\documentclass[9pt]{amsart}
\usepackage{amsmath}
\usepackage{amsxtra}
\usepackage{amscd}
\usepackage{amsthm}
\usepackage{amsfonts}
\usepackage{amssymb}
\usepackage{eucal}
\usepackage[all]{xy}
\usepackage{graphicx}
\usepackage[hidelinks]{hyperref}
\usepackage{tikz-cd}

\newtheorem{cor}[subsubsection]{Corollary}
\newtheorem{lem}[subsubsection]{Lemma}
\newtheorem{prop}[subsubsection]{Proposition}
\newtheorem{propconstr}[subsubsection]{Proposition-Construction}
\newtheorem{thmconstr}[subsubsection]{Theorem-Construction}

\newtheorem{conj}[subsubsection]{Conjecture}
\newtheorem{thm}[subsubsection]{Theorem}

\theoremstyle{definition}

\theoremstyle{remark}
\newtheorem{rem}[subsubsection]{Remark}

\newcommand{\thmref}[1]{Theorem~\ref{#1}}

\newcommand{\secref}[1]{Sect.~\ref{#1}}
\newcommand{\lemref}[1]{Lemma~\ref{#1}}
\newcommand{\propref}[1]{Proposition~\ref{#1}}
\newcommand{\corref}[1]{Corollary~\ref{#1}}
\newcommand{\conjref}[1]{Conjecture~\ref{#1}}

\numberwithin{equation}{section}

\newcommand{\nc}{\newcommand}
\nc{\renc}{\renewcommand}
\nc{\ssec}{\subsection}
\nc{\sssec}{\subsubsection}
\nc{\on}{\operatorname}

\nc\ol{\overline}
\nc\wt{\widetilde}
\nc\tboxtimes{\wt{\boxtimes}}
\nc\tstar{\wt{\star}}
\nc{\alp}{\alpha}

\nc{\ZZ}{{\mathbb Z}}
\nc{\NN}{{\mathbb N}}
\nc{\OO}{{\mathbb O}}
\renc{\SS}{{\mathbb S}}
\nc{\DD}{{\mathbb D}}
\nc{\GG}{{\mathbb G}}

\nc{\Fq}{{\mathbb F}_q}
\nc{\Fqb}{\ol{{\mathbb F}_q}}
\nc{\Ql}{\ol{{\mathbb Q}_\ell}}
\nc{\id}{\text{id}}
\nc\X{\mathcal X}

\nc{\Hom}{\on{Hom}}
\nc{\Lie}{\on{Lie}}
\nc{\Loc}{\on{Loc}}
\nc{\Pic}{\on{Pic}}
\nc{\Bun}{\on{Bun}}
\nc{\IC}{\on{IC}}
\nc{\Aut}{\on{Aut}}
\nc{\rk}{\on{rk}}
\nc{\Sh}{\on{Sh}}
\nc{\Perv}{\on{Perv}}
\nc{\pos}{{\on{pos}}}
\nc{\Conv}{\on{Conv}}
\nc{\Sph}{\on{Sph}}
\nc{\Sym}{\on{Sym}}
\nc{\BunBb}{\overline{\Bun}_B}
\nc{\BunNb}{\overline{\Bun}_N}
\nc{\BunTb}{\overline{\Bun}_T}
\nc{\BunBbm}{\overline{\Bun}_{B^-}}
\nc{\BunBbel}{\overline{\Bun}_{B,el}}
\nc{\BunBbmel}{\overline{\Bun}_{B^-,el}}
\nc{\Buno}{\overset{o}{\Bun}}
\nc{\BunPb}{{\overline{\Bun}_P}}
\nc{\BunBM}{\Bun_{B(M)}}
\nc{\BunBMb}{\overline{\Bun}_{B(M)}}
\nc{\BunPbw}{{\widetilde{\Bun}_P}}
\nc{\BunBP}{\widetilde{\Bun}_{B,P}}
\nc{\GUb}{\overline{G/U}}
\nc{\GUPb}{\overline{G/U(P)}}

\nc{\Hhom}{\underline{\on{Hom}}}
\nc\syminfty{\on{Sym}^{\infty}}
\nc\lal{\ol{\lambda}}
\nc\xl{\ol{x}}
\nc\thl{\ol{\theta}}
\nc\nul{\ol{\nu}}
\nc\mul{\ol{\mu}}
\nc\Sum\Sigma
\nc{\oX}{\overset{o}{X}{}}
\nc{\hl}{\overset{\leftarrow}h{}}
\nc{\hr}{\overset{\rightarrow}h{}}
\nc{\M}{{\mathcal M}}
\nc{\N}{{\mathcal N}}
\nc{\F}{{\mathcal F}}
\nc{\D}{{\mathcal D}}
\nc{\Q}{{\mathcal Q}}
\nc{\Y}{{\mathcal Y}}
\nc{\G}{{\mathcal G}}
\nc{\E}{{\mathcal E}}
\nc{\CalC}{{\mathcal C}}
\nc\Dh{\widehat{\D}}

\nc{\C}{{\mathcal C}}
\nc{\K}{{\mathcal K}}
\renewcommand{\H}{{\mathcal H}}

\nc{\T}{{\mathcal T}}
\nc{\V}{{\mathcal V}}
\renc{\P}{{\mathcal P}}
\nc{\A}{{\mathcal A}}
\nc{\B}{{\mathcal B}}
\nc{\U}{{\mathcal U}}

\nc{\Gr}{{\on{Gr}}}

\nc{\frn}{{\check{\mathfrak u}(P)}}

\nc{\fC}{\mathfrak C}
\nc{\p}{\mathfrak p}
\nc{\q}{\mathfrak q}
\nc\f{{\mathfrak f}}

\nc{\qo}{{\mathfrak q}}
\nc{\po}{{\mathfrak p}}
\nc{\s}{{\mathfrak s}}
\nc\w{\text{w}}

\renewcommand{\mod}{{\on{-mod}}}

\nc\mathi\iota
\nc\Spec{\on{Spec}}
\nc\Mod{\on{Mod}}
\nc{\tw}{\widetilde{\mathfrak t}}
\nc{\pw}{\widetilde{\mathfrak p}}
\nc{\qw}{\widetilde{\mathfrak q}}
\nc{\jw}{\widetilde j}

\nc{\grb}{\overline{\Gr}}
\nc{\I}{\mathcal I}

\nc{\lambdach}{{\check\lambda}}
\nc{\Lambdach}{{\check\Lambda}{}}
\nc{\much}{{\check\mu}}
\nc{\omegach}{{\check\omega}}
\nc{\nuch}{{\check\nu}}
\nc{\etach}{{\check\eta}}
\nc{\alphach}{{\check\alpha}}
\nc{\oblvtach}{{\check\oblvta}}
\nc{\rhoch}{{\check\rho}}
\nc{\ch}{{\check h}}

\nc{\Hb}{\overline{\H}}

\nc{\BA}{{\mathbb{A}}}
\nc{\BC}{{\mathbb{C}}}
\nc{\BE}{{\mathbb{E}}}
\nc{\BF}{{\mathbb{F}}}
\nc{\BG}{{\mathbb{G}}}
\nc{\BM}{{\mathbb{M}}}
\nc{\BO}{{\mathbb{O}}}
\nc{\BD}{{\mathbb{D}}}
\nc{\BL}{{\mathbb{L}}}
\nc{\Bl}{{\mathbb{l}}}
\nc{\BN}{{\mathbb{N}}}
\nc{\BP}{{\mathbb{P}}}
\nc{\BQ}{{\mathbb{Q}}}
\nc{\BR}{{\mathbb{R}}}
\nc{\BZ}{{\mathbb{Z}}}
\nc{\BS}{{\mathbb{S}}}
\nc{\BT}{{\mathbb{T}}}

\nc{\CA}{{\mathcal{A}}}
\nc{\CB}{{\mathcal{B}}}

\nc{\CE}{{\mathcal{E}}}
\nc{\CF}{{\mathcal{F}}}
\nc{\CH}{{\mathcal{H}}}

\nc{\CL}{{\mathcal{L}}}
\nc{\CC}{{\mathcal{C}}}
\nc{\CG}{{\mathcal{G}}}
\nc{\CM}{{\mathcal{M}}}
\nc{\CN}{{\mathcal{N}}}
\nc{\CK}{{\mathcal{K}}}
\nc{\CO}{{\mathcal{O}}}
\nc{\CP}{{\mathcal{P}}}
\nc{\CQ}{{\mathcal{Q}}}
\nc{\CR}{{\mathcal{R}}}
\nc{\CS}{{\mathcal{S}}}
\nc{\CT}{{\mathcal{T}}}
\nc{\CU}{{\mathcal{U}}}
\nc{\CV}{{\mathcal{V}}}
\nc{\CW}{{\mathcal{W}}}
\nc{\CX}{{\mathcal{X}}}
\nc{\CY}{{\mathcal{Y}}}
\nc{\CZ}{{\mathcal{Z}}}
\nc{\CI}{{\mathcal{I}}}
\nc{\cD}{{\mathcal{D}}}
\nc{\ocD}{\overset{\circ}{\mathcal{D}}}

\nc{\csM}{{\check{\mathcal A}}{}}
\nc{\oM}{{\overset{\circ}{\mathcal M}}{}}
\nc{\obM}{{\overset{\circ}{\mathbf M}}{}}
\nc{\oCA}{{\overset{\circ}{\mathcal A}}{}}
\nc{\obA}{{\overset{\circ}{\mathbf A}}{}}
\nc{\ooM}{{\overset{\circ}{M}}{}}
\nc{\osM}{{\overset{\circ}{\mathsf M}}{}}
\nc{\vM}{{\overset{\bullet}{\mathcal M}}{}}
\nc{\nM}{{\underset{\bullet}{\mathcal M}}{}}
\nc{\oD}{{\overset{\circ}{\mathcal D}}{}}
\nc{\obD}{{\overset{\circ}{\mathbf D}}{}}
\nc{\oA}{{\overset{\circ}{\mathbb A}}{}}
\nc{\op}{{\overset{\bullet}{\mathbf p}}{}}
\nc{\cp}{{\overset{\circ}{\mathbf p}}{}}
\nc{\oU}{{\overset{\bullet}{\mathcal U}}{}}
\nc{\oZ}{{\overset{\circ}{\mathcal Z}}{}}
\nc{\ofZ}{{\overset{\circ}{\mathfrak Z}}{}}
\nc{\oF}{{\overset{\circ}{\fF}}}

\nc{\fa}{{\mathfrak{a}}}
\nc{\fb}{{\mathfrak{b}}}
\nc{\fc}{{\mathfrak{c}}}
\nc{\fch}{{\mathfrak{ch}}}
\nc{\fd}{{\mathfrak{d}}}
\nc{\ff}{{\mathfrak{f}}}
\nc{\fg}{{\mathfrak{g}}}
\nc{\fgl}{{\mathfrak{gl}}}
\nc{\fh}{{\mathfrak{h}}}
\nc{\fj}{{\mathfrak{j}}}
\nc{\fl}{{\mathfrak{l}}}
\nc{\fm}{{\mathfrak{m}}}
\nc{\fn}{{\mathfrak{n}}}
\nc{\fu}{{\mathfrak{u}}}
\nc{\fp}{{\mathfrak{p}}}
\nc{\fr}{{\mathfrak{r}}}
\nc{\fs}{{\mathfrak{s}}}
\nc{\ft}{{\mathfrak{t}}}
\nc{\fT}{{\mathfrak{T}}}
\nc{\fz}{{\mathfrak{z}}}
\nc{\fsl}{{\mathfrak{sl}}}
\nc{\hsl}{{\widehat{\mathfrak{sl}}}}
\nc{\hgl}{{\widehat{\mathfrak{gl}}}}
\nc{\hg}{{\widehat{\mathfrak{g}}}}
\nc{\htt}{{\widehat{\mathfrak{t}}}}
\nc{\chg}{{\widehat{\mathfrak{g}}}{}^\vee}
\nc{\hn}{{\widehat{\mathfrak{n}}}}
\nc{\chn}{{\widehat{\mathfrak{n}}}{}^\vee}

\nc{\fA}{{\mathfrak{A}}}
\nc{\fB}{{\mathfrak{B}}}
\nc{\fD}{{\mathfrak{D}}}
\nc{\fE}{{\mathfrak{E}}}
\nc{\fF}{{\mathfrak{F}}}
\nc{\fG}{{\mathfrak{G}}}
\nc{\fK}{{\mathfrak{K}}}
\nc{\fL}{{\mathfrak{L}}}
\nc{\fM}{{\mathfrak{M}}}
\nc{\fN}{{\mathfrak{N}}}
\nc{\fP}{{\mathfrak{P}}}
\nc{\fU}{{\mathfrak{U}}}
\nc{\fV}{{\mathfrak{V}}}
\nc{\fZ}{{\mathfrak{Z}}}

\nc{\bb}{{\mathbf{b}}}
\nc{\bc}{{\mathbf{c}}}
\nc{\bd}{{\mathbf{d}}}
\nc{\bbf}{{\mathbf{f}}}
\nc{\be}{{\mathbf{e}}}
\nc{\bg}{{\mathbf{g}}}
\nc{\bi}{{\mathbf{i}}}
\nc{\bj}{{\mathbf{j}}}
\nc{\bn}{{\mathbf{n}}}
\nc{\bp}{{\mathbf{p}}}
\nc{\bq}{{\mathbf{q}}}
\nc{\bu}{{\mathbf{u}}}
\nc{\bv}{{\mathbf{v}}}
\nc{\bx}{{\mathbf{x}}}
\nc{\bs}{{\mathbf{s}}}
\nc{\by}{{\mathbf{y}}}
\nc{\bw}{{\mathbf{w}}}
\nc{\bA}{{\mathbf{A}}}
\nc{\bK}{{\mathbf{K}}}
\nc{\bB}{{\mathbf{B}}}
\nc{\bC}{{\mathbf{C}}}
\nc{\bG}{{\mathbf{G}}}
\nc{\bD}{{\mathbf{D}}}
\nc{\bH}{{\mathbf{He}}}
\nc{\bM}{{\mathbf{M}}}
\nc{\bN}{{\mathbf{N}}}
\nc{\bO}{{\mathbf{O}}}
\nc{\bV}{{\mathbf{V}}}
\nc{\bW}{{\mathbf{Wh}}}
\nc{\bX}{{\mathbf{X}}}
\nc{\bY}{{\mathbf{Y}}}
\nc{\bZ}{{\mathbf{Z}}}
\nc{\bS}{{\mathbf{S}}}
\nc{\bT}{{\mathbf{T}}}

\nc{\sA}{{\mathsf{A}}}
\nc{\sB}{{\mathsf{B}}}
\nc{\sC}{{\mathsf{C}}}
\nc{\sD}{{\mathsf{D}}}
\nc{\sF}{{\mathsf{F}}}
\nc{\sG}{{\mathsf{G}}}
\nc{\sH}{{\mathsf{H}}}
\nc{\sK}{{\mathsf{K}}}
\nc{\sL}{{\mathsf{L}}}
\nc{\sM}{{\mathsf{M}}}
\nc{\sO}{{\mathsf{O}}}
\nc{\sR}{{\mathsf{R}}}
\nc{\sU}{{\mathsf{U}}}
\nc{\sW}{{\mathsf{W}}}
\nc{\sQ}{{\mathsf{Q}}}
\nc{\sP}{{\mathsf{P}}}
\nc{\sY}{{\mathsf{Y}}}
\nc{\sZ}{{\mathsf{Z}}}
\nc{\sfp}{{\mathsf{p}}}
\nc{\sfq}{{\mathsf{q}}}
\nc{\sr}{{\mathsf{r}}}
\nc{\sk}{{\mathsf{k}}}
\nc{\su}{{\mathsf{u}}}
\nc{\sv}{{\mathsf{v}}}
\nc{\sg}{{\mathsf{g}}}
\nc{\sff}{{\mathsf{f}}}
\nc{\sfb}{{\mathsf{b}}}
\nc{\sfc}{{\mathsf{c}}}
\nc{\sd}{{\mathsf{d}}}

\nc{\BK}{{\bar{K}}}

\nc{\tA}{{\widetilde{\mathbf{A}}}}
\nc{\tB}{{\widetilde{\mathcal{B}}}}
\nc{\tg}{{\widetilde{\mathfrak{g}}}}
\nc{\tG}{{\widetilde{G}}}
\nc{\TM}{{\widetilde{\mathbb{M}}}{}}
\nc{\tO}{{\widetilde{\mathsf{O}}}{}}
\nc{\tU}{{\widetilde{\mathfrak{U}}}{}}
\nc{\TZ}{{\tilde{Z}}}
\nc{\tx}{{\tilde{x}}}
\nc{\tbv}{{\tilde{\bv}}}
\nc{\tfP}{{\widetilde{\mathfrak{P}}}{}}
\nc{\tz}{{\tilde{\zeta}}}
\nc{\tmu}{{\tilde{\mu}}}

\nc{\urho}{\underline{\rho}}
\nc{\uB}{\underline{B}}
\nc{\uC}{{\underline{\mathbb{C}}}}
\nc{\ui}{\underline{i}}
\nc{\uj}{\underline{j}}
\nc{\ofP}{{\overline{\mathfrak{P}}}}
\nc{\oB}{{\overline{\mathcal{B}}}}
\nc{\og}{{\overline{\mathfrak{g}}}}
\nc{\oI}{{\overline{I}}}

\nc{\eps}{\varepsilon}
\nc{\hrho}{{\hat{\rho}}}

\nc{\one}{{\mathbf{1}}}
\nc{\two}{{\mathbf{t}}}

\nc{\Rep}{{\mathop{\operatorname{\rm Rep}}}}
\nc{\Tot}{{\mathop{\operatorname{\rm Tot}}}}
\nc{\Ker}{{\mathop{\operatorname{\rm Ker}}}}
\nc{\Hilb}{{\mathop{\operatorname{\rm Hilb}}}}
\nc{\End}{{\mathop{\operatorname{\rm End}}}}
\nc{\Ext}{{\mathop{\operatorname{\rm Ext}}}}
\nc{\CHom}{{\mathop{\operatorname{{\mathcal{H}}\it om}}}}
\nc{\GL}{{\mathop{\operatorname{\rm GL}}}}
\nc{\gr}{{\mathop{\operatorname{\rm gr}}}}
\nc{\Id}{{\mathop{\operatorname{\rm Id}}}}
\nc{\de}{{\mathop{\operatorname{\rm def}}}}
\nc{\length}{{\mathop{\operatorname{\rm length}}}}
\nc{\supp}{{\mathop{\operatorname{\rm supp}}}}

\nc{\Cliff}{{\mathsf{Cliff}}}
\nc{\Fl}{\on{Fl}}
\nc{\Fib}{{\mathsf{Fib}}}
\nc{\Coh}{{\on{Coh}}}
\nc{\QCoh}{{\on{QCoh}}}
\nc{\IndCoh}{{\on{IndCoh}}}
\nc{\FCoh}{{\mathsf{FCoh}}}

\nc{\reg}{{\text{\rm reg}}}

\nc{\cplus}{{\mathbf{C}_+}}
\nc{\cminus}{{\mathbf{C}_-}}
\nc{\cthree}{{\mathbf{C}_*}}
\nc{\Qbar}{{\bar{Q}}}
\nc\Eis{{\on{Eis}}}
\nc\Eisb{\ol\Eis{}}
\nc\Eisr{\on{Eis}^{rat}{}}
\nc\wh{\widehat}
\nc{\Def}{\on{Def_{\check{\fb}}(E)}}
\nc{\barZ}{\overline{Z}{}}
\nc{\barbarZ}{\overline{\barZ}{}}
\nc{\barpi}{\overline\pi}
\nc{\barbarpi}{\overline\barpi}
\nc{\barpip}{\overline\pi{}^+}
\nc{\barpim}{\overline\pi{}^-}

\nc{\fq}{\mathfrak q}

\nc{\fqb}{\ol{\fq}{}}
\nc{\fpb}{\ol{\fp}{}}
\nc{\fpr}{{\fp^{rat}}{}}
\nc{\fqr}{{\fq^{rat}}{}}

\nc{\hattimes}{\wh\otimes}

\nc{\bh}{{{\mathbf h}}}
\nc{\bk}{{{\mathbf k}}}
\nc{\bOmega}{{\overline{\Omega(\check \fn)}}}

\nc{\seq}[1]{\stackrel{#1}{\sim}}

\nc{\cT}{{\check{T}}}
\nc{\cG}{{\check{G}}}
\nc{\cM}{{\check{M}}}
\nc{\cB}{{\check{B}}}
\nc{\cP}{{\check{P}}}

\nc{\ct}{{\check{\mathfrak t}}}
\nc{\cg}{{\check{\fg}}}
\nc{\cb}{{\check{\fb}}}
\nc{\cn}{{\check{\fn}}}

\nc{\cLambda}{{\check\Lambda}}

\nc{\cla}{{\check\lambda}}
\nc{\cmu}{{\check\mu}}
\nc{\cnu}{{\check\nu}}
\nc{\ceta}{{\check\eta}}

\nc{\DefbE}{{\on{Def}_{\cB}(E_\cT)}}

\nc{\imathb}{{\ol{\imath}}}
\nc{\rlr}{\overset{\longrightarrow}{\underset{\longrightarrow}\longleftarrow}}

\nc{\oBun}{\overset{\circ}\Bun}
\nc{\oSht}{\overset{\circ}\Sht}
\nc{\LocSys}{\on{LocSys}}
\nc{\BunBbb}{\ol{\ol{Bun}}_B}
\nc{\BunBr}{\Bun_B^{rat}}
\nc{\BunBrp}{\Bun_B^{rat,polar}}
\nc{\BunTrp}{\Bun_T^{rat,polar}}
\nc{\BunNr}{\Bun_N^{rat}}
\nc{\BunNre}{\Bun_N^{enh,rat}}
\nc{\BunTr}{\Bun_T^{rat}}
\nc{\Vect}{\on{Vect}}
\nc{\Whit}{\on{Whit}}
\nc{\CTb}{\ol{\on{CT}}}
\nc{\Ran}{\on{Ran}}
\nc{\CTr}{\on{CT}^{rat}{}}
\nc\jmathr{\jmath^{rat}{}}
\nc{\ux}{\underline{x}}
\nc{\clambda}{{\check\lambda}}
\nc{\calpha}{{\check\alpha}}
\nc{\ind}{{\mathbf{ind}}}
\nc{\oblv}{{\mathbf{oblv}}}
\nc{\coeff}{\on{W-coeff}}
\nc{\Poinc}{\on{Poinc}}
\nc{\Dmod}{\on{D-mod}}
\nc{\dr}{\on{dR}}
\nc{\oCZ}{\overset{\circ}\CZ}
\nc{\KL}{\on{KL}}
\nc{\triv}{{\mathbf{triv}}}

\nc{\dgSch}{\on{DGSch}}
\nc{\Sch}{\on{Sch}}
\nc{\affdgSch}{\on{DGSch}^{\on{aff}}}
\nc{\affSch}{\on{Sch}^{\on{aff}}}
\nc{\Sing}{\on{Sing}}
\nc{\inftygroup}{\infty\on{-Grpd}}
\renc{\dr}{{\on{dr}}}
\nc\Maps{\on{Maps}}
\nc\Res{\on{Res}}
\nc\bMaps{\mathbf{Maps}}
\nc{\ul}{\underline}
\nc{\bNP}{\mathbf{N(P)}}
\nc{\ofc}{\overset{\circ}\fch}
\nc{\ppart}{(\!(t)\!)}
\nc{\qqart}{[\![t]\!]}
\nc{\crit}{\on{crit}}
\nc{\DGCat}{\on{DGCat}}
\nc{\Shv}{\on{Shv}}
\nc{\bDelta}{\mathbf{\Delta}}
\nc{\genB}{{\overset{\on{gen}}\to B}}
\nc{\genP}{{\underset{\on{gen}}\longrightarrow P}}
\nc{\genN}{{\underset{\on{gen}}\longrightarrow N}}
\nc{\semiinf}{{\frac{\infty}{2}+\bullet}}
\nc{\mmod}{\on{-}\mathbf{mod}}
\nc{\commod}{\on{-}\mathbf{comod}}
\nc{\AdFr}{\on{Ad}_{\on{Frob}}}
\nc{\Frob}{\on{Frob}}
\nc{\Tr}{\on{Tr}}
\nc{\Sht}{\on{Sht}}
\nc{\sfe}{\mathsf{e}}
\nc{\tCat}{{2\on{-Cat}}}
\nc{\tIndCoh}{{2\on{-IndCoh}}}
\nc{\tQCoh}{{2\on{-QCoh}}}
\nc{\uShv}{\underline{\Shv}}
\nc{\qLisse}{\on{QLisse}}
\nc{\Nilp}{{\on{Nilp}}}
\nc{\sotimes}{\overset{!}\otimes} 
\nc{\AGCat}{\on{AGCat}}
\nc{\LS}{\on{LS}}
\nc{\Cat}{\on{-Cat}}
\nc{\bo}{\mathbf o}
\nc{\poinc}{\mathsf{poinc}}
\nc{\funct}{\mathsf{funct}}
\nc{\sFunct}{\mathsf{Funct}}
\nc{\BW}{\mathbb{W}}
\nc{\uHom}{\underline{\on{Hom}}}
\nc{\BH}{\mathbb{H}}

\newcommand{\WD}{\textrm{Weil-Drinf}}
\newcommand{\coarse}{\mathrm{coarse}}
\newcommand{\pss}{\textrm{Pro-ss}^\coarse}
\newcommand{\totext}[1]{\xrightarrow{#1}}

\begin{document}

\vskip1cm

\title[On the excursion algebra]{On the excursion algebra}

\author{Dennis Gaitsgory, Kevin Lin and Wyatt Reeves}  

\begin{abstract}
The \emph{excursion algebra} associated to a scheme $X$ over $\BF_q$ and a reductive group $\sG$ is the algebra of global functions
on the stack of arithmetic $\sG$-local systems on $X$. When $X$ is a curve, this algebra acts on the space of automorphic functions.
In this paper we establish some basic properties of this algebra.
\end{abstract}

\date{\today}

\maketitle

\tableofcontents

\section*{Introduction}

In his seminal work \cite{VLaf} (extended in \cite{Xue}),  V.~Lafforgue defined an action\footnote{See \secref{sss:exc action} for the precise assertion.} 
of the \emph{excursion algebra} on the space of automorphic functions. In this paper we study the excursion algebra abstractly, without connection
to automorphic functions (and we do it for an arbitrary smooth algebraic variety $X$ over $\BF_q$, not necessarily a curve); we denote it by $\on{Exc}(X,\sG)$, where
$\sG$ is the target reductive group. 

\medskip

Our main results include:

\begin{enumerate}

\item Each factor of $\on{Exc}(X,\sG)$ is reduced and normal; 

\medskip

\item In fact, $\on{Exc}(X,\sG)$ is a product of algebras of the form $\CO_{\sG'/\!/\on{Ad}_\sg(\sG')}$, where:

\begin{itemize}

\item $\sG'$ is a reductive subgroup of $\sG$;

\item $\sg\in \sG$ is an element that normalizes $\sG'$;

\item $\on{Ad}_\sg$ stands for the action by $\sg$-twisted conjugation.

\end{itemize}

\medskip

\item $\on{Exc}(X,\sG)$ is finitely generated over each local Hecke algebra $\CH_x(\sG)$ (see \thmref{t:Hecke}(a) for the precise assertion);

\medskip

\item For $\sG=GL_n$, the map from the global Hecke algebra $\CH_X(\sG)$ to $\on{Exc}(X,\sG)$ is surjective (see \thmref{t:Hecke}(b) for the precise assertion);

\medskip

\item The algebra $\on{Exc}(X,\sG)$ is independent of $\ell$ in the sense that there exists a canonically defined $\BQ$-algebra
$\on{Exc}(X,\sG)_\BQ$ such that for any $\ell\neq p$, we have a canonical isomorphism $$\ol\BQ_\ell\underset{\BQ}\otimes \on{Exc}(X,\sG)_\BQ\simeq
\on{Exc}(X,\sG),$$ and the map $\CH_X(\sG)\to \on{Exc}(X,\sG)$ is induced by a map  $\CH_{X,\BQ}(\sG)\to \on{Exc}(X,\sG)_\BQ$;

\medskip

\item In particular, for $\sG=GL_n$, the rational structure on $\on{Exc}(X,\sG)$ is uniquely characterized by the property of being compatible
with the one on $\CH_X(\sG)$. 

\end{enumerate} 

%

\ssec{The excursion algebra}

\sssec{}

Let $X_0$ be a (geometrically) connected scheme of finite type over $\BF_q$, and let $X$ denote its base change to $\ol\BF_q$. Let
$\sG$ be a reductive group over the field of coefficients $\sfe:=\ol\BQ_\ell$, $\ell\neq p:=\on{char}(\BF_q)$. 

\medskip

To this data, following \cite{AGKRRV}, we associate two objects in derived algebraic geometry over $\sfe$. One is
$\LS_\sG^{\on{restr}}(X)$, see \secref{sss:LS restr} for the definition. The prestack $\LS_\sG^{\on{restr}}(X)$ depends
functorially on the \'etale site of $X$; hence the geometric Frobenius acting on $X$ gives rise to an automorphism,
denoted, $\Frob$, of $\LS_\sG^{\on{restr}}(X)$. 

\medskip

We set
$$\LS_\sG^{\on{arithm}}(X):=(\LS_\sG^{\on{restr}}(X))^{\Frob}.$$

\medskip

One shows that $\LS_\sG^{\on{arithm}}(X)$ is a union of connected components, each of which is a quasi-compact
algebraic stack over $\sfe$ locally of finite type (in fact, the stack-quotient of an affine scheme by an action of $\sG$).
Moreover, if one restricts ramification along the divisor at infinity with respect to some compactification (which we will from
now on assume), $\LS_\sG^{\on{arithm}}(X)$ has finitely many connected components. 

\medskip

The excursion algebra $\on{Exc}(X,\sG)$ is defined as the algebra of global functions on $\LS_\sG^{\on{arithm}}(X)$, i.e., 
$$\on{Exc}(X,\sG):=\Gamma(\LS_\sG^{\on{arithm}}(X),\CO_{\LS_\sG^{\on{arithm}}(X)}).$$

A priori, $\on{Exc}(X,\sG)$ is a connective commutative DG algebra over $\sfe$. However, it follows from the
results of this paper that it is actually \emph{classical}, i.e., is concentrated in cohomological degree $0$. 

\medskip

The goal of this paper is to establish the properties of $\on{Exc}(X,\sG)$ listed in the preamble. 

\sssec{}

Here is some history of the definition of $\on{Exc}(X,\sG)$. For the rest of this subsection we will assume that $X_0$ is a curve; let $\ol{X}_0$
denote its compactification\footnote{We should say that the fact that we consider arbitrary schemes rather than curves 
is out of an academic interest--our main applications are in the case of curves.}. 

\medskip

The stack $\LS_\sG^{\on{arithm}}(X)$ should be thought of as the moduli space of \emph{continuous} representations 
of the Weil group $\on{Weil}(X)$ of $X$ into $\sG(\sfe)$ up to conjugation. 

\medskip

Let us temporarily replace $\on{Weil}(X)$ by an abstract group $\Gamma$, and consider the stack
$$\bMaps_{\on{groups}}(\Gamma,\sG)/\on{Ad}(\sG),$$
and set
$$\on{Exc}(\Gamma,\sG):=\Gamma(\bMaps_{\on{groups}}(\Gamma,\sG)/\on{Ad}(\sG),\CO_{\bMaps_{\on{groups}}(\Gamma,\sG)/\on{Ad}(\sG)}).$$

The algebra $\on{Exc}(\Gamma,\sG)$ was described explicitly by V.~Lafforgue in \cite[Sect. 11]{VLaf}. He exhibited it as a span 
of a set of specific elements that he called ``excursion operators", hence the name of the algebra (we recall the definition of these
elements in \secref{sss:exc op}). 

\medskip

The natural projection
$$\bMaps_{\on{groups}}(\Gamma,\sG)/\on{Ad}(\sG)\to \Spec(\on{Exc}(\Gamma,\sG))$$
gives rise to a bijection between:

\begin{itemize}

\item Conjugacy classes of homomorphisms $\Gamma\to \sG$, \emph{up to semi-simplification}, and 

\item $\sfe$-points of $\Spec(\on{Exc}(\Gamma,\sG))$.

\end{itemize}

In particular, the set of $\sfe$-points of $\Spec(\on{Exc}(\Gamma,\sG))$ is in bijection with the set of conjugacy classes 
of \emph{semi-simple} homomorphisms $\Gamma\to \sG$. We refer the reader to \cite[Sect 11]{VLaf} for the discussion
of the relation between $\on{Exc}(\Gamma,\sG)$ and \emph{quasi-characters}. 

\sssec{}

Let us take $\Gamma$ to be $\on{Weil}(X)$, viewed as an abstract group (i.e., we disregard its topology). Then the main construction 
in \cite{VLaf} (extended by C.~Xue in \cite{Xue}) defines an action of
$$\on{Exc}(X,\sG)^{\on{discr}}:=\on{Exc}(\on{Weil}(X),\sG)$$
on the space of automorphic functions with compact supports for the group $G:=\sG^\vee$ and the global field corresponding to $\ol{X}_0$
(with bounded ramification at points of $\ol{X}_0-X_0$). 

\medskip

We have a naturally defined closed embedding
$$^{\on{cl}}\!\LS_\sG^{\on{arithm}}(X)\to \bMaps_{\on{groups}}(\on{Weil}(X),\sG)/\on{Ad}(\sG),$$
which gives rise to a map 
$$\on{Exc}(X,\sG)^{\on{discr}}\to \on{Exc}(X,\sG),$$
which is surjective on $H^0(-)$.

\medskip

In fact, one can describe the corresponding quotient explicitly: the corresponding closed subscheme
$$\Spec(\on{Exc}(X,\sG))\subset \Spec(\on{Exc}(X,\sG)^{\on{discr}})$$
is the Zariski closure of the subset of $\sfe$-points that correspond to \emph{continuous}
semi-simple homomorphisms $\on{Weil}(X)\to \sG$, see \propref{p:coarse via discr} in this paper. 

\sssec{} \label{sss:exc action} 

Combined with the technology developed in \cite{AGKRRV}, one can show that the action of $\on{Exc}(X,\sG)^{\on{discr}}$
on automorphic functions actually factors via $\on{Exc}(X,\sG)$. 

\sssec{}

The primary impetus for this paper was to establish properties (1) and (2) mentioned in the preamble: 

\medskip

Property (1) is a key input for the project of proving a version of the Ramanujan-Petersson conjecture over function fields
(unramified version), see \cite{Ras}. 

\medskip

Property (2) serves as an input for the even more ambitious project of proving the Arthur conjectures, see also \cite{Ras}.  

\sssec{}

Properties (3) and (4) reflect some well-known phenomena on the automorphic side. 

\medskip

Properties (5) and (6) can be regarded as a tiny step towards the program of understanding the phenomenon of
``independence of $\ell$" for the Langlands correspondence over function fields. 

\ssec{The geometrically semi-simple locus}

The proofs of all the main results of this paper are based on an alternative description of $\on{Exc}(X,\sG)$ that 
we outline in this subsection.

\sssec{}

The prestack $\LS_\sG^{\on{restr}}(X)$ is a union of connected components $\LS_\sG^{\on{restr}}(X)_\alpha$;
two $\sG$-local systems belong to the same connected component if and only if they have isomorphic
semi-simplifications; each $\LS_\sG^{\on{restr}}(X)_\alpha$ contains a unique semi-simple local system $\sigma_\alpha$ 
(defined up to an isomorphism). 

\medskip

Denote
$$\LS_\sG^{\on{restr},0}(X):=\underset{\alpha}\sqcup \on{pt}/\sG_\alpha, \quad \sG_\alpha:=\on{Aut}(\sigma_\alpha);$$
this is the \emph{semi-simple} locus of $\LS_\sG^{\on{restr}}(X)$.

\sssec{}

Set
$$\LS_\sG^{\on{arithm},0}(X):=(\LS_\sG^{\on{restr},0}(X))^{\Frob}.$$

The main point of this paper is that the restriction map
\begin{equation} \label{e:retsriction Intro}
\on{Exc}(X,\sG):=\Gamma(\LS_\sG^{\on{arithm}}(X),\CO_{\LS_\sG^{\on{arithm}}(X)})\to
\Gamma(\LS_\sG^{\on{arithm},0}(X),\CO_{\LS_\sG^{\on{arithm},0}(X)})=:\on{Exc}(X,\sG)^0
\end{equation}
is an isomorphism. This is our \thmref{t:exc}.

\sssec{}

The results mentioned in the preamble all follow from the isomorphism \eqref{e:retsriction Intro}. 

\medskip

For example, points (1) and (2) follows from the fact for every $\alpha$ for which $\LS_\sG^{\on{restr}}(X)_\alpha$ is $\Frob$-invariant, we have
$$(\LS_\sG^{\on{restr}}(X)_\alpha)^{\Frob}\simeq \sG_\alpha/\on{Ad}_{\Frob}(\sG_\alpha),$$
as a stack.

\medskip

Point (3) follows from the fact that the map
$$\sG_\alpha/\!/\on{Ad}_{\Frob}(\sG_\alpha)\to \sG/\!/\on{Ad}(\sG),$$
induced by the inclusion $\sG_\alpha\to \sG$, is finite, which is a variant of a theorem from \cite{Vin}. 

\medskip

Point (5) is obtained by rewriting $\on{Exc}(X,\sG)^0$ via Drinfeld's rational model for the semi-simple envelope of the
Galois group of $X_0$, see \secref{ss:Drinf}.  

\sssec{}

Let us now explain the mechanism behind the isomorphism \eqref{e:retsriction Intro}. 

\medskip

Recall that the input into the definition of
$\LS_\sG^{\on{restr}}(X)$ is the symmetric monoidal category $\qLisse(X)$ of local systems on $X$. Let $\qLisse^{\on{relev}}(X)\subset \qLisse(X)$
be the full subcategory generated by local systems that admit a Weil structure. 

\medskip

Denote by $\LS_\sG^{\on{restr,relev}}(X)$ the corresponding substack of $\LS_\sG^{\on{restr}}(X)$
(in fact, $\LS_\sG^{\on{restr,relev}}(X)$ is the union of certain of the connected components of $\LS_\sG^{\on{restr}}(X)$). 

\medskip

It is easy to see that the inclusion
$$(\LS_\sG^{\on{restr,relev}}(X))^{\Frob}\to (\LS_\sG^{\on{restr}}(X))^{\Frob}=:\LS_\sG^{\on{arithm}}(X)$$
is an isomorphism.

\sssec{}

Let $\BZ^{\on{alg,wt}}$ be the pro-algebraic group, whose representations are finite-dimensional vector spaces
equipped with an automorphism, whose eigenvalues are $q$-Weil numbers.

\medskip

By definition, we have a homomorphism
$$\BZ\to \BZ^{\on{alg,wt}}$$
with Zariski-dense image. Denote by $\Frob\in \BZ^{\on{alg,wt}}$ the image of $1\in \BZ$. 

\medskip

In addition, we have a natural homomorphism $\BG_m\to \BZ^{\on{alg,wt}}$, which records the decomposition into weights. 

\sssec{} \label{sss:pre contr Intro}

The isomorphism \eqref{e:retsriction Intro} is based on the following two observations:

\begin{itemize}

\item The prestack $\LS_\sG^{\on{restr,relev}}(X)$ carries a natural action of $\BZ^{\on{alg,wt}}$, with $\Frob\in \BZ^{\on{alg,wt}}$
acting as the automorphism induced by the geometric Frobenius on $X$; 

\medskip

\item The resulting action of $\BG_m\subset \BZ^{\on{alg,wt}}$ on $\LS_\sG^{\on{restr,relev}}(X)$ extends to an action of the monoid $\BA^1$ (that still commutes with
$\BZ^{\on{alg,wt}}$), which contracts it to
$$\LS_\sG^{\on{restr,relev,0}}(X):=\LS_\sG^{\on{restr,relev}}(X)\cap \LS_\sG^{\on{restr,0}}(X).$$

\end{itemize}

\sssec{}

Let us show how these observations imply the isomorphism \eqref{e:retsriction Intro}. 

\medskip

First, the action of $\BZ^{\on{alg,wt}}$ on 
$\LS_\sG^{\on{restr,relev}}(X)$ induces one on 
$$(\LS_\sG^{\on{restr,relev}}(X))^{\Frob}\simeq \LS_\sG^{\on{arithm}}(X).$$

\medskip

Second, the $\BA^1$-action on $\LS_\sG^{\on{restr,relev}}(X)$ induces one on $\LS_\sG^{\on{arithm}}(X)$, which contracts it to
$\LS_\sG^{\on{arithm},0}(X)$. 

\medskip

In particular, we obtain that $\on{Exc}(X,\sG)$ carries the following structures:

\medskip

\begin{itemize}

\item It is a representation of $\BZ^{\on{alg,wt}}$;

\medskip

\item The induced representation of $\BG_m$ extends to a representation of the monoid $\BA^1$, and the latter contracts it to $\on{Exc}(X,\sG)^0$;

\end{itemize} 

\medskip

Note, however, that by the definition of $\LS_\sG^{\on{arithm}}(X)$, the action of $\Frob$ on it, and hence on $\on{Exc}(X,\sG)$, is trivial. 
Since $\Frob$ generates $\BZ^{\on{alg,wt}}$, we obtain that the action of all of $\BZ^{\on{alg,wt}}$ on $\on{Exc}(X,\sG)$ is actually trivial. 

\medskip

In particular, the action of $\BG_m$ on 
$\on{Exc}(X,\sG)$ is trivial. Hence, the contraction 
$$\on{Exc}(X,\sG)\to \on{Exc}(X,\sG)^0$$
is trivial, i.e., is an isomorphism. 

\medskip

We refer the reader to the proof of \thmref{t:exc}, where the above argument is spelled out in more detail.

\sssec{}

The idea of a contraction as in \secref{sss:pre contr Intro} traces back to the PhD thesis of the third author 
of this paper \cite{Re}, and a subsequent paper by the second and the third authors \cite{LR}. 

\medskip

In this next subsection we will explain where it comes from. 

\ssec{The contraction}

\sssec{}

The above structures on the prestack $\LS_\sG^{\on{restr,relev}}(X)$ are induced by the functoriality of the construction
from \cite[Sect. 1.8]{AGKRRV} by the corresponding structures on the (symmetric monoidal) category $\qLisse(X)$.

\medskip

The latter are induced by the corresponding structures on the ambient category $\Shv^{\on{relev}}(X)$ (defined in \secref{ss:relev}),
which we will describe below. 

\sssec{}

Let $\Shv^{\on{Weil,wt}}(X)$ be the category of mixed Weil sheaves with integral weights, see \secref{sss:weil shv wt}. 

\medskip

We have a natural action of
$\Rep(\BZ^{\on{alg,wt}})$ on $\Shv^{\on{Weil,wt}}(X)$ (tensor up by sheaves pulled back from the point-scheme). 
We show that
the naturally defined functor
$$\Vect\underset{\Rep(\BZ^{\on{alg,wt}})}\otimes \Shv^{\on{Weil,wt}}(X)\to \Shv^{\on{relev}}(X)$$
is an equivalence, see \corref{c:W acts on rel}. 

\medskip

By the principle of ``equivariantization/de-equivariantization" (see \eqref{e:eq de-eq}), the latter equivalence implies
that we have a natural action of $\BZ^{\on{alg,wt}}$ on $\Shv^{\on{relev}}(X)$.

\sssec{} \label{sss:filtr Intro}

The extension of the action of $\BG_m$ on $\Shv^{\on{relev}}(X)$ to an action of the monoid $\BA^1$ follows from the next principle:

\medskip

Let $\bC$ be a DG category with an action of $\BG_m$. Denote $\bD:=\bC^{\BG_m}$; this is a category acted on by $\Rep(\BG_m)$,
or which is the same by the group $\BZ$. 

\medskip

In \thmref{t:A1} we show that the datum of extension of the given $\BG_m$-action to a $\BA^1$-action is equivalent to the datum of
a $\BZ$-invariant \emph{filtration} on $\bD$ (see \secref{sss:filtr} for what we mean by this). 

\sssec{}

In the case of $\bC:=\Shv^{\on{relev}}(X)$, the corresponding category $(\Shv^{\on{relev}}(X))^{\BG_m}$ is the category
introduced in \cite{HL}, and which we denote by $\Shv^{\on{relev,gr}}(X)$. It is defined as
$$\Rep(\BG_m)\underset{\Rep(\BZ^{\on{alg,wt}})}\otimes\Shv^{\on{Weil,wt}}(X),$$
or which is the same
$$\Vect_\sfe\underset{\Rep(\BZ^{\on{alg,0}})}\otimes \Shv^{\on{Weil,wt}}(X),$$
where
$$\Rep(\BZ^{\on{alg,0}})\subset \Rep(\BZ^{\on{alg,wt}})$$
is the full subcategory corresponding to eigenvalues that are Weil numbers of weight $0$.

\medskip

Using the latter presentation, we define a filtration on $\Shv^{\on{relev,gr}}(X)$ to be induced by the weight filtration on 
$\Shv^{\on{Weil,wt}}(X)$.

\sssec{}

The fact that $\BA^1$ contracts $\LS_\sG^{\on{restr,relev}}(X)$ to $\LS_\sG^{\on{restr,relev,0}}(X)$ stems from the following
phenomenon:

\medskip

In the situation of \secref{sss:filtr Intro} the action of $\BA^1$ contracts $\bC$ to the category $\on{gr}_0(\bD)$. In our case,
$$\on{gr}_0(\Shv^{\on{relev,gr}}(X))\simeq \Vect_\sfe\underset{\Rep(\BZ^{\on{alg,0}})}\otimes \Shv^{\on{Weil,0}}(X)=:\Shv^{\on{relev,0}}(X),$$
where $\Shv^{\on{Weil,0}}(X)$ is the category of Weil sheaves of weight $0$. 

\medskip

In \propref{p:relev 0} we show that $\Shv^{\on{relev,0}}(X)$ is the \emph{semi-simplification} of the category $\Shv^{\on{relev}}(X)$, 
and hence the corresponding category $\qLisse^{\on{relev,0}}(X)$ is the semi-simplification of $\qLisse^{\on{relev}}(X)$.  
The prestack
associated to the (symmetric monoidal) category $\qLisse^{\on{relev,0}}(X)$ by the construction of \cite[Sect. 1.8]{AGKRRV} is 
exactly $\LS_\sG^{\on{restr,relev,0}}(X)$.

\ssec{Structure of the paper}

\sssec{}

The goal of \secref{s:A1} is to prove \thmref{t:A1}, which describes what kind of structure on a category 
$\bC$ makes it into a comodule over $\QCoh(\BA^1)$. As was mentioned already, this structure turns out to
be a filtration on the corresponding category $\bD:=\bC^{\BG_m}$. 

\medskip

Note that this is vaguely analogous to the fact that the datum of a filtration on a vector space $V$ 
is equivalent to realizing $V$ as the fiber at $1$ of a quasi-coherent sheaf on $\BA^1/\BG_m$. 

\sssec{}

The goal of \secref{s:Weil} is to carry out the main construction of this paper, which is formulated as
\thmref{t:contraction on category}: it says that the category $\Shv^{\on{relev}}(X)$ admits a canonical
action of $\BA^1$ that contracts it onto $\Shv^{\on{relev},0}(X)$.

\medskip

Prior to doing so, we introduce the main actors: 

\medskip

\begin{enumerate}

\item The groups
$$\BZ\to \BZ^{\on{alg}}\twoheadrightarrow \BZ^{\on{alg,wt}}\twoheadrightarrow \BZ^{\on{alg},0};$$

\medskip

\item The various versions of the category of Weil sheaves:
$$\Shv^{\on{Weil},0}(X)\subset \Shv^{\on{Weil,wt}}(X)\subset \Shv^{\on{Weil,loc.fin}}(X)\subset \Shv^{\on{Weil}}(X)$$
(so that for $X=\on{pt}$ we recover the categories of representations of the groups in the previous
item);

\medskip

\item The categories 
$$\Shv^{\on{relev}}(X), \,\, \Shv^{\on{relev,gr}}(X) \text{ and } \Shv^{\on{relev},0}(X),$$

\medskip

\item The relation between the categories in items (2) and (3) is via the procedure of equivariantization and de-equivariantization
with respect to the groups in item (1).

\end{enumerate}

\sssec{}

In \secref{s:LS} we introduce the counterparts of the categories in items (2) and (3) above, where 
we impose the condition that the underlying object of $\Shv(X)$ be \emph{lisse}:
$$\qLisse^{\on{Weil},0}(X)\subset \qLisse^{\on{Weil,wt}}(X)\subset \qLisse^{\on{Weil,loc.fin}}(X)\subset \qLisse^{\on{Weil}}(X),$$
and
$$\qLisse^{\on{relev}}(X), \,\, \qLisse^{\on{relev,gr}}(X) \text{ and } \qLisse^{\on{relev},0}(X).$$

We introduce the corresponding Galois groups that control their hearts:

\medskip

\begin{itemize}

\item $\on{Gal}^{\on{alg}}(X) \leftrightarrow \qLisse(X)$;

\medskip

\item $\on{Weil}^{\on{alg}}(X) \leftrightarrow \qLisse^{\on{Weil}}(X)$; 

\medskip

\item $\on{Weil}^{\on{alg,loc.fin}}(X) \leftrightarrow \qLisse^{\on{Weil,loc.fin}}(X)$;

\medskip

\item $\on{Weil}^{\on{alg,wt}}(X) \leftrightarrow \qLisse^{\on{Weil,wt}}(X)$;

\medskip

\item $\on{Gal}^{\on{alg,relev}}(X) \leftrightarrow \qLisse^{\on{relev}}(X)$;

\medskip

\item $\on{Gal}^{\on{alg,relev},0}(X) \leftrightarrow \qLisse^{\on{relev},0}(X)$; we show that $\on{Gal}^{\on{alg,relev},0}(X)$ is the maximal reductive quotient of
$\on{Gal}^{\on{alg,relev}}(X)$;

\medskip

\item $\on{Gal}^{\on{alg,gr}}(X) \leftrightarrow \qLisse^{\on{relev,gr}}(X)$.

\end{itemize}

\medskip

We also introduce the corresponding versions of the moduli space of local systems:

\medskip

\begin{itemize}

\item $\qLisse(X)\rightsquigarrow \LS_\sG^{\on{restr}}(X)$;

\medskip

\item $\qLisse^{\on{relev}}(X)\rightsquigarrow \LS_\sG^{\on{restr,relev}}(X)$;

\medskip

\item $\qLisse^{\on{relev},0}(X)\rightsquigarrow \LS_\sG^{\on{restr,relev},0}(X)$;

\medskip

\item $\qLisse^{\on{Weil}} (X)\rightsquigarrow \LS_\sG^{\on{arithm}}(X):=(\LS_\sG^{\on{restr}}(X))^{\Frob}\simeq (\LS_\sG^{\on{restr,relev}}(X))^{\Frob}$;

\medskip

\item $\BZ\mod \underset{\Rep(\BZ^{\on{alg},0})}\otimes \qLisse^{\on{Weil},0}(X) \rightsquigarrow
\LS_\sG^{\on{arithm},0}(X):=(\LS_\sG^{\on{restr,relev},0}(X))^{\Frob}$; note that the Galois group responsible for this
category is what we denote $\on{Weil}^{\on{red}}(X)$, see \secref{sss:Weil red}. 

\end{itemize}

\bigskip

We apply Theorem-Construction \ref{t:contraction on category} to the full subcategory 
$\qLisse^{\on{relev}}(X)\subset \Shv^{\on{relev}}(X)$ and obtain a contraction of the prestack 
$\LS^{\on{restr,relev}}(X)$ onto $\LS^{\on{restr,relev},0}(X)$. 

\medskip 

Finally, we introduce the excursion algebra $\on{Exc}(X,\sG)$ as 
$$\Gamma(\LS_\sG^{\on{arithm}}(X),\CO_{\LS_\sG^{\on{arithm}}(X)}),$$
and prove the main result of this paper, \thmref{t:exc}, which says that the map 
$$\Gamma(\LS_\sG^{\on{arithm}}(X),\CO_{\LS_\sG^{\on{arithm}}(X)})\to
\Gamma(\LS_\sG^{\on{arithm},0}(X),\CO_{\LS_\sG^{\on{arithm},0}(X)})$$
is an isomorphism. 

\sssec{}

In \secref{s:fin} we connect our definition of the excursion algebra $\on{Exc}(X,\sG)$ to (a variant of)
V.~Lafforgue's definition, which uses the Weil group of $X$ as a discrete group.

\medskip

We then establish properties (3) and (4) stated in the preamble. 

\medskip

As was mentioned already, property (3) is deduced from \thmref{t:exc} using a variant of a theorem of Vinberg.

\medskip

Property (4) is deduced from Property (3) using Chebotarev's density. It is a variation (explained to us by V.~Lafforgue)
of the argument from \cite{Laum}, which shows that the map from the Grothendieck group of Weil sheaves on $X$ to 
the space of $\sfe$-valued functions on all $X(\BF_{q^n})$ is injective. 

\sssec{}

In \secref{s:rat} we establish Properties (5) and (6) stated in the preamble. 

\medskip

We again use \thmref{t:exc}, which allows us to interpret $\on{Exc}(X,\sG)$ as the algebra of global functions on the stack
$$\bMaps_{\on{groups}}(\on{Weil}^{\on{red}}(X),\sG)/\on{Ad}(\sG),$$
where $\on{Weil}^{\on{red}}(X)$ as in \secref{sss:Weil red}.

\medskip

The next observation is that the above pro-reductive group $\on{Weil}^{\on{red}}(X)$ is closely related to the 
pro-semi simple completion $\on{Gal-Drinf}^{\on{arithm}}(X)$ of $\on{Gal}^{\on{arithm}}(X)$ studied by V.~Drinfeld in \cite{Dr}. 

\medskip

The main result of {\it loc. cit.} says that $\on{Gal-Drinf}^{\on{arithm}}(X)$ (which is a priori a pro-semi simple group over $\sfe$)
has a rational structure as an object of Drinfeld's category $\on{Pro-ss}(\sfe)$. This category is a rather coarse object; however,
the rational structure on $\on{Gal-Drinf}^{\on{arithm}}(X)$ turns out to be exactly enough to induce a rational structure on $\on{Exc}(X,\sG)$.

\medskip

A small complication arises in establishing the \emph{continuity} of the $\on{Gal}(\ol\BQ/\BQ)$-action on the model of
$\on{Exc}(X,\sG)$ over $\ol\BQ$ (an issue that is not explicitly addressed in \cite{Dr}). We deal with it in Sects.
\ref{ss:cont}-\ref{ss:cont-bis}. 

\ssec{Conventions and notation}

The conventions in this paper follow those of \cite{AGKRRV}. Let us recap the following two points: 

\sssec{}

We will be working with the $(\infty,2)$-category $\DGCat$ of $\sfe$-linear DG categories; 
see \cite[Sect. A.2]{GRV} for a detailed discussion. This $(\infty,2)$-category carries a symmetric
monoidal structure (the Lurie tensor product); the unit for it is the DG category $\Vect_\sfe$ of chain 
complexes of $\sfe$-vector spaces. 

\medskip

For an algebra (resp., coalgebra) object $\bA\in \DGCat$, we let $\bA\mmod$ (resp., $\bA\commod$) denote 
the category of $\bA$-modules (resp., comodules) in $\DGCat$.

\sssec{}

There will be two kinds of algebraic geometry in this paper: 

\medskip

One is taking place over the ground field $\BF_q$,
and the main actor will be a fixed scheme $X_0$. 
We will be interested in various categories of (ind-constructible) sheaves on $X:=\Spec(\ol\BF_q)\underset{\Spec(\BF_q)}\times X_0$;
these will be objects of $\DGCat$.  Derived algebraic geometry will play no role here.

\medskip

The other algebraic geometry is taking place over $\sfe$ and the objects that we will be working with are 
inherently derived; our principal actors, such as $\LS_\sG^{\on{restr}}(X)$ and $\LS_\sG^{\on{arithm}}(X)$ are examples.

\ssec{Acknowledgements}

We wish to thank A.~Beilinson, P. ~Boisseau, R.~van Dobben de Bruyn,  A.~Eteve, V.~Lafforgue, S.~Raskin, J.~Scholbach, P.~Scholze and C.~Xue
for very helpful discussions. 

\medskip

The main part of this work was done while the second author was visiting MPIM in summer 2025. 

\section{Categories with an action of the monoid \texorpdfstring{$\BA^1$}{A1}} \label{s:A1}

The goal of this section is to state and then prove \thmref{t:A1}. 

\ssec{Statement of the result}

\sssec{}

Recall that the operations of equivariantization and de-equivariantization 
$$\bC\mapsto \bC^H \text{ and } \bD\mapsto \Vect_{\sfe}\underset{\Rep(H)}\otimes \bD$$
give rise to mutually inverse equivalences
\begin{equation} \label{e:eq de-eq}
\QCoh(H)\commod \simeq \Rep(H)\mmod
\end{equation} 
for any algebraic group $H$, where $(-)\mmod$ and $(-)\commod$ are taken in $\DGCat$. 

\medskip

The equivalence \eqref{e:eq de-eq} is symmetric monoidal, where we endow the left-hand side with
the symmetric monoidal structure compatible with the forgetful functor
$$\QCoh(H)\commod\to \DGCat,$$
and the right-hand side with tensor product over $\Rep(H)$.

\sssec{}

We take $H=\BG_m$, and obtain an equivalence
\begin{equation} \label{e:G_m actions}
\QCoh(\BG_m)\commod \simeq \BZ\mmod,
\end{equation}
where we have identified
$$\Rep(\BG_m)\mmod \simeq \BZ\mmod,$$
since $\Rep(\BG_m)$ is 
$$\BZ\times \Vect\simeq \Vect^{\BZ}$$ as a (symmetric monoidal) DG category. 

\sssec{}

The question that we want to address in this section is the following: what is the counterpart of 
\eqref{e:G_m actions} if we replace the $\BG_m$ in the left-hand side by the monoid 
$\BA^1$? 

\sssec{} \label{sss:filtr}

Let $\DGCat^{\on{Filtr}}$ denote the category whose objects are DG categories $\bD$ equipped with a sequence
of full subcategories 
$$\bD_{\leq n}\overset{i_{\leq n}}\hookrightarrow \bD,\quad n\in \BZ$$
such that:

\begin{itemize}

\item $\bD_{\leq n}\subset \bD_{\leq n+1}$;

\item Each $i_n$ admits a continuous right adjoint;

\item The functor $\underset{n}{\on{colim}}\, \bD_{\leq n}\to \bD$ is an equivalence.

\end{itemize} 

\medskip

Morphisms in this category are functors $G:\bD'\to \bD''$ that map the corresponding subcategories
to one another, and such that the Beck-Chevalley conditions hold, i.e., for every $n$, the natural transformation
\begin{equation} \label{e:Beck-Chevalley}
G_{\leq n}\circ i_{\leq n}^R\to i_{\leq n}^R\circ G
\end{equation} 
is an isomorphism, where $G_{\leq n}$ denotes the induced functor $\bD'_{\leq n}\to \bD''_{\leq n}$. 

\sssec{}

Let $\BZ\mmod^{\on{Filtr}}$ be the full subcategory in
$$\BZ\mmod\underset{\DGCat}\times \DGCat^{\on{Filtr}},$$ 
whose objects consist of
$$(\bD\in \BZ\mmod, \bD_i\hookrightarrow \bD),$$
for which the action of the generator $1\in \BZ$ maps $\bD_{\leq n}$ isomorphically onto $\bD_{\leq n+1}$. 

\sssec{}

We will prove the following theorem:

\begin{thm} \label{t:A1}
There is a canonical equivalence
$$\QCoh(\BA^1)\commod \simeq \BZ\mmod^{\on{Filtr}}$$
that makes the diagram
\begin{equation} \label{e:A1}
\CD
\QCoh(\BA^1)\commod @>{\sim}>> \BZ\mmod^{\on{Filtr}} \\
@VVV @VVV \\
\QCoh(\BG_m)\commod  @>{\sim}>> \BZ\mmod
\endCD
\end{equation}
commute, where the vertical arrows are the forgetful functors.
\end{thm}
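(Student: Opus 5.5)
We will build the equivalence by viewing $\QCoh(\BA^1)$-comodules as $\QCoh(\BG_m)$-comodules with extra structure, and then translating that structure through \eqref{e:G_m actions}. Since $\BG_m\subset \BA^1$ is an open submonoid, restriction gives a map of coalgebras $\QCoh(\BA^1)\to \QCoh(\BG_m)$. It therefore gives a functor $\QCoh(\BA^1)\commod\to \QCoh(\BG_m)\commod$, which is the left vertical arrow of \eqref{e:A1}.

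Write $\bC$ for a $\QCoh(\BA^1)$-comodule and $\bD:=\bC^{\BG_m}$ for the $\BZ$-module attached to its underlying $\QCoh(\BG_m)$-comodule. Concretely, $\bD$ is the category of $\BG_m$-equivariant objects, and the generator $1\in\BZ$ acts on $\bD$ by twisting by the character. The task is to extract a filtration on $\bD$ from the $\BA^1$-structure, and conversely. The model case is $\bC=\QCoh(\BA^1)$ itself. There $\bD=\QCoh(\BA^1/\BG_m)$, i.e. graded $\sfe[t]$-modules, and this category is the colimit along multiplication by $t$ of the categories of modules generated in degrees $\le n$. Equivalently, $\bD$ is the colimit of the graded pieces $\Vect^{\BZ_{\le n}}$, with the transition functors given by shift-by-$t$.

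The strategy is to express both sides as module categories and compare them. First, $\QCoh(\BA^1)$ is the dual of $\QCoh(\BA^1)$ as a monoidal category under convolution. Equivalently, a $\QCoh(\BA^1)$-comodule is a module over $(\QCoh(\BA^1),\star)$, where $\star$ is pushforward along multiplication. Using \eqref{e:eq de-eq} in the form $\bC\mapsto\bC^{\BG_m}$, I would identify $\QCoh(\BA^1)\commod$ with modules over the monoidal category $\QCoh(\BG_m\backslash\BA^1/\BG_m)$ acting on $\bD$. Here the two $\BG_m$-factors are absorbed by working $\BG_m$-equivariantly, in the spirit of Hecke categories: $\QCoh(\BA^1)^{\BG_m}\mmod$ relative to $\Rep(\BG_m)$.

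Next, I would compute this monoidal category explicitly. Graded $\sfe[t]$-modules, with the relevant convolution structure, are generated by the objects $\sfe[t](n)$. Under convolution, $t$ gives the maps relating the generators to the $\BZ$-action. The key claim is that an action of this monoidal category on a $\BZ$-module $\bD$ amounts to a colocalizing idempotent-type datum. Specifically, the object $\sfe[t]$ acts as an idempotent comonad (or monad) $P$ on $\bD$ with a map $P\to \on{id}$. Its essential image is $\bD_{\le 0}$, with $P=i_{\le 0}\circ i^R_{\le 0}$. Then $\bD_{\le n}$ is obtained by translating $\bD_{\le 0}$ by $n$, and the inclusions $\bD_{\le n}\subset\bD_{\le n+1}$ come from $t$. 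The condition $\underset{n}{\on{colim}}\,\bD_{\le n}\simeq\bD$ reflects the fact that $\on{colim}(\sfe[t]\xrightarrow{t}\sfe[t](1)\to\cdots)$ is the unit $\sfe[t,t^{-1}]$ after restriction to $\BG_m$. Idempotence of $P$ corresponds to $\sfe[t]\star\sfe[t]\simeq\sfe[t]$. This holds because the multiplication map $\BA^1\times\BA^1\to\BA^1$ pushes forward the structure sheaf appropriately in the equivariant category; this is the computation I would check carefully. Continuity of $P$ gives continuity of the right adjoints. Finally, compatibility of morphisms with the $\sfe[t]$-action is exactly the Beck–Chevalley condition \eqref{e:Beck-Chevalley}, so the functor is defined on morphisms as well as objects.

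I expect the main obstacle to be the middle step: proving that modules over the equivariant convolution category are the same as $\BZ$-modules equipped with a single continuous colocalization $\bD_{\le0}$ satisfying the shift and exhaustion conditions. To do this, I would present the monoidal category as generated by one idempotent coalgebra together with $\BZ$. I would verify the idempotency computation directly, using the fact that graded $\sfe[t]$-modules are compactly generated by the twists $\sfe[t](n)$. I would then invoke the standard fact that an action of a monoidal category of the form ``$\Vect$ with an idempotent coalgebra'' is equivalent to a colocalization. Commutativity of \eqref{e:A1} is then immediate, since forgetting the filtration corresponds to restricting along $\BG_m\subset\BA^1$, i.e. inverting $t$.
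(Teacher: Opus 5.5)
Your reduction is sound and is the dual of the paper's Step 1. Passing to modules over $(\QCoh(\BA^1),\star)$ and then to $\BG_m$-invariants does give an action of the equivariant convolution category on the $\BZ$-module $\bD=\bC^{\BG_m}$, and the direction ``action $\Rightarrow$ filtration'' works as you say. Under $\QCoh(\BA^1/\BG_m)\simeq \on{Funct}(\BZ,\Vect)$ the equivariant convolution is simply the pointwise tensor product, whose unit is the constant functor, i.e.\ $\sfe[t,t^{-1}]$. The twists of $\CO_{\BA^1/\BG_m}$ are idempotent coalgebras over this unit. They therefore act on $\bD$ by nested colocalizations $P_n=i_{\leq n}\circ i_{\leq n}^R$ with continuous right adjoints, and since the unit is the colimit of these twists, the filtration is exhaustive. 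You are right that idempotence holds only equivariantly: non-equivariantly $m_*\CO_{\BA^1\times\BA^1}=\sfe[x,y]$ is much larger than $\sfe[t]$. The gap is the converse, which is the actual content of the theorem. You need the assignment to be an equivalence of $\infty$-categories: a filtration must determine an action, with all its higher coherence data, up to contractible choice. You also need that, for a functor between such categories, a module-functor structure is exactly the Beck--Chevalley property and not extra data.

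Your plan for this step does not work as stated. The monoidal category in question is not ``$\Vect$ with an idempotent coalgebra''. Nor is it freely generated by one idempotent coalgebra $P$ together with $\BZ$: the conjugates $P_n$ of $P$ are linked by the maps given by $t$, they satisfy $P_n\star P_m\simeq P_{\min(n,m)}$, and there is the infinite colimit relation $\on{colim}_n P_n\simeq\one$. To conclude, you would need a universal property of $(\on{Funct}(\BZ,\Vect),\otimes)$, with its $\BZ$-action by shifts, among presentable monoidal categories, showing that these are all the relations, coherently. The single-idempotent fact does not give this. Likewise, checking compatibility with the generator $\sfe[t]$ up to homotopy does not control the coherences of module functors. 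The missing statement can be supplied, for instance from the universal property of Day convolution for the monoidal poset $(\BZ\cup\{-\infty\},\max)$, localized at the colimit relation, together with the fact that idempotent coalgebras in a monoidal $\infty$-category form a poset. But that is the heart of the matter, and your proposal does not contain it.

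The paper avoids the problem by staying on the comodule side. It identifies the comonad $(-)\otimes\QCoh(\BA^1/\BG_m)$ on $\DGCat$ with $\on{Funct}(\BZ,-)$, with comultiplication via $\min$ and counit via $\on{colim}$. It notes that $\oblv_{\on{Filtr}}:\DGCat^{\on{Filtr}}\to\DGCat$ has a right adjoint $\bC\mapsto\on{Funct}(\BZ,\bC)$, filtered by sequences that are constant from $n$ on, and that this adjunction induces exactly that comonad. It then applies comonadic Barr--Beck--Lurie: conservativity comes from the Beck--Chevalley isomorphisms, and preservation of limits from identifying colimits along the inclusions $\bD_{\leq n}$ with limits along their right adjoints. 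This handles all the coherences at once.
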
 

\begin{rem}

Both sides in \thmref{t:A1} carry naturally defined symmetric monoidal structures, compatible with the vertical
functors in \eqref{e:A1}. It will
follow from the construction that the equivalence in \thmref{t:A1} is symmetric monoidal.

\end{rem} 

\sssec{Example} \label{e:A mod}

Let $A$ be a non-negatively graded associative algebra. The resulting coaction of $\BA^1$ on $A$, which is a
homomorphism
$$A\to A[t],$$
gives rise to a functor
$$A\mod\to A[t]\mod\simeq A\mod\otimes \QCoh(\BA^1),$$
which makes $A\mod$ into an object of $\QCoh(\BA^1)\commod$.
Let us describe the corresponding object of $\BZ\mmod^{\on{Filtr}}$.

\medskip

First, the underlying object of $\BZ\mmod$ is $A\mod^{\on{gr}}$, the 
category of graded $A$-modules.

\medskip

Let $A\mod^{\on{gr}}_{\leq n}$ be the full subcategory generated by $A(m)$ for $m\leq n$, where
$A(m)_i=A_{m+i}$. Informally, $A\mod^{\on{gr}}_{\leq n}$ consists of objects generated by
elements of degrees $\leq n$.

\medskip

Equivalently, $A\mod^{\on{gr}}_{\leq n}$ is the \emph{left} orthogonal of the full subcategory consisting
of modules that are concentrated in degrees $>n$. 

\medskip

Then the object of $\BZ\mmod^{\on{Filtr}}$ corresponding to $A\mod$ under the equivalence of \thmref{t:A1} is 
$$\{A\mod^{\on{gr}}_{\leq n}\}.$$

\ssec{The action of zero}

\sssec{}

Let $\bC$ be an object of $\QCoh(\BA^1)\commod$. Taking the fiber of the coaction functor
$$\bC\to \bC\otimes \QCoh(\BA^1)$$
at $0$, we obtain an endofunctor of $\bC$ that we denote by $[0]$.

\medskip

Let us describe this endofunctor explicitly in terms of the equivalence of \thmref{t:A1}. 

\sssec{}

Set $\bD:=\bC^{\BG_m}$. The endofunctor $[0]$ respects the action of $\BG_m$, and thus
gives rise to an endofunctor of $\bD$; we denote it by the same character $[0]$.

\medskip

The questions of describing the above two versions of $[0]$ are equivalent under
equivariantization and de-equivariantization, so it is sufficient to deal with the latter.

\sssec{}

By \thmref{t:A1}, $\bD$ upgrades to an object of $\BZ\mmod^{\on{Filtr}}$.

\medskip

For $n\in \BZ$ denote by $j_{\geq n}$ the \emph{right adjoint} of the projection
$$\bD\to \bD/\bD_{< n}=:\bD_{\geq n}.$$

Similarly, denote by $j_{=n}$ the \emph{right adjoint} of the projection
$$\bD_{\leq n}\to \bD_{\leq n}/\bD_{<n}=:\bD_{=n}.$$

\sssec{} \label{sss:phi and psi}

Denote
$$\on{gr}(\bD):=\underset{n}\oplus\, \bD_{=n}\simeq \underset{n}\Pi\, \bD_{=n}.$$

Note that the action of $\BZ$ on $\bD$ identifies
$$\on{gr}(\bD)\simeq (\bD_{=0})^{\BZ}.$$

\medskip

Denote by $\phi_\bullet$ the functor
$$\on{gr}(\bD)\to \bD,$$
whose $n$th component $\phi_n$ is $i_{\leq n}\circ j_{=n}$. 

\medskip

Denote by $\psi_\bullet$ the functor
$$\bD\to \on{gr}(\bD),$$
whose $n$th component $\psi_n$ is $j_{=n}^L\circ i^R_{\leq n}$. 

\medskip

Note that the functors $\phi_\bullet$ and $\psi_\bullet$ are compatible with the action of $\BZ$ on the two sides. 

\begin{rem} 

Note that although each $\phi_n$ is fully faithful, the functor $\phi_\bullet$ is not:
the essential images of the different $\phi_n$ in $\bD$ interact non-trivially. 

\end{rem} 

\sssec{}

The following will be proved along with \thmref{t:A1}:

\begin{prop} \label{p:action of 0}
The action of $[0]$ on $\bD$ is given by the composition
$$\bD \overset{\psi_\bullet}\to \on{gr}(\bD) \overset{\phi_\bullet}\to \bD.$$
\end{prop}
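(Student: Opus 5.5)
The plan is to prove the proposition together with \thmref{t:A1}, reading both off the same construction. The endomorphism $0\colon \BA^1\to\BA^1$ of the monoid factors as $\BA^1\to \on{pt}\overset{0}\to \BA^1$, and both maps are $\BG_m$-equivariant monoid homomorphisms. Accordingly $[0]$ factors through a \emph{special fiber}. For $\bC\in \QCoh(\BA^1)\commod$, let $\bC_0$ be the sub-comodule on which the coaction factors through $\QCoh(\{0\})$; equivalently, it is the image of the idempotent $[0]$. Then $[0]$ is the composite of a "specialization" functor $\bC\to\bC_0$ with the inclusion $\bC_0\to \bC$, where the latter is corestriction along $\BA^1\to\on{pt}$.

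Passing to $\BG_m$-invariants, the $\BG_m$-action on $\bC_0$ factors through $\BA^1$ via $0$, which is $\BG_m$-equivariant. So $\bC_0^{\BG_m}$ is a category with a $\BZ$-action and the trivial filtration, i.e. one concentrated in a single graded piece. Step one is to identify $\bC_0^{\BG_m}$ with $\on{gr}(\bD)\simeq (\bD_{=0})^{\BZ}$. For this I will use the construction of the equivalence in \thmref{t:A1}: $\bD_{\leq n}$ is the full subcategory of $\BG_m$-equivariant objects whose coaction extends over $\BA^1$ with weights $\leq n$. Under this description, $\bD_{=0}=\bD_{\leq 0}/\bD_{<0}$ is exactly the weight-$0$ part of the specialization.

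Step two is to identify the two functors. The inclusion $\bC_0\to\bC$ becomes, on invariants, the functor sending an object of $\bD_{=n}$ to its canonical lift in $\bD_{\leq n}$ that is right-orthogonal to $\bD_{<n}$. By definition this lift is $i_{\leq n}\circ j_{=n}=\phi_n$. The specialization $\bC\to\bC_0$ is the left adjoint (restriction to the fiber at $0$). On invariants it takes the weight-$n$ component, $i^R_{\leq n}$, and then passes to the quotient by $\bD_{<n}$, which is $j^L_{=n}$; this is $\psi_n$. The Beck--Chevalley condition \eqref{e:Beck-Chevalley} is what makes $\psi_\bullet$ compatible with the coaction, so that it is genuinely the corestriction.

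As a sanity check I will run \secref{e:A mod}. There $[0]$ is base change along $A\to A_0\to A$, namely $M\mapsto A\otimes_{A_0}(A_0\otimes_A M)$. Both sides send a generator $A(m)$ to $A(m)$, and are continuous and $\BZ$-equivariant. Since $\bD$ is generated under colimits and the $\BZ$-action by $\bD_{\leq 0}$, it suffices to compare the two functors on $\bD_{\leq 0}$, then filtration-step by filtration-step.

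The main obstacle is step two, in particular the identification of the specialization functor with $j^L_{=n}\circ i^R_{\leq n}$ and not with some other composite of adjoints. The naive guess "take the $n$th graded piece" must be corrected by the truncation $i^R_{\leq n}$, and checking that the resulting functors assemble into a morphism of $\QCoh(\BA^1)$-comodules requires the Beck--Chevalley isomorphisms. I would first verify this on the universal example $\bC=\QCoh(\BA^1)$ itself, where $\bD=\QCoh(\BA^1/\BG_m)$ and the filtration is by generation by $\CO(m)$, $m\leq n$. I would then deduce the general case by the symmetric monoidality of the equivalence, using $\bC\simeq \bC\otimes_{\QCoh(\BA^1)}\QCoh(\BA^1)$ in the comodule sense.
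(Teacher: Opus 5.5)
Your intermediate object $\bC_0$ is mis-specified, and the two identifications that carry the content of the proposition are asserted rather than proved. The map $\phi$ is not fully faithful: see the remark following \secref{sss:phi and psi}. Concretely, in \secref{e:A mod} take $A=\sfe[x]$ with $\deg(x)=1$. Then $A_0=\sfe$ and $\phi=A\otimes(-)\colon\Vect_\sfe\to A\mod$, while $\End_{A\mod}(A)=A\neq\sfe$. So $\bD_0$ cannot be a full sub-comodule of $\bC$ with $\phi$ as the inclusion, and your Step two, read literally, is false.

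What is true is that $\bD_0$ is a retract of $\bC$ in $\DGCat$, i.e.\ a splitting of the idempotent $[0]$. But identifying that splitting with $\bD_{=0}$, and its two structure maps with $i_{\leq n}\circ j_{=n}$ and $j^L_{=n}\circ i^R_{\leq n}$, \emph{is} the proposition. Your Steps one and two only restate it ("exactly the weight-$0$ part of the specialization", "by definition this lift is $\phi_n$").

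Several auxiliary claims are also off:
\begin{itemize}
\item $\psi_n$ is in general neither left nor right adjoint to $\phi_n$; it composes a left adjoint $j^L_{=n}$ with a right adjoint $i^R_{\leq n}$.
\item $\on{pt}\overset{0}\to\BA^1$ is not a unital monoid map.
\item Agreement of two functors on generators gives no isomorphism of functors without first constructing a natural transformation. The same problem affects your proposed reduction to the universal comodule $\QCoh(\BA^1)$.
\end{itemize}

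The missing input is the explicit form of the coaction produced by the proof of \thmref{t:A1}. Under $\bD\otimes\QCoh(\BA^1/\BG_m)\simeq\on{Funct}(\BZ,\bD)$ it is $\bd\mapsto\{i_{\leq n}\circ i^R_{\leq n}(\bd)\}$. Moreover, $\on{Id}\otimes\iota^*$ for $\iota\colon\on{pt}/\BG_m\to\BA^1/\BG_m$ is the associated-graded functor $\{\bc_n\}\mapsto\{\on{cofib}(\bc_{n-1}\to\bc_n)\}$. Hence the $n$th component of $[0]$ is $\on{cofib}\bigl(i_{\leq n-1}\circ i^R_{\leq n-1}(\bd)\to i_{\leq n}\circ i^R_{\leq n}(\bd)\bigr)$.

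Now apply the localization triangle for $\bD_{<n}\hookrightarrow\bD_{\leq n}\to\bD_{=n}$ to $i^R_{\leq n}(\bd)$. It identifies this cofiber with $i_{\leq n}\circ j_{=n}\circ j^L_{=n}\circ i^R_{\leq n}(\bd)=\phi_n\circ\psi_n(\bd)$. That is the paper's entire argument. The factorization of $[0]$ and $\psi_\bullet\circ\phi_\bullet\simeq\on{Id}$ then follow directly from the definitions. No idempotent splitting, Beck--Chevalley verification, or reduction to a universal comodule is needed.

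Your ``universal example'' idea amounts to computing $\iota^*$ on $\on{Funct}(\BZ,-)$. Without the formula for the coaction on $\bD$, it does not finish the proof.
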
 

\sssec{}

Recall that $\bD$ was obtained as $\bC^{\BG_m}$. Let
$$\phi:\bD_0\to \bC \text{ and } \psi:\bC\to \bD_0$$
be the functors corresponding to $\phi_\bullet$ and $\psi_\bullet$ by 
the equivalence \eqref{e:G_m actions}.

\medskip

Applying to \propref{p:action of 0} the equivalence \eqref{e:G_m actions}, we obtain:

\begin{cor} \label{c:action of 0} 
The action of $[0]$ on $\bC$ factors as
$$\bC \overset{\psi}\to \bD_0 \overset{\phi}\to \bC.$$
\end{cor}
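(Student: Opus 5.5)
The plan is to deduce the statement from \propref{p:action of 0} by applying de-equivariantization, i.e.\ the inverse of the equivalence \eqref{e:G_m actions}, to the factorization of $[0]$ on $\bD$. I would proceed in three steps.

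\medskip

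First, I will check that the factorization in \propref{p:action of 0} takes place in $\BZ\mmod$. The functors $\psi_\bullet:\bD\to \on{gr}(\bD)$ and $\phi_\bullet:\on{gr}(\bD)\to\bD$ are $\BZ$-equivariant (as noted in \secref{sss:phi and psi}), and the endofunctor $[0]$ of $\bD=\bC^{\BG_m}$ is by construction the equivariantization of the $\BG_m$-equivariant endofunctor $[0]$ of $\bC$. Hence the isomorphism $[0]\simeq \phi_\bullet\circ\psi_\bullet$ should be read as an isomorphism of $\BZ$-equivariant functors; I would verify this by going back to the proof of \propref{p:action of 0}, where the factorization is constructed naturally with respect to the $\BZ$-action.

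\medskip

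Second, I will identify the object of $\QCoh(\BG_m)\commod$ corresponding to $\on{gr}(\bD)\in \BZ\mmod$. We have $\on{gr}(\bD)\simeq (\bD_{=0})^{\BZ}$, which as a $\BZ$-module is $\Rep(\BG_m)\otimes \bD_{=0}$, i.e.\ induced from the trivial $\BZ$-module $\bD_{=0}$. Under \eqref{e:eq de-eq} it therefore corresponds to
$$\Vect\underset{\Rep(\BG_m)}\otimes(\Rep(\BG_m)\otimes \bD_{=0})\simeq \bD_{=0}$$
with the trivial $\BG_m$-action. This is the category denoted $\bD_0$ in the statement. Under this identification, $\phi$ and $\psi$ are the images of $\phi_\bullet$ and $\psi_\bullet$, which is exactly how they were defined in the text preceding the corollary.

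\medskip

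Third, since \eqref{e:G_m actions} is an equivalence of $(\infty,2)$-categories, it is in particular a functor that respects compositions and 2-morphisms. Applying it to the factorization from the first step gives $[0]\simeq \phi\circ\psi$ on $\bC$. It is compatible with forgetting the $\BG_m$-structure, so this is an isomorphism of plain endofunctors of $\bC$.

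\medskip

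The only point needing care is the first step, namely that the isomorphism of \propref{p:action of 0} is $\BZ$-equivariant and not merely an isomorphism of underlying functors. I expect it to hold because every ingredient ($i_{\leq n}$, $j_{=n}$ and their adjoints, and the shift by $1\in\BZ$ carrying $\bD_{\leq n}$ to $\bD_{\leq n+1}$) is permuted compatibly by the $\BZ$-action. The remaining steps are formal.
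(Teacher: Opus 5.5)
Your proposal is correct and is the paper's argument: the paper obtains the corollary in one line by applying the equivalence \eqref{e:G_m actions} to \propref{p:action of 0}, with $\phi$ and $\psi$ defined as the functors corresponding to $\phi_\bullet$ and $\psi_\bullet$. Your Steps 1 and 2 spell out what the paper leaves implicit: that the factorization is $\BZ$-equivariant, and that $\on{gr}(\bD)\simeq(\bD_{=0})^{\BZ}$ de-equivariantizes to $\bD_0=\bD_{=0}$ with the trivial $\BG_m$-action.
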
 

In what follows we will refer to $\bD_0$ as the \emph{attracting category} corresponding to $\bC$. Note that by construction,
the endofunctor
$$\psi\circ \phi$$
of $\bD_0$ is canonically isomorphic to the identity. 

\sssec{}

Note that the assignment
$$(\bC\in \QCoh(\BA^1)\commod) \mapsto (\bD_0\in \DGCat)$$
has a structure of right lax symmetric monoidal functor. In particular, it
maps (commutative) algebra objects in $\QCoh(\BA^1)\commod$ to
 (commutative) algebra objects in $\DGCat$, i.e., (symmetric) monoidal
 DG categories. 

\medskip

It will follow from the construction that the functors $\phi$ and $\psi$ appearing in \corref{c:action of 0},
viewed as natural transformations between functors
$$\QCoh(\BA^1)\commod \leftrightarrow \DGCat,$$
have natural symmetric monoidal structures, and the identification in \corref{c:action of 0} is compatible
with these structures. 

\medskip

In particular, if $\bC$ is a (commutative) algebra object in $\QCoh(\BA^1)\commod$,
and hence $\bD_0$ is a (symmetric) monoidal DG category, the functors $\psi$ and $\phi$ 
have natural symmetric monoidal structures, and the identification of \corref{c:action of 0} is compatible
with these structures. 

\sssec{}

Let us see what \corref{c:action of 0} says in the example in \secref{e:A mod}. Let $A_0$ be the degree $0$-component of $A$.
We have the natural homomorphisms
$$A\to A_0 \text{ and } A_0\to A.$$

\medskip

The endofunctor $[0]$ of $A\mod$ acts as
$$A\mod \overset{A_0\underset{A}\otimes (-)}\longrightarrow A_0\mod \overset{A\underset{A_0}\otimes (-)}\longrightarrow A\mod.$$

\medskip

We identify
$$A\mod^{\on{gr}}_{=n} \simeq A_0\mod$$
for any $n$.

\medskip

It is easy to see that under this identification, the functor 
$$\phi_n:A_0\mod\to A\mod^{\on{gr}}$$
corresponds to
$$A_0\mod \overset{\on{deg}=n}\longrightarrow A_0\mod^{\on{gr}} \overset{A\underset{A_0}\otimes (-)}\longrightarrow A\mod^{\on{gr}},$$
and the functor 
$$\psi_n:A\mod^{\on{gr}}\to A_0\mod$$
to
$$A\mod^{\on{gr}} \overset{A_0\underset{A}\otimes (-)}\longrightarrow A_0\mod^{\on{gr}}   \overset{\on{deg}=n}\longrightarrow A_0\mod.$$

From here, we obtain that the functors $\phi$ and $\psi$ identify with
$$A_0\mod \overset{A\underset{A_0}\otimes (-)}\longrightarrow A\mod$$
and 
$$A\mod \overset{A_0\underset{A}\otimes (-)}\longrightarrow A_0\mod,$$
respectively.

\ssec{Proof of \thmref{t:A1}, Step 1} 

\sssec{} \label{sss:sym mon com}

Note that when $H$ is commutative, the equivalence \eqref{e:eq de-eq} is compatible with another pair of
symmetric monoidal structures, where:

\begin{itemize}

\item In the left-hand side, we take
$$\bC'_1,\bC'_2\mapsto (\bC'_1\otimes \bC'_2)^H,$$
which carries an action of $H$ through either of the factors due to the
commutativity of $H$;

\medskip

\item In the left hand side we take
$$\bC''_1,\bC''_2\mapsto \bC'_1\otimes \bC''_2,$$ 
with the action of $\Rep(H)$ given via the (symmetric) monoidal functor $$\Rep(H)\to \Rep(H)\otimes \Rep(H)$$
that arises as pullback along the multiplication map 
$$\on{pt}/H\times \on{pt}/H\to \on{pt}/H,$$
which exists thanks to the commutativity of $H$.

\end{itemize}

\sssec{}

Let us regard \eqref{e:G_m actions} as a symmetric monoidal equivalence with respect to the 
symmetric monoidal structures specified in \secref{sss:sym mon com}. 

\medskip

Note that under the equivalence
$$\Rep(\BG_m)\mmod\simeq \BZ\mmod,$$
this is the symmetric monoidal structure compatible with the forgetful functor
\begin{equation} \label{e:forget Z}
\BZ\mmod\to \DGCat.
\end{equation}

I.e., the monoidal operation is given by  
$$\bC''_1,\bC''_2\mapsto \bC'_1\otimes \bC''_2,$$ 
equipped with the diagonal action of $\BZ$. 

\sssec{}

The operation of pullback along 
$$\BA^1\overset{\BG_m}\times \BA^1\to \BA^1$$
makes $\QCoh(\BA^1)$ into a coalgebra in $\QCoh(\BG_m)\commod$
with respect to the (symmetric) monoidal structure in \secref{sss:sym mon com}.

\medskip

We have a tautological equivalence
\begin{equation} \label{e:red1}
\QCoh(\BA^1)\commod \simeq \QCoh(\BA^1)\commod(\QCoh(\BG_m)\commod).
\end{equation} 

\sssec{}

Under the equivalence \eqref{e:G_m actions}, the coalgebra object 
$$\QCoh(\BA^1)\in \QCoh(\BG_m)\commod$$
corresponds to
$$\QCoh(\BA^1/\BG_m)\in \BZ\mmod,$$
with the coalgebra structure given by pullback along
$$(\BA^1/\BG_m)\times (\BA^1/\BG_m)\to \BA^1/\BG_m.$$

\medskip

Applying \eqref{e:G_m actions}, we obtain an equivalence
\begin{equation} \label{e:red2}
\QCoh(\BA^1)\commod(\QCoh(\BG_m)\commod) \simeq \QCoh(\BA^1/\BG_m)\commod(\BZ\mmod).
\end{equation} 

\sssec{}

Applying the forgetful functor \eqref{e:forget Z}, we can view $\QCoh(\BA^1/\BG_m)$ as a coalgebra
in $\DGCat$. 

\medskip

We will prove an equivalence:
\begin{equation} \label{e:A1,1}
\QCoh(\BA^1/\BG_m)\commod \simeq \DGCat^{\on{Filtr}},
\end{equation}
compatible with the forgetful functors of both sides to $\DGCat$. 

\medskip

The compatibility with the $\BZ$-actions will follow from the construction, and that would 
lead to an equivalence
$$\QCoh(\BA^1/\BG_m)\commod(\BZ\mmod) \simeq \BZ\mmod^{\on{Filtr}}.$$

Composing this with \eqref{e:red1} and \eqref{e:red2}, we obtain the statement of \thmref{t:A1}.

\ssec{Proof of \thmref{t:A1}, Step 2} 

\sssec{}

Let $\on{Filtr}$ denote the endofunctor on $\DGCat$ that sends 
$$\bC\mapsto \on{Filtr}(\bC):=\on{Funct}(\BZ,\bC),$$
where we view $\BZ$ as a poset and hence a category.

\medskip

Note that $\on{Filtr}$ has a natural comonad structure:

\medskip

The binary operation is defined by 
\begin{align*} 
&\on{Funct}(\BZ,\bC)\to \on{Funct}(\BZ,\on{Funct}(\BZ,\bC))=\on{Funct}(\BZ\times \BZ,\bC), \\ 
&\{\bc_n\}\in \on{Funct}(\BZ,\bC) \mapsto \{\bc_{n_1,n_2}\}\in \on{Funct}(\BZ\times \BZ,\bC), \quad \bc_{n_1,n_2}:=\bc_{\on{min}(n_1,n_2)}.
\end{align*} 

The counit is given by
$$\on{Funct}(\BZ,\bC)\to \bC, \quad \{\bc_n\}\in \on{Funct}(\BZ,\bC) \mapsto \underset{n}{\on{colim}}\, \bc_n.$$

\sssec{}

Recall that for $\bC\in \DGCat$, the category 
$$\bC\otimes \QCoh(\BA^1/\BG_m)$$
identifies canonically with $\on{Filtr}(\bC)$, see, e.g., \cite[Chapter 9, Sect. 1.3]{GaRo2}.

\medskip

We claim:

\begin{prop} \label{p:comonads}
The isomorphism of endofunctors of $\DGCat$
\begin{equation} \label{e:comonads}
\on{Filtr}(-)\simeq (-)\otimes \QCoh(\BA^1/\BG_m)
\end{equation} 
upgrades to an isomorphism of comonads, where the comonad structure 
on the right-hand side comes from the coalgebra structure on $\QCoh(\BA^1/\BG_m)$. 
\end{prop}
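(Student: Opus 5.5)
Both sides of \eqref{e:comonads} are functors of the form $\bC\mapsto \bC\otimes \bA$ for a fixed dualizable DG category $\bA$. On the left, $\bA=\on{Funct}(\BZ,\Vect_\sfe)$; this category is compactly generated by the representables $\delta_n$, so $\on{Funct}(\BZ,\bC)\simeq \bC\otimes \on{Funct}(\BZ,\Vect_\sfe)$. On the right, $\bA=\QCoh(\BA^1/\BG_m)$. A comonad structure on $(-)\otimes \bA$ that is linear over $\DGCat$ is the same as a coalgebra structure on $\bA$ in $\DGCat$. So I would first check that the comonad structure on $\on{Filtr}$ is of this linear form. It is, because both the comultiplication (restriction along the map of posets $\BZ\times\BZ\to\BZ$, $(n_1,n_2)\mapsto \min(n_1,n_2)$) and the counit (colimit over $\BZ$) commute with tensoring by $\bC$. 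With this, the proposition reduces to one statement: the equivalence $\on{Funct}(\BZ,\Vect_\sfe)\simeq \QCoh(\BA^1/\BG_m)$ of \cite[Chapter 9, Sect. 1.3]{GaRo2} is an equivalence of coalgebras in $\DGCat$.

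Next I would write this equivalence out explicitly. A graded $\sfe[t]$-module $M=\oplus M_n$, with $t$ of degree $1$ (up to a sign convention), corresponds to the filtered object $n\mapsto M_n$ with transition maps given by $t$. I would then compare the structure maps one at a time.

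\emph{Counit.} The counit of $\QCoh(\BA^1/\BG_m)$ is pullback along the unit point $1:\on{pt}\to\BA^1/\BG_m$, which sends $M\mapsto M/(t-1)$. This is $\underset{n}{\on{colim}}\, M_n$, so it matches the counit of $\on{Filtr}$.

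\emph{Comultiplication.} The comultiplication of $\QCoh(\BA^1/\BG_m)$ is pullback along the multiplication map $m:\BA^1/\BG_m\times\BA^1/\BG_m\to \BA^1/\BG_m$. On coordinate rings this is $t\mapsto t_1t_2$, with $\BG_m\times\BG_m\to \BG_m$ the multiplication. So $m^*M=\sfe[t_1,t_2]\otimes_{\sfe[t]}M$, bigraded so that its $(n_1,n_2)$-component is the colimit of $M_k$ over $k\le\min(n_1,n_2)$, which is $M_{\min(n_1,n_2)}$. This matches the comultiplication of $\on{Filtr}$. To make the comparison rigorous rather than componentwise, I would check it on the compact generators $\delta_n$, that is, $\sfe[t](n)$. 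Their images are $\sfe[t_1,t_2](n,n)$, which correspond to the functor represented by $(n,n)$ on $\BZ\times\BZ$. Both comultiplications are continuous functors that agree on generators, functorially in maps between generators, so they agree.

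The main obstacle is coherence: the coalgebra structures are $A_\infty$ (indeed $E_\infty$) structures, so matching the binary operation and the counit is not enough by itself. To handle this I would avoid comparing the structures by hand and instead deduce both from geometry. The poset $\BZ$ with $\min$ is a commutative monoid in categories, so $\on{Funct}(\BZ,\Vect_\sfe)$, with restriction along $\min$, is the coalgebra coming from functoriality of $\on{Funct}(-,\Vect_\sfe)$ under restriction. Likewise $\BA^1/\BG_m$ is a commutative monoid stack and $\QCoh$ is contravariantly functorial. The identification from \cite{GaRo2} comes from a symmetric monoidal comparison. I would realize it as left Kan extension along the functor $\BZ\to\QCoh(\BA^1/\BG_m)$, $n\mapsto \sfe[t](n)$, which is symmetric monoidal when $\BZ$ carries the monoidal structure $\min$ and the target carries $m^*$. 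Because this comparison is natural with respect to the monoid structures, all the higher coherences are transported along with it.
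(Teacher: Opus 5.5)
Your reduction to comparing coalgebra structures on $\QCoh(\BA^1/\BG_m)$ is correct, and so is your explicit matching of the counit ($M/(t-1)\simeq \on{colim}_n M_n$) and of the comultiplication ($(m^*M)_{(n_1,n_2)}\simeq M_{\min(n_1,n_2)}$, checked on the generators $\delta_n$). This is exactly what the paper does by ``direct inspection''. The gap is in the coherence step, which you rightly flag as the main obstacle but do not resolve.

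\emph{Why your coherence argument fails.}
\begin{enumerate}
\item $m^*$ is the comultiplication $\QCoh(\BA^1/\BG_m)\to \QCoh(\BA^1/\BG_m)^{\otimes 2}$, not a monoidal structure on $\QCoh(\BA^1/\BG_m)$. The monoidal structure induced by the monoid $\BA^1/\BG_m$ is convolution $m_*(-\boxtimes -)$. Left Kan extending a symmetric monoidal functor produces a map of \emph{algebras} (Day convolution to convolution). That is a statement about the duals of the coalgebras you care about, and you would still have to transport it back through compatible self-dualities.
\item $(\BZ,\min)$ has no unit. The counit $\on{colim}_n$ is a left Kan extension, not restriction along a unit map. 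So the left-hand coalgebra is not literally ``$\on{Funct}(-,\Vect_\sfe)$ applied to a commutative monoid''.
\item Most importantly, the two coalgebras come from monoids in different ambient categories via different functors, so there is no map of monoids to transport along. The claim that the comparison is ``natural with respect to the monoid structures'' is precisely the coherent compatibility that has to be proved. Free cocompletion is a left adjoint, so a functor $n\mapsto \delta_n$ together with isomorphisms $m^*(\delta_n)\simeq \delta_n\boxtimes\delta_n$ does not formally upgrade to a map of coalgebras. The higher coherences of those isomorphisms must still be supplied.
\end{enumerate}

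\emph{The missing idea} is the truncation argument the paper uses. All the categories involved, $\on{Funct}(\BZ^k,\Vect_\sfe)\simeq \QCoh((\BA^1/\BG_m)^{k})$, are derived categories of their hearts. They are compactly generated by the projective objects $\delta_{(n_1,\dots,n_k)}$ of the heart, whose Hom complexes are concentrated in degree $0$. All the structure functors and their composites (restrictions along $\min$, colimits over $\BZ$, pullbacks along $m$ and along the unit point) are t-exact and send these generators to generators. Consequently:
\begin{itemize}
\item the spaces of natural transformations between these functors are discrete;
\item both comonad structures live in a $(2,1)$-category and are determined by their abelian-level data;
\item once the comultiplications and counits are matched, comparing the associativity and unit constraints is a finite check, since the relevant automorphism groups of functors are just $\sfe^\times$.
\end{itemize}
Replacing your last paragraph with this argument completes the proof.
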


\begin{proof}

Note that the comonad $\on{Filtr}$ on $\DGCat$ commutes with tensor products
by objects of $\DGCat$. Hence, it is also given by tensoring with a coalgebra
object in $\DGCat$, to be denoted $\QCoh(\BA^1/\BG_m)'$.

\medskip

It follows from \eqref{e:comonads} that the plain object of $\DGCat$ underlying $\QCoh(\BA^1/\BG_m)'$
identifies with $\QCoh(\BA^1/\BG_m)$. Thus, it remains to show that the two comonad structures
agree.

\medskip

It follows by direct inspection that the two binary operations 
$$\QCoh(\BA^1/\BG_m)\rightrightarrows \QCoh(\BA^1/\BG_m)\otimes \QCoh(\BA^1/\BG_m)$$
agree.

\medskip

We claim that this implies that the entire comonad structures agree. Indeed, note that both structures are 
given by t-exact functors, and the categories involved are derived categories of their hearts.

\end{proof} 

\sssec{}

Thus, we obtain that in order to prove \eqref{e:A1,1}, we need to establish an equivalence 
\begin{equation} \label{e:A1,2}
\on{Filtr}\commod(\DGCat) \simeq \DGCat^{\on{Filtr}},
\end{equation}
compatible with the forgetful functors to $\DGCat$.

\ssec{Proof of \thmref{t:A1}, Step 3} 

\sssec{}

We claim that the forgetful functor
$$\oblv_{\on{Filtr}}:\DGCat^{\on{Filtr}}\to \DGCat$$
admits a right adjoint, to be denoted $\ind_{\on{Filtr}}$.

\medskip

Namely, $\ind_{\on{Filtr}}$ sends $\bC\in \DGCat$ to an object of $\DGCat^{\on{Filtr}}$, whose underlying DG category
is $\on{Funct}(\BZ,\bC)$, and where
$$\on{Funct}(\BZ,\bC)_{\leq n}:=(\{\bc_n\}\,\mid\, \bc_m\to \bc_{m+1} \text{ are isomorphisms for } m\geq n).$$

\sssec{}

Let us construct the adjunction data. The counit of the adjunction, which is a functor
$$\on{Funct}(\BZ,\bC)\to \bC,$$
is given by
$$\{\bc_n\}\mapsto \underset{n}{\on{colim}}\, \bc_n.$$

\medskip

The unit of the adjunction, which is a functor, 
$$\bD\mapsto \on{Funct}(\BZ,\bD), \quad \bD\in \DGCat^{\on{Filtr}},$$
is given by
$$\bd\mapsto \{\bd_n\}, \quad \bd_n:=i_{\leq n}\circ i_{\leq n}^R(\bd).$$

\sssec{}

It follows from the construction that the comonad on $\DGCat$ attached to the above adjunction 
identifies with $\on{Filtr}$.

\medskip

This gives rise to a functor
$$\DGCat^{\on{Filtr}}\to \on{Filtr}\commod(\DGCat).$$

\medskip

In order to show that the functor is an equivalence, we have to show that the functor $\oblv_{\on{Filtr}}$
satisfies the assumptions of the (comonadic) Barr-Beck-Lurie theorem. 

\sssec{}

First, we claim that $\oblv_{\on{Filtr}}$ is conservative. 

\medskip

Let $G:\bD'\to \bD''$ be an equivalence, and we need to show that it induces an equivalence 
$$\bD'_{\leq n}\to \bD''_{\leq n}.$$

However, this follows from the Beck-Chevalley isomorphisms \eqref{e:Beck-Chevalley}.

\sssec{}

We now claim that $\oblv_{\on{Filtr}}$ commutes with all limits. Let
$$\alpha\mapsto (\bD_\alpha,\{\bD_{\alpha,\leq n}\})$$
be a diagram in $\DGCat^{\on{Filtr}}$. 

\medskip

Set
$$\bD:=\underset{\alpha}{\on{lim}}\, \bD_\alpha \text{ and } \bD_{\leq n}:=\underset{\alpha}{\on{lim}}\, \bD_{\alpha,\leq n}.$$

The adjoint functors
$$i_{\alpha,\leq n}:\bD_{\alpha,\leq n}\rightleftarrows \bD_\alpha:(i_{\alpha,\leq n})^R$$
give rise to adjoint functors
$$i_{\leq n}:\bD_{\leq n}\rightleftarrows \bD:(i_{\leq n})^R,$$
and both commute with 
$$\bD_{\leq n}\to \bD_{\alpha,\leq n} \text{ and } \bD\to \bD_\alpha.$$

We claim that $(\bD,\{\bD_{\leq n}\})$ is indeed an object of $\DGCat^{\on{Filtr}}$. For that we need to show that 
the functor
$$\underset{n}{\on{colim}}\, \bD_{\leq n}\to \bD$$
is an equivalence.

\medskip

By \cite[Chapter 1, Proposition 2.5.7]{GaRo1}, this is equivalent to 
$$\bD \to \underset{n}{\on{lim}}\, \bD_{\leq n}$$
being an equivalence, where the limit is taken with respect to the right adjoints of $\bD_{\leq n}\to \bD_{\leq n+1}$. 

\medskip

The required assertion follows now from the commutative diagram
$$
\CD
\bD @>>>  \underset{n}{\on{lim}}\, \bD_{\leq n} \\
@V{=}VV @VV{=}V \\
\underset{\alpha}{\on{lim}}\, \bD_\alpha @>>> \underset{\alpha}{\on{lim}}\, \underset{n}{\on{lim}}\, \bD_{\alpha,\leq n}\simeq
\underset{n}{\on{lim}}\, \underset{\alpha}{\on{lim}}\, \bD_{\alpha,\leq n},
\endCD
$$
where the bottom horizontal arrow is an equivalence again by  \cite[Chapter 1, Proposition 2.5.7]{GaRo1}.

\medskip

It is clear that $(\bD,\{\bD_{\leq n}\})$ constructed above is the limit of $(\bD_\alpha,\{\bD_{\alpha,\leq n}\})$ in 
$\DGCat^{\on{Filtr}}$. By construction
$$\oblv_{\on{Filtr}}(\bD,\{\bD_{\leq n}\})=\bD=
\underset{\alpha}{\on{lim}}\, \oblv_{\on{Filtr}}(\bD_\alpha,\{\bD_{\alpha,\leq n}\}),$$
as required. 

\qed[\thmref{t:A1}]

\sssec{}

Unwinding, we obtain that the functor 
$$\on{Filtr}\commod(\DGCat)\to \DGCat^{\on{Filtr}}$$
is given as follows.

\medskip

It sends 
$$(\bC,\bC\overset{\Phi}\to \on{Funct}(\BZ,\bC))\in \on{Filtr}\commod(\DGCat)$$
to the object 
$$(\bD,\{\bD_{\leq n}\})\in  \DGCat^{\on{Filtr}},$$
where $\bD=\bC$, and $\bD_{\leq n}=:\bC_{\leq n}$ is the full subcategory of $\bC$ consisting of
those objects $\bc$, for which the map
$$\Phi(\bc)_{n'}\to \underset{m}{\on{colim}}\, \Phi(\bc)_m=\bc$$
is an isomorphism for all $n'\geq n$. 

\ssec{Identifying the action of zero} 

We will now show that the equivalence established in \thmref{t:A1} has the property stated in \propref{p:action of 0}.

\sssec{} \label{sss:gr}
 
Note that for $\bC\in \DGCat$, the functor
$$\on{Filtr}(\bC):= \on{Funct}(\BZ,\bC) \simeq \bC\otimes \QCoh(\BA^1/\BG_m) \overset{\on{Id}\otimes \iota^*}\longrightarrow 
\bC\otimes \QCoh(\on{pt}/\BG_m)\simeq \bC^{\BZ}$$
(where $\iota$ is the embedding $\on{pt}/\BG_m\to \BA^1/\BG_m$) is the functor of the \emph{associated graded}
$$\on{gr}(\{\bc_n\})=\{\on{cofib}(\bc_{n-1}\to \bc_n)\}.$$

\sssec{}

For $\bD\in \DGCat^{\on{Filtr}}$, the corresponding object in $\QCoh(\BA^1/\BG_m)\commod$ has $\bD$ as the underlying category
and the coaction 
$$\bD\to \bD\otimes \QCoh(\BA^1/\BG_m)\simeq \on{Funct}(\BZ,\bD)$$
is given by
$$\bd\mapsto \{i_{\leq n}\circ i_{\leq n}^R(\bd)\}.$$

\medskip

Combining with \secref{sss:gr}, we obtain that the functor $[0]$, which in the present context is the functor
\begin{equation} \label{e:action of 0 comp}
\bD\to \bD\otimes \QCoh(\BA^1/\BG_m)  \overset{\on{Id}\otimes \iota^*}\longrightarrow \bD\otimes \QCoh(\on{pt}/\BG_m)\simeq \bD^{\BZ}
\end{equation} 
is given by
$$\bd \mapsto \{\on{cofib}(i_{\leq n-1}\circ i_{\leq n-1}^R(\bd)\to i_{\leq n}\circ i_{\leq n}^R(\bd))\}.$$

\sssec{}

Finally, we note that for every $n$, we have
$$\on{cofib}(i_{\leq n-1}\circ i_{\leq n-1}^R(-)\to i_{\leq n}\circ i_{\leq n}^R(-))\simeq \phi_n\circ \psi_n,$$
where $\phi_n$ and $\psi_n$ are as in \secref{sss:phi and psi}. 

\section{A contraction on the category of Weil sheaves}  \label{s:Weil}

The goal of this section is to state and prove Theorem-Construction \ref{t:contraction on category}.  In order to do so, we will
need to introduce a little zoo of various versions of the category of Weil sheaves. 

\ssec{Various algebro-geometric completions of \texorpdfstring{$\BZ$}{Z}} 

In this subsection we will introduce the pro-algebraic groups
$$\BZ^{\on{alg}},\,\, \BZ^{\on{alg,wt}} \text{ and } \BZ^{\on{alg},0},$$
which would play an important role throughout the paper: the will serve as ``equivariantization/de-equivariantization" agents. 

\sssec{}

Let $\BZ\mod$ be the category of representations of $\BZ$. We consider
it as a symmetric monoidal category under tensor product, so that the forgetful functor
$$\BZ\mod\to \Vect_{\sfe}$$
is symmetric monoidal.

\medskip

We have
\begin{equation} \label{e:Z vs Gm}
\BZ\mod\simeq \QCoh(\BG_m),
\end{equation} 
where we consider $\QCoh(\BG_m)$ as a symmetric monoidal category with respect to \emph{convolution}.
Under this equivalence, the forgetful functor $\BZ\mod\to \Vect_\sfe$ corresponds to the functor
$$\Gamma(\BG_m,-):\QCoh(\BG_m)\to \Vect_\sfe.$$

\medskip

Let  
$$\BZ\mod_{\on{loc.fin}}\subset \BZ\mod$$ be the full subcategory that consists of locally finite representations.
Under \eqref{e:Z vs Gm} it corresponds to the full subcategory 
$$\QCoh(\BG_m)_{\on{tors}}\subset \QCoh(\BG_m)$$
consisting of quasi-coherent sheaves, whose set-theoretic support lies in the union of closed points of $\BG_m$. 

\medskip

The category $\BZ\mod_{\on{loc.fin}}$ together with the forgetful functor to $\Vect_{\sfe}$ corresponds
to a pro-algebraic group over $\sfe$, which we denote by $\BZ^{\on{alg}}$.

\sssec{} \label{sss:Weil num}

Recall that an element of $\sfe:=\ol\BQ_\ell$ is called a $q$-\emph{Weil number} of weight $n$ if:

\medskip

\begin{enumerate}

\item It belongs to $\ol\BQ$;

\medskip

\item Its image under any embedding $\ol\BQ\to \BC$ has absolute value $q^{n/2}$;

\medskip

\item It is an algebraic integer outside of $p$ (i.e., becomes an algebraic integer after multiplication by $p^m$ for some $m$).

\end{enumerate}

\medskip

It is easy to see that any Weil number is actually a \emph{unit} outside of $p$ (indeed, take its norm and we obtain a
rational number with norm a power of $p$).

\medskip

We define the notion of \emph{mock} $q$-Weil number by omitting condition (3) in the definition of Weil numbers. 

\sssec{}

Let 
$$\BZ\mod_{\on{wt}} \subset \BZ\mod_{\on{loc.fin}}$$
be the full subcategory, consisting of objects, on which the eigenvalues of the generator $1\in \BZ$ are $q$-Weil numbers
in $\sfe$. This is a symmetric monoidal subcategory because Weil numbers form a subgroup of $\sfe^\times$.

\medskip

Under \eqref{e:Z vs Gm}, it corresponds to the full subcategory 
$$\QCoh(\BG_m)_{\on{wt}}\subset \QCoh(\BG_m)_{\on{tors}}$$
consisting of quasi-coherent sheaves, whose set-theoretic support belongs to the set of $q$-Weil numbers.

\sssec{} \label{sss:structure W}

The category $\BZ\mod_{\on{wt}}$ together with the forgetful functor to $\Vect_{\sfe}$ corresponds
to a pro-algebraic group over $\sfe$, which we denote by $\BZ^{\on{alg,wt}}$.

\medskip

By construction, $\BZ^{\on{alg,wt}}$ comes equipped with the following structures:

\begin{itemize}

\item The inclusion $\BZ\mod_{\on{wt}} \hookrightarrow \BZ\mod$ gives rise to a 
homomorphism $\BZ\to \BZ^{\on{alg,wt}}$ with Zariski-dense image; we denote by $\Frob\in \BZ^{\on{alg,wt}}$ 
the image of the generator $1\in \BZ$.

\medskip

\item We have a canonical homomorphism $\BG_m\to \BZ^{\on{alg,wt}}$; it corresponds to the
decomposition of $\Rep(\BZ^{\on{alg,wt}})\simeq \BZ\mod_{\on{wt}}$ according to weights;
a choice of $q^{\frac{1}{2}}\in \sfe$ gives rise to a left inverse of the above map.

\end{itemize}

\begin{rem} \label{r:Z wt over Q}

Note the following key difference between $\BZ^{\on{alg,wt}}$ and $\BZ^{\on{alg}}$: the former
is defined over $\BQ\subset \sfe$, whereas the latter is not.

\end{rem}

\sssec{}

Let $\BZ^{\on{alg},0}$ denote the quotient $\BZ^{\on{alg,wt}}/\BG_m$. Note that
$\Rep(\BZ^{\on{alg},0})$ is a full subcategory of 
$$\Rep(\BZ^{\on{alg,wt}})=\BZ\mod_{\on{wt}}\subset \BZ\mod_{\on{loc.fin}}\subset \BZ\mod$$
consisting of modules, in which the generator $1\in \BZ$ acts with generalized eigenvalues that are 
$q$-Weil numbers of weight $0$. 

\medskip

Equivalently, 
$$\Rep(\BZ^{\on{alg},0})\subset \Rep(\BZ^{\on{alg,wt}})=\QCoh(\BG_m)_{\on{wt}}\subset  \QCoh(\BG_m)_{\on{tors}}\subset \QCoh(\BG_m)$$
is the subcategory
$$\QCoh(\BG_m)_{0}\subset \QCoh(\BG_m)$$
that consists of quasi-coherent sheaves, whose set-theoretic support belongs to the set of $q$-Weil numbers
of weight $0$. 

\sssec{}

Here are some more remarks about the structure of $\BZ^{\on{alg,wt}}$:

\medskip

\begin{itemize}

\item Its unipotent factor is isomorphic to $\BG_a$; indeed, its category of representations is 
the subcategory $\QCoh(\BG_m)_{\{1\}}\subset \QCoh(\BG_m)$ of quasi-coherent sheaves 
set-theoretically supported at $1\in \BG_m$;

\medskip

\item $\pi_0(\BZ^{\on{alg,wt}})\simeq \wh\BZ$; indeed its characters are the roots of unity in $\sfe$;

\medskip

\smallskip

\item The reductive part of the connected component of $\BZ^{\on{alg},0}$ is the pro-torus,
whose group of characters is the quotient of the group of Weil numbers of weight $0$ by roots 
of unity.

\end{itemize}

%
%

\sssec{}

Let $\BH$ be $\BZ^{\on{alg}}$, $\BZ^{\on{alg,wt}}$ or $\BZ^{\on{alg},0}$. 
Write 
$$\BH\simeq \underset{\alpha}{\on{lim}}\, H_\alpha,$$
where $H_\alpha$ are finite-dimensional algebraic groups.

\medskip

We claim:

\begin{prop} \label{p:W rep} \hfill

\smallskip

\noindent{\em(a)} The category $\Rep(\BH)$ is compactly generated by finite-dimensional representations;

\smallskip

\noindent{\em(b)} The monoidal category $\Rep(\BH)$ is rigid;

\smallskip

\noindent{\em(c)} The functor
\begin{equation} \label{e:colimit groups}
\underset{\alpha}{\on{colim}}\, \Rep(H_\alpha)\to \Rep(\BH)
\end{equation} 
is an equivalence.

\end{prop}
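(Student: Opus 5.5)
The plan is to reduce everything to the structure of the abelian category $\Rep(\BH)^\heartsuit$ and use that $\Rep(\BH)$ is, by definition, the ind-completion (in the derived sense) of the category of locally finite $\BZ$-modules with the appropriate eigenvalue conditions. Concretely, $\Rep(\BH)\simeq \QCoh(\BG_m)_{S}$ for $S$ the set of all closed points, the $q$-Weil numbers, or the weight-$0$ $q$-Weil numbers. Such a category decomposes as a direct sum over $\lambda\in S$ of the categories $\QCoh(\BG_m)_{\{\lambda\}}$ of sheaves set-theoretically supported at $\lambda$. Each summand is equivalent, via translation by $\lambda$, to modules over the completed local ring, i.e. torsion $\sfe[\![x]\!]$-modules.

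For (a), I would use the standard fact that $\QCoh(\BG_m)_{\{\lambda\}}$ is compactly generated by the single object $\CO/(x-\lambda)$, or by the finite-length modules $\CO/(x-\lambda)^k$; these are exactly the finite-dimensional representations. Direct sums of compactly generated categories are compactly generated by the union of the generators, so (a) follows. The compact objects are then the finite direct sums of these summand generators, i.e. the finite-dimensional representations, and their retracts and finite extensions.

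For (b), I would check the rigidity criteria. The unit (the trivial representation, $\CO/(x-1)$) is compact. Every compact object admits a dual: a finite-dimensional representation $V$ with Frobenius eigenvalues in $S$ has dual $V^*$ with eigenvalues in $S^{-1}=S$, because $S$ is a group in each of the three cases. Finally, the tensor product of compacts is compact, since the eigenvalues of $V\otimes W$ are products of eigenvalues, and $S$ is closed under products. These conditions give rigidity by the usual criterion for compactly generated monoidal categories.

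For (c), both sides are compactly generated, and the transition functors are pullbacks along surjections $H_\beta\to H_\alpha$. These pullbacks are fully faithful on hearts and preserve compacts, so the colimit is compactly generated by the union of $\Rep(H_\alpha)^c$. It therefore suffices to show that the functor is fully faithful on compact objects and essentially surjective on compact generators.

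Essential surjectivity is immediate: every finite-dimensional $\BH$-representation factors through some $H_\alpha$.

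Full faithfulness on compacts is the main obstacle. One needs
$$\underset{\alpha}{\on{colim}}\, \Hom_{\Rep(H_\alpha)}(V,W)\simeq \Hom_{\Rep(\BH)}(V,W)$$
on the derived level, i.e. Ext groups are compatible with the pro-system. I would reduce, by dévissage, to $V=\sfe$, so that the statement becomes: group cohomology $H^*(H_\alpha,W)$ stabilizes to the Ext computed in $\QCoh(\BG_m)_S$. The latter is explicit: $\Ext^*(\sfe,W)$ is the cohomology of the complex $W\xrightarrow{\Frob-1}W$. This is because $\Rep(\BH)$ sits fully faithfully inside $\BZ\mod$, as the torsion subcategory at closed points with support in $S$ is closed under extensions and the relevant local cohomology computations agree.

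On the other side, I would use that $H^*(\BH,W)=\on{colim}_\alpha H^*(H_\alpha,W)$ holds for pro-algebraic groups, since both sides are computed from the cobar complex of $\CO(\BH)=\on{colim}\,\CO(H_\alpha)$ and filtered colimits are exact. It then remains to identify rational cohomology of $\BH$ with the Ext in the derived category of the abelian category $\Rep(\BH)^\heartsuit$. The point to verify is that $D(\Rep(\BH)^\heartsuit)$, ind-completed, agrees with $\QCoh(\BG_m)_S$. I would check this by comparing Ext's in both. Both vanish in degrees $\geq 2$: this follows from the $\BG_a$ unipotent factor on the $\BH$ side, and from $\BZ$ having cohomological dimension $1$ on the other side. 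So it reduces to Yoneda $\Ext^1$, which agrees because the abelian categories coincide.
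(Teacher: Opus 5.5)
Your proposal is correct. Parts (a) and (b) follow the paper's argument with the details filled in. The paper just says (a) is obvious from the description inside $\QCoh(\BG_m)_{\on{tors}}$, and that (b) follows because compact objects are dualizable.

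In (c) you make the same reduction as the paper. Compact generation reduces everything to the map $\on{colim}_\alpha \Ext^i_{\Rep(H_\alpha)}(V_1,V_2)\to \Ext^i_{\Rep(\BH)}(V_1,V_2)$ for finite-dimensional $V_1,V_2$. The colimit of the cobar complexes then gives the rational cohomology of $\BH$.

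Beyond that, you add a step the paper leaves implicit. The paper asserts the Ext-compatibility ``for any pro-algebraic group'', so it tacitly treats $\Ext_{\Rep(\BH)}$ as comodule Ext. Yet for (a) it uses the model $\Rep(\BH)\simeq \QCoh(\BG_m)_S\subset \BZ\mod$, where $\Ext^\bullet(\sfe,W)$ is the cohomology of $\Frob-1$ on $W$. Your comparison is what reconciles these two models:
\begin{itemize}
\item both sides are concentrated in degrees $0$ and $1$, by $\on{cd}(\BZ)=1$ on one side and by $\BH\simeq \BG_a\times(\text{pro-diagonalizable})$ on the other;
\item they agree in degrees $0$ and $1$, because both are Hom and Yoneda $\Ext^1$ in the same abelian category, compatibly with the functor.
\end{itemize}
This check is special to these commutative groups of cohomological dimension $\leq 1$. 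The paper's formulation is more general, but it presupposes that $\Rep(\BH)$ computes comodule Ext's.

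A few small points:
\begin{itemize}
\item Reduce to $V=\sfe$ via $\Hom(V,W)\simeq \Hom(\sfe,V^\vee\otimes W)$, using rigidity and the monoidality of the functor. D\'evissage does not work here, since nontrivial characters are not built from $\sfe$ by extensions.
\item The vanishing of $H^{\geq 2}(\BH,-)$ also uses that the reductive part of $\BH$ is pro-diagonalizable, hence linearly reductive. That the unipotent part is $\BG_a$ is not enough on its own.
\item Like the paper, you need each $\Rep(H_\alpha)$ to be compactly generated by finite-dimensional representations. This holds because each $H_\alpha$ is $\BG_a$ or trivial, times a diagonalizable group.
\end{itemize}
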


\begin{proof}

Point (a) is obvious from the description of $\Rep(\BH)$ as a full subcategory in $\QCoh(\BG_m)_{\on{tors}}$.

\medskip

Point (b) follows from the fact that compact objects in $\Rep(\BH)$ are dualizable (which in turn follows from point (a)). Point (c)
holds for any pro-algebraic group for which (a) holds:

\medskip

Indeed, it suffices to show that the functor \eqref{e:colimit groups} is fully faithful, and given (a) it suffices to check that it
is fully faithful on compact objects. I.e., it suffices 
to show that for an index $\alpha_0$, $i\in \BZ^{\geq 0}$ and $V_1,V_2\in \Rep(H_{\alpha_0})^\heartsuit$, the map
$$\underset{\alpha\geq \alpha_0}{\on{colim}}\, \Ext^i_{\Rep(H_\alpha)}(\Res^{H_{\alpha_0}}_{H_\alpha}(V_1),\Res^{H_{\alpha_0}}_{H_\alpha}(V_2))\to 
\Ext^i_{\Rep(\BH)}(\Res^{\BH}_{H_\alpha}(V_1),\Res^{\BH}_{H_\alpha}(V_2))$$
is an isomorphism. However, the latter is the case for any pro-algebraic group. 

\end{proof} 

\sssec{}

From \propref{p:W rep} we obtain:

\begin{cor} \label{c:eq de-eq W}
For $\BH$ being $\BZ^{\on{alg}}$, $\BZ^{\on{alg,wt}}$ or $\BZ^{\on{alg},0}$, the operations
$$\bC\mapsto \bC^\BH \text{ and } \bD\mapsto \Vect_{\sfe}\underset{\Rep(\BH)}\otimes \bD$$
define mutually inverse equivalences
$$\QCoh(H)\commod \leftrightarrow \Rep(H)\mmod.$$
\end{cor}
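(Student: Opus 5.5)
The plan is to reduce the statement to the corresponding equivalence \eqref{e:eq de-eq} for each finite-dimensional quotient $H_\alpha$, and then pass to the limit over $\alpha$ using \propref{p:W rep}. Write $\BH\simeq \underset{\alpha}{\on{lim}}\, H_\alpha$ as before. Then $\QCoh(\BH)\simeq \underset{\alpha}{\on{colim}}\, \QCoh(H_\alpha)$ as coalgebras in $\DGCat$. Here the transition functors are pullbacks along the surjections $H_{\alpha'}\to H_\alpha$, and the colimit is taken in $\DGCat$. Dually, \propref{p:W rep}(c) gives $\Rep(\BH)\simeq \underset{\alpha}{\on{colim}}\, \Rep(H_\alpha)$ as symmetric monoidal categories.

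\textbf{Step 1: the structural input.} The argument for \eqref{e:eq de-eq} for an algebraic group (as in Gaitsgory's 1-affineness of $\on{pt}/H$) uses only two facts. First, $\Rep(H)$ is rigid and compactly generated, so the functor $\Rep(H)\mmod\to \QCoh(H)\commod$ is well behaved. Second, $\Vect_\sfe$ is a dualizable $\Rep(H)$-module whose endomorphism coalgebra is $\QCoh(H)$, namely
$$\Vect_\sfe\underset{\Rep(H)}\otimes\Vect_\sfe\simeq \QCoh(H).$$
I would verify these facts for $\BH$ directly. Rigidity and compact generation are \propref{p:W rep}(a),(b). For the tensor product, I would use \propref{p:W rep}(c) to write
$$\Vect_\sfe\underset{\Rep(\BH)}\otimes \Vect_\sfe\simeq \underset{\alpha}{\on{colim}}\,\Vect_\sfe\underset{\Rep(H_\alpha)}\otimes \Vect_\sfe\simeq \underset{\alpha}{\on{colim}}\,\QCoh(H_\alpha)\simeq \QCoh(\BH).$$
The first isomorphism holds because relative tensor products commute with filtered colimits of the algebra and modules in $\DGCat$. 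The last holds because $\QCoh$ of an affine pro-scheme is the colimit of $\QCoh$ of the finite-type stages.

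\textbf{Step 2: Barr--Beck.} Consider the functor $\bD\mapsto \Vect_\sfe\underset{\Rep(\BH)}\otimes \bD$ from $\Rep(\BH)\mmod$ to $\QCoh(\BH)\commod$, whose right adjoint is $\bC\mapsto\bC^\BH$. Rigidity of $\Rep(\BH)$ makes $\Vect_\sfe$ a dualizable module over it, so tensoring with $\Vect_\sfe$ commutes with all limits. I would then show that this functor is comonadic, with comonad given by $-\otimes\QCoh(\BH)$ by Step 1. Conservativity requires $\Rep(\BH)$ to be generated under colimits by $\Vect_\sfe$ as a module over itself, up to the action. This comes down to the statement that the regular representation $\CO(\BH)$ generates $\Rep(\BH)$, which I would check on compact generators using the finite-dimensional stages. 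This yields the equivalence, and the inverse is $\bC\mapsto \bC^\BH$.

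\textbf{Main obstacle.} I expect the hard step to be comonadicity, or conservativity, for a pro-group. For non-unipotent pro-groups, 1-affineness can fail in general. Here, however, $\Rep(\BH)$ is a subcategory of $\QCoh(\BG_m)_{\on{tors}}$, which is compactly generated with dualizable compacts. If the direct Barr--Beck argument becomes hard, my fallback is to express both sides as limits over $\alpha$:
$$\Rep(\BH)\mmod\simeq\underset{\alpha}{\on{lim}}\,\Rep(H_\alpha)\mmod \quad\text{and}\quad \QCoh(\BH)\commod\simeq \underset{\alpha}{\on{lim}}\,\QCoh(H_\alpha)\commod,$$
and deduce the claim from \eqref{e:eq de-eq} for each $H_\alpha$ and compatibility with the transition maps.
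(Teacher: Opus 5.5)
\textbf{Verdict: there is a genuine gap, at conservativity.} Your Steps 1--2 are the natural way to unpack the paper's one-line deduction ``from \propref{p:W rep}''. The gap is in the only non-formal step: showing that $\bD\mapsto \Vect_\sfe\otimes_{\Rep(\BH)}\bD\simeq \CO_{\BH}\mod(\bD)$ is conservative. Here $\CO_{\BH}$ is the regular representation, and $\Vect_\sfe\simeq \CO_{\BH}\mod(\Rep(\BH))$ by rigidity.

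\textbf{Why your inputs cannot give conservativity.} Everything you actually verify also holds for $U=\prod_{\BN}\BG_a$ with $\Rep(U):=\on{Ind}(\Rep(U)^{\on{fd}})$: rigidity, compact generation by dualizable objects, \propref{p:W rep}(c), hence $\Vect_\sfe\otimes_{\Rep(\BH)}\Vect_\sfe\simeq\QCoh(\BH)$, and preservation of limits. For this $U$ the statement is false. Indeed $(\Vect_\sfe)^{U}=\QCoh(\on{pt}/U)$, and $\one$ is not compact there: $H^0\Gamma(\on{pt}/U,\oplus_n\one[n])=\prod_n H^n(U,\sfe)$, with $H^n(U,\sfe)\neq 0$ for all $n$. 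So the unit $\Rep(U)\to(\Vect_\sfe\otimes_{\Rep(U)}\Rep(U))^{U}$ is not an equivalence.

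\textbf{Why your proposed reduction is the wrong criterion.} For $\phi:\bD\to\bD'$, what $\CO_{\BH}\mod(\phi)$ controls are the maps $\Hom_{\bD}(d_1,\CO_{\BH}\otimes d_2)\to\Hom_{\bD'}(\phi(d_1),\CO_{\BH}\otimes\phi(d_2))$, i.e.\ Homs out of free modules. To recover $\Hom_{\bD}(d_1,d_2)$ from these, you need $\one$ to be built from $\CO_{\BH}$ by finite limits and retracts. Building it by colimits, which is what ``$\CO(\BH)$ generates $\Rep(\BH)$'' gives, only helps for compact $d_1$, while $\bD$ is arbitrary.

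\textbf{Your fallback is circular.} The natural functors $\QCoh(H_\alpha)\commod\to\QCoh(H_\beta)\commod$ are corestrictions, so they form an ind-system. The functors matching restriction on the $\Rep$ side are $\bC\mapsto\bC^{\ker(H_\beta\to H_\alpha)}$. Identifying $\QCoh(\BH)\commod$ with the limit along those, via $\bC\mapsto\{\bC^{\ker(\BH\to H_\alpha)}\}$, is a statement of the same kind for the pro-groups $\ker(\BH\to H_\alpha)$. By the example of $U$, it does not follow from the finite-type case and \propref{p:W rep}(c).

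\textbf{The missing idea: $\CO_{\BH}$ is descendable.} This uses the specific structure of $\BH$: $\Rep(\BH)$ has cohomological dimension $1$. What breaks the statement is a pro-unipotent part of infinite cohomological dimension, as in $U$; pro-reductive pieces are harmless. Split a locally finite $\BZ$-module into generalized eigenspaces and take the logarithm of the unipotent part. This gives $\BH\simeq D(S)\times\BG_a$, where $D(S)$ is the diagonalizable pro-group whose character group is the relevant subgroup $S\subset\sfe^\times$ of eigenvalues.

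Then $\CO_{\BH}=(\oplus_{s\in S}\sfe_s)\otimes\sfe[x]$. The sequence $0\to\sfe\to\sfe[x]\xrightarrow{\partial_x}\sfe[x]\to 0$ is exact and $\BG_a$-equivariant, and $\sfe_1$ is a summand of $\oplus_s\sfe_s$. Hence $\one$ is a direct summand of $\on{fib}\bigl(\CO_{\BH}\xrightarrow{\on{id}\otimes\partial_x}\CO_{\BH}\bigr)$.

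Now suppose $\CO_{\BH}\mod(\phi)$ is an equivalence. Then $\phi$ is fully faithful: apply $\Hom_{\bD}(d_1,(-)\otimes d_2)$ to this fiber-and-retract presentation of $\one$. It is also essentially surjective: each $\CO_{\BH}\otimes d'$ lies in the image of $\phi$, and that image is closed under fibers and retracts. With this input, your Barr--Beck argument goes through.

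\textbf{Alternative fix via finite stages.} Your ``check on finite-dimensional stages'' becomes legitimate if you take $H_\alpha=D(S_\alpha)\times\BG_a$ with $S_\alpha\subset S$ finitely generated. The kernels $D(S/S_\alpha)$ are then pro-reductive, so $\CO_{H_\alpha}$ is a direct summand of $\CO_{\BH}$. Descendability therefore passes from $H_\alpha$ to $\BH$.
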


\ssec{Sheaf theories} \label{ss:sheaf theories}

\sssec{}  \label{sss:ground}

Let $k$ be a ground field, and let $\Sch_k$ be the category of separated schemes of finite type over $k$.
Denote $\on{pt}:=\Spec(k)$.

\medskip

Let $\on{Corr}(\Sch_k)$ be the category of correspondences on $\Sch_k$ (see \cite[Sect. 2.1]{GRV}), and 
let 
$$\Shv(-):\on{Corr}(\Sch_k)\to \DGCat$$
be a sheaf theory in the sense of \cite[Sect. 2.2]{GRV}, equipped with a \emph{right-lax} symmetric monoidal structure. 

\medskip

Informally, this means:

\medskip

\begin{itemize}

\item For every $X\in \Sch_k$ we have an object $\Shv(X)\in \DGCat$;

\medskip

\item For every morphism $f:X_1\to X_2$ we have functors $$f_*:\Shv(X_1)\to \Shv(X_2) \text{ and }
f^!:\Shv(X_2)\to \Shv(X_1)$$ that satisfy base change for fiber squares;

\medskip

\item We have a functor $\Shv(X_1)\boxtimes \Shv(X_2)\overset{\boxtimes}\to \Shv(X_1\times X_2)$;

\medskip

\item A homotopy-coherent data of compatibility between the above pieces of structure.

\end{itemize}

\sssec{}

Note that the symmetric monoidal structure on $\Shv(-)$ endows $\Shv(\on{pt})$ with a structure
of symmetric monoidal DG category. Moreover, the functor $\Shv(-)$ naturally factors via
$\Shv(\on{pt})\mmod$. 

\medskip

We will make the following additional assumptions (cf. \cite[Sect. 4.1.1]{GRV}): 

\begin{enumerate} 

\item For every $X\in \Sch_k$, the category $\Shv(X)$ is generated by objects that are 
\emph{totally compact}\footnote{Recall that for a monoidal DG category $\bA$ and $\CM\in \bA\mmod$, an object $m\in \CM$ is said to be totally
compact with respect to $\bA$ if the functor $\uHom_{\CM,\bA}(m,-):\CM\to \bA$ (of inner Hom relative to $\bA$) 
is a map of $\bA$-module categories, see \cite[Sect. B.4]{GRV}.}
with respect to $\Shv(\on{pt})$;

\medskip
 
\item For every $X\in \Sch_k$, the map
$$\Shv(X)\otimes \Shv(X)\to \Shv(\on{pt}),$$
induced by the unit of the canonical self-duality of $X\in \on{Corr}(\Sch_k)$,
is the counit of a self-duality of $\Shv(X)$ as an object of $\Shv(\on{pt})\mmod$. 

\end{enumerate} 

Note that given condition (1), condition (2) can be reformulated as follows: there exists an anti self-equivalence
$$\BD:(\Shv(X)^{\on{tot.comp}})^{\on{op}}\simeq \Shv(X)^{\on{tot.comp}},$$
such that for $\CF\in \Shv(X)^{\on{tot.comp}}$ and $\CF'\in \Shv(X)$,
$$\uHom_{\Shv(X),\Shv(\on{pt})}(\CF,\CF')\simeq (p_X)_*\circ \Delta_X^!(\BD(\CF)\boxtimes \CF'),$$
where:

\begin{itemize}

\item $\uHom_{\Shv(X),\Shv(\on{pt})}(-,-)$ denotes the inner Hom in $\Shv(X)$ relative to the action of $\Shv(\on{pt})$;

\item $\Delta_X$ denotes the diagonal map $X\to X\times X$;

\item $p_X$ denotes the projection $X\to \on{pt}$.

\end{itemize} 

\begin{rem}

In practice, the symmetric monoidal category $\Shv(\on{pt})$ is often semi-rigid
(or even rigid). In this case ``totally compact" is the same as just ``compact".

\end{rem} 

\sssec{}

Let $\Phi:\Shv_1(-)\to \Shv_2(-)$ be a natural transformation between two sheaf theories satisfying 
the above assumptions.

\medskip

Assume that for every $X$, the functor
$$\Phi_X:\Shv_1(X)\to \Shv_2(X)$$
maps objects that are totally compact with respect to $\Shv_1(\on{pt})$ to 
objects that are totally compact with respect to $\Shv_2(\on{pt})$.

\medskip

Note that in this case, for $\CF\in \Shv_1(X)^{\on{tot.comp}}$ we have a tautologically defined map
\begin{equation} \label{e:Phi Verdier}
\BD(\Phi(\CF))\to \Phi(\BD(\CF)). 
\end{equation} 

\medskip

We observe: 

\begin{prop} \label{p:abstract base change}
Assume that the natural transformation \eqref{e:Phi Verdier} is an isomorphism. Then 
the functor
$$\Shv_2(\on{pt})\underset{\Shv_1(\on{pt})}\otimes \Shv_1(X)\to \Shv_2(X)$$
is fully faithful.
\end{prop}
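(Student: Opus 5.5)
Since $\Shv_1(X)$ is generated by objects that are totally compact with respect to $\Shv_1(\on{pt})$, the category $\Shv_2(\on{pt})\underset{\Shv_1(\on{pt})}\otimes \Shv_1(X)$ is generated, as a $\Shv_2(\on{pt})$-module category, by objects of the form $\one\otimes\CF$ with $\CF\in \Shv_1(X)^{\on{tot.comp}}$. These objects are totally compact with respect to $\Shv_2(\on{pt})$. Their relative inner Homs are computed by base change:
$$\uHom_{\Shv_2(\on{pt})\otimes_{\Shv_1(\on{pt})}\Shv_1(X),\Shv_2(\on{pt})}(\one\otimes\CF,\one\otimes\CF')\simeq \Phi_{\on{pt}}\bigl(\uHom_{\Shv_1(X),\Shv_1(\on{pt})}(\CF,\CF')\bigr).$$
The functor $\Phi_X$ sends them to totally compact objects by hypothesis. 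So I plan to reduce the statement to the claim that for $\CF,\CF'\in\Shv_1(X)^{\on{tot.comp}}$ the natural map
$$\Phi_{\on{pt}}\bigl(\uHom_{\Shv_1(X),\Shv_1(\on{pt})}(\CF,\CF')\bigr)\to \uHom_{\Shv_2(X),\Shv_2(\on{pt})}(\Phi_X(\CF),\Phi_X(\CF'))$$
is an isomorphism. This is legitimate by the standard criterion for a $\Shv_2(\on{pt})$-linear functor between module categories generated by totally compact objects: such a functor is fully faithful if it preserves totally compact generators and induces isomorphisms on relative inner Homs between them. This holds because the plain $\Hom$ is obtained from the relative inner Hom by applying $\Hom_{\Shv_2(\on{pt})}(\one,-)$, and $\Shv_2(\on{pt})$-translates of the generators generate.

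For the second step, I rewrite both sides using self-duality (condition (2)). The left-hand side is
$$\Phi_{\on{pt}}\bigl((p_X)_*\Delta_X^!(\BD\CF\boxtimes\CF')\bigr).$$
Since $\Phi$ is a natural transformation of sheaf theories on $\on{Corr}(\Sch_k)$, it commutes with $(p_X)_*$, $\Delta_X^!$ and (lax-monoidally) with $\boxtimes$. So the left-hand side maps to
$$(p_X)_*\Delta_X^!\bigl(\Phi_X(\BD\CF)\boxtimes\Phi_X(\CF')\bigr).$$
The right-hand side is
$$(p_X)_*\Delta_X^!\bigl(\BD(\Phi_X\CF)\boxtimes\Phi_X(\CF')\bigr).$$
The comparison map between them is induced by \eqref{e:Phi Verdier}, $\BD(\Phi(\CF))\to\Phi(\BD(\CF))$, which is an isomorphism by assumption. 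What remains is to check that the map induced on relative inner Homs agrees with this composite. That reduces to the compatibility of the duality data under $\Phi$: the map \eqref{e:Phi Verdier} is tautologically defined precisely from the evaluation pairing, which $\Phi$ transports.

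I expect the main obstacle to be justifying the compatibility of $\Phi$ with $\boxtimes$. Monoidality is only right-lax: the map $\Phi(\CF_1)\boxtimes\Phi(\CF_2)\to\Phi(\CF_1\boxtimes\CF_2)$ need not obviously be invertible. I would avoid needing it in general by working directly with the counit pairing $\Shv(X)\otimes\Shv(X)\to\Shv(\on{pt})$ in condition (2), which $\Phi$ intertwines as a natural transformation on correspondences. Then the two maps to be compared are both expressed via this counit, and only the identification \eqref{e:Phi Verdier} is used. The rest is bookkeeping of adjunctions.
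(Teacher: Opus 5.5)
Your proposal is correct and is essentially the paper's argument. The paper proves the abstract version (\propref{p:Phi Verdier abstract}) by reducing to $\uHom(\one_{\bA'}\otimes m,\one_{\bA'}\otimes m_1)$ for totally compact $m$, rewriting both sides through the pairings with $\BD(m)$ and $\BD(\Phi_\CM(m))$, and observing that after composing with the isomorphism \eqref{e:Phi Verdier abstract} the comparison map becomes the isomorphism expressing the commutativity of \eqref{e:abstract functor and duality}. Your worry about $\boxtimes$ is settled the way you propose, but note that this does not actually avoid the compatibility with $\boxtimes$: the fact that $\Phi$ intertwines the pairing $\Shv(X)\otimes\Shv(X)\to\Shv(\on{pt})$, which is built from $\boxtimes$, is exactly the commutative square the paper takes as part of $\Phi$ being a transformation of symmetric monoidal sheaf theories, and nothing beyond it is needed.
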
 

\ssec{Proof of \propref{p:abstract base change}}

\sssec{} \label{sss:ULA 1}

Let $\bA$ be a monoidal category; let $\CM$ be a dualizable $\bA$-module category, and let 
$m\in \CM^{\on{tot.comp}}$. Denote by $\BD(m)$ the object of $\CM^\vee$ so that
$$\uHom_{\CM,\bA}(m,m')=\langle m',\BD(m) \rangle, \quad m'\in \CM,$$
where:

\begin{itemize}

\item $\uHom_{\CM,\bA}(-,-)$ denotes the inner Hom in $\CM$ relative to $\bA$;

\item $\CM^\vee\in \bA^{\on{rev}}\mmod$ denotes the dual of $\CM$;

\item $\langle-,-\rangle$ denotes the pairing $\CM\otimes \CM^\vee\to \bA$.

\end{itemize}

\sssec{}  \label{sss:ULA 2}

Let now $\Phi_\bA:\bA\to \bA'$ be a homomorphism. Then the object
$$\one_{\bA'}\otimes m\in \bA'\underset{\bA}\otimes \CM$$
is totally compact and we have
$$\BD(\one_{\bA'}\otimes m)\simeq \BD(m)\otimes \one_{\bA'}$$
as objects of
$$\CM^\vee\underset{\bA}\otimes \bA'\simeq (\bA'\underset{\bA}\otimes \CM)^\vee.$$

\sssec{}

Let now $\CM'$ be an object of $\bA'\mmod$, and let us be given a functor
$$\Phi_\CM:\CM\to \CM'$$
as $\bA$-module categories.

\medskip

Suppose that $\CM'$ is also dualizable, and let us be given a functor
$$\Phi_{\CM^\vee}:\CM^\vee\to \CM'{}^\vee$$
of $\bA^{\on{rev}}$-module categories, so that the diagram
\begin{equation} \label{e:abstract functor and duality}
\CD
\CM \otimes \CM^\vee @>{\langle -,-\rangle}>> \bA \\ 
@V{\Phi_{\CM}\otimes \Phi_{\CM^\vee}}VV @VV{\Phi_\bA}V \\
\CM' \otimes \CM'{}^\vee @>{\langle -,-\rangle}>> \bA'
\endCD
\end{equation} 
commutes.

\medskip

Suppose that $\Phi_\CM$ maps totally compact objects to totally compact objects. From 
\eqref{e:abstract functor and duality} we obtain a map
\begin{equation} \label{e:Phi Verdier abstract}
\BD(\Phi_{\CM}(m))\to \Phi_{\CM^\vee}(\BD(m)), \quad m\in \CM^{\on{tot.comp}}.
\end{equation}

\sssec{}

We claim: 

\begin{prop} \label{p:Phi Verdier abstract}
Suppose that $\CM$ is generated by totally compact objects and suppose that
the natural transformation \eqref{e:Phi Verdier abstract} is an isomorphism.
Then the induced functor
$$\Phi'_{\CM}:\bA'\underset{\bA}\otimes \CM\to \CM'$$
is fully faithful.
\end{prop}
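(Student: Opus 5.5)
Since $\CM$ is generated by totally compact objects, the category $\bA'\underset{\bA}\otimes \CM$ is generated, as an $\bA'$-module category, by the objects $\one_{\bA'}\otimes m$ with $m\in \CM^{\on{tot.comp}}$. By \secref{sss:ULA 2}, these objects are totally compact. The functor $\Phi'_\CM$ is $\bA'$-linear and continuous, and it sends $\one_{\bA'}\otimes m$ to $\Phi_\CM(m)$, which is again totally compact by assumption. So the plan is to reduce fully faithfulness to a comparison of relative inner Homs on these generators. Concretely, I would show that for $m\in \CM^{\on{tot.comp}}$ and an arbitrary $n\in \bA'\underset{\bA}\otimes \CM$, the map
$$\uHom_{\bA'\underset{\bA}\otimes\CM,\bA'}(\one_{\bA'}\otimes m,n)\to \uHom_{\CM',\bA'}(\Phi_\CM(m),\Phi'_\CM(n))$$
is an isomorphism in $\bA'$. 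Applying $\Hom_{\bA'}(a',-)$ and using that the objects $a'\otimes \one_{\bA'}\otimes m$ generate the source, this gives fully faithfulness.

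To compare the two inner Homs, I would write both via duality. By \secref{sss:ULA 1}–\ref{sss:ULA 2}, the left-hand side is $\langle n,\BD(m)\otimes\one_{\bA'}\rangle$, where the pairing is the base-changed pairing $(\bA'\underset{\bA}\otimes\CM)\otimes(\CM^\vee\underset{\bA}\otimes\bA')\to\bA'$. The right-hand side is $\langle \Phi'_\CM(n),\BD(\Phi_\CM(m))\rangle'$. Using the isomorphism \eqref{e:Phi Verdier abstract}, the latter becomes $\langle \Phi'_\CM(n),\Phi_{\CM^\vee}(\BD(m))\rangle'$.

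It remains to identify the two resulting functors of $n$:
$$n\mapsto \langle n,\BD(m)\otimes\one_{\bA'}\rangle \quad\text{and}\quad n\mapsto\langle \Phi'_\CM(n),\Phi_{\CM^\vee}(\BD(m))\rangle'.$$
Both are continuous and $\bA'$-linear, so it suffices to compare them on the generators $n=\one_{\bA'}\otimes m'$ with $m'\in\CM$. There the first becomes $\Phi_\bA(\langle m',\BD(m)\rangle)$ and the second becomes $\langle\Phi_\CM(m'),\Phi_{\CM^\vee}(\BD(m))\rangle'$. These agree by the commutativity of \eqref{e:abstract functor and duality}.

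The main obstacle is bookkeeping: one has to check that the isomorphism obtained this way is indeed the map induced by $\Phi'_\CM$ on inner Homs, and not merely some abstract isomorphism. I would handle this by observing that the map \eqref{e:Phi Verdier abstract} was itself defined from \eqref{e:abstract functor and duality} as the mate of the functoriality map on inner Homs. Unwinding this adjunction shows that the composite above coincides with the map induced by $\Phi'_\CM$. The remaining point, that $\bA'$-linear continuous functors agreeing on generators agree, is routine.
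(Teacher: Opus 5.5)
Your proposal is correct and follows essentially the same route as the paper: reduce to relative inner Homs out of the totally compact generators $\one_{\bA'}\otimes m$, and rewrite both sides via the duality pairings using \secref{sss:ULA 2} and the isomorphism \eqref{e:Phi Verdier abstract}. Then test on objects $\one_{\bA'}\otimes m_1$ and identify the resulting map with the isomorphism expressing the commutativity of \eqref{e:abstract functor and duality}. The differences are cosmetic: you apply the Verdier isomorphism before restricting to such generators, while the paper applies it afterwards, and your bookkeeping remark is what the paper covers with ``unwinding''.
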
 

It is clear that the assertion of  \propref{p:abstract base change} is a particular case of that of \propref{p:Phi Verdier abstract}. 

\sssec{Proof of \propref{p:Phi Verdier abstract}}

It suffices to show that for $m\in \CM^{\on{tot.comp}}$ and $m'\in \bA'\underset{\bA}\otimes \CM$, the map
\begin{multline} \label{e:Phi Verdier abstract 1}
\uHom_{\bA'\underset{\bA}\otimes \CM,\bA'}(\one_{\bA'}\otimes m,m') \to 
\uHom_{\CM',\bA'}(\Phi'_{\CM}(\one_{\bA'}\otimes m),\Phi'_{\CM}(m'))\simeq \\
\simeq \uHom_{\CM',\bA'}(\Phi_\CM(m),\Phi'_{\CM}(m'))
\end{multline}
is an isomorphism.

\medskip

By \secref{sss:ULA 2}, we rewrite the above map as
\begin{equation} \label{e:Phi Verdier abstract 2}
\langle m',\BD(m)\otimes \one_{\bA'}\rangle \to \langle \Phi'_{\CM}(m'),\BD(\Phi_\CM(m))\rangle.
\end{equation} 

To prove that \eqref{e:Phi Verdier abstract 2} is an isomorphism, we can take $m'$ of the form $\one_{\bA'}\otimes m_1$ for $m_1\in \CM$. 
In this case, the map \eqref{e:Phi Verdier abstract 2} becomes
\begin{equation} \label{e:Phi Verdier abstract 3}
\Phi_\bA(\langle m_1,\BD(m)\rangle) \to \langle \Phi_\CM(m_1),\BD(\Phi_\CM(m))\rangle.
\end{equation} 

When we compose the latter map with (the isomorphism) \eqref{e:Phi Verdier abstract} we obtain a map
\begin{equation} \label{e:Phi Verdier abstract 4}
\Phi_\bA(\langle m_1,\BD(m)\rangle) \to  \langle \Phi'_{\CM}(m_1),\Phi_{\CM^\vee}(\BD(m))\rangle.
\end{equation} 

However, unwinding, we obtain that the map \eqref{e:Phi Verdier abstract 4} is the isomorphism that expresses
the commutativity of \eqref{e:abstract functor and duality}. 

\qed[ \propref{p:Phi Verdier abstract}]

\ssec{Weil sheaves}

\sssec{} \label{sss:finite field}

Let $X_0$ be a scheme of finite type over $\BF_q$, and let $X$ be its base change to $\ol\BF_q$. Let
$\Shv(X)$ denote the category of $\ell$-adic sheaves on $X$, as defined in \cite[Sect. 1.1]{AGKRRV}.

\medskip

By construction,
$$\Shv(X)\simeq \on{Ind}(\Shv(X)^{\on{constr}}).$$

The category $\Shv(X)$ carries a canonical t-structure, whose heart is the category $\on{Ind}(\on{Perv}(X))$,
where $\on{Perv}(X)$ is the abelian category of perverse sheaves on $X$, as defined in \cite{BBD}. 

\medskip

Recall (see \cite[Theorem 1.1.6]{AGKRRV}) that $\Shv(X)$ is left-complete in its t-structure. 

\sssec{}

Let $\Frob$ denote the geometric Frobenius acting on $X$. Consider the resulting automorphism
$\Frob^*$ of $\Shv(X)$.

\medskip

Define $\Shv^{\on{weak-Weil}}(X)$ to be the category of pairs $(\CF,\alpha)$, where $\CF\in \Shv(X)$
and $\alpha$ is a map $\Frob^*(\CF)\to \CF$, which by adjunction is the same as a map
\begin{equation} \label{e:other alpha}
\CF\to \Frob_*(\CF).
\end{equation} 

\medskip

We have a monadic adjunction
$$\ind_{\on{weak-Weil}}:\Shv(X)\rightleftarrows \Shv^{\on{weak-Weil}}(X):\oblv_{\on{weak-Weil}},$$
where $\ind_{\on{weak-Weil}}$ sends $\CF$ to
$$\underset{n\geq 0}\oplus\, (\Frob^n)_*(\CF),$$
equipped with the natural weak Weil structure.

\sssec{} \label{sss:weak Weil t}

Since the above monad on $\Shv(X)$ is t-exact, we obtain that $\Shv^{\on{weak-Weil}}(X)$ carries
a unique t-structure, so that both functors $\ind_{\on{weak-Weil}}$ and $\oblv_{\on{weak-Weil}}$ are t-exact. 

\medskip

It follows formally that the category $\Shv^{\on{weak-Weil}}(X)$ is also left-complete in its t-structure.

\sssec{}

Let 
$$\Shv^{\on{Weil}}(X)\subset \Shv^{\on{weak-Weil}}(X)$$
be the full subcategory consisting of objects $(\CF,\alpha)$, for which $\alpha$ is an isomorphism. 
Since $\Frob$ is an equivalence, this is the same as requiring that \eqref{e:other alpha} be an isomorphism.

\medskip

We have a localization
$$\jmath^*:\Shv^{\on{weak-Weil}}(X)\rightleftarrows \Shv^{\on{Weil}}(X):\jmath_*,$$
where $\jmath^*$ sends 
$$(\CF,\alpha)\mapsto \underset{n}{\on{colim}}\, (\Frob^n)_*(\CF),$$
with the transition maps induced by $\alpha$, and where the colimit is equipped with a natural
Frobenius-equivariant structure. 

\medskip

Denote 
$$\ind_{\on{Weil}}=\jmath^*\circ \ind_{\on{weak-Weil}} \text{ and } \oblv_{\on{Weil}}:=\oblv_{\on{weak-Weil}}\circ \jmath_*.$$

The same logic as in \secref{sss:weak Weil t} applies, and we obtain that $\Shv^{\on{Weil}}(X)$ carries a unique t-structure,
for which the functors $\ind_{\on{Weil}}$ and $\oblv_{\on{Weil}}$ are t-exact. Moreover, $\Shv^{\on{Weil}}(X)$ is left-complete 
in its t-structure. 

\sssec{}

Set
$$\Shv^{\on{Weil}}(X)^{\on{constr}}:=\Shv^{\on{Weil}}(X)\underset{\Shv(X)}\times \Shv(X)^{\on{constr}}.$$

Define
$$\Shv^{\on{Weil,loc.fin}}(X):=\on{Ind}(\Shv^{\on{Weil}}(X)^{\on{constr}}).$$

\medskip

We have a tautologically defined functor:

\begin{equation} \label{e:Weil loc fin}
\Shv^{\on{Weil,loc.fin}}(X)\to \Shv^{\on{Weil}}(X).
\end{equation}

By a slight abuse of notation, we will denote by the same symbol $\oblv_{\on{Weil}}$ 
the resulting functor
$$\Shv^{\on{Weil,loc.fin}}(X)\to \Shv(X).$$

\sssec{}

We claim:

\begin{prop} \label{p:Weil loc fin} \hfill

\smallskip

\noindent{\em(a)}
The functor \eqref{e:Weil loc fin} preserves compactness;

\smallskip

\noindent{\em(b)}
The functor \eqref{e:Weil loc fin} is fully faithful;

\end{prop}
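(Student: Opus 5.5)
The plan is to reduce both assertions to one fact: objects of $\Shv^{\on{Weil}}(X)^{\on{constr}}$ are compact in $\Shv^{\on{Weil}}(X)$. Granting this, (a) holds because the functor \eqref{e:Weil loc fin} is the ind-extension of the tautological inclusion
$$\Shv^{\on{Weil}}(X)^{\on{constr}}\hookrightarrow \Shv^{\on{Weil}}(X),$$
and compact objects of $\on{Ind}(\Shv^{\on{Weil}}(X)^{\on{constr}})$ are retracts of objects of $\Shv^{\on{Weil}}(X)^{\on{constr}}$. For (b), note that this inclusion is fully faithful by definition, since $\Shv^{\on{Weil}}(X)^{\on{constr}}$ is defined as a fiber product, i.e.\ a full subcategory. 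The ind-extension of a fully faithful functor whose essential image consists of compact objects is fully faithful. Indeed, for $\CF\in \Shv^{\on{Weil}}(X)^{\on{constr}}$ and a filtered colimit $\underset{i}{\on{colim}}\,\CF_i$ of such objects, we have
$$\Hom\bigl(\CF,\underset{i}{\on{colim}}\,\CF_i\bigr)\simeq \underset{i}{\on{colim}}\,\Hom(\CF,\CF_i)$$
on both sides, and the general case follows by passing to colimits in the first variable.

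To prove compactness, I will first describe mapping spaces in $\Shv^{\on{Weil}}(X)$. Objects are pairs $(\CF,\alpha)$ with $\alpha:\Frob^*(\CF)\simeq \CF$, i.e.\ $\Shv^{\on{Weil}}(X)$ is the equalizer (fixed points) of $\on{Id}$ and $\Frob^*$ on $\Shv(X)$. This gives
$$\Hom_{\Shv^{\on{Weil}}(X)}((\CF,\alpha),(\CF',\alpha'))\simeq \on{fib}\Bigl(\Hom_{\Shv(X)}(\CF,\CF')\xrightarrow{\ f\mapsto \alpha'\circ\Frob^*(f)\circ\alpha^{-1}-f\ }\Hom_{\Shv(X)}(\CF,\CF')\Bigr).$$
One can verify this either directly from the fixed-point description, or via the adjunction $(\ind_{\on{Weil}},\oblv_{\on{Weil}})$ applied to the free resolution of $(\CF,\alpha)$ by $\ind_{\on{Weil}}$.

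Next, I need that $\oblv_{\on{Weil}}:\Shv^{\on{Weil}}(X)\to \Shv(X)$ commutes with colimits. Since $\Frob^*$ is a continuous equivalence, the fixed-point category is a limit in $\DGCat$ of a diagram of continuous functors. Hence colimits in it are computed on the underlying objects of $\Shv(X)$; alternatively, this follows since $\oblv_{\on{Weil}}$ has the left adjoint $\ind_{\on{Weil}}$ and $\jmath_*$ is continuous, because $\jmath^*$ is given by a sequential colimit.

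Combining these, for $\CF$ constructible the functor $\Hom_{\Shv(X)}(\CF,\oblv_{\on{Weil}}(-))$ commutes with filtered colimits, since $\CF$ is compact in $\Shv(X)=\on{Ind}(\Shv(X)^{\on{constr}})$. Taking fibers commutes with filtered colimits, so $\Hom_{\Shv^{\on{Weil}}(X)}((\CF,\alpha),-)$ does as well, i.e.\ $(\CF,\alpha)$ is compact.

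The only step requiring real care is the continuity of $\oblv_{\on{Weil}}$, together with identifying the Hom-formula as the fiber above. I would handle both by realizing $\Shv^{\on{Weil}}(X)$ as the limit $\Shv(X)^{\BZ}$ for the $\BZ$-action generated by $\Frob^*$, computed in $\DGCat$ (continuous functors), where both facts are standard.
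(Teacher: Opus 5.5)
Your proof is correct and is essentially the paper's argument. The paper first passes to $\Shv^{\on{weak-Weil}}(X)$, using that the localization functor $\jmath^*$ preserves compactness, and there writes $(\CF,\alpha)$ as the cofiber of $\ind_{\on{weak-Weil}}(\Frob_*(\CF))\to \ind_{\on{weak-Weil}}(\CF)$. That cofiber sequence is the dual form of your description of $\Hom$ out of $(\CF,\alpha)$ as a fiber of a map $\Hom_{\Shv(X)}(\CF,-)\to \Hom_{\Shv(X)}(\CF,-)$. You instead work directly in the $\BZ$-fixed-point category and use continuity of $\oblv_{\on{Weil}}$, with (b) deduced from (a) by the same formal argument as in the paper.
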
 

\begin{proof}

To prove point (a), it suffices to show that if an object $(\CF,\alpha)\in \Shv^{\on{Weil}}(X)$ 
is such that $\CF\in \Shv(X)$ is compact (i.e., lies in $\Shv(X)^{\on{constr}}$), then this
object is compact. Since $(\jmath^*,\jmath_*)$ is a localization, it suffices to prove the same assertion for 
 $\Shv^{\on{weak-Weil}}(X)$. 

\medskip

However, this follows from the fact that $(\CF,\alpha)$ can be written as
$$\on{cofib}(\ind_{\on{weak-Weil}}(\Frob_*(\CF))\to \ind_{\on{weak-Weil}}(\CF)),$$
for the map obtained by the $(\ind_{\on{weak-Weil}},\oblv_{\on{weak-Weil}})$-adjunction from the map
$$\Frob_*(\CF)\to \underset{n\geq 0}\oplus\, (\Frob^n)_*(\CF)=\oblv_{\on{weak-Weil}}\circ \ind_{\on{weak-Weil}}(\CF).$$

\medskip

Point (b) follows formally from point (a), since the functor \eqref{e:Weil loc fin} is by definition fully faithful on compacts.

\end{proof}

\sssec{}

The inclusion
$$\Shv^{\on{Weil,loc.fin}}(X)^c=\Shv^{\on{Weil}}(X)^{\on{constr}}\hookrightarrow \Shv^{\on{Weil}}(X)$$
is stable under the truncations with respect to the t-structure. Hence $\Shv^{\on{Weil,loc.fin}}(X)^c$ inherits
a t-structure, which in turn induces a t-structure on $\Shv^{\on{Weil,loc.fin}}(X)$.

\medskip

By construction, the functor \eqref{e:Weil loc fin} is t-exact, and hence so is the functor 
$$\oblv_{\on{Weil}}:\Shv^{\on{Weil,loc.fin}}(X)\to \Shv(X).$$

Since the latter functor is conservative, an object of $\Shv^{\on{Weil,loc.fin}}(X)$ is connective/coconnective
if and only if its image under $\oblv_{\on{Weil}}$ is connective/coconnective. 

\sssec{} \label{sss:weil shv wt}

We now introduce two more categories that we will use later. 

\medskip

One is:
$$\Shv^{\on{Weil,wt}}(X):=\on{Ind}(\Shv^{\on{Weil,wt}}(X)^{\on{constr}}),$$
where 
$$\Shv^{\on{Weil,wt}}(X)^{\on{constr}}\subset \Shv^{\on{Weil}}(X)^{\on{constr}}$$
is a full subcategory that consists of objects, whose 
perverse cohomologies have irreducible subquotients that are pure of some integral weight.

\medskip

Note that for $X_0=\on{pt}$, the category $\Shv^{\on{Weil,wt}}(X)$ is by definition $\Rep(\BZ^{\on{alg,wt}})$. 

\medskip

By construction, the tautological functor
$$\Shv^{\on{Weil,wt}}(X)\to \Shv^{\on{Weil,loc.fin}}(X)$$
preserves compactness and hence is fully faithful. 

\sssec{}

The other is:
$$\Shv^{\on{Weil},0}(X):=\on{Ind}(\Shv^{\on{Weil},0}(X)^{\on{constr}}),$$
where 
$$\Shv^{\on{Weil},0}(X)^{\on{constr}}\subset \Shv^{\on{Weil}}(X)^{\on{constr}}$$
is a full subcategory that consists of objects, whose 
perverse cohomologies have irreducible subquotients that are pure of weight $0$.

\medskip

Note that for $X_0=\on{pt}$, the category $\Shv^{\on{Weil},0}(X)$ is by definition $\Rep(\BZ^{\on{alg},0})$. 

\medskip

By construction, the tautological functor
$$\Shv^{\on{Weil},0}(X)\to \Shv^{\on{Weil,loc.fin}}(X)$$
preserves compactness and hence is fully faithful. 

\sssec{}

The t-structure on $\Shv^{\on{Weil,loc.fin}}(X)^c$ induces t-structures on the (small) categories $$\Shv^{\on{Weil,wt}}(X)^{\on{constr}} 
\text{ and } \Shv^{\on{Weil},0}(X)^{\on{constr}},$$ which in turn induce t-structures on 
$$\Shv^{\on{Weil,wt}}(X) \text{ and } \Shv^{\on{Weil},0}(X),$$
respectively. 

\medskip

By construction, the embeddings
$$\Shv^{\on{Weil},0}(X)\hookrightarrow \Shv^{\on{Weil,wt}}(X) \hookrightarrow \Shv^{\on{Weil,loc.fin}}(X)$$
are t-exact. 

\ssec{The category of relevant sheaves} \label{ss:relev}

\sssec{}

In the setting of \secref{sss:ground}, take $k=\BF_q$. We will consider several sheaf theories associated to this situation.
In the notations of \secref{sss:finite field}, each sends $X_0\in \Sch_{/\BF_q}$ to the following category: 

\medskip

\begin{enumerate}

\item $\Shv(X)$; 

\medskip

\item $\Shv^{\on{Weil,loc.fin}}(X)$;

\medskip

\item $\Shv^{\on{Weil,wt}}(X)$, which is the full subcategory of $\Shv^{\on{Weil,loc.fin}}(X)$ consisting of objects $\CF$, such that for every $x\in |X|$, 
the eigenvalues of $\Frob_x$ acting on the *-stalks of irreducible subquotients of perverse cohomologies of $\CF$ are $q_x$-Weil numbers.

\end{enumerate} 

\medskip

Evaluating on $X_0=\on{pt}$, the above examples produce the categories
$$\Vect_{\sfe},\,\, \Rep(\BZ^{\on{alg}}) \text{ and } \Rep(\BZ^{\on{alg,wt}}),$$
respectively. 

\begin{rem} 

The above definition of $\Shv^{\on{Weil,wt}}(X)$ is a variant of the (usual) category $\Shv^{\on{Weil,mxd}}(X)$, the difference being
that we require the eigenvalues to be Weil numbers as opposed to mock Weil numbers (see \secref{sss:Weil num} for what this 
means). 

\medskip

The fact that $\Shv^{\on{Weil,mxd}}(X)$ is a sheaf theory is a consequence of Weil-II (specifically, Theorems 3.3.1 and 6.1.2
in \cite{De2}). 

\medskip

The fact that $\Shv^{\on{Weil,wt}}(X)$ is a sheaf theory is obtained by combining \cite[Theorem 5.2.2]{De1}
(stability under !-puhforwards) and the ``de finitude" trick of \cite[Sects. 6.1.4-6.1.10]{De2} (stability under *-puhforwards). 

\medskip 

That said, the categories $\Shv^{\on{Weil,mxd}}(X)$ and $\Shv^{\on{Weil,wt}}(X)$ are expressible through one another.
Namely, the proof of \thmref{t:LL} below shows that the natural functor
$$\Rep(\BZ^{\on{alg,mxd}})\underset{\Rep(\BZ^{\on{alg,wt}})}\otimes \Shv^{\on{Weil,wt}}(X)\to \Shv^{\on{Weil,mxd}}(X)$$
is an equivalence, where $\BZ^{\on{alg,mxd}}$ is a variant of $\Rep(\BZ^{\on{alg,wt}})$, where we replace ``Weil" by ``mock Weil".

\end{rem}

\sssec{}

Note also that we have the natural transformations
$$\Shv(-)^{\on{Weil,wt}}\to \Shv(-)^{\on{Weil,loc.fin}} \to \Shv(-)$$
satisfying the assumptions of \propref{p:abstract base change}.  

\medskip

Hence, we obtain the fully faithful functors
\begin{equation} \label{e:tensor up 1}
\Rep(\BZ^{\on{alg}}) \underset{\Rep(\BZ^{\on{alg,wt}})}\otimes\Shv^{\on{Weil,wt}}(X)\to \Shv^{\on{Weil,loc.fin}}(X),
\end{equation} 
\begin{equation} \label{e:tensor up 2}
\Vect_{\sfe}\underset{\Rep(\BZ^{\on{alg}})}\otimes\Shv^{\on{Weil,loc.fin}}(X)\to \Shv(X),
\end{equation} 
and 
\begin{equation} \label{e:tensor up 3}
\Vect_{\sfe}\underset{\Rep(\BZ^{\on{alg,wt}})}\otimes\Shv^{\on{Weil,wt}}(X)\to \Shv(X).
\end{equation} 

\sssec{}

Let 
$$\Shv^{\on{relev}}(X)\subset \Shv(X)$$
denote the essential image of the functor \eqref{e:tensor up 2}.

\medskip

We claim:

\begin{lem} \label{l:rel gener}
The category $\Shv^{\on{relev}}(X)$ is compactly generated by irreducible
objects $\CF\in  \Perv(X)$ for which there exists an integer $n$ such that $(\Frob^n)^*(\CF)\simeq \CF$.
\end{lem}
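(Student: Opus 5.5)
The plan is to identify a set of compact generators of $\Shv^{\on{relev}}(X)$ coming from $\Shv^{\on{Weil,loc.fin}}(X)$, and then compare that set with the set of irreducible $\Frob$-periodic perverse sheaves in both directions.

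\emph{Generators of $\Shv^{\on{relev}}(X)$.} The functor \eqref{e:tensor up 2} is fully faithful and continuous (by \propref{p:abstract base change}). Its source is compactly generated by the objects $\one\otimes \CF$, where $\CF$ runs over $\Shv^{\on{Weil}}(X)^{\on{constr}}$. The images of these objects are the sheaves $\oblv_{\on{Weil}}(\CF)\in \Shv(X)^{\on{constr}}$, and these are compact in $\Shv(X)$. It follows that $\Shv^{\on{relev}}(X)$ is a full subcategory of $\Shv(X)$ which is closed under colimits, and in particular under retracts. It also follows that $\Shv^{\on{relev}}(X)$ is compactly generated by the objects $\oblv_{\on{Weil}}(\CF)$ for $\CF\in \Shv^{\on{Weil}}(X)^{\on{constr}}$. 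Denote by $\bS$ the set of irreducible $\CF\in \Perv(X)$ with $(\Frob^n)^*(\CF)\simeq \CF$ for some $n\geq 1$. Objects of $\bS$ are compact in $\Shv(X)$. It therefore suffices to prove two inclusions: each $\oblv_{\on{Weil}}(\CF)$ lies in the full subcategory generated by $\bS$ under finite colimits, shifts and retracts; and $\bS\subset \Shv^{\on{relev}}(X)$.

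\emph{First inclusion.} Let $\CF$ be a constructible Weil sheaf. Then $\oblv_{\on{Weil}}(\CF)$ has finitely many nonzero perverse cohomologies, so it is an iterated extension of their shifts. Each perverse cohomology $\CP$ is a perverse sheaf of finite length, and the Weil structure gives an isomorphism $\Frob^*(\CP)\simeq \CP$, since the t-structure is Frobenius-invariant. Consequently $\Frob^*$ permutes the finite multiset of Jordan--H\"older constituents of $\CP$. Every constituent then has a finite orbit under $\Frob^*$, which means it lies in $\bS$. Since $\CP$ is an iterated extension of its constituents, it lies in the subcategory generated by $\bS$, and hence so does $\oblv_{\on{Weil}}(\CF)$.

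\emph{Second inclusion.} Let $\CF\in\bS$ and fix an isomorphism $\phi:(\Frob^n)^*(\CF)\simeq \CF$. Set
$$\CG:=\underset{i=0}{\overset{n-1}\oplus}\,(\Frob^i)^*(\CF),$$
and equip it with the isomorphism $\Frob^*(\CG)\to \CG$ which shifts the summands cyclically. On the summands $i<n-1$ this map is the identity, and on the last summand it is $\phi$, which sends $(\Frob^n)^*\CF$ back to the summand $i=0$. This makes $\CG$ a constructible Weil sheaf, so $\oblv_{\on{Weil}}(\CG)=\CG$ lies in $\Shv^{\on{relev}}(X)$. Since $\CF$ is a direct summand of $\CG$ and $\Shv^{\on{relev}}(X)$ is closed under retracts, we get $\CF\in \Shv^{\on{relev}}(X)$.

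No step here looks hard. The one place that needs care is the first step, namely justifying that the essential image of \eqref{e:tensor up 2} is closed under colimits and retracts and is generated by the sheaves $\oblv_{\on{Weil}}(\CF)$. This follows from full faithfulness together with the fact that these sheaves are compact in $\Shv(X)$. The cyclic construction of the Weil structure on $\CG$ is routine.
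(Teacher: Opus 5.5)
Your proof is correct, but it takes a different route from the paper.

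\textbf{The paper's route.} The paper takes as generators the images of irreducible objects of $\Shv^{\on{Weil,loc.fin}}(X)^\heartsuit$. It then asserts, as ``easy to see'', that the underlying perverse sheaf of every irreducible Weil perverse sheaf has the form \eqref{e:power Frob}. That is, it is a multiplicity-free sum over a single Frobenius orbit of an irreducible $\CF$ of exact period $n$. This structural fact implicitly rests on a Clifford-type argument:
\begin{enumerate}
\item the socle of the underlying perverse sheaf is $\Frob^*$-stable, so it is everything by irreducibility;
\item the isotypic components then form one orbit;
\item multiplicity one uses that $\sfe$ is algebraically closed.
\end{enumerate}
The converse, that each such orbit sum carries a Weil structure, is left implicit.

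\textbf{Your route.} You take all constructible Weil sheaves as generators. You use only that $\Frob^*$ permutes the finite multiset of Jordan--H\"older constituents of a $\Frob^*$-stable perverse sheaf. This is more elementary: it needs neither the semisimplicity statement nor the t-structure on the Weil category. You also make the reverse inclusion explicit through the cyclic Weil structure.

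\textbf{What each buys.} The paper's route yields the explicit description \eqref{e:power Frob} of irreducible Weil perverse sheaves, in both directions. The paper reuses this later: in \propref{p:t on relev}, in \lemref{l:act on sing supp}, and in Step 4 of the proof of \propref{p:relev 0}. Your argument proves the lemma itself and the fact that each orbit sum is Weil. It does not show the other direction, namely that every irreducible Weil perverse sheaf is of this form.
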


\begin{proof}

By construction, $\Shv^{\on{relev}}(X)$ is generated by images under $\oblv_{\on{Weil}}$ of irreducible
objects in $\Shv^{\on{Weil,loc.fin}}(X)^\heartsuit$, i.e., irreducible Weil perverse sheaves. It is easy to
see that any such is of the form 
\begin{equation} \label{e:power Frob}
\CF\oplus \Frob^*(\CF)\oplus...\oplus (\Frob^{n-1})^*(\CF),
\end{equation} 
where $\CF$ is an irreducible object in $\Perv(X)$ such that $(\Frob^n)^*(\CF)\simeq \CF$,
while $(\Frob^m)^*(\CF)\not\simeq \CF$ for $m<n$. 

\end{proof}

\begin{cor} \label{c:t relev}  \hfill

\smallskip

\noindent{\em(a)}
The inclusion $\Shv^{\on{relev}}(X)\hookrightarrow \Shv(X)$ is compatible with the t-structure on $\Shv(X)$.

\smallskip

\noindent{\em(b)}
The induced t-structure on $\Shv^{\on{relev}}(X)$ preserves the subcategory of compact objects. 

\smallskip

\noindent{\em(c)} The irreducible objects in $\Shv^{\on{relev}}(X)^\heartsuit$ are those described in
\lemref{l:rel gener}.

\end{cor}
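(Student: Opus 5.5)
The plan is to derive all three parts from \lemref{l:rel gener}, using the following class of objects. Let $S$ denote the set of irreducible $\CF\in\Perv(X)$ with $(\Frob^n)^*(\CF)\simeq\CF$ for some $n$. Let $\Shv^{\on{relev}}(X)^{c}_{S}\subset\Shv(X)^{\on{constr}}$ be the smallest full subcategory containing $S$ that is closed under cones, shifts and retracts. By \lemref{l:rel gener}, $\Shv^{\on{relev}}(X)\simeq\on{Ind}(\Shv^{\on{relev}}(X)^{c}_{S})$. Since $\Shv(X)^{\on{constr}}$ consists of compact objects, this subcategory is exactly the category of compact objects of $\Shv^{\on{relev}}(X)$.

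The first key step is to describe $\Shv^{\on{relev}}(X)^{c}_{S}$ concretely: it consists of those constructible $\CG$ all of whose perverse cohomologies have Jordan--H\"older constituents in $S$.

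\begin{itemize}
\item The class of such $\CG$ is closed under cones, by the long exact sequence of perverse cohomology and the fact that $\Perv(X)$ has finite length. It is also closed under shifts and retracts, and it contains $S$. Hence it contains $\Shv^{\on{relev}}(X)^{c}_{S}$.
\item Conversely, induct on the number of nonzero perverse cohomologies together with their lengths. Using the perverse truncation triangles and the composition series, every such $\CG$ is an iterated extension of shifts of elements of $S$. Hence it lies in $\Shv^{\on{relev}}(X)^{c}_{S}$.
\end{itemize}

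In particular, this description shows that $\Shv^{\on{relev}}(X)^{c}_{S}$ is stable under the perverse truncation functors $\tau^{\leq0},\tau^{\geq0}$ of $\Shv(X)^{\on{constr}}$.

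For (a) and (b), the stability just established means that $\Shv^{\on{relev}}(X)^{c}_{S}$ inherits a bounded t-structure. Its ind-extension gives a t-structure on $\Shv^{\on{relev}}(X)$, and the inclusion into $\Shv(X)=\on{Ind}(\Shv(X)^{\on{constr}})$ is t-exact, because truncation functors commute with filtered colimits. This is what we mean by compatibility: the inclusion commutes with $\tau^{\leq0}$ and $\tau^{\geq0}$, so truncations of objects of $\Shv^{\on{relev}}(X)$ stay in $\Shv^{\on{relev}}(X)$. This gives (a). Statement (b) is then immediate, since the truncations of compact objects are computed inside $\Shv^{\on{relev}}(X)^{c}_{S}$.

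For (c), the heart $\Shv^{\on{relev}}(X)^\heartsuit$ is $\on{Ind}$ of the abelian category of perverse sheaves with all constituents in $S$. Its irreducible objects are therefore exactly the irreducible perverse sheaves lying in $S$, i.e., those described in \lemref{l:rel gener}. One point needs care: a simple object of an Ind-category of a finite-length abelian category is compact. Indeed, it is a filtered colimit of compact subobjects, one of which must be nonzero and hence, by simplicity, equal to the whole object.

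The main obstacle I expect is the closure step in the description of $\Shv^{\on{relev}}(X)^{c}_{S}$. Identifying the thick subcategory generated by $S$ with the objects whose perverse constituents lie in $S$ requires that $S$ be closed under nothing more than isomorphism. It also uses that perverse cohomology sheaves of cones have constituents among those of the terms, which follows from the long exact sequence and the finite length of $\Perv(X)$.
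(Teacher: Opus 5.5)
Your proposal is correct and is essentially the argument the paper intends when it records this as an immediate corollary of \lemref{l:rel gener}. Because the generators are simple perverse sheaves, the compact objects are exactly the constructible complexes all of whose perverse cohomologies have constituents in $S$; this class is stable under perverse truncations, and (a)--(c) then follow by ind-extension, just as you describe.
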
 

\sssec{}

We claim:

\begin{thm} \label{t:LL}
The functor \eqref{e:tensor up 1} is an equivalence.
\end{thm}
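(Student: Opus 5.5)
Since \eqref{e:tensor up 1} is already known to be fully faithful (by \propref{p:abstract base change}), it remains to prove essential surjectivity. The source is generated by objects $V\otimes \CF$ with $V\in \Rep(\BZ^{\on{alg}})^c$ and $\CF\in \Shv^{\on{Weil,wt}}(X)^{\on{constr}}$, and the functor is continuous. Hence it suffices to show that every compact object of $\Shv^{\on{Weil,loc.fin}}(X)$ lies in the essential image. Every object of $\Shv^{\on{Weil}}(X)^{\on{constr}}$ is a finite extension of shifts of irreducible Weil perverse sheaves, and the essential image is closed under cones and shifts. We are therefore reduced to showing that every irreducible object of $\Shv^{\on{Weil,loc.fin}}(X)^\heartsuit$ has the form $\chi\otimes \CF'$, where $\chi$ is a one-dimensional representation of $\BZ^{\on{alg}}$ (i.e.\ the pullback from the point of a rank one Weil sheaf with $\Frob$ acting by some $a\in \sfe^\times$) and $\CF'$ lies in $\Shv^{\on{Weil,wt}}(X)$.

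For this we use the structure theory of irreducible Weil perverse sheaves. Let $\CF$ be irreducible Weil perverse. By the argument in \lemref{l:rel gener}, $\CF$ is of the form \eqref{e:power Frob}, i.e.\ it is induced from an irreducible $\CF_1$ defined over $\BF_{q^n}$. We then apply the key input from Weil II, namely \cite[Th\'eor\`eme 1.3.1 / Proposition 1.2.7]{De2}, which relies on Lafforgue's theorem via Deligne's conjecture. It states that an irreducible Weil sheaf (first on the smooth open of its support, where it is a lisse irreducible Weil local system, then extended by intermediate extension) becomes, after twisting by a rank one sheaf $\sfe^{(b)}$ from the point, one whose Frobenius eigenvalues at all closed points are Weil numbers (pure of some weight). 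In the version needed here, with Weil numbers rather than mock Weil numbers, we use L.~Lafforgue's result that after a twist the determinant has finite order and all Frobenius eigenvalues are algebraic, pure, and $\ell'$-adic units for $\ell'\ne p$. Since $\on{IC}$ extension commutes with twists from the point, this gives $\CF_1\simeq \sfe^{(b)}\otimes \CF'_1$ over $\BF_{q^n}$, with $\CF'_1$ of weight type.

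Next we descend the twist to $\BF_q$. Choose $a\in \sfe^\times$ with $a^n=b$. The Weil sheaf $\CF\otimes \sfe^{(a^{-1})}$ is then induced from $\CF_1\otimes \sfe^{(b^{-1})}=\CF'_1$. Induction from $\BF_{q^n}$ to $\BF_q$ preserves the condition that stalk eigenvalues are Weil numbers, because the eigenvalues of $\Frob_x$ on an induced sheaf are $n$-th roots of eigenvalues of $\Frob_x^n$, and these remain $q_x$-Weil numbers. Hence $\CF\simeq \sfe^{(a)}\otimes \CF'$ with $\CF'\in \Shv^{\on{Weil,wt}}(X)$, as required. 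If $\CF$ fails to be irreducible after this manipulation, we still obtain that it lies in the thick subcategory generated by the image, which suffices.

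The main obstacle is the second step: making precise the citation that an irreducible Weil sheaf has, after twisting by a character of $\BZ$, Frobenius eigenvalues that are genuine Weil numbers. This means algebraic with all conjugates of the correct absolute value, and units away from $p$, not merely ``mock''. It requires Lafforgue's theorem (through Deligne's conjecture, proved via Lafforgue and Drinfeld for general smooth $X$) applied on a smooth dense open of the support. It also requires checking that integrality and purity survive the intermediate extension and the passage to all stalks of perverse subquotients, which uses the ``de finitude'' argument already quoted in the definition of $\Shv^{\on{Weil,wt}}$ as a sheaf theory.
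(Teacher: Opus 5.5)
Your proposal is correct and is essentially the paper's proof: using full faithfulness, you reduce to irreducible Weil perverse sheaves, pass to the lisse case via intermediate extension (which preserves the weight/Weil-number condition), and invoke L.~Lafforgue's Corollary VII.8 (with Deligne's correction in \cite{De3}) to write the sheaf as a line from the point tensored with an object of $\Shv^{\on{Weil,wt}}(X)$. Your detour through geometric irreducibility and induction from $\BF_{q^n}$ is correct but unnecessary, since that result (after the Weil II twist making the determinant of finite order) applies directly to irreducible lisse Weil sheaves over $\BF_q$, which need not be geometrically irreducible; note also that Weil II does not rely on Lafforgue, so the genuine input is \cite{LLaf} rather than \cite{De2}.
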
 

\begin{proof}

We need to show that the essential image of the functor \eqref{e:tensor up 1} generates the target
category. Hence, it suffices to show that every object $\CF\in (\Shv^{\on{Weil,loc.fin}}(X))^\heartsuit$ 
lies in the essential image of \eqref{e:tensor up 1}. 

\medskip

We can assume that $\CF$ is
irreducible. We will show that in this case $\CF$ has the form
$$\CF^0\otimes \fl,$$
where $\CF^0\in (\Shv^{\on{Weil,wt}}(X))^\heartsuit$ and $\fl$ is a line in $\Shv^{\on{Weil}}(\on{pt})$.

\medskip

Any irreducible object of $(\Shv^{\on{Weil,loc.fin}}(X))^\heartsuit$ is a Goresky-MacPherson extension
of an irreducible \emph{lisse} object on a locally-closed smooth subscheme of $X$. Since the operation of Goresky-MacPherson 
extension preserves weights, this allows us to assume that $\CF$ is lisse. 

\medskip

Now, the desired result follows from \cite[Corollary VII.8]{LLaf}\footnote{With the correction in \cite[Sect. 1.7-1.9]{De3}.}.

\end{proof}

\begin{cor} \label{c:relev}
The essential image of the functor \eqref{e:tensor up 3} equals $\Shv^{\on{relev}}(X)$.
\end{cor}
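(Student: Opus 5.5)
The plan is to factor the functor \eqref{e:tensor up 3} through \eqref{e:tensor up 1} and \eqref{e:tensor up 2}. Using associativity of relative tensor products, write
$$\Vect_{\sfe}\underset{\Rep(\BZ^{\on{alg,wt}})}\otimes\Shv^{\on{Weil,wt}}(X)\simeq
\Vect_{\sfe}\underset{\Rep(\BZ^{\on{alg}})}\otimes \Big(\Rep(\BZ^{\on{alg}})\underset{\Rep(\BZ^{\on{alg,wt}})}\otimes\Shv^{\on{Weil,wt}}(X)\Big).$$
Here $\Rep(\BZ^{\on{alg,wt}})\to \Rep(\BZ^{\on{alg}})$ is the symmetric monoidal functor induced by the surjection $\BZ^{\on{alg}}\to \BZ^{\on{alg,wt}}$ (equivalently, the inclusion $\BZ\mod_{\on{wt}}\subset \BZ\mod_{\on{loc.fin}}$). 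Under this identification, the functor \eqref{e:tensor up 3} becomes the composite of two functors. The first is $\Vect_\sfe\underset{\Rep(\BZ^{\on{alg}})}\otimes(-)$ applied to \eqref{e:tensor up 1}. The second is the functor \eqref{e:tensor up 2}.

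To justify this, I would check that the diagram of sheaf-theory natural transformations $\Shv^{\on{Weil,wt}}\to\Shv^{\on{Weil,loc.fin}}\to\Shv$ is compatible with the module structures over $\Rep(\BZ^{\on{alg,wt}})\to\Rep(\BZ^{\on{alg}})\to\Vect_\sfe$. This holds because all three functors are given by tensoring with pullbacks from the point. It follows that the composite of the induced functors is the functor induced by the composite transformation, and this is a formal property of extension of scalars.

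Now \thmref{t:LL} says \eqref{e:tensor up 1} is an equivalence. Hence its base change along $\Rep(\BZ^{\on{alg}})\to\Vect_\sfe$ is also an equivalence. So the essential image of \eqref{e:tensor up 3} coincides with the essential image of \eqref{e:tensor up 2}, which is $\Shv^{\on{relev}}(X)$ by definition.

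The only point needing care is the identification of the composite functor with \eqref{e:tensor up 3}, i.e., the compatibility of the three natural transformations. I expect this to be routine, since each of the categories $\Rep(\BH)$ involved is rigid by \propref{p:W rep}, so relative tensor products behave well. One could alternatively argue directly: the essential image of \eqref{e:tensor up 3} is contained in $\Shv^{\on{relev}}(X)$, and to show it generates one checks that the irreducible generators from \lemref{l:rel gener} arise from weighted Weil sheaves. That check is exactly the twisting-by-a-line argument in the proof of \thmref{t:LL}, so the factorization route is cleaner.
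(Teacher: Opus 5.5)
Your proposal is correct and matches the argument the paper intends; the corollary is stated without proof right after \thmref{t:LL}. As you say, \eqref{e:tensor up 3} factors as the base change of the equivalence \eqref{e:tensor up 1} along $\Rep(\BZ^{\on{alg}})\to \Vect_\sfe$, followed by \eqref{e:tensor up 2}, whose essential image is $\Shv^{\on{relev}}(X)$ by definition.
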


\sssec{}

From Corollaries \ref{c:relev} and \ref{c:eq de-eq W} we obtain:

\begin{cor} \label{c:W acts on rel}
The category $\Shv^{\on{relev}}(X)$ upgrades to an object of $\QCoh(\BZ^{\on{alg,wt}})\commod$ so that
$$(\Shv^{\on{relev}}(X))^{\BZ^{\on{alg,wt}}}\simeq \Shv^{\on{Weil,wt}}(X),$$
as objects of $\Rep(\BZ^{\on{alg,wt}})\mmod$.
\end{cor}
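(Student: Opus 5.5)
The plan is to combine \corref{c:relev} with the equivariantization/de-equivariantization equivalence of \corref{c:eq de-eq W}, applied to $\BH=\BZ^{\on{alg,wt}}$.

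First, set
$$\bD:=\Shv^{\on{Weil,wt}}(X)\in \Rep(\BZ^{\on{alg,wt}})\mmod,$$
where the action is by tensoring with objects pulled back from $\Shv^{\on{Weil,wt}}(\on{pt})=\Rep(\BZ^{\on{alg,wt}})$. This uses the right-lax symmetric monoidal structure of the sheaf theory and the factorization of $\Shv(-)$ through $\Shv(\on{pt})\mmod$ from \secref{ss:sheaf theories}. By \corref{c:eq de-eq W}, the de-equivariantization
$$\bC:=\Vect_\sfe\underset{\Rep(\BZ^{\on{alg,wt}})}\otimes \bD$$
is canonically an object of $\QCoh(\BZ^{\on{alg,wt}})\commod$. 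Moreover, since the two functors in \corref{c:eq de-eq W} are mutually inverse, we get $\bC^{\BZ^{\on{alg,wt}}}\simeq \bD$ as $\Rep(\BZ^{\on{alg,wt}})$-module categories.

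Second, identify $\bC$ with $\Shv^{\on{relev}}(X)$. The functor \eqref{e:tensor up 3} goes from $\bC$ to $\Shv(X)$. It is fully faithful by \propref{p:abstract base change}, which applies to the composite natural transformation $\Shv^{\on{Weil,wt}}(-)\to \Shv(-)$: both the preservation of totally compact objects and the compatibility with Verdier duality are preserved under composition. By \corref{c:relev}, the essential image of \eqref{e:tensor up 3} is exactly $\Shv^{\on{relev}}(X)$. So \eqref{e:tensor up 3} gives an equivalence $\bC\simeq \Shv^{\on{relev}}(X)$. Transporting the $\QCoh(\BZ^{\on{alg,wt}})$-comodule structure along this equivalence yields the required upgrade, together with the identification of invariants.

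The only point needing care is the fully faithfulness of \eqref{e:tensor up 3}. If one prefers not to check the hypotheses of \propref{p:abstract base change} for the composite directly, one can factor \eqref{e:tensor up 3} as
$$\Vect_\sfe\underset{\Rep(\BZ^{\on{alg}})}\otimes\Big(\Rep(\BZ^{\on{alg}})\underset{\Rep(\BZ^{\on{alg,wt}})}\otimes \Shv^{\on{Weil,wt}}(X)\Big)\to \Vect_\sfe\underset{\Rep(\BZ^{\on{alg}})}\otimes\Shv^{\on{Weil,loc.fin}}(X)\to \Shv(X).$$
The first arrow is an equivalence by \thmref{t:LL}, since tensoring preserves equivalences. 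The second arrow is \eqref{e:tensor up 2}, which is fully faithful with essential image $\Shv^{\on{relev}}(X)$ by definition. This route actually makes \corref{c:relev} immediate as well, so the argument is essentially formal.
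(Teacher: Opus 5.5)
Your proposal is correct and is essentially the paper's argument: the paper deduces this corollary by combining \corref{c:relev} (which, together with the full faithfulness of \eqref{e:tensor up 3} coming from \propref{p:abstract base change}, identifies $\Vect_\sfe\underset{\Rep(\BZ^{\on{alg,wt}})}\otimes\Shv^{\on{Weil,wt}}(X)$ with $\Shv^{\on{relev}}(X)$) with the equivariantization/de-equivariantization equivalence of \corref{c:eq de-eq W}. Your alternative factorization through \thmref{t:LL} and \eqref{e:tensor up 2} is precisely how \corref{c:relev} itself follows from \thmref{t:LL} in the paper.
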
 

\ssec{Compatibility of t-structures}

\sssec{} \label{sss:ten prod t-structure}

Let $\bA$ be a monoidal category equipped with a t-structure so that the monoidal operation is right t-exact,
and $\one_\bA$ is connective. Let $\CM_1$ and $\CM_2$ be right and left $\bA$-module categories each equipped with a t-structure,
and such that the action functors are right t-exact. 

\medskip

In this case we equip 
$$\CM_1\underset{\bA}\otimes \CM_2$$ 
with a t-structure by declaring that the connective subcategory is generated under colimits
by the image of connective objects under
$$\CM_1\otimes \CM_2\to \CM_1\underset{\bA}\otimes \CM_2.$$ 

We call the resulting t-structure on $\CM_1\underset{\bA}\otimes \CM_2$ the \emph{tensor product} t-structure. 

\sssec{}

We will now consider a paradigm in which the tensor product t-structure is particularly well-behaved.
Let $\bA$ and $\CM\in \bA\mmod$ be as above. Let $\bA'$ be another monoidal category equipped with a 
t-structure as above. Let $\Phi:\bA\to \bA'$ be a right t-exact monoidal functor. We equip 
$$\CM':=\bA'\underset{\bA}\otimes \CM$$
with a t-structure by the above recipe. 

\medskip

Assume that $\Phi$ admits a continuous right adjoint. Assume that:

\begin{itemize}

\item The natural structure on $\Phi^R$ of right-lax compatibility with the $\bA$-bimodule structure is strict;

\item The functor $\Phi^R$ is conservative;

\item The functor $\Phi^R$ is t-exact; 

\item The action of $\Phi^R(\one_{\bA'})$ on $\CM$ is t-exact. 

\end{itemize}

Under these circumstances, we claim:

\begin{lem} \label{l:good t}  
Both functors in the adjunction
\begin{equation} \label{e:good t}
\CM\simeq \bA\underset{\bA}\otimes \CM \overset{\Phi\otimes \on{Id}}{\underset{\Phi^R\otimes \on{Id}}{\rightleftarrows}} \bA'\underset{\bA}\otimes \CM=:\CM'
\end{equation} 
are t-exact, and the right adjoint is conservative. 
\end{lem}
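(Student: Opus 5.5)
Throughout, I write $\Phi^R$ also for $\Phi^R\otimes\on{Id}:\CM'\to\CM$ and $\Phi$ for $\Phi\otimes\on{Id}:\CM\to\CM'$.

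The plan is to first identify the composite $\Phi^R\circ\Phi$ on $\CM$. Since $\Phi^R$ is strictly compatible with the $\bA$-bimodule structure, $\Phi^R$ is a map of $\bA$-bimodules $\bA'\to\bA$. Hence $\Phi^R\otimes\on{Id}$ makes sense and is the continuous right adjoint of $\Phi\otimes\on{Id}$: adjunctions in bimodule categories tensor up. Moreover, the bimodule compatibility gives $\Phi^R(a)\simeq a\otimes\Phi^R(\one_{\bA'})$ for $a\in\bA$. Therefore
$$\Phi^R\circ\Phi(m)\simeq \Phi^R(\one_{\bA'})\otimes m, \quad m\in \CM.$$

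Next I would check conservativity of $\Phi^R\otimes\on{Id}$. I expect this to be the main obstacle, since conservativity is not preserved by $-\otimes_\bA\CM$ in general. My first attempt is to use that $\Phi^R$ is an $\bA$-bimodule map. Then $\bA'$, viewed as an $\bA$-bimodule, is a retract-free comodule-type object: $\CM'$ should be identified with modules in $\CM$ over the monad $\Phi^R\Phi=\Phi^R(\one_{\bA'})\otimes-$. The argument would run as follows.
\begin{enumerate}
\item Conservativity of $\Phi^R$ together with continuity and strictness gives monadicity of $\bA'$ over $\bA$ via Barr--Beck--Lurie, so $\bA'\simeq B\mod(\bA)$ with $B:=\Phi^R(\one_{\bA'})$ an algebra in $\bA$.
\item Tensoring, $\CM'\simeq B\mod(\CM)$, because relative tensor products with module categories of algebras commute: $B\mod(\bA)\otimes_\bA\CM\simeq B\mod(\CM)$.
\item The forgetful functor $B\mod(\CM)\to\CM$ is conservative, and under this identification it is $\Phi^R\otimes\on{Id}$.
\end{enumerate}

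For t-exactness, right t-exactness of $\Phi\otimes\on{Id}$ holds by definition of the tensor product t-structure, since $\Phi$ sends $\CM^{\leq0}$ into generators of $\CM'^{\leq0}$. For $\Phi^R$, I would proceed in two steps.
\begin{enumerate}
\item Right t-exactness: $\CM'^{\leq0}$ is generated under colimits by $\Phi(m)$ (more precisely by images of $a'\otimes m$ with $a'$ connective; but $a'$ is a colimit of objects $B$-induced from connective objects of $\bA$, by t-exactness of $\Phi^R$ and $\bA'\simeq B\mod(\bA)$). Then $\Phi^R\Phi(m)=B\otimes m$ is connective because the action of $B$ on $\CM$ is t-exact, and $\Phi^R$ is continuous.
\item Left t-exactness: this is formal, since $\Phi^R$ is right adjoint to the right t-exact functor $\Phi$.
\end{enumerate}

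Finally, with $\Phi^R$ t-exact and conservative, left t-exactness of $\Phi$ follows. An object $c\in\CM'$ lies in $\CM'^{\geq0}$ iff $\Phi^R(c)\in\CM^{\geq0}$: one direction is immediate; for the other, take $\tau^{<0}c\to c$ and note that $\Phi^R(\tau^{<0}c)=\tau^{<0}\Phi^R(c)=0$, so conservativity kills $\tau^{<0}c$. Applied to $c=\Phi(m)$ with $m\in\CM^{\geq0}$, we have $\Phi^R\Phi(m)=B\otimes m\in\CM^{\geq0}$ by t-exactness of the $B$-action, hence $\Phi(m)\in\CM'^{\geq0}$.
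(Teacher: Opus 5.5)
Your proposal is correct. For t-exactness it follows the paper. The right adjoint is left t-exact by adjunction and right t-exact because $\Phi^R$ is. The left adjoint is then t-exact because the monad $\Phi^R\circ\Phi\simeq B\otimes -$, with $B:=\Phi^R(\one_{\bA'})$, is t-exact while the right adjoint is t-exact and conservative. Your truncation argument spells out the paper's one-line claim that ``the left adjoint is t-exact if and only if the resulting monad is t-exact.''

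You differ from the paper on conservativity. The paper never identifies $\CM'$. It uses the fact that a continuous right adjoint is conservative if and only if the essential image of the left adjoint generates the target. The image of $\Phi$ generates $\bA'$, and $\CM'$ is generated by objects $a'\otimes m$. Hence the image of $\Phi\otimes\on{Id}$ generates $\CM'$, and so $\Phi^R\otimes\on{Id}$ is conservative.

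So the obstacle you anticipate does not arise. For right adjoints of this kind, conservativity amounts to generation, and generation is visibly stable under $-\otimes_{\bA}\CM$. Your route goes through Barr--Beck--Lurie and the base-change equivalence $B\mod(\bA)\otimes_{\bA}\CM\simeq B\mod(\CM)$. It is valid but imports noticeably heavier machinery. In exchange it gives an explicit model of $\CM'$ as $B$-modules in $\CM$, where the forgetful functor is visibly conservative and the monad is visibly $B\otimes -$.

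For right t-exactness of $\Phi^R\otimes\on{Id}$, the bar-resolution detour is unnecessary. On the generators $a'\otimes m$ of $\CM'^{\leq 0}$ one has $(\Phi^R\otimes\on{Id})(a'\otimes m)\simeq \Phi^R(a')\otimes m$. This is connective directly from t-exactness of $\Phi^R$ and right t-exactness of the action, without using the t-exactness of the $B$-action.

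Minor slip: the formula $\Phi^R(a)\simeq a\otimes \Phi^R(\one_{\bA'})$ for $a\in\bA$ should read $\Phi^R(\Phi(a))$. Also, the phrase ``retract-free comodule-type object'' carries no content and can be dropped.
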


\begin{proof}

The fact that $\Phi^R$ is conservative is equivalent to the fact that the essential image of $\Phi$ generates the target category,
which implies that the essential image of $\Phi\otimes \on{Id}$ generates the target category, which in turn implies that
$\Phi^R\otimes \on{Id}$ is conservative. 

\medskip

The left adjoint in \eqref{e:good t} is right t-exact by construction. Hence, the right adjoint in \eqref{e:good t} is left t-exact
(by adjunction). However, it is also right t-exact, since $\Phi^R$ was assumed to be right t-exact. 

\medskip

Under these circumstances,
the left adjoint is t-exact if and only if the resulting monad is t-exact. However, this monad given by the action of $\Phi^R(\one_{\bA'})$ on $\CM$.
Since this action it t-exact, this implies the assertion of the lemma. 

\end{proof} 

\sssec{}

We apply \lemref{l:good t} to 
$$\bA:=\Rep(\BZ^{\on{alg}}) ,\,\, \bA':=\Vect_{\sfe} \text{ and } \CM:=\Shv^{\on{Weil,loc.fin}}(X).$$
  
We obtain that the category
$$\Vect_{\sfe}\underset{\Rep(\BZ^{\on{alg}})}\otimes\Shv^{\on{Weil,loc.fin}}(X)$$
acquires a t-structure, so that the adjoint functors
$$\Shv^{\on{Weil,loc.fin}}(X)\rightleftarrows \Vect_{\sfe}\underset{\Rep(\BZ^{\on{alg}})}\otimes\Shv^{\on{Weil,loc.fin}}(X)$$
are both t-exact. 

\sssec{}

Recall now (see \corref{c:t relev}) that the category $\Shv^{\on{relev}}(X)$ carries a t-structure, induced by that on $\Shv(X)$. 
We claim:

\begin{prop} \label{p:t on relev}
The equivalence
$$\Vect_{\sfe}\underset{\Rep(\BZ^{\on{alg}})}\otimes\Shv^{\on{Weil,loc.fin}}(X) \overset{\sim}\to \Shv^{\on{relev}}(X)$$
is t-exact.
\end{prop}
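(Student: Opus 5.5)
The t-structure on the source is characterized by the t-exactness of the functor $\Shv^{\on{Weil,loc.fin}}(X)\to \Vect_{\sfe}\underset{\Rep(\BZ^{\on{alg}})}\otimes\Shv^{\on{Weil,loc.fin}}(X)$ and of its conservative right adjoint (\lemref{l:good t}). So the plan is to compare both sides after applying $\oblv_{\on{Weil}}$ and its right adjoint.

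Denote the equivalence by $E$. The composite
$$\Shv^{\on{Weil,loc.fin}}(X)\to \Vect_{\sfe}\underset{\Rep(\BZ^{\on{alg}})}\otimes\Shv^{\on{Weil,loc.fin}}(X)\overset{E}\to \Shv^{\on{relev}}(X)\hookrightarrow \Shv(X)$$
is $\oblv_{\on{Weil}}$, which is t-exact. By \corref{c:t relev}(a), the inclusion $\Shv^{\on{relev}}(X)\hookrightarrow\Shv(X)$ is t-exact and detects connectivity and coconnectivity. Hence $E$ sends the image of $\Shv^{\on{Weil,loc.fin}}(X)^{\leq 0}$ into $\Shv^{\on{relev}}(X)^{\leq 0}$. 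By the definition in \secref{sss:ten prod t-structure}, the connective part of the source is generated under colimits by exactly this image. Since $E$ is continuous, $E$ is right t-exact.

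For left t-exactness, it is enough to show that $E^{-1}$ is right t-exact. I would argue via the right adjoint $R:=\Phi^R\otimes\on{Id}$ of the left functor, which is t-exact and conservative by \lemref{l:good t}. Take $\CF\in\Shv^{\on{relev}}(X)^{\geq 0}$. Then $E^{-1}(\CF)$ is coconnective if and only if $R(E^{-1}(\CF))$ is coconnective. The first part of the argument shows that $E$ is right t-exact, so $E^{-1}$ is left t-exact in the sense of adjunction. Therefore $R\circ E^{-1}$ is the right adjoint of the right t-exact functor $\oblv_{\on{Weil}}|\colon \Shv^{\on{Weil,loc.fin}}(X)\to\Shv^{\on{relev}}(X)$, and so it is left t-exact. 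This settles coconnective objects. For connective objects, I would describe $R\circ E^{-1}$ concretely: it is the right adjoint of $\oblv_{\on{Weil}}$, given by something like
$$\CF\mapsto \underset{n}\oplus\,(\Frob^n)_*\CF$$
with its Weil structure. This may need to be taken over the Frobenius orbits of the relevant irreducibles, in the spirit of \eqref{e:power Frob}, and then projected to the locally finite part. Since $\Frob_*$ is t-exact, this functor is t-exact. Conservativity of $R$ then transfers connectivity.

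The main obstacle is this last identification: showing that the right adjoint of $\oblv_{\on{Weil}}\colon\Shv^{\on{Weil,loc.fin}}(X)\to\Shv^{\on{relev}}(X)$ is right t-exact. The difficulty is that passing to the locally finite part is a colimit-closed truncation that is not obviously exact. I would first check this on the compact generators from \lemref{l:rel gener}. There I would use \corref{c:t relev}(b), which says the t-structure preserves compacts, so that it suffices to test against irreducible perverse $\CF$ with $(\Frob^n)^*\CF\simeq\CF$. For such $\CF$ the right adjoint is computed by the induced Weil sheaf \eqref{e:power Frob} tensored with the regular representation of $\BZ^{\on{alg}}$ restricted to the relevant eigenvalues, which lies in the heart.
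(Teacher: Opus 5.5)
Your proof that $E$ is right t-exact is the same as the paper's. For the other direction, your route can be completed: pass to $R\circ E^{-1}$, the right adjoint of $\oblv_{\on{Weil}}\colon \Shv^{\on{Weil,loc.fin}}(X)\to\Shv^{\on{relev}}(X)$, and use that $R$ is t-exact and conservative. But as written the decisive step is only asserted, and some of the surrounding material is wrong or does no work.

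\begin{itemize}
\item The paragraph on $\CF\in\Shv^{\on{relev}}(X)^{\geq 0}$ only shows that $E^{-1}$ is left t-exact. That is a restatement of right t-exactness of $E$, and it is not what you need, namely that $E^{-1}$ preserves connectivity.
\item The formula $\CF\mapsto\oplus_n(\Frob^n)_*\CF$, ``projected to the locally finite part'', is not the right adjoint of $\oblv_{\on{Weil}}$. A direct sum of Frobenius translates is an induction, which is of left-adjoint type. Exactness of $\Frob_*$ also gives no control over the colocalization onto $\Shv^{\on{Weil,loc.fin}}(X)$. This formula should be dropped.
\end{itemize}

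What actually justifies your last sentence is the following.
\begin{itemize}
\item An irreducible relevant perverse sheaf $\CF$ is a direct summand of \eqref{e:power Frob}, which is $\oblv_{\on{Weil}}(\CF'_W)$ for an irreducible Weil perverse sheaf $\CF'_W$.
\item Since $E\circ(\Phi\otimes\on{Id})\simeq\oblv_{\on{Weil}}$, you get $R\circ E^{-1}\circ\oblv_{\on{Weil}}\simeq R\circ(\Phi\otimes\on{Id})$.
\item By the proof of \lemref{l:good t}, this monad is the action of the regular representation $\Phi^R(\one)=\CO(\BZ^{\on{alg}})$, which is t-exact by hypothesis.
\item Hence $R(E^{-1}(\CF))$ is a summand of an object of the heart; no restriction to ``relevant eigenvalues'' is involved. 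Conservativity and t-exactness of $R$ then finish the argument.
\end{itemize}

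However, the identity $E^{-1}\circ\oblv_{\on{Weil}}\simeq\Phi\otimes\on{Id}$ already ends the proof one step earlier, and this is exactly the paper's argument. The t-structure on $\Shv^{\on{relev}}(X)$ is extended from compact objects (\corref{c:t relev}(b)). So right t-exactness of $E^{-1}$ reduces to showing that $E^{-1}$ sends irreducible objects of $\Shv^{\on{relev}}(X)^\heartsuit$ into the heart. Each such object is a summand of \eqref{e:power Frob}, and $E^{-1}$ of \eqref{e:power Frob} is $(\Phi\otimes\on{Id})(\CF'_W)$. This lies in the heart because $\Phi\otimes\on{Id}$ is t-exact by \lemref{l:good t}. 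You need neither the right adjoint of $\oblv_{\on{Weil}}$ nor the conservativity of $R$.
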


\begin{proof}

The fact that the functor $\to$ is right t-exact is immediate. To show that it is t-exact, it is sufficient to
show that its inverse is right t-exact. 

\medskip

Since the t-structure on $\Shv^{\on{relev}}(X)$ is obtained by extending the t-structure on 
$\Shv^{\on{relev}}(X)^c$, it suffices to show that the inverse functor sends irreducible objects
in $\Shv^{\on{relev}}(X)^\heartsuit$ to objects in the heart in the left-hand side. 

\medskip

According to \corref{c:t relev}, irreducible objects in $\Shv^{\on{relev}}(X)^\heartsuit$ are irrreducible
objects in $\Shv(X)$ that are invariant under some power of the Frobenius. Such an object is direct
summand of an object \eqref{e:power Frob}, so it is enough to show that the latter object gets sent to
an object in the heart in the left-hand side. However, \eqref{e:power Frob} is the image of an object 
in $\Shv^{\on{Weil,loc.fin}}(X)^\heartsuit$ under
$$\Shv^{\on{Weil,loc.fin}}(X)\to \Vect_{\sfe}\underset{\Rep(\BZ^{\on{alg}})}\otimes\Shv^{\on{Weil,loc.fin}}(X)\to \Shv(X),$$
while the composite functor is t-exact. 
\end{proof} 

\begin{rem}

We can also apply \lemref{l:good t} to 
$$\bA:=\Rep(\BZ^{\on{alg,wt}}),\,\, \bA':=\Rep(\BZ^{\on{alg}}),\,\, \CM:=\Shv^{\on{Weil,wt}}(X),$$
and obtain a t-structure on
$$\Rep(\BZ^{\on{alg}})\underset{\Rep(\BZ^{\on{alg,wt}})}\otimes\Shv^{\on{Weil,wt}}(X).$$

As in \propref{p:t on relev}, one shows that the equivalence \eqref{e:tensor up 1} is t-exact. 

\end{rem} 

\begin{rem}

Furthermore, we can take
$$\bA:=\Rep(\BZ^{\on{alg,wt}}),\,\,\bA':=\Vect_{\sfe} \text{ and } \CM:=\Shv^{\on{Weil,wt}}(X),$$
and produce a t-structure on 
$$\Vect_{\sfe}\underset{\Rep(\BZ^{\on{alg,wt}})}\otimes\Shv^{\on{Weil,wt}}(X).$$

It follows, however, that this is the same t-structure as the one arising from the equivalence
$$\Vect_{\sfe}\underset{\Rep(\BZ^{\on{alg,wt}})}\otimes\Shv^{\on{Weil,wt}}(X)\simeq 
\Shv^{\on{relev}}(X)\simeq \Vect_{\sfe}\underset{\Rep(\BZ^{\on{alg}})}\otimes\Shv^{\on{Weil,loc.fin}}(X).$$

\end{rem} 

\sssec{}

We now claim:

\begin{thm} \label{t:t on relev} \hfill

\smallskip

\noindent{\em(a)} The category $\Shv^{\on{relev}}(X)$ is left-complete with respect to its t-structure.

\smallskip

\noindent{\em(b)} The category $\Shv^{\on{Weil,loc.fin}}(X)$ is left-complete with respect to its t-structure.

\end{thm}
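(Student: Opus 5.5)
Both statements will be deduced from the left-completeness of $\Shv(X)$ (\cite[Theorem 1.1.6]{AGKRRV}) together with the left-completeness of $\Shv^{\on{Weil}}(X)$ established above. We use the following criterion. Let $\bC$ be a category with a t-structure compatible with filtered colimits, and let $F:\bC\to \bC'$ be a continuous, conservative, t-exact functor to a left-complete category that commutes with countable limits of the form $\underset{n}{\on{lim}}\,\tau^{\geq -n}$. Then $\bC$ is left-complete. Indeed, for $\bc\in \bC$ consider the map $\bc\to \underset{n}{\on{lim}}\, \tau^{\geq -n}(\bc)$. Its cohomologies in each degree stabilize, so its image under $F$ is the corresponding map in $\bC'$. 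That map is an isomorphism, and conservativity of $F$ finishes the argument. Thus in each case we need a t-exact conservative functor to a left-complete category that commutes with the relevant Postnikov limits.

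For (a), we take $F$ to be the inclusion $\Shv^{\on{relev}}(X)\hookrightarrow \Shv(X)$. By \corref{c:t relev}(a) it is t-exact, and it is conservative. The limit in $\Shv^{\on{relev}}(X)$ is computed by applying the right adjoint of the inclusion to the limit in $\Shv(X)$. So it suffices to show that $\Shv^{\on{relev}}(X)$ is closed under limits of Postnikov towers $\underset{n}{\on{lim}}\,\tau^{\geq -n}(\CF)$ formed in $\Shv(X)$. For this we will show that the inclusion admits a t-exact continuous right adjoint; a right adjoint commutes with limits, so closedness follows. Via the presentation $\Shv^{\on{relev}}(X)\simeq \Vect_\sfe\underset{\Rep(\BZ^{\on{alg}})}\otimes\Shv^{\on{Weil,loc.fin}}(X)$, this is plausible from the decomposition of $\Shv(X)$ according to whether the irreducible constituents are Frobenius-periodic or not. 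Concretely, the Serre subcategory of $\on{Perv}(X)$ generated by irreducibles that are invariant under some power of $\Frob$ should split off Ext-orthogonally from its complement. Every irreducible perverse sheaf is a GM extension of an irreducible lisse sheaf, and the $\Frob$-orbit of such a sheaf is either finite or infinite. Hence the splitting should follow by considering the $\Frob^n$-action on Ext groups between constituents. We would check this on generators first and then extend by compact generation.

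For (b), we take $F=\oblv_{\on{Weil}}:\Shv^{\on{Weil,loc.fin}}(X)\to \Shv(X)$. It is t-exact and conservative, so it remains to show that it commutes with the Postnikov limits. We factor it as \eqref{e:Weil loc fin} followed by $\oblv_{\on{Weil}}$ on $\Shv^{\on{Weil}}(X)$; the second functor is a right adjoint and so commutes with limits. The main obstacle is the first functor: \eqref{e:Weil loc fin} is a fully faithful left adjoint and need not commute with limits. We would show that $\Shv^{\on{Weil,loc.fin}}(X)$ is closed under Postnikov limits inside the left-complete category $\Shv^{\on{Weil}}(X)$. First, an object of $\Shv^{\on{Weil}}(X)$ lies in $\Shv^{\on{Weil,loc.fin}}(X)$ exactly when all of its perverse cohomologies do, i.e., when Frobenius acts locally finitely. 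The forward direction is immediate. The converse should follow by an argument analogous to that for $\Shv^{\on{relev}}(X)$: here we would use the splitting of $\Shv^{\on{Weil}}(X)$ according to the generalized eigenvalues of Frobenius, which are given by the action of $\QCoh(\BG_m)$, into the torsion part and its complement. This provides a t-exact right adjoint to \eqref{e:Weil loc fin}, namely the $\QCoh(\BG_m)_{\on{tors}}$-local cohomology functor. Finally, the Postnikov limit of $\CF\in\Shv^{\on{Weil,loc.fin}}(X)$ has the same cohomologies as $\CF$, all of which are locally finite. Hence the limit lies in the subcategory, and (b) follows. We expect that verifying the existence and t-exactness of these splittings, i.e., the Ext-orthogonality statements, will be the hardest step.
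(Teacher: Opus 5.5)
There is a genuine gap. In both parts your key step is an Ext-orthogonal splitting of the ambient category that would give a \emph{t-exact} right adjoint to the inclusion, and no such splitting exists.

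\textbf{Part (a).} Take $X_0$ a smooth projective curve of genus $g\geq 2$, with Jacobian $J$. Let $\chi=\exp\circ\lambda$ for a nonzero continuous $\lambda:T_\ell(J)\to \ell^2\BZ_\ell$, pulled back to $\pi_1(X)$. Since $\Frob^n-1$ is invertible on $V_\ell(J)$, no $(\Frob^n)^*$ fixes $\CL_\chi$. So the irreducible perverse sheaf $\CL_\chi[1]$ is not relevant (\corref{c:t relev}(c)). Yet $\on{Ext}^1(\sfe_X,\CL_\chi)=H^1(X,\CL_\chi)$ has dimension $2g-2\neq 0$, and $\sfe_X$ is relevant. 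Hence the continuous right adjoint $\iota^R$ of $\iota:\Shv^{\on{relev}}(X)\hookrightarrow \Shv(X)$ is not t-exact. If it were, $\iota^R(\CL_\chi[1])$ would lie in the heart, and $\Hom(\CK,\iota^R(\CL_\chi[1]))=\Hom(\CK,\CL_\chi[1])=0$ for every relevant irreducible $\CK$ would force it to vanish. But $\Hom(\sfe_X,\iota^R(\CL_\chi[1]))=H^1(X,\CL_\chi)\neq 0$.

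\textbf{Part (b).} The failure is visible already for $X_0=\on{pt}$. There \eqref{e:Weil loc fin} becomes $\QCoh(\BG_m)_{\on{tors}}\hookrightarrow \QCoh(\BG_m)$. Its right adjoint (derived torsion, i.e.\ your local cohomology) sends the regular representation $\sfe[t^{\pm1}]$ to $(\sfe(t)/\sfe[t^{\pm 1}])[-1]$. So there is no torsion/non-torsion splitting, and the right adjoint is not t-exact.

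\textbf{The criterion.} It only yields $\bc\simeq \lim_n \tau^{\geq -n}\bc$ for $\bc$ already in the subcategory, and that half is automatic here. Both inclusions are fully faithful and t-exact into left-complete categories ($\Shv(X)$, resp.\ $\Shv^{\on{Weil}}(X)$), so the ambient Postnikov limit of $\bc$ is $\bc$ itself. The actual content of left-completeness is that an arbitrary compatible tower $\{\bc_n\}$, $\bc_n\in \bC^{\geq -n}$, converges in $\bC$. Equivalently, an ambient object all of whose truncations lie in the subcategory must lie in it. This is \corref{c:char relev} (and its locally finite analogue), which the paper deduces \emph{from} the theorem. Once the splitting is gone, your plan has no argument for it.

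\textbf{The paper's route.} It argues intrinsically and never compares with the complement.
For (a), it reruns the proof of \cite[Theorem 1.1.6]{AGKRRV} inside $\Shv^{\on{relev}}(X)$. This uses only that $\Shv^{\on{relev}}(X)$ is compactly generated by constructible objects with a t-structure preserving compacts (\corref{c:t relev}), and that Beilinson's theorem holds equally for relevant perverse sheaves.
For (b), it writes $\Shv^{\on{Weil,loc.fin}}(X)\simeq (\Shv^{\on{relev}}(X))^{\BZ^{\on{alg}}}$ as the totalization of $\Shv^{\on{relev}}(X)\otimes \QCoh(\BZ^{\on{alg}})^{\otimes n}$ with t-exact face maps. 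Each term is left-complete by (a) and \cite[Lemma 2.1.3(a)]{AGKRRV}, and a limit of left-complete categories along t-exact functors is left-complete. So (b) is derived from (a).
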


\begin{proof}

Point (a) of the theorem follows by repeating the argument of \cite[Theorem 1.1.6]{AGKRRV}\footnote{Note that Beilinson's
theorem, used in \cite[Sect. E.1]{AGKRRV}, is equally applicable to $\Shv^{\on{relev}}(X)$.}.

\medskip

We now prove point (b). Note that by \corref{c:eq de-eq W}, we can regard $\Shv^{\on{relev}}(X)$ as a category acted on by
$\BZ^{\on{alg}}$ so that
$$\Shv^{\on{Weil,loc.fin}}(X)\simeq (\Shv^{\on{relev}}(X))^{\BZ^{\on{alg}}}.$$

In other words, $\Shv^{\on{Weil,loc.fin}}(X)$ is the totalization of the cosimplicial category with terms
$$\Shv^{\on{relev}}(X)\otimes \QCoh(\BZ^{\on{alg}})^{\otimes n}$$
and t-exact face maps. Since all terms are left-complete (e.g., by \cite[Lemma 2.1.3(a)]{AGKRRV}), we obtain that 
$\Shv^{\on{Weil,loc.fin}}(X)$ is also left-complete.

\end{proof} 

\sssec{}

From point (a) of \thmref{t:t on relev}, we obtain:

\begin{cor} \label{c:char relev}
An object of $\Shv(X)$ belongs to $\Shv^{\on{relev}}(X)$ if and only if all irreducible subquotients of its perverse cohomologies
are invariant under some power of the Frobenius. 
\end{cor}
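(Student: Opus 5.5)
The plan is to prove the two implications separately. The \emph{only if} direction is formal. The \emph{if} direction uses left-completeness, \thmref{t:t on relev}(a), to reduce to bounded objects. For the \emph{only if} direction, let $\CF\in \Shv^{\on{relev}}(X)$. By \corref{c:t relev}(a) the inclusion $\Shv^{\on{relev}}(X)\hookrightarrow \Shv(X)$ is t-exact. Hence every perverse cohomology $H^i(\CF)$ lies in $\Shv^{\on{relev}}(X)^\heartsuit$, which is a full abelian subcategory of $\on{Ind}(\Perv(X))$ closed under subquotients, because truncations and cokernels are computed compatibly. So every irreducible subquotient is an irreducible object of $\Shv^{\on{relev}}(X)^\heartsuit$, and by \corref{c:t relev}(c) together with \lemref{l:rel gener} it is invariant under some power of $\Frob$. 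Irreducible subquotients of an ind-perverse sheaf are subquotients of compact (perverse) subobjects, so no issue arises there.

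For the converse, let $\CF\in \Shv(X)$ have the property that all irreducible subquotients of all $H^i(\CF)$ are $\Frob$-power-invariant. I will carry out three steps.

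\emph{Step 1.} Consider an object $\CG$ of $\Perv(X)$, i.e.\ a compact object in the heart, with this property. It has a finite Jordan--H\"older filtration whose graded pieces are relevant by \lemref{l:rel gener}. Since $\Shv^{\on{relev}}(X)$ is a stable full subcategory closed under extensions, $\CG\in \Shv^{\on{relev}}(X)$.

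\emph{Step 2.} Now let $\CG\in \on{Ind}(\Perv(X))$ have the property. It is the filtered union of its perverse subobjects of finite length, each of which has the property, and $\Shv^{\on{relev}}(X)$ is closed under colimits. Hence the whole heart part is handled.

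\emph{Step 3.} Bounded objects are then obtained by finite extensions of shifts of their perverse cohomologies, so they lie in $\Shv^{\on{relev}}(X)$. For bounded-above objects I would write $\CF\simeq \on{colim}_n \tau^{\geq -n}\CF$. This colimit does not a priori stay in the bounded range, but since colimits are allowed in $\Shv^{\on{relev}}(X)$, we still get $\CF\in \Shv^{\on{relev}}(X)$. (In $\Shv(X)$ the t-structure is compatible with filtered colimits, so this presentation is valid.)

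\emph{Step 4.} For an unbounded $\CF$, I would first use $\CF\simeq \on{colim}_m \tau^{\leq m}\CF$ to reduce to $\CF$ bounded above. The remaining delicate point, and the main obstacle, is that a bounded-above object is not generally a colimit of its bounded truncations in the coconnective direction: one has $\CF\simeq \lim_n \tau^{\geq -n}\CF$ instead. Here I will invoke left-completeness. Each $\tau^{\geq -n}\CF$ lies in $\Shv^{\on{relev}}(X)$ by Step 3 (for bounded objects). By \thmref{t:t on relev}(a) the limit $\CF':=\lim_n \tau^{\geq -n}\CF$ computed in $\Shv^{\on{relev}}(X)$ exists, and its truncations recover the $\tau^{\geq -n}\CF$. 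The inclusion into $\Shv(X)$ is t-exact. Comparing the images in $\Shv(X)$, which is also left-complete by \cite[Theorem 1.1.6]{AGKRRV}, the canonical map $\CF'\to\CF$ induces isomorphisms on all $\tau^{\geq -n}$, hence is an isomorphism. One thing to check carefully is that the inclusion commutes with these Postnikov limits. This follows because both categories are left-complete and the inclusion is t-exact, so each limit is determined by its truncation tower. This is exactly where part (a) of \thmref{t:t on relev} is needed, explaining why the corollary is stated as a consequence of it.
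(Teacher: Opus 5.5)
Your argument is essentially the paper's. The paper deduces the corollary directly from the left-completeness of $\Shv^{\on{relev}}(X)$ in \thmref{t:t on relev}(a), with the heart case being the easy part (cf.\ the footnote in the proof of \propref{p:LS relev to all}). Your Steps 1, 2 and 4 spell this out correctly.

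\textbf{The second half of your Step 3 is false and should be deleted.} The truncations $\tau^{\geq -n}\CF$ form an inverse system: the maps go $\tau^{\geq -n-1}\CF\to\tau^{\geq -n}\CF$. So $\on{colim}_n\tau^{\geq -n}\CF$ is not even defined. A bounded-above object is recovered as $\lim_n\tau^{\geq -n}\CF$, which is exactly what you say in Step 4. If the Step 3 claim were true, bounded-above objects would be colimits of bounded ones and left-completeness would be superfluous. What is true is $\CF\simeq\on{colim}_m\tau^{\leq m}\CF$, which you correctly use at the start of Step 4.

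\textbf{A smaller point in the \emph{only if} direction.} Compatibility of truncations and cokernels only shows that $\Shv^{\on{relev}}(X)^\heartsuit$ is closed under kernels and cokernels of its own morphisms. It does not show closure under arbitrary subobjects taken in $\on{Ind}(\Perv(X))$. The correct justification is the one you hint at afterwards:
\begin{itemize}
\item write $\CF$ as a filtered colimit of compact objects of $\Shv^{\on{relev}}(X)$;
\item these are constructible, and their perverse cohomologies have Frobenius-power-invariant Jordan--H\"older constituents;
\item hence $H^i(\CF)$ is a filtered union of finite-length perverse sheaves with such constituents;
\item any irreducible subquotient of $H^i(\CF)$ is already a subquotient of one of them.
\end{itemize}
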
 

\begin{rem} \label{r:wt left complete}
An assertion parallel to \thmref{t:t on relev}(b) holds for $\Shv^{\on{Weil,wt}}(X)$: in the proof one uses 
$\BZ^{\on{alg,wt}}$ instead of $\BZ^{\on{alg}}$ and \corref{c:W acts on rel}. 
\end{rem}

\ssec{A semi-simple category of sheaves}

\sssec{}

Denote
$$\Shv^{\on{relev},0}(X):=\Vect_{\sfe}\underset{\Rep(\BZ^{\on{alg},0})}\otimes\Shv^{\on{Weil},0}(X).$$

We equip $\Shv^{\on{relev},0}(X)$ with a t-structure by the recipe of \secref{sss:ten prod t-structure}.

\sssec{}

Note that we have a naturally defined forgetful functor
\begin{equation}  \label{e:zero to all}
\phi:\Shv^{\on{relev},0}(X)\to \Shv^{\on{relev}}(X),
\end{equation} 
but it is \emph{not} an equivalence. 

\sssec{}

Recall that a DG category $\bC$ equipped with a t-structure is said to be \emph{semi-simple} if:

\begin{itemize}

\item $\bC^\heartsuit$ is a semi-simple abelian category;

\item The functor $D(\bC^\heartsuit) \to \bC$ is an equivalence.

\end{itemize} 

\sssec{}

We claim:

\begin{prop} \label{p:relev 0} \hfill

\smallskip

\noindent{\em(a)} The category $\Shv^{\on{relev},0}(X)$ is semi-simple.  

\smallskip

\noindent{\em(b)} Its irreducible objects are the direct summands of 
images under the tautological functor
$$\Shv^{\on{Weil},0}(X)\to \Shv^{\on{relev},0}(X)$$
of objects \eqref{e:power Frob}, where $\CF$ is an irreducible perverse sheaf with a Weil structure
\emph{over} $\BF_{q^n}$ that is pure of weight $0$. Each such direct summand is of the form
$(\Frob^i)^*(\CF)$ for $0\leq i<n$. 
\end{prop}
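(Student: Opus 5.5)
The plan is to compute Hom's in $\Shv^{\on{relev},0}(X)$ between images of compact generators and to reduce semi-simplicity to two inputs: (i) Gabber's purity/decomposition, i.e. pure Weil perverse sheaves of weight $0$ are geometrically semi-simple, and (ii) the vanishing of the relevant $\Ext$'s once we tensor down along $\Rep(\BZ^{\on{alg},0})\to \Vect_\sfe$.

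\emph{Hom computation.} Since $\Rep(\BZ^{\on{alg},0})$ is rigid (\propref{p:W rep}(b)), for compact $\CF_1,\CF_2\in \Shv^{\on{Weil},0}(X)$ the Hom in $\Vect_\sfe\underset{\Rep(\BZ^{\on{alg},0})}\otimes \Shv^{\on{Weil},0}(X)$ between their images is the underlying vector space of the inner Hom $\uHom_{\Rep(\BZ^{\on{alg},0})}(\CF_1,\CF_2)$. By \propref{p:Phi Verdier abstract} and duality (i.e., the sheaf-theoretic setup of \secref{ss:sheaf theories}), this inner Hom is $(p_X)_*\Delta_X^!(\BD\CF_1\boxtimes \CF_2)\in \Rep(\BZ^{\on{alg},0})$, i.e., the Weil complex $R\Hom_{\Shv(X)}(\CF_1,\CF_2)$ with its Frobenius, restricted to its weight-$0$-generalized-eigenvalue part. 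In other words, the functor $\phi$ of \eqref{e:zero to all} on such Hom's is the inclusion of the generalized-eigenspace summand with eigenvalues Weil numbers of weight $0$ into $R\Hom_{\Shv(X)}(\CF_1,\CF_2)$. (Alternatively, this can be obtained from \corref{c:W acts on rel}: $\Vect\otimes_{\Rep(\BZ^{\on{alg},0})}(-)$ is the $\BZ^{\on{alg},0}$-de-equivariantization, and Hom's are $\CO(\BZ^{\on{alg},0})$-coinvariants.)

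\emph{Weight argument (main step).} Take $\CF_1,\CF_2$ pure perverse of weight $0$. By Deligne's Weil II, $\Ext^i(\CF_1,\CF_2)$ as a Frobenius module is mixed of weights $\geq i$. Hence for $i>0$ its weight-$0$ part vanishes. For $i=0$, $\Hom(\CF_1,\CF_2)$ has weight $\geq 0$ and its weight-$0$ part is what survives. So, in $\Shv^{\on{relev},0}(X)$, images of weight-$0$ pure perverse sheaves have no higher Ext's. This is the step I expect to require care: the weight $\geq i$ bound holds for $\Ext^i$ between pure perverse sheaves of equal weight (from $\BD\CF_1\otimes^!\CF_2$ being of weight $\geq 0$ and $(p_X)_*$ preserving weight $\geq$), and one must check that the inner-Hom identification above is compatible with taking generalized eigenspaces. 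By Gabber, every object of $\Shv^{\on{Weil},0}(X)^{\on{constr}}$ has perverse cohomologies whose subquotients are pure of weight $0$. It remains to show that the images are semi-simple in the heart. Here the extension classes between weight-$0$ irreducibles live in $\Ext^1$ in the \emph{Weil} category, which comes from $H^0$ and $H^1$ of Frobenius on $\Ext^{0,1}_{\Shv(X)}$. After tensoring down, only the underlying $\Ext^1_{\Shv(X)}$ (of weight $\geq1$, hence zero in the weight-$0$ part) matters, so these classes die.

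\emph{Conclusion.} The images of irreducible weight-$0$ Weil perverse sheaves generate $\Shv^{\on{relev},0}(X)$, lie in the heart (the t-structure is that of \secref{sss:ten prod t-structure}, and the Ext vanishing above makes them heart objects with no self-extensions), and have $\Ext^{>0}=0$ among themselves. Their endomorphism algebras are finite-dimensional semisimple: $\End$ equals the weight-$0$ part of $\Hom_{\Shv(X)}$ of geometrically semi-simple sheaves. It follows that $\Shv^{\on{relev},0}(X)\simeq D(\text{semisimple})$, which gives (a). For (b), take an irreducible Weil perverse sheaf of weight $0$. As in \lemref{l:rel gener}, it is of the form \eqref{e:power Frob} with $\CF$ carrying a Weil structure over $\BF_{q^n}$, pure of weight $0$. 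Its endomorphism algebra computed above is the Frobenius-invariant-weight-$0$ part of $\End(\oplus_i (\Frob^i)^*\CF)=\sfe^n$, so after tensoring down it becomes $\sfe^n$ with idempotents projecting to the summands $(\Frob^i)^*\CF$. Faithfulness of $\phi$ on these identifies the summands with $(\Frob^i)^*(\CF)$.
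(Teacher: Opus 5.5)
Your route is the paper's route. You identify the Hom in $\Shv^{\on{relev},0}(X)$ between images of compact Weil objects with the underlying vector space of the inner Hom relative to $\Rep(\BZ^{\on{alg},0})$. You obtain that inner Hom from $\CHom_{\Shv(X)}(\CF_1,\CF_2)$, with its Frobenius, by extracting the weight-$0$ generalized eigenspaces. Then the bound ``$\Ext^i$ has weights $\geq i$'' kills everything in degrees $i>0$. The compatibility you flagged is formal, and it is the paper's Step 2. Since $\Shv^{\on{Weil},0}(X)\hookrightarrow \Shv^{\on{Weil,loc.fin}}(X)$ is fully faithful and compatible with the actions, adjunction shows that the inner Hom relative to $\Rep(\BZ^{\on{alg},0})$ is the right adjoint of $\Rep(\BZ^{\on{alg},0})\hookrightarrow\Rep(\BZ^{\on{alg}})$ applied to the inner Hom relative to $\Rep(\BZ^{\on{alg}})$. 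Note that the formula $(p_X)_*\Delta_X^!(\dots)$ must be evaluated in $\Shv^{\on{Weil,loc.fin}}(X)$, because $\Shv^{\on{Weil},0}(-)$ is not stable under $(p_X)_*$. Up to here everything is correct.

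The gap is in your concluding step. Compact generators that lie in the heart, have no $\Ext^{>0}$ among them, and have semi-simple endomorphism algebras need \emph{not} generate a semi-simple category. The two indecomposable projectives over the path algebra of $\bullet\to\bullet$ satisfy all of these conditions. What is missing is control of the degree-$0$ Homs between \emph{different} generators, i.e.\ that the generators are semi-simple objects of the heart (condition (i) of the paper's criterion). Neither your $\Ext^1$ discussion nor the remark that ``the weight-$0$ part of $\Hom_{\Shv(X)}$ survives'' supplies this.

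The paper closes the gap with the case analysis of Step 4. Take irreducible $\CF_1,\CF_2\in(\Shv^{\on{Weil},0}(X))^\heartsuit$. Either $\oblv(\CF_1)$ and $\oblv(\CF_2)$ share no isotypic component, and then $\Hom_{\Shv(X)}(\CF_1,\CF_2)=0$. Or $\CF_1\simeq\CF_2\otimes\fl$ for a weight-$0$ line $\fl$, and then \emph{all} of $\Hom_{\Shv(X)}(\CF_1,\CF_2)$ already has weight-$0$ eigenvalues. Hence $\phi$ is bijective on degree-$0$ Homs between the generators. This embeds them into the semi-simple category of perverse sheaves on $X$, which gives (a) and (b) at once. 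Your computation $\End\simeq\sfe^n$ in (b) is the special case $\CF_1=\CF_2$. What you get there is the full weight-$0$ generalized eigenspace, not the Frobenius invariants.

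Alternatively, you can close the gap from your own observation. Composition is Frobenius-equivariant, so weights add. Since all these Homs have weight $\geq 0$, the positive-weight part is a two-sided ideal, and the weight-$0$ ringoid is a quotient of a semi-simple one.

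Finally, Gabber's theorem is not needed here. Geometric semi-simplicity of irreducible Weil perverse sheaves is the elementary description in \lemref{l:rel gener}. The weight-$0$ condition on perverse cohomologies is the definition of $\Shv^{\on{Weil},0}(X)^{\on{constr}}$, not a theorem.
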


\begin{rem} \label{r:semi-simple 0}
Note that although the functor $\phi$ of \eqref{e:zero to all} is not an equivalence, 
it induces a bijection on
semi-simple objects: indeed, it follows from \corref{c:t relev}(c) and the proof of 
\thmref{t:LL} that any irreducible perverse sheaf
in $\Shv^{\on{relev}}(X)$ lies in the image of $\phi$.

\medskip

In particular, the functor $\phi$ is t-exact. 

\end{rem} 

\sssec{}

Before we prove the proposition, we give a criterion for semi-simplicity:

\begin{lem} 
Let $\bC$ be a DG category equipped with a t-structure. Assume that $\bC$ is compactly generated
by a collection of objects $\bc_\alpha\in \bC^\heartsuit$ such that: 

\smallskip

\noindent{\em(i)} $\bc_\alpha$ are semi-simple as objects of $\bC^\heartsuit$;

\smallskip

\noindent{\em(ii)} For any $\alpha_1,\alpha_2$, the object $\CHom_\bC(\bc_{\alpha_1},\bc_{\alpha_2})$ is acyclic off degree $0$. 

\medskip

Then $\bC$ is semi-simple.

\end{lem}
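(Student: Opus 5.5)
We reduce the statement to a computation in the full subcategory $\bC_0\subset \bC^\heartsuit$ of finite direct sums of the $\bc_\alpha$. By (i), we may first replace the collection $\{\bc_\alpha\}$ by the set of all simple direct summands of the $\bc_\alpha$. Direct summands of compact objects are compact, and (ii) is inherited by summands because $\CHom$ of summands are summands of $\CHom$. Henceforth each $\bc_\alpha$ is simple in $\bC^\heartsuit$.

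\emph{Step 1.} Let $\CA$ be the additive category with objects the finite sums of $\bc_\alpha$'s, and morphisms $H^0\CHom_\bC$. By (ii) the full DG subcategory of $\bC$ spanned by these objects has mapping complexes concentrated in degree $0$. It is therefore equivalent to the ordinary category $\CA$, viewed as a DG category. Since the $\bc_\alpha$ are simple, Schur's lemma makes $\CA$ semi-simple: $\on{End}(\bc_\alpha)$ is a division algebra, and $\Hom(\bc_\alpha,\bc_\beta)$ is either zero or the two objects are isomorphic. (Over $\sfe$ algebraically closed with finite-dimensional Homs this endomorphism algebra is just $\sfe$, but we do not need this.)

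\emph{Step 2.} Since $\bC$ is compactly generated by the $\bc_\alpha$, we get
$$\bC\simeq \on{Ind}(\text{idempotent-completed pretriangulated hull of }\CA).$$
Let $\CA^{\on{ss}}$ be the abelian semi-simple category of Ind-objects of $\CA$, i.e. arbitrary direct sums of the $\bc_\alpha$. Then $\on{Ind}$ of the pretriangulated hull of $\CA$ identifies with $D(\CA^{\on{ss}})$. Indeed, both sides are compactly generated by the same objects with the same (degree-$0$) mapping complexes, and in $D(\CA^{\on{ss}})$ the objects $\bc_\alpha$ are compact projective generators. We then check that this equivalence is t-exact. On the $D(\CA^{\on{ss}})$ side, the connective part is generated under colimits by the $\bc_\alpha$. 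On the $\bC$ side, the $\bc_\alpha$ lie in $\bC^\heartsuit$, so the equivalence is right t-exact. For the other direction, write an object of $\bC^{\geq 0}$ as $\bigoplus_\alpha V_\alpha\otimes \bc_\alpha$ with $V_\alpha$ complexes of vector spaces; every object has this form by semi-simplicity of $D(\CA^{\on{ss}})$. Applying $\Hom(\bc_\alpha,-)$, which detects coconnectivity, we need the $V_\alpha$ to be coconnective.

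\emph{Main obstacle.} The difficulty is that the t-structure on $\bC$ is given abstractly, and it need not a priori be the one generated by the $\bc_\alpha$. We need that $\bC^{\leq 0}$ is exactly the colimit-closure of the $\bc_\alpha$, i.e. that $\bC^{\geq 1}$ equals the right orthogonal of the $\bc_\alpha$. If this fails, we would add to the hypotheses, or check in the application, that the t-structure is compatible with compact generation. This is true in the intended application via the tensor-product t-structure of \secref{sss:ten prod t-structure}, which is by definition generated by images of connective objects.

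Granting this, $\bC^\heartsuit$ consists of the objects $\bigoplus V_\alpha\otimes\bc_\alpha$ with each $V_\alpha$ in degree $0$. Thus $\bC^\heartsuit=\CA^{\on{ss}}$ is semi-simple, and $D(\bC^\heartsuit)\to\bC$ is the equivalence constructed above.
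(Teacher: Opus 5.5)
The paper states this lemma without proof, so your argument has to stand on its own. It has one genuine gap: the step you call the ``main obstacle'' is left as an assumption, with the suggestion that the hypotheses might need strengthening. But that step is exactly what must be proved to get the lemma as stated, and no extra hypothesis is needed.

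After your Steps 1--2, every object of $\bC$ is a direct sum $x\simeq\bigoplus_{\alpha,n}\bc_\alpha^{\oplus I_{\alpha,n}}[-n]$. So $\bc_\alpha[-n]$ is a direct summand of $x$ whenever $I_{\alpha,n}\neq\emptyset$. Now $\bC^{\leq 0}$ is closed under retracts, being the left orthogonal of $\bC^{\geq 1}$. Suppose $x\in\bC^{\leq 0}$ and $I_{\alpha,n}\neq\emptyset$ for some $n\geq 1$. Then
$\bc_\alpha[-n]\in\bC^{\leq 0}\cap\bC^{\geq n}\subseteq\bC^{\leq 0}\cap\bC^{\geq 1}=0$,
using $\bc_\alpha\in\bC^{\geq 0}$. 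This contradicts $\bc_\alpha\neq 0$.

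Combine this with the inclusion you already have: the $\bc_\alpha[m]$, $m\geq 0$, lie in $\bC^{\leq 0}$, which is closed under colimits and extensions. Together these show that $\bC^{\leq 0}$ is precisely the colimit closure of the $\bc_\alpha$. So the given t-structure is forced to be the standard one on $D(\CA^{\on{ss}})$. Then $\bC^\heartsuit=\CA^{\on{ss}}$ is semi-simple, $D(\bC^\heartsuit)\to\bC$ is an equivalence, and the rest of your argument goes through.

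Relatedly, your ``other direction'' sentence checks the wrong inclusion. For $x\in\bC^{\geq 0}$, coconnectivity of $\CHom_\bC(\bc_\alpha,x)$ is automatic: $\Hom(\bc_\alpha[k],x)=0$ for $k\geq 1$ because $\bc_\alpha[k]\in\bC^{\leq -k}$. This is just the adjoint reformulation of the right t-exactness of $F\colon D(\CA^{\on{ss}})\to\bC$, which you had already established. What t-exactness additionally requires is $F^{-1}(\bC^{\leq 0})\subseteq D(\CA^{\on{ss}})^{\leq 0}$, and that is exactly what the retract argument above supplies.
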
 

\sssec{Proof of \propref{p:relev 0}, Step 1}

It suffices to show that for a pair of irreducible objects $\CF_1,\CF_2\in (\Shv^{\on{Weil},0}(X))^\heartsuit$, the object 
\begin{equation} \label{e:relev 0 1}
\CHom_{\Shv^{\on{relev},0}(X)}(\one_{\Vect_{\sfe}}\otimes \CF_1,\one_{\Vect_{\sfe}}\otimes \CF_2)\in \Vect_{\sfe}
\end{equation}
is acyclic off degree $0$. 

\medskip

Note that the object \eqref{e:relev 0 1} is obtained by applying the forgetful functor $\Rep(\BZ^{\on{alg},0})\to \Vect_{\sfe}$
to
\begin{equation} \label{e:relev 0 2}
\uHom_{\Shv^{\on{Weil},0}(X),\Rep(\BZ^{\on{alg},0})}(\CF_1,\CF_2)\in \Rep(\BZ^{\on{alg},0}).
\end{equation}

Hence, it suffices to show that \eqref{e:relev 0 2} lies in the heart of the t-structure. 

\sssec{Proof of \propref{p:relev 0}, Step 2}

In its turn, the object \eqref{e:relev 0 2} 
is obtained from
\begin{equation} \label{e:relev 0 3}
\uHom_{\Shv^{\on{Weil,loc.fin}}(X),\BZ\mod_{\on{loc.fin}}.}(\CF_1,\CF_2)\in \BZ\mod_{\on{loc.fin}}.
\end{equation}
by applying the right adjoint to the embedding
\begin{equation} \label{e:Weil numbers 0}
\Rep(\BZ^{\on{alg},0})\hookrightarrow \Rep(\BZ^{\on{alg}})=\BZ\mod_{\on{loc.fin}}.
\end{equation} 

\medskip

We will show that this right adjoint annihilates $H^i(\uHom_{\Shv^{\on{Weil,loc.fin}}(X),\BZ\mod_{\on{loc.fin}}.}(\CF_1,\CF_2))$
for $i>0$ and acts as identity for $i=0$. 

\sssec{Proof of \propref{p:relev 0}, Step 3}

Note that the object of $\Vect_{\sfe}$ underlying \eqref{e:relev 0 3} is
\begin{equation} \label{e:relev 0 4}
\CHom_{\Shv(X)}(\CF_1,\CF_2),
\end{equation}
with the action of $1\in \BZ$ given by the Weil structure.

\medskip

Now, by \cite{BBD}, the generalized eigenvalues of the Frobenius on
$$H^i(\CHom_{\Shv(X)}(\CF_1,\CF_2))\simeq \on{Ext}^i_{\Shv(X)}(\CF_1,\CF_2)$$
are Weil numbers of weights $\geq i$. 

\medskip

Hence, all $H^i(\CHom_{\Shv(X)}(\CF_1,\CF_2))$ with $i>0$ are annihilated by the right
adjoint of \eqref{e:Weil numbers 0}. 

\sssec{Proof of \propref{p:relev 0}, Step 4}

Finally, for $i=0$, 
\begin{equation} \label{e:relev 0 5}
H^0(\CHom_{\Shv(X)}(\CF_1,\CF_2))\simeq \Hom_{\Shv(X)}(\CF_1,\CF_2),
\end{equation}
and let us consider two cases:

\medskip

\noindent{\it Case 1:} $\oblv_{\on{Weil}}(\CF_1)$ and $\oblv_{\on{Weil}}(\CF_2)$ does not
have common isotypic components. In this case $\Hom_{\Shv(X)}(\CF_1,\CF_2)=0$
and there is nothing to prove,

\medskip 

\noindent{\it Case 2:} $\oblv_{\on{Weil}}(\CF_1)$ and $\oblv_{\on{Weil}}(\CF_2)$ do 
have common isotypic components. Then by the description of the irreducible
objects in $\Shv^{\on{Weil}}(X)$ (see the proof of \lemref{l:rel gener}), we have 
$$\CF_1\simeq \CF_2\otimes \fl,$$ where $\fl$ is a line in $\Rep(\BZ^{\on{alg},0})$, and 
$\Hom_{\Shv(X)}(\CF_1,\CF_2)$ is isomorphic to $\fl^{\oplus n}$ for some $n$.
Hence, \eqref{e:relev 0 5} already belongs to $\Rep(\BZ^{\on{alg},0})$, and hence, the right
adjoint of \eqref{e:Weil numbers 0} applied to $H^0(\CHom_{\Shv(X)}(\CF_1,\CF_2))$ 
acts as identity. 
 
\qed[\propref{p:relev 0}]

\begin{rem}

As in \thmref{t:t on relev}(b), from \propref{p:relev 0} one deduces that the category
$\Shv^{\on{Weil},0}(X)$ is left-complete in its t-structure.

\end{rem}

\ssec{A contraction on the category of sheaves}

\sssec{}

We are now ready to carry out the key construction of this section:

\begin{thmconstr} \label{t:contraction on category}
The DG category $\Shv^{\on{relev}}(X)$ canonically lifts to an object of $\QCoh(\BA^1)\commod$. Moreover:

\smallskip

\noindent{\em(i)} The \emph{attracting category} corresponding to this coaction is $\Shv^{\on{relev},0}(X)$. 

\smallskip

\noindent{\em(ii)} The resulting functors
$$\Shv^{\on{relev}}(X) \overset{\psi}\to \Shv^{\on{relev},0}(X)\overset{\phi}\to \Shv^{\on{relev}}(X)$$
are compatible with the coactions of $\QCoh(\BZ^{\on{alg,wt}})$, where the coaction 
on $\Shv^{\on{relev},0}(X)$ is via $\BZ^{\on{alg,wt}}\to \BZ^{\on{alg},0}$. 
\end{thmconstr}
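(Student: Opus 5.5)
We will construct the $\BA^1$-coaction via \thmref{t:A1}, applied not to $\Shv^{\on{relev}}(X)$ directly but to its $\BG_m$-equivariantization. By \corref{c:W acts on rel}, $\Shv^{\on{relev}}(X)$ carries a coaction of $\QCoh(\BZ^{\on{alg,wt}})$ with invariants $\Shv^{\on{Weil,wt}}(X)$. Restricting along $\BG_m\to \BZ^{\on{alg,wt}}$, we obtain a $\BG_m$-action, and
$$\bD:=(\Shv^{\on{relev}}(X))^{\BG_m}\simeq \Rep(\BG_m)\underset{\Rep(\BZ^{\on{alg,wt}})}\otimes \Shv^{\on{Weil,wt}}(X)\simeq \Vect_\sfe\underset{\Rep(\BZ^{\on{alg},0})}\otimes \Shv^{\on{Weil,wt}}(X).$$
The last equivalence uses $\BZ^{\on{alg},0}=\BZ^{\on{alg,wt}}/\BG_m$ together with $\Rep(\BG_m)\simeq \Vect\underset{\Rep(\BZ^{\on{alg},0})}\otimes\Rep(\BZ^{\on{alg,wt}})$, which holds by \corref{c:eq de-eq W} for the quotient. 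The $\BZ$-action on $\bD$ is twisting by the weight-$1$ Tate line. By \thmref{t:A1}, lifting to $\QCoh(\BA^1)\commod$ amounts to giving a $\BZ$-invariant filtration on $\bD$. We will take
$$\bD_{\leq n}:=\Vect_\sfe\underset{\Rep(\BZ^{\on{alg},0})}\otimes \Shv^{\on{Weil,wt}}(X)_{\leq n},$$
where $\Shv^{\on{Weil,wt}}(X)_{\leq n}$ is ind-generated by constructible objects all of whose perverse-cohomology irreducible subquotients are pure of weight $\leq n$. Each such subcategory is $\Rep(\BZ^{\on{alg},0})$-stable because weight-$0$ twists preserve weights. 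Tensoring with the weight-$1$ line shifts $\leq n$ to $\leq n+1$, and the union is everything. Since the whole construction is natural in $\Rep(\BZ^{\on{alg,wt}})$-module structure, the coaction of $\BZ^{\on{alg,wt}}$ (which is commutative) commutes with the $\BA^1$-coaction; this gives compatibility (ii) once (i) is established.

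The key input, and the main obstacle, is verifying that these subcategories satisfy the axioms of $\DGCat^{\on{Filtr}}$, namely that the inclusions admit continuous right adjoints in a way compatible with $\Rep(\BZ^{\on{alg},0})$-linearity. Here we plan to use a weight semi-orthogonality, as in Steps 2--3 of the proof of \propref{p:relev 0}. For $\CF_1$ pure of weight $\leq n$ and $\CF_2$ pure of weight $>n$, the relative inner Hom $\uHom_{\Shv^{\on{Weil,wt}}(X),\Rep(\BZ^{\on{alg,wt}})}(\CF_2,\CF_1)$ has Frobenius eigenvalues on its $H^i$ of weight $\geq i+w(\CF_1)-w(\CF_2)$ by \cite{BBD}. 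After de-equivariantizing along $\BZ^{\on{alg},0}$, that is, after taking the weight-$0$ part, we aim to show that maps from higher-weight to lower-weight pieces vanish in $\bD$. This would give a recollement: every compact object of $\bD$ would admit a finite weight filtration (perverse truncations combined with the Jordan--H\"older filtration, pushed to $\bD$) whose pieces are split off by the semi-orthogonality. The right adjoint $i^R_{\leq n}$ is then computed on compacts and extends continuously. We expect the delicate point to be that the semi-orthogonality holds only after killing nonzero-weight eigenvalues, so it must be checked in $\bD$ rather than in $\Shv^{\on{Weil,wt}}(X)$. Concretely, the Hom in $\bD$ is the weight-$0$ generalized eigenspace of the Weil-structured Hom, and Deligne's bounds kill the wrong direction. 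As an alternative, we would give the standard weight-truncation functors $w_{\leq n}$ on mixed complexes (BBD 5.3) directly and check that they commute with the $\Rep(\BZ^{\on{alg},0})$-action.

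For (i), the attracting category is $\bD_{=0}=\bD_{\leq 0}/\bD_{<0}$. By the semi-orthogonality above, this quotient identifies with $\Vect_\sfe\underset{\Rep(\BZ^{\on{alg},0})}\otimes \Shv^{\on{Weil},0}(X)=\Shv^{\on{relev},0}(X)$: the weight-$0$ subcategory maps fully faithfully onto the quotient and generates it. Finally, we will identify the functors: \propref{p:action of 0} and \corref{c:action of 0} describe $\phi$ and $\psi$, and unwinding shows that $\phi$ is the forgetful functor \eqref{e:zero to all} and $\psi$ is the associated-graded-in-weight-$0$ functor. Both are $\Rep(\BZ^{\on{alg,wt}})$-linear by construction, hence compatible with the $\QCoh(\BZ^{\on{alg,wt}})$-coactions via $\BZ^{\on{alg,wt}}\to\BZ^{\on{alg},0}$.
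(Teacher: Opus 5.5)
Your overall architecture is the paper's. You use the same $\bD\simeq\Vect_\sfe\underset{\Rep(\BZ^{\on{alg},0})}\otimes\Shv^{\on{Weil,wt}}(X)$, the same filtration by weights of perverse constituents, \thmref{t:A1}, and an identification of $\bD_{=0}$ by an orthogonality plus generation. But the orthogonality you rely on points the wrong way and is false.

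You want maps from higher-weight to lower-weight objects to vanish in $\bD$. Your own bound (weights $\geq i+w(\CF_1)-w(\CF_2)$ on $H^i\uHom(\CF_2,\CF_1)$) does not give this, since the bound is $\leq 0$ for $i\leq w(\CF_2)-w(\CF_1)$. Concretely, on $X=\BP^1$ take $\CF_1=\sfe_X[1]$ (pure of weight $1$) and $\CF_2=\sfe_X[1](-1)$ (pure of weight $3$). Then $\CHom_{\Shv(X)}(\CF_2,\CF_1)\simeq C^*(X,\sfe)(1)$, and $H^2(X,\sfe)(1)\simeq\sfe$ has Frobenius eigenvalue $1$, so $\Ext^2_{\bD}(\CF_2,\CF_1)\neq 0$. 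This $\Ext^2$ is nonzero even in $\Shv^{\on{Weil,wt}}(X)$.

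These higher-to-lower Ext's are exactly what encodes non-split mixed extensions. If the weight pieces of compact objects of $\bD$ split off as you propose, $\bD$ would decompose as $\bigoplus_n \bD_{=n}$. Then $\Shv^{\on{relev}}(X)\simeq\Vect_\sfe\underset{\Rep(\BG_m)}\otimes\bD$ would be the semi-simple category $\Shv^{\on{relev},0}(X)$, which it is not. So the recollement you describe does not exist, and your proof of (i), which invokes ``the semi-orthogonality above'', rests on a false statement.

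What is true is the opposite direction. If $\CF_1$ is pure of weight $\leq n$ and $\CF_2$ is pure of weight $>n$, then $H^i\CHom_{\Shv(X)}(\CF_1,\CF_2)$ vanishes for $i<0$ and has weights $\geq i+1>0$. Hence its derived Frobenius invariants vanish and $\CHom_{\Shv^{\on{Weil,wt}}(X)}(\CF_1,\CF_2)=0$. This holds already before de-equivariantization, contrary to your remark that the check must be done in $\bD$.

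This is what the paper uses. It shows that $\Shv^{\on{Weil},0}(X)$ lies in the right orthogonal of $(\Shv^{\on{Weil,wt}}(X))_{\leq -1}$ inside $(\Shv^{\on{Weil,wt}}(X))_{\leq 0}$, hence maps fully faithfully to the quotient. It generates the quotient because perverse sheaves of weight $\leq -1$ die there. This identification is $\Rep(\BZ^{\on{alg},0})$-linear, so tensoring with $\Vect_\sfe$ gives (i). Part (ii) comes for free, since everything is done on $\Shv^{\on{Weil,wt}}(X)$ compatibly with $\Rep(\BZ^{\on{alg,wt}})$.

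Finally, what you call the main obstacle is not one. $(\Shv^{\on{Weil,wt}}(X))_{\leq n}$ is generated by objects that are compact in $\Shv^{\on{Weil,wt}}(X)$, so the inclusion preserves compactness and has a continuous right adjoint. That adjoint is strictly $\Rep(\BZ^{\on{alg},0})$-linear by rigidity (\propref{p:W rep}(b)). No weight filtrations of compact objects or weight-truncation functors are needed to verify the axioms of $\DGCat^{\on{Filtr}}$.
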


\begin{rem} 
Note that as a consequence of \thmref{t:contraction on category} we obtain that the forgetful functor $\phi$
of \eqref{e:zero to all} admits a retraction, to be denoted
$$\psi:\Shv^{\on{relev}}(X)\to \Shv^{\on{relev},0}(X).$$

\medskip

In view of \propref{p:relev 0}, we obtain that on the category $\Shv^{\on{relev}}(X)$ there is a 
well-defined functor of \emph{semi-simplification}. 
\end{rem} 

\sssec{}

The rest of this subsection is devoted to the proof of \thmref{t:contraction on category}. 

\sssec{}

Recall that according to \corref{c:relev}, we have
$$\Shv^{\on{relev}}(X)\simeq \Vect_{\sfe}\underset{\Rep(\BZ^{\on{alg,wt}})}\otimes\Shv^{\on{Weil,wt}}(X).$$

Set\footnote{The reason for the choice of notation is that the role of $\Shv^{\on{relev,gr}}(X)$ vis-a-vis $\Shv^{\on{relev}}(X)$ is
similar to that of $A\mod^{\on{gr}}$ vis-a-vis $A\mod$, for a $\BZ$-graded algebra $A$.} 
$$\Shv^{\on{relev,gr}}(X):=\Rep(\BG_m)\underset{\Rep(\BZ^{\on{alg,wt}})}\otimes\Shv^{\on{Weil,wt}}(X),$$
where
$$\Rep(\BZ^{\on{alg,wt}})\to \Rep(\BG_m)$$
is given by restriction along the canonical homomorphism $\BG_m\to \BZ^{\on{alg,wt}}$
(see \secref{sss:structure W}).

\begin{rem}

The category $\Shv^{\on{relev,gr}}(X)$ has been introduced and studied in \cite{HL}.

\end{rem} 

\sssec{}

Thus, 
$$\Shv^{\on{relev}}(X)\simeq  \Vect_{\sfe}\underset{\Rep(\BG_m)}\otimes \Shv^{\on{relev,gr}}(X).$$

According to \thmref{t:A1}, in order to prove \thmref{t:contraction on category}, we need to lift
$\Shv^{\on{relev,gr}}(X)$ to an object of $\BZ\mmod^{\on{Filtr}}$, so that the corresponding category
$(\Shv^{\on{relev,gr}}(X))_0$ identifies with $\Shv^{\on{relev},0}(X)$.

\sssec{}

Note that the short exact sequence
$$1\to \BG_m\to \BZ^{\on{alg,wt}}\to \BZ^{\on{alg},0}\to 1$$
leads to an equivalence 
$$\Rep(\BG_m)\simeq \Vect_{\sfe}\underset{\Rep(\BZ^{\on{alg},0})}\otimes \Rep(\BZ^{\on{alg,wt}}).$$

Hence, we can rewrite
$$\Shv^{\on{relev,gr}}(X) \simeq  \Vect_{\sfe}\underset{\Rep(\BZ^{\on{alg},0})}\otimes \Shv^{\on{Weil,wt}}(X).$$

We will lift $\Shv^{\on{Weil,wt}}(X)$ to an object of $\DGCat^{\on{Filtr}}$ in a way compatible with the action of
$\Rep(\BZ^{\on{alg,wt}})$ and such that
$$(\Shv^{\on{Weil,wt}}(X))_0\simeq \Shv^{\on{Weil},0}(X).$$

This will induce the required structure on $\Shv^{\on{relev,gr}}(X)$.

\sssec{}

We let 
$$(\Shv^{\on{Weil,wt}}(X))_{\leq n}\subset \Shv^{\on{Weil,wt}}(X)$$
be the full subcategory generated by
$$(\Shv^{\on{Weil,wt}}(X))^{\on{constr}}_{\leq n}\subset (\Shv^{\on{Weil,wt}}(X))^{\on{constr}},$$
where the latter is the subcategory consisting of objects, all of whose perverse cohomologies
have weights $\leq n$.

\medskip

\noindent {\it Warning}: note that $(\Shv^{\on{Weil,wt}}(X))_{\leq n}$ is \emph{not} what is called 
``mixed complexes of weight $\leq n$" in \cite{BBD}. 

\sssec{}

By construction, the category $(\Shv^{\on{Weil,wt}}(X))_{\leq n}$ is compactly generated, and the embedding
$$(\Shv^{\on{Weil,wt}}(X))_{\leq n}\hookrightarrow (\Shv^{\on{Weil,wt}}(X))_{\leq n+1}$$
preserves compactness (and, hence, admits a colimit-preserving right adjoint). 

\medskip

Furthermore, it is clear that 
$$\underset{n}{\on{colim}}\, (\Shv^{\on{Weil,wt}}(X))_{\leq n}\to \Shv^{\on{Weil,wt}}(X)$$
is an equivalence.

\sssec{}

It remains to identify
$$(\Shv^{\on{Weil,wt}}(X))_{\leq 0}/(\Shv^{\on{Weil,wt}}(X))_{\leq -1}\simeq \Shv^{\on{Weil},0}(X).$$

\medskip

We will think of $\Shv^{\on{Weil},0}(X)$ as a full subcategory of $(\Shv^{\on{Weil,wt}}(X))_{\leq 0}$.
We will show that as such it identifies with the right orthogonal
$$(\Shv^{\on{Weil,wt}}(X))_{\leq -1}^\perp\subset (\Shv^{\on{Weil,wt}}(X))_{\leq 0}.$$

\sssec{}

First, we claim that for $\CF_1\in (\Shv^{\on{Weil,wt}}(X))_{\leq -1}$ and $\CF_2\in \Shv^{\on{Weil},0}(X)$,
we have
$$\CHom_{\Shv^{\on{Weil,wt}}(X)}(\CF_1,\CF_2)=0.$$

To prove this, we can assume that $\CF_1$ and $\CF_2$ are constructible and perverse. We have
$$\CHom_{\Shv^{\on{Weil,wt}}(X)}(\CF_1,\CF_2)=\CHom_{\Shv^{\on{Weil,loc.fin}}(X)}(\CF_1,\CF_2)\simeq
(\CHom_{\Shv(X)}(\CF_1,\CF_2))^{\Frob}.$$

Now, $H^i(\CHom_{\Shv(X)}(\CF_1,\CF_2))=0$ for $i<0$ and for $i\geq 0$ the Frobenius eigenvalues
on it have weights $\geq i+1$. This implies that 
$$\left(H^i(\CHom_{\Shv(X)}(\CF_1,\CF_2))\right)^{\Frob}=0,$$
as desired. 

\sssec{}

Thus, we obtain a (fully faithful) inclusion
$$\Shv^{\on{Weil},0}(X)\subset (\Shv^{\on{Weil,wt}}(X))_{\leq -1}^\perp.$$

It remains to show that the essential image of this inclusion generates the target. This is equivalent to showing
that the essential image of
$$\Shv^{\on{Weil},0}(X)\hookrightarrow (\Shv^{\on{Weil,wt}}(X))_{\leq 0}\to (\Shv^{\on{Weil,wt}}(X))_{\leq 0}/(\Shv^{\on{Weil,wt}}(X))_{\leq -1}$$
generates the target. 

\medskip

However, this is obvious: the right-hand side is generated by perverse sheaves of weights $\leq 0$, while perverse sheaves of weights
$\leq -1$ go to zero.

\qed[\thmref{t:contraction on category}]

\begin{rem}

Recall that the category $\Shv^{\on{Weil,wt}}(X)$ is left-complete in its t-structure (see Remark \ref{r:wt left complete}). 
We claim that an object of $\Shv^{\on{Weil,wt}}(X)$ belongs to $(\Shv^{\on{Weil,wt}}(X))_{\leq n}$ if (and only if) all of its
perverse cohomologies do (in particular, $(\Shv^{\on{Weil,wt}}(X))_{\leq n}$ is also left-complete in its t-structure). 

\medskip

Indeed, note that $(\Shv^{\on{Weil,wt}}(X))_{>n}$ also acquires a t-structure, such that the projection
$$(j_{>n})^L:\Shv^{\on{Weil,wt}}(X)\to (\Shv^{\on{Weil,wt}}(X))_{>n}$$
is t-exact. 

\medskip

Let $\CF\in \Shv^{\on{Weil,wt}}(X)$ be such that all of its perverse cohomologies have weights $\leq n$. 
We wish to show that $\CF\in (\Shv^{\on{Weil,wt}}(X))_{\leq n}$, i.e., that $(j_{>n})^L(\CF)=0$. 
Now, by assumption all of the cohomologies of $(j_{>n})^L(\CF)$ vanish. Hence, it suffices to show
that the t-structure on $(\Shv^{\on{Weil,wt}}(X))_{>n}$ is separated. We claim that $(\Shv^{\on{Weil,wt}}(X))_{>n}$
is in fact left-complete in its t-structure:

\medskip

Note that the functor 
$$j_{>n}: (\Shv^{\on{Weil,wt}}(X))_{>n}\to \Shv^{\on{Weil,wt}}(X)$$
is also t-exact. Since $j_{>n}$ commutes with limits, it follows formally that $\Shv^{\on{Weil,wt}}(X))_{>n}$ is 
left-complete in its t-structure.

\end{rem} 

\begin{rem}

Applying \lemref{l:good t} to 
$$\bA:=\Rep(\BZ^{\on{alg},0}),\,\, \bA':=\Vect_{\sfe} \text{ and } \CM:=\Shv^{\on{Weil,wt}}(X)$$
we obtain that the category $\Shv^{\on{relev,gr}}(X)$ also acquires a t-structure so that the adjoint
functors
$$\Shv^{\on{Weil,wt}}(X)\rightleftarrows \Shv^{\on{relev,gr}}(X)$$
are t-exact. Since $\Shv^{\on{Weil,wt}}(X)$ is left-complete in its t-structure, the same is true
for $\Shv^{\on{relev,gr}}(X)$.

\medskip

Furthermore, the categories
$$(\Shv^{\on{relev,gr}}(X))_{\leq n}\simeq  \Vect_{\sfe}\underset{\Rep(\BZ^{\on{alg},0})}\otimes (\Shv^{\on{Weil,wt}}(X))_{\leq n}$$
and 
$$(\Shv^{\on{relev,gr}}(X))_{>n}\simeq  \Vect_{\sfe}\underset{\Rep(\BZ^{\on{alg},0})}\otimes (\Shv^{\on{Weil,wt}}(X))_{>n}$$
also acquire t-structures, in which they are left-complete, and the functors $(i_{\leq n},i_{\leq n}^R)$ and $(j_{\geq n}^L,j_{\geq n})$
are t-exact. 

\end{rem} 

\ssec{The t-exactness property of the contraction}

\sssec{}

We claim:

\begin{prop}
The coaction functor
$$\Shv^{\on{relev}}(X)\to \Shv^{\on{relev}}(X)\otimes \QCoh(\BA^1)$$
is t-exact.
\end{prop}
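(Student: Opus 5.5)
We will reduce the statement to the graded category and then to a statement about the filtration on $\Shv^{\on{Weil,wt}}(X)$. The coaction functor $\Shv^{\on{relev}}(X)\to \Shv^{\on{relev}}(X)\otimes \QCoh(\BA^1)$ is $\BG_m$-equivariant, so under the equivalence \eqref{e:G_m actions} it corresponds to the $\BZ$-equivariant coaction
$$\Shv^{\on{relev,gr}}(X)\to \Shv^{\on{relev,gr}}(X)\otimes \QCoh(\BA^1/\BG_m)\simeq \on{Funct}(\BZ,\Shv^{\on{relev,gr}}(X)),\quad \CF\mapsto \{i_{\leq n}\circ i_{\leq n}^R(\CF)\}.$$
The de-equivariantization functor $\Shv^{\on{relev,gr}}(X)\to \Shv^{\on{relev}}(X)$ is t-exact and conservative; we expect this from \lemref{l:good t} applied to $\Rep(\BG_m)\to \Vect_\sfe$. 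Therefore the t-exactness upstairs will follow from t-exactness of the graded version. Here the target $\on{Funct}(\BZ,-)$ carries the pointwise t-structure, which matches the natural t-structure on $\bC\otimes\QCoh(\BA^1/\BG_m)$.

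So it suffices to show that each functor $i_{\leq n}\circ i_{\leq n}^R$ on $\Shv^{\on{relev,gr}}(X)$ is t-exact. By the last remark of the previous subsection, $i_{\leq n}$ and $i^R_{\leq n}$ are t-exact for the induced t-structures on $(\Shv^{\on{relev,gr}}(X))_{\leq n}$. Granting this, the claim is immediate. For completeness we would re-prove the key point, namely that $i^R_{\leq n}$ is t-exact on $\Shv^{\on{Weil,wt}}(X)$, and then pass through $\Vect_\sfe\underset{\Rep(\BZ^{\on{alg},0})}\otimes-$ using \lemref{l:good t}.

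For this key point, take a compact perverse Weil sheaf $\CF$ of integral weights. Its weight filtration (in the sense of \cite{BBD}, for mixed perverse sheaves) gives a short exact sequence $0\to W_{\leq n}\CF\to \CF\to \CF/W_{\leq n}\CF\to 0$ in the heart. Here $W_{\leq n}\CF\in(\Shv^{\on{Weil,wt}}(X))_{\leq n}$, and $\CF/W_{\leq n}\CF$ has all perverse constituents of weight $>n$. The main step is to show that $\CF/W_{\leq n}\CF$ is right-orthogonal to $(\Shv^{\on{Weil,wt}}(X))_{\leq n}$. We plan to do this by the same Frobenius-invariants computation as in the proof of \thmref{t:contraction on category}. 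For perverse $\CF_1$ of weights $\leq n$ and $\CF_2$ of weights $>n$, $\Ext^i_{\Shv(X)}(\CF_1,\CF_2)$ has weights $\geq i+1$, so both $\Frob$-invariants and coinvariants vanish, which gives $\CHom_{\Shv^{\on{Weil,wt}}}(\CF_1,\CF_2)=0$. It follows that $i^R_{\leq n}(\CF)=W_{\leq n}\CF$, which lies in the heart.

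We then extend from compact objects in the heart to all of $\Shv^{\on{Weil,wt}}(X)$. Since $i^R_{\leq n}$ is continuous and the t-structure is compactly generated with compact objects stable under truncation, the heart is the ind-completion and filtered colimits are t-exact. This gives t-exactness on bounded objects by dévissage. To pass to unbounded objects we use left-completeness (Remark \ref{r:wt left complete}) together with commutation of $i^R_{\leq n}$ with limits of Postnikov towers, which holds because $i^R_{\leq n}$ is a right adjoint. We expect the main obstacle to be bookkeeping: checking that the identification of the coaction with $\{i_{\leq n} i^R_{\leq n}\}$ is compatible with the t-structure on $\bC\otimes\QCoh(\BA^1)$, i.e.\ that the equivalence $\bC\otimes\QCoh(\BA^1/\BG_m)\simeq\on{Funct}(\BZ,\bC)$ is t-exact for the pointwise t-structure. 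We would verify this on generators $\bc\otimes\CO(k)$, which correspond to the functors $\BZ\to\bC$ given by $m\mapsto \bc$ for $m\geq k$ and $0$ otherwise.
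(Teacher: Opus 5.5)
Your proposal is correct and follows the paper's proof: the paper likewise uses \lemref{l:good t} to reduce to the $\QCoh(\BA^1/\BG_m)$-coaction on $\Shv^{\on{Weil,wt}}(X)$, given by $\CF\mapsto \{i_{\leq n}\circ i_{\leq n}^R(\CF)\}$, and observes that for $\CF$ in the heart this is the canonical weight filtration. Your orthogonality argument spells out what the paper leaves implicit in that last sentence: Frobenius weights are $\geq i+1$ on $\Ext^i$ from weights $\leq n$ to weights $>n$, so $i^R_{\leq n}(\CF)$ is $W_{\leq n}\CF$. The same is true of your d\'evissage from compact perverse objects to all objects.
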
 

\begin{proof}

By \lemref{l:good t}, it suffices to show that the coaction functor
$$\Shv^{\on{Weil,wt}}(X)\to \Shv^{\on{Weil,wt}}(X)\otimes \QCoh(\BA^1/\BG_m),$$
is t-exact.

\medskip

It suffices to show that the above functor sends objects in the heart to objects in the heart. 
By construction, this functor send $\CF\in \Shv^{\on{Weil,wt}}(X)$ to the object in 
$$\Shv^{\on{Weil,wt}}(X)\otimes \QCoh(\BA^1/\BG_m)\simeq \on{Filtr}(\Shv^{\on{Weil,wt}}(X))=
\on{Funct}(\BZ,\Shv^{\on{Weil,wt}}(X))$$
given by
$$n\mapsto \CF_n:=i_{\leq n}\circ (i_{\leq n})^R(\CF).$$

However, the latter is the canonical weight filtration on $\CF$.

\end{proof} 

\sssec{}

We now claim:

\begin{prop} \label{p:psi t-exact}
The contraction functor 
$$\psi:\Shv^{\on{relev}}(X)\to \Shv^{\on{relev},0}(X)$$
is t-exact.
\end{prop}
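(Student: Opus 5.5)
We reduce, as in the previous proposition, to the graded level. Under the equivalences of \thmref{t:A1} and \corref{c:eq de-eq W}, $\psi$ is obtained by de-equivariantization (with respect to $\BG_m$, and then $\Rep(\BZ^{\on{alg},0})$) from the functor
$$\psi_\bullet:\Shv^{\on{relev,gr}}(X)\to \on{gr}(\Shv^{\on{relev,gr}}(X)),$$
whose $n$-th component is $\psi_n=j^L_{=n}\circ i^R_{\leq n}$. By the preceding remarks and \lemref{l:good t}, the projections $\Shv^{\on{Weil,wt}}(X)\to \Shv^{\on{relev,gr}}(X)$ and $\Shv^{\on{relev,gr}}(X)\to \Shv^{\on{relev}}(X)$ are t-exact, with conservative t-exact right adjoints. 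The same holds for the corresponding projections onto $\Shv^{\on{relev},0}(X)$. It therefore suffices to show that the functor
$$\psi_0=j^L_{=0}\circ i^R_{\leq 0}:\Shv^{\on{Weil,wt}}(X)\to \Shv^{\on{Weil},0}(X)$$
is t-exact, and that this is compatible with the $\Rep(\BZ^{\on{alg},0})$-actions; the latter compatibility is automatic from the construction. The other $\psi_n$ differ from $\psi_0$ by Tate twists, which are t-exact.

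Next, identify $\psi_0$ concretely. The previous proposition shows that $n\mapsto i_{\leq n}\circ i_{\leq n}^R(\CF)$ is the canonical weight filtration on $\CF$ in the perverse sense, and that $i^R_{\leq n}$ is t-exact (this was also stated in the remark on $(i_{\leq n},i^R_{\leq n})$). Thus for $\CF$ in the heart, $i^R_{\leq 0}(\CF)=W_{\leq 0}\CF$ is the maximal perverse subobject of weights $\leq 0$. Then $j^L_{=0}$ is the projection
$$(\Shv^{\on{Weil,wt}}(X))_{\leq 0}\to (\Shv^{\on{Weil,wt}}(X))_{\leq 0}/(\Shv^{\on{Weil,wt}}(X))_{\leq -1}\simeq \Shv^{\on{Weil},0}(X).$$
Under the identification of the quotient with the right orthogonal established in the proof of \thmref{t:contraction on category}, this projection sends a perverse $\CG$ of weights $\leq 0$ to $\on{cofib}(W_{\leq -1}\CG\to \CG)=\on{gr}^W_0\CG$, which is perverse and pure of weight $0$. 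Hence $\psi_0$ sends the heart to the heart: $\CF\mapsto \on{gr}^W_0(\CF)$.

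Finally, pass from the heart to t-exactness. Both $i^R_{\leq 0}$ and $j^L_{=0}$ are continuous. The t-structures are compactly generated and compatible with filtered colimits. The category $\Shv^{\on{Weil,wt}}(X)$ is left-complete (Remark \ref{r:wt left complete}), and the target $\Shv^{\on{Weil},0}(X)$ is left-complete as well. The functors are exact and send the heart to the heart, so they are t-exact on bounded objects. Right t-exactness then follows by taking colimits. For left t-exactness, write a coconnective object as a limit of its truncations and use continuity together with left-completeness; alternatively, use that $j^L_{=0}$ is t-exact as a quotient by a subcategory closed under truncations, by the remark on $(j^L_{\geq n},j_{\geq n})$.

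The main obstacle is the precise identification of $j^L_{=0}$ on the heart with $\on{gr}^W_0$. This requires knowing that $\on{cofib}(W_{\leq -1}\CG\to\CG)$ lies in the right orthogonal to $(\Shv^{\on{Weil,wt}}(X))_{\leq -1}$. That in turn is exactly the weight estimate $\on{Ext}^i$ from weight $\leq -1$ to weight $0$ having eigenvalues of weight $\geq i+1$, already used above.
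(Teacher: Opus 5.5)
Your proposal is correct and follows essentially the same route as the paper: reduce via \lemref{l:good t} to $\psi_0=j^L_{=0}\circ i^R_{\leq 0}:\Shv^{\on{Weil,wt}}(X)\to \Shv^{\on{Weil},0}(X)$, then use a weight argument to show that objects of the heart go to the heart. The paper checks this on irreducibles (weight $>0$ is killed by $i^R_{\leq 0}$, weight $<0$ is killed by the quotient, weight $0$ is fixed), which is equivalent to your identification of $\psi_0$ on the heart with $\on{gr}^W_0$. One point of presentation: left-completeness is not what is needed to pass from the heart to full t-exactness. It suffices that the t-structures are Ind-extended from bounded t-structures on constructible objects, so checking on compact objects of the heart is enough; this is not a gap.
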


\begin{proof}

It suffices to show that the functor
$$\psi_0:\Shv^{\on{relev,gr}}(X)\to \Shv^{\on{relev},0}(X)$$
is t-exact.

\medskip

Using \lemref{l:good t}, this is equivalent to showing that the corresponding functor
$$\psi_0:\Shv^{\on{Weil,wt}}(X)\to \Shv^{\on{Weil},0}(X)$$
is t-exact.

\medskip

To prove this, it suffices to show that the functor $\psi_0$ sends irreducible perverse sheaves in $\Shv^{\on{Weil,wt}}(X)$
to objects in the heart. 

\medskip

Let $\CF\in \Shv^{\on{Weil,wt}}(X)$ be pure of weight $n$. If $n>0$, then it is annihilated by the functor $(i_{\leq 0})^R$
and hence by $\psi_0$. 

\medskip

If $n<0$, then the functor $(i_{\leq 0})^R$ sends it to itself, but it is annihilated by the projection
$$\Shv^{\on{Weil,wt}}(X)_{\leq 0}\to \Shv^{\on{Weil,wt}}(X)_{\leq 0}/\Shv^{\on{Weil,wt}}(X)_{\leq -1}.$$

\medskip

Finally, when $n=0$, then $\psi_0(\CF)=\CF$.

\end{proof}

\section{A contraction on the stack of local systems} \label{s:LS}

In this section we will apply Theorem-Construction \ref{t:contraction on category} to obtain a contraction of the 
stack of arithmetic local systems onto the locus, where the underlying geometric local system is semi-simple. 

\medskip

This contraction will allow us to prove our main result about the excursion algebra, \thmref{t:exc}. 

\ssec{Weil sheaves with prescribed singular support}

\sssec{}

Let $\CN\subset T^*(X)$ be a conical Zariski-closed subset. Let
$$\Shv_\CN(X)\subset \Shv(X)$$
be the full subcategory defined as in \cite[Sect. E.4]{AGKRRV}.

\medskip

Let 
\begin{equation} \label{e:Weil sing supp 1}
\Shv^{\on{Weil}}_\CN(X)\subset \Shv^{\on{Weil}}(X),\,\, 
\Shv^{\on{Weil,loc.fin}}_\CN(X)\subset \Shv^{\on{Weil,loc.fin}}(X),
\end{equation} 

\begin{equation} \label{e:Weil sing supp 2}
\Shv^{\on{Weil,wt}}_\CN(X)\subset \Shv^{\on{Weil,wt}}(X),\,\, 
\Shv^{\on{Weil,0}}_\CN(X)\subset \Shv^{\on{Weil,0}}(X)
\end{equation} 
be the full subcategories equal to the preimage(s) of $\Shv_\CN(X)$ under the respective forgetful functors to $\Shv(X)$.

\medskip

Note that the above subcategories are submodule categories for the actions of
$$\BZ\mod,\,\, \BZ\mod_{\on{loc.fin}}=\Rep(\BZ^{\on{alg}}),\,\, \BZ\mod_{\on{wt}}=\Rep(\BZ^{\on{alg,wt}}) \text{ and }
\BZ\mod_{0}=\Rep(\BZ^{\on{alg},0}),$$
respectively. 

\sssec{}

Set also
$$\Shv^{\on{relev}}_\CN(X):=\Shv^{\on{relev}}(X)\underset{\Shv(X)}\times \Shv_\CN(X)$$
and
$$\Shv^{\on{relev},0}_\CN(X):=\Shv^{\on{relev},0}(X)\underset{\Shv(X)}\times \Shv_\CN(X).$$

\medskip

Recall that we have
$$\Shv^{\on{Weil,loc.fin}}(X)\simeq (\Shv^{\on{relev}}(X))^{\BZ^{\on{alg}}},\,\,
\Shv^{\on{Weil,wt}}(X)\simeq (\Shv^{\on{relev}}(X))^{\BZ^{\on{alg,wt}}}$$
and also
$$\Shv^{\on{Weil,0}}(X)\simeq (\Shv^{\on{relev},0}(X))^{\BZ^{\on{alg},0}}.$$

\sssec{}

We claim:

\begin{lem} \label{l:act on sing supp}
The subcategories
\begin{equation} \label{e:relev N}
\Shv^{\on{relev}}_\CN(X) \subset \Shv^{\on{relev}}(X) \text{ and } \Shv^{\on{relev},0}_\CN(X) \subset \Shv^{\on{relev},0}(X)
\end{equation} 
are stable under the coactions of 
$$\QCoh(\BZ^{\on{alg}}),\,\, \QCoh(\BZ^{\on{alg,wt}}) \text{ and } \QCoh(\BZ^{\on{alg},0}),$$
respectively. Moreover, we have the equalities:  
$$(\Shv^{\on{relev}}_\CN(X))^{\BZ^{\on{alg}}}=\Shv^{\on{Weil,loc.fin}}_\CN(X),\,\, (\Shv^{\on{relev}}_\CN(X))^{\BZ^{\on{alg,wt}}}=\Shv^{\on{Weil,wt}}_\CN(X)$$
and 
$$(\Shv^{\on{relev},0}_\CN(X))^{\BZ^{\on{alg},0}}=\Shv_\CN^{\on{Weil,0}}(X)$$
as full subcategories of
$$\Shv^{\on{Weil,loc.fin}}(X),\,\, \Shv^{\on{Weil,wt}}(X) \text{ and } \Shv^{\on{Weil,0}}(X),$$
respectively.
\end{lem}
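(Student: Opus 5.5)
The plan is to reduce stability under the coaction to a statement about the underlying objects of $\Shv(X)$, and then to identify the invariants formally.

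\emph{Stability.} Let $\BH$ be one of $\BZ^{\on{alg}}$, $\BZ^{\on{alg,wt}}$, $\BZ^{\on{alg},0}$. Through the equivalence of \corref{c:eq de-eq W}, the coaction of $\QCoh(\BH)$ on $\Shv^{\on{relev}}(X)$ corresponds to the action of $\Rep(\BH)$ on the invariant category. Concretely, the coaction functor
$$\Shv^{\on{relev}}(X)\to \Shv^{\on{relev}}(X)\otimes \QCoh(\BH)$$
becomes the following after composing with the forgetful functor to $\Shv^{\on{relev}}(X)$, i.e.\ after evaluating $\QCoh(\BH)$ at points, or more precisely after applying $\Gamma$ against finite-dimensional representations. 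Each underlying object is obtained from $\CF$ by a functor that does not change the underlying sheaf, up to colimits and tensoring with vector spaces.

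To make this precise, I would use that $\QCoh(\BH)$ is the colimit of the regular representations $\CO(H_\alpha)$, by \propref{p:W rep}(c). Then the coaction of $\CF$, viewed as an object of $\Shv^{\on{relev}}(X)\otimes \QCoh(\BH)$, has the following property: its image under $\on{Id}\otimes \Gamma(\BH,-)$ is a colimit of objects of the form $\CF\otimes V$ with $V\in \Vect_\sfe$. More invariantly, $\Shv^{\on{relev}}(X)\otimes\QCoh(\BH)$ is tensored over $\QCoh(\BH)$, and the functor $\on{Id}\otimes\Gamma$ is conservative, since $\BH$ is affine. An object lies in $\Shv_\CN(X)\otimes \QCoh(\BH)$ if and only if its image under $\on{Id}\otimes\Gamma$ lies in $\Shv_\CN(X)$. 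For this, one uses that $\Shv_\CN(X)\subset\Shv(X)$ admits a continuous right adjoint, as in \cite[Sect. E.4]{AGKRRV}, so tensoring with $\QCoh(\BH)$ preserves the fully faithful embedding and its characterization. Finally, the composite $\Gamma\circ\on{coact}$ is the $!$-pullback/$*$-pushforward along $\BH\to\on{pt}$ applied to $\CF$, i.e.\ $\CF\otimes\CO(\BH)$. This lies in $\Shv_\CN(X)$ because $\Shv_\CN(X)$ is closed under colimits.

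I expect this to be the main obstacle: justifying the claim ``lies in the subcategory iff the image under $\on{Id}\otimes\Gamma$ does''. The cleanest route would be to show that $\Shv_\CN(X)$ is a direct factor in the appropriate sense. Alternatively, one can work with the right adjoint $R$ to the inclusion and check that the coaction commutes with $R\otimes\on{Id}$; this in turn follows from the fact that the coaction is computed by colimits of $\CF\otimes V$.

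\emph{Invariants.} Once stability holds, $(\Shv^{\on{relev}}_\CN(X))^{\BH}$ is the limit of the cosimplicial diagram $\Shv^{\on{relev}}_\CN(X)\otimes\QCoh(\BH)^{\otimes n}$. Since each term embeds fully faithfully into the corresponding term for $\Shv^{\on{relev}}(X)$, the invariants embed fully faithfully into $(\Shv^{\on{relev}}(X))^{\BH}$. Via \corref{c:W acts on rel} (and its analogues for $\BZ^{\on{alg}}$ and $\BZ^{\on{alg},0}$), this is $\Shv^{\on{Weil,loc.fin}}(X)$, $\Shv^{\on{Weil,wt}}(X)$, and $\Shv^{\on{Weil},0}(X)$ respectively.

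The image consists of objects whose underlying object, obtained by applying $\oblv$ (the $0$-th term of the totalization), lies in $\Shv^{\on{relev}}_\CN(X)$, i.e.\ whose forgetful image lies in $\Shv_\CN(X)$. This is exactly the defining condition of $\Shv^{\on{Weil,loc.fin}}_\CN(X)$, $\Shv^{\on{Weil,wt}}_\CN(X)$, and $\Shv^{\on{Weil,0}}_\CN(X)$ in \eqref{e:Weil sing supp 1}--\eqref{e:Weil sing supp 2}. For the $0$ case, I would additionally note that $\phi:\Shv^{\on{relev},0}(X)\to\Shv^{\on{relev}}(X)\to \Shv(X)$ is the relevant forgetful functor used to define the fiber product, and the same argument applies verbatim.
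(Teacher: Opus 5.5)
\textbf{There is a genuine gap, in your computation of the averaging comonad.}

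Your reduction is the same as the paper's. Stability of $\Shv^{\on{relev}}_{\CN}(X)$ under the coaction is equivalent to stability under the comonad $\oblv_{\BH}\circ\on{Av}^{\BH}=(\on{Id}\otimes\Gamma)\circ\on{coact}$. Once that is known, your identification of the invariants is exactly the general paradigm the paper uses. The step you flag as the main obstacle is harmless: $\BH$ is affine, so $\bC\otimes\QCoh(\BH)\simeq\CO(\BH)\mod(\bC)$, and membership in $\CO(\BH)\mod(\bC_1)$ is tested on the underlying object.

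The problem is the formula $\oblv_{\BH}\circ\on{Av}^{\BH}(\CF)\simeq\CF\otimes\CO(\BH)$. That is the formula for a \emph{trivial} action, and this action is not trivial. The point $\Frob\in\BH$ acts on $\Shv^{\on{relev}}(X)$ by Frobenius pullback, so the fiber of $\on{coact}(\CF)$ at $\Frob$ is a Frobenius translate of $\CF$. Your claim that the coaction ``does not change the underlying sheaf'' is therefore false. The formula holds only on the essential image of $\oblv_{\BH}:\bD\to\bC$, where $\oblv_{\BH}\circ\on{Av}^{\BH}\circ\oblv_{\BH}(\CG)\simeq\oblv_{\BH}(\CG\otimes\CO(\BH))$. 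A typical relevant object is not in that image. Take an irreducible perverse $\CF$ with $(\Frob^n)^*\CF\simeq\CF$ but $\Frob^*\CF\not\simeq\CF$. Then $\oblv_{\BH}\circ\on{Av}^{\BH}(\CF)$ is Frobenius-invariant and nonzero, so it involves all the translates $(\Frob^i)^*\CF$, $0\le i<n$, and not $\CF$ alone. So your stability argument does not apply to exactly the objects that make $\Shv^{\on{relev}}(X)$ larger than the image of the Weil category.

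\textbf{The missing step is the paper's reduction to irreducibles.}
\begin{itemize}
\item The comonad is continuous and t-exact, since both $\oblv_{\BH}$ and $\on{Av}^{\BH}$ are t-exact by \lemref{l:good t}.
\item The singular support condition is tested on irreducible subquotients of perverse cohomologies. So it suffices to treat an irreducible $\CF$ in the heart.
\item Such an $\CF$ is a direct summand of the object \eqref{e:power Frob}, which \emph{does} lie in the image of $\oblv_{\BH}$. For $\BZ^{\on{alg,wt}}$ this needs twisting the Weil structure as in the proof of \thmref{t:LL}. For the weight-$0$ case use \propref{p:relev 0}(b).
\item Hence $\oblv_{\BH}\circ\on{Av}^{\BH}(\CF)$ is a direct summand of \eqref{e:power Frob} tensored with the vector space $\CO(\BH)$.
\item It therefore lies in $\Shv_{\CN}(X)$, because that subcategory is closed under colimits and direct summands and \eqref{e:power Frob} has singular support in $\CN$. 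This last point implicitly uses that $\CN$ is Frobenius-stable, as for $\CN=\{0\}$; your argument never encounters this condition, which is a symptom of the error.
\end{itemize}
The same correction is needed for $\Shv^{\on{relev},0}_{\CN}(X)$. Its irreducible objects are the individual translates $(\Frob^i)^*\CF$, so the ``verbatim'' transfer you propose does not go through as written.
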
 

\begin{proof}

Consider the following general paradigm:

\medskip

Let $\BH$ be as in \corref{c:eq de-eq W}. Let $\bC$ be an object of $\QCoh(\BH)\commod$, and set
$$\bD:=\bC^\BH\in \Rep(\BH)\mmod.$$ Consider the adjoint pair
$$\oblv_\BH:\bD\rightleftarrows \bC:\on{Av}^\BH.$$

Let $\bC_1\subset \bC$ be a full subcategory, stable under the comonad $\oblv_\BH\circ \on{Av}^\BH$.
Set
$$\bD_1:=\bD\underset{\bC}\times \bC_1,$$

Then $\bC_1$ is a $\QCoh(\BH)$-comodule and 
$$\bC_1^\BH=\bD_1.$$

\medskip

We claim that the above paradigm is applicable in the situation of the lemma. I.e., we have to show that the
subcategories \eqref{e:relev N} are stable under the corresponding comonad. By definition, the question reduces
to one about irreducible objects in the heart of the t-structure. 

\medskip

Such an object is a direct summand of an object \eqref{e:power Frob}, which lies in the image of
$$\oblv_\BH:\bD\to \bC,$$
and an action of the comonad on it is given by tensoring with the algebra of regular functions on $\BH$.
The latter operation tautologically preserves the singular support condition.

\end{proof} 

\sssec{} \label{sss:shv N coact}

Thus, we obtain that the category $\Shv^{\on{relev}}_\CN(X)$ is a comodule over $\QCoh(\BZ^{\on{alg,wt}})$, and in 
particular over $\QCoh(\BG_m)$. Set
$$\Shv^{\on{relev,gr}}_\CN(X):=(\Shv^{\on{relev}}_\CN(X))^{\BG_m}\subset (\Shv^{\on{relev}}(X))^{\BG_m}=\Shv^{\on{relev,gr}}(X).$$

Note that under the identification
$$\Shv^{\on{relev,gr}}(X)\simeq \Rep(\BG_m)\underset{\Rep(\BZ^{\on{alg,wt}})}\otimes \Shv^{\on{Weil,wt}}(X)\simeq
\Vect_{\sfe}\underset{\Rep(\BZ^{\on{alg},0})}\otimes \Shv^{\on{Weil,wt}}(X),$$
the full subcategory 
$$\Shv^{\on{relev,gr}}_\CN(X)\subset \Shv^{\on{relev,gr}}(X)$$
corresponds to
$$\Rep(\BG_m)\underset{\Rep(\BZ^{\on{alg,wt}})}\otimes \Shv_\CN^{\on{Weil,wt}}(X)\simeq
\Vect_{\sfe}\underset{\Rep(\BZ^{\on{alg},0})}\otimes \Shv_\CN^{\on{Weil,wt}}(X).$$

\sssec{}

We now claim: 

\begin{prop}
The full subcategory $\Shv^{\on{relev}}_\CN(X)\subset \Shv^{\on{relev}}(X)$ is stable under the coaction of
$\QCoh(\BA^1)$. 
\end{prop}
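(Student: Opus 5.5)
The plan is to reduce, via \thmref{t:A1}, to a statement about filtrations on the graded categories. We already have the $\QCoh(\BG_m)$-comodule structure on $\Shv^{\on{relev}}_\CN(X)$ from \secref{sss:shv N coact}. Under \thmref{t:A1}, stability of the full subcategory under the $\QCoh(\BA^1)$-coaction is equivalent to the following condition on $\Shv^{\on{relev,gr}}_\CN(X)\subset \Shv^{\on{relev,gr}}(X)$: the comonad $\on{Filtr}$, i.e.\ the functor
$$\bd\mapsto \{i_{\leq n}\circ i_{\leq n}^R(\bd)\},$$
preserves it. In other words, we need each $i_{\leq n}\circ i_{\leq n}^R$ to preserve $\Shv^{\on{relev,gr}}_\CN(X)$. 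Given this, the subcategories
$$\Shv^{\on{relev,gr}}_\CN(X)\cap (\Shv^{\on{relev,gr}}(X))_{\leq n}$$
define an object of $\BZ\mmod^{\on{Filtr}}$, the inclusion is a morphism satisfying Beck--Chevalley, and the $\BA^1$-coaction restricts. (Equivalently, one can run the paradigm from the proof of \lemref{l:act on sing supp} with $\BH$ replaced by the monoid $\BA^1$.)

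Next, transfer the question to $\Shv^{\on{Weil,wt}}(X)$. Write
$$\Shv^{\on{relev,gr}}(X)\simeq \Vect_\sfe\underset{\Rep(\BZ^{\on{alg},0})}\otimes \Shv^{\on{Weil,wt}}(X),$$
with the filtration induced from $(\Shv^{\on{Weil,wt}}(X))_{\leq n}$, and with $\Shv^{\on{relev,gr}}_\CN(X)$ corresponding to the analogous tensor product with $\Shv^{\on{Weil,wt}}_\CN(X)$ (see \secref{sss:shv N coact}). It then suffices to show that the functors $i_{\leq n}\circ i_{\leq n}^R$ on $\Shv^{\on{Weil,wt}}(X)$ preserve $\Shv^{\on{Weil,wt}}_\CN(X)$. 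These functors are $\Rep(\BZ^{\on{alg},0})$-linear, so the preservation passes through the tensor product.

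To check this, note that $i_{\leq n}\circ i_{\leq n}^R$ is continuous, and that $\Shv^{\on{Weil,wt}}_\CN(X)$ is closed under colimits. So it is enough to treat compact generators. I would argue using t-exactness. By the t-exactness of the coaction proved above, $i_{\leq n}\circ i_{\leq n}^R$ is t-exact, and on the heart it is the canonical weight filtration $W_{\leq n}$ of a Weil perverse sheaf. The singular support condition is detected on perverse cohomologies and is stable under subquotients in $\Perv(X)$, so it is preserved by $W_{\leq n}$. For a general object I would use left-completeness (Remark \ref{r:wt left complete} and the remark following \thmref{t:contraction on category}) together with t-exactness: the perverse cohomologies of $i_{\leq n}\circ i_{\leq n}^R(\CF)$ are $W_{\leq n}$ of the perverse cohomologies of $\CF$, hence have singular support in $\CN$. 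The conclusion then follows from the fact, from \cite[Sect. E.4]{AGKRRV}, that membership in $\Shv_\CN(X)$ can be tested on perverse cohomologies.

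The main obstacle I expect is this last point. Membership in $\Shv_\CN(X)$ for non-compact or unbounded objects is not obviously detected by perverse cohomologies, so one needs compatibility of $\Shv_\CN(X)$ with the t-structure and with Postnikov limits. If that is not available directly, I would instead restrict to constructible bounded objects. There everything reduces to finite extensions of subquotients of perverse sheaves in $\Shv_\CN(X)$, and I would use that $\Shv_\CN(X)$ is generated by its constructible objects. The latter holds by construction of $\Shv_\CN(X)$ as an ind-completion.
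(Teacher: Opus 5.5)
Your argument is the paper's proof. You reduce, via \thmref{t:A1} and \secref{sss:shv N coact}, to showing that the t-exact endofunctors $i_{\leq n}\circ (i_{\leq n})^R$ of $\Shv^{\on{Weil,wt}}(X)$ preserve $\Shv^{\on{Weil,wt}}_\CN(X)$, and then check this on the heart. The paper checks on irreducibles, where the functor is the identity or zero according to whether the weight is $\leq n$ or $>n$; your observation that $W_{\leq n}$ is a subobject is equivalent.

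Your ``main obstacle'' is not one. In \cite[Sect. E.4]{AGKRRV}, membership in $\Shv_\CN(X)$ is \emph{defined} by a condition on perverse cohomologies, so passing from the heart to arbitrary objects via t-exactness is automatic, and left-completeness is not needed.

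You should, however, discard the claim that it is ``enough to treat compact generators'', and also the fallback plan. $\Shv_\CN(X)$ is not constructed as an ind-completion. As the paper remarks right after this proposition, neither it nor $\Shv^{\on{Weil,wt}}_\CN(X)$ is known to be compactly generated, so that reduction would not be justified.
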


\begin{proof}

The assertion of the proposition is equivalent to the statement that the endofunctors
$i_{\leq n}\circ (i_{\leq n})^R$ of $\Shv^{\on{Weil,wt}}(X)$ preserve the subcategory $\Shv^{\on{Weil,wt}}_\CN(X)$. 

\medskip

Note that the functor $i_{\leq n}\circ (i_{\leq n})^R$ is t-exact (it is enough to check this on the heart, where it is obvious).
Hence, it is sufficient to show that for an irreducible object $\CF\in \Shv^{\on{Weil,wt}}_\CN(X)^\heartsuit$, the object 
$i_{\leq n}\circ (i_{\leq n})^R(\CF)$ belongs to $\Shv^{\on{Weil,wt}}_\CN(X)$.

\medskip

However, this is obvious: 

\medskip

If $\on{wt}(\CF)\leq n$, then $i_{\leq n}\circ (i_{\leq n})^R(\CF)\simeq \CF$, and if
 $\on{wt}(\CF)> n$, then $i_{\leq n}\circ (i_{\leq n})^R(\CF)=0$.
 
 \end{proof}

\begin{rem}

Note that we do not know in general whether the category $\Shv_\CN(X)$ is compactly generated (or even dualizable).
Hence, we do not know this either for the categories $\Shv_\CN^?(X)$ introduced above, with the exception of 
$\Shv^{\on{relev},0}_\CN(X)$, which
is semi-simple and hence compactly generated, and hence so is $\Shv_\CN^{\on{Weil,0}}(X)\simeq (\Shv^{\on{relev},0}_\CN(X))^{\BZ^{\on{alg},0}}$. 

\end{rem} 

\ssec{Weil local systems}

From now on we will assume that $X$ is smooth and connected. 

\sssec{}

We will apply the above discussion to $\CN=\{0\}$. In this case will adopt a special notation
$$\Shv^?_{\{0\}}(X)=:\qLisse^?(X).$$

\sssec{}

Let us choose a base point $x\in X(\ol{k})$. This choice gives rise to a fiber functor 
$$\qLisse(X)^\heartsuit\to \Vect_\sfe.$$

Denote by 
$$\on{Gal}^{\on{alg}}(X)$$ 
the corresponding pro-algebraic group. 

\medskip

Restricting to $\qLisse^{\on{relev}}(X)^\heartsuit\subset \qLisse(X)^\heartsuit$ we obtain a pro-algebraic group, to be denoted
$$\on{Gal}^{\on{alg,relev}}(X),$$ 
which is naturally a quotient of $\on{Gal}^{\on{alg}}(X)$.

\medskip

Denote by 
$$\on{Gal}^{\on{red}}(X) \text{ and } \on{Gal}^{\on{red,relev}}(X)$$
the maximal reductive quotients of $\on{Gal}^{\on{alg}}(X)$ and $\on{Gal}^{\on{alg,relev}}(X)$, respectively. 

\sssec{}

We have tautologically defined t-exact functors
\begin{equation} \label{e:Tannakian}
\Rep(\on{Gal}^{\on{alg}}(X)) \to \qLisse(X) \text{ and } \Rep(\on{Gal}^{\on{alg,relev}}(X)) \to \qLisse^{\on{relev}}(X),
\end{equation} 
which induce equivalence on the hearts of the t-structures. 

\sssec{}

Consider now the category $\qLisse^{\on{relev},0}(X)^\heartsuit$ with its induced fiber functor. Denote the corresponding pro-algebraic group by 
$$\on{Gal}^{\on{alg,relev},0}(X).$$ 

Note, however, that since the abelian category $\qLisse^{\on{relev},0}(X)^\heartsuit$ is semi-simple, the group $\on{Gal}^{\on{alg,relev},0}(X)$ is pro-reductive. Moreover,
since the entire category $\qLisse^{\on{relev},0}(X)$ is semi-simple, the functor
$$\Rep(\on{Gal}^{\on{alg,relev},0}(X))\to \qLisse^{\on{relev},0}(X)$$
is an equivalence.

\sssec{}

The functor
$$\qLisse^{\on{relev},0}(X)\to \qLisse^{\on{relev}}(X)$$
induces a homomorphism:

\begin{equation} \label{e:Gal red}
\on{Gal}^{\on{alg,relev}}(X)\to \on{Gal}^{\on{alg,relev},0}(X).
\end{equation}

We claim:

\begin{prop} \label{p:Gal red}
The map \eqref{e:Gal red} factors through an isomorphism
$$\on{Gal}^{\on{red,relev}}(X)\overset{\sim}\to  \on{Gal}^{\on{alg,relev},0}(X).$$
\end{prop}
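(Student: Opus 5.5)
To prove \propref{p:Gal red}, I would argue Tannakian-wise and reduce everything to statements about the abelian categories $\qLisse^{\on{relev}}(X)^\heartsuit$ and $\qLisse^{\on{relev},0}(X)^\heartsuit$, together with the functors $\phi$ and $\psi$.

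First, the factorization. The target $\on{Gal}^{\on{alg,relev},0}(X)$ is pro-reductive, because its representation category is semi-simple (\propref{p:relev 0}). Hence any homomorphism from $\on{Gal}^{\on{alg,relev}}(X)$ to it kills the unipotent radical, so \eqref{e:Gal red} factors through $\on{Gal}^{\on{red,relev}}(X)$.

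Recall that $\Rep(\on{Gal}^{\on{red,relev}}(X))$ is the full subcategory of $\qLisse^{\on{relev}}(X)^\heartsuit$ consisting of semi-simple objects. So the claim is that the functor
$$\phi^\heartsuit:\qLisse^{\on{relev},0}(X)^\heartsuit\to \qLisse^{\on{relev}}(X)^\heartsuit$$
is fully faithful, with essential image the semi-simple objects. For the induced map of groups to be an isomorphism it then suffices, by the standard Tannakian criterion (Deligne--Milne), to check two things:

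1. $\phi^\heartsuit$ is fully faithful, and its essential image is closed under subobjects. This gives surjectivity of $\on{Gal}^{\on{red,relev}}(X)\to \on{Gal}^{\on{alg,relev},0}(X)$.
2. Every semi-simple object of $\qLisse^{\on{relev}}(X)^\heartsuit$ lies in the essential image. This gives injectivity.

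The essential image claim comes from Remark~\ref{r:semi-simple 0}. By \corref{c:t relev}(c) and the proof of \thmref{t:LL}, every irreducible object of $\qLisse^{\on{relev}}(X)^\heartsuit$ is of the form $\phi(\CF)$ with $\CF$ irreducible in $\qLisse^{\on{relev},0}(X)^\heartsuit$. The lisse condition is preserved by Lemma~\ref{l:act on sing supp}. Since $\phi$ is t-exact and $\qLisse^{\on{relev},0}(X)$ is semi-simple, the essential image of $\phi^\heartsuit$ is then exactly the semi-simple objects, and it is closed under subobjects because subobjects of semi-simple objects are semi-simple.

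The main point is full faithfulness of $\phi^\heartsuit$: we need
$$\Hom(\CF_1,\CF_2)\simeq \Hom(\phi\CF_1,\phi\CF_2)$$
for irreducible $\CF_1,\CF_2$. I would compute the left-hand side as in Steps 1--4 of the proof of \propref{p:relev 0}. There, $\CHom_{\Shv^{\on{relev},0}}$ is the right adjoint to $\Rep(\BZ^{\on{alg},0})\hookrightarrow \BZ\mod_{\on{loc.fin}}$ applied to $\CHom_{\Shv(X)}(\oblv\CF_1,\oblv\CF_2)$, with $\Frob$ acting. In degree $0$, Step 4 shows that this right adjoint acts as the identity on $\Hom_{\Shv(X)}$, and this $\Hom_{\Shv(X)}$ is precisely $\Hom(\phi\CF_1,\phi\CF_2)$ in $\Shv^{\on{relev}}(X)$. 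I expect the hardest part to be checking that the resulting identification is really the map induced by $\phi$. To handle this, I would unwind $\phi$ as de-equivariantization of the inclusion $\Shv^{\on{Weil},0}\hookrightarrow\Shv^{\on{Weil,wt}}$, compatible with the forgetful functors to $\Shv(X)$.

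Alternatively, full faithfulness follows from $\psi\circ\phi\simeq\on{Id}$ (\corref{c:action of 0}, \thmref{t:contraction on category}) together with t-exactness of $\psi$ (\propref{p:psi t-exact}): these show $\phi^\heartsuit$ is faithful, with $\psi^\heartsuit$ as a retraction. Fullness on irreducibles then follows from Schur's lemma, since $\phi$ sends non-isomorphic irreducibles to non-isomorphic irreducibles, which one checks by applying $\psi$.
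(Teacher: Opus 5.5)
Your proposal is correct and follows essentially the paper's route. Like the paper, you reduce Tannakian-wise to the statement that an object of $\qLisse^{\on{relev}}(X)^\heartsuit$ comes from $\qLisse^{\on{relev},0}(X)^\heartsuit$ if and only if it is semi-simple, and you take this from Remark~\ref{r:semi-simple 0}. Your explicit check that $\phi^\heartsuit$ is fully faithful (via Step 4 of the proof of \propref{p:relev 0}, or via $\psi\circ\phi\simeq\on{Id}$ plus Schur's lemma) supplies a point the paper leaves implicit.

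One small correction: a homomorphism into a pro-reductive group need not kill the unipotent radical unless it is surjective. So derive the factorization instead from the surjectivity established in your item 1, or from the fact that every object in the essential image of $\phi^\heartsuit$ is semi-simple.
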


In other words, \propref{p:Gal red} says that the map  \eqref{e:Gal red}
identifies $\on{Gal}^{\on{alg,relev},0}(X)$ with the maximal reductive quotient of 
$\on{Gal}^{\on{alg,relev}}(X)$.

\begin{proof}

The assertion of the proposition is equivalent to the following statement: 

\medskip

An object
$V\in \Rep(\on{Gal}^{\on{alg,relev}}(X))^\heartsuit$ is the restriction of an object in 
$\Rep(\on{Gal}^{\on{alg,relev,0}}(X))^\heartsuit$ if and only if it is semi-simple. 

\medskip

In other words, an object $\CF\in \qLisse^{\on{relev}}(X)^\heartsuit$ belongs to 
$\qLisse^{\on{relev},0}(X)^\heartsuit$ if and only if it is semi-simple. 

\medskip

However, this is the content of Remark \ref{r:semi-simple 0}.

\end{proof} 

\sssec{} \label{sss:contr qLisse}

According to \secref{sss:shv N coact}, the category $\qLisse^{\on{relev}}(X)$ carries a canonical coaction 
of the comonoidal category $\QCoh(\BZ^{\on{alg,wt}})$, compatible with the symmetric monoidal structure. Moreover, the coaction functor
$$\qLisse^{\on{relev}}(X)\to \qLisse^{\on{relev}}(X)\otimes \QCoh(\BZ^{\on{alg,wt}})$$
is right t-exact (in fact, it is t-exact). 

\medskip

In particular, $\qLisse^{\on{relev}}(X)$ carries a coaction of $\QCoh(\BG_m)$ with parallel properties. 

\sssec{}

The key input from the previous two sections is that the above coaction of $\QCoh(\BG_m)$ on  $\qLisse^{\on{relev}}(X)$
extends to a coaction
of $\QCoh(\BA^1)$. This coaction has the following properties:

\medskip

\begin{itemize}

\item It is compatible with the coaction of $\QCoh(\BZ^{\on{alg,wt}})$;

\medskip

\item It is compatible with the symmetric monoidal structure
on $\qLisse^{\on{relev}}(X)$;

\medskip

\item The coaction functor
$$\qLisse^{\on{relev}}(X)\to \qLisse^{\on{relev}}(X)\otimes \QCoh(\BA^1)$$
is right t-exact (in fact, it is t-exact). 

\end{itemize}

\sssec{}

In particular, we obtain a right t-exact (in fact, t-exact) symmetric monoidal endofunctor $[0]$ of $\qLisse^{\on{relev}}(X)$,
which factors as
$$\qLisse^{\on{relev}}(X)\overset{\psi}\to \qLisse^{\on{relev},0}(X) \overset{\phi}\to \qLisse^{\on{relev}}(X),$$
where $\phi$ is the tautological forgetful functor. 

\medskip

The composition $\psi\circ \phi$ is canonically isomorphic to the identity functor. 

\sssec{}

Let us see what the above structure gives for the abelian category 
$$(\qLisse^{\on{relev}}(X))^\heartsuit\simeq \Rep(\on{Gal}^{\on{alg,relev}}(X))^\heartsuit.$$

Since the fiber functor $\qLisse^{\on{relev}}(X)\to \Vect_\sfe$ is compatible with the $\BA^1$-action,
we obtain that the algebraic group $\on{Gal}^{\on{alg,relev}}(X)$ acquires an action of $\BA^1$, which
contracts it to $\on{Gal}^{\on{alg,relev},0}(X)$. This contraction gives rise to a \emph{preferred} Levi
splitting
$$\on{Gal}^{\on{alg,relev}}(X)\simeq \on{Gal}^{\on{alg,relev},0}(X)\ltimes \on{Gal}^{\on{alg,relev,unip}}(X).$$

\begin{rem} \label{r:can Levi}
The above action of $\BG_m\subset \BA^1$ on $\on{Gal}^{\on{alg,relev}}(X)$ can also be viewed as follows.
The algebraic version of the Weil group is an extension
$$1\to \on{Gal}^{\on{alg,relev}}(X) \to \on{Weil}^{\on{alg,loc.fin}}(X)\to \BZ^{\on{alg}}\to 1,$$
so that
$$(\qLisse^{\on{Weil,loc.fin}}(X))^\heartsuit  \simeq \Rep(\on{Weil}^{\on{alg,loc.fin}}(X))^\heartsuit.$$

The extension $\on{Weil}^{\on{alg,loc.fin}}(X)$ is induced from an extension
$$1\to \on{Gal}^{\on{alg,relev}}(X) \to \on{Weil}^{\on{alg,wt}}(X)\to \BZ^{\on{alg,wt}}\to 1,$$
$$(\qLisse^{\on{Weil,wt}}(X))^\heartsuit  \simeq \Rep(\on{Weil}^{\on{alg,wt}}(X))^\heartsuit.$$

The latter extension restricts to an extension
$$1\to \on{Gal}^{\on{alg,relev}}(X) \to \on{Gal}^{\on{alg,gr}}(X)\to \BG_m\to 1,$$
and we have 
$$\on{Gal}^{\on{alg,gr}}(X)\simeq (\on{Gal}^{\on{alg,relev},0}(X)\times \BG_m)\ltimes \on{Gal}^{\on{alg,relev,unip}}(X).$$

Unwinding, we obtain
$$(\qLisse^{\on{relev,gr}}(X))^\heartsuit \simeq \Rep(\on{Gal}^{\on{alg,gr}}(X))^\heartsuit.$$

\end{rem}

\ssec{The stack of local systems with restricted variation}

In this subsection, we review the material from \cite[Sects. 3.5-3.7]{AGKRRV}.

\sssec{} \label{sss:LS restr}

Let $\sG$ be a reductive group, and let $\LS^{\on{restr}}_\sG(X)$ be as in \cite[Sect. 1.4]{AGKRRV}. 

\medskip

By definition, for a $\sfe$-algebra $R$, 
$$\Maps(\Spec(R),\LS^{\on{restr}}_\sG(X))$$
is the groupoid of right t-exact symmetric monoidal functors
$$\Rep(\sG)\to R\mod(\qLisse(X)).$$

\sssec{} \label{sss:classical}

Note that the \emph{classical stack} underlying $\LS^{\on{restr}}_\sG(X)$ identifies with the \emph{classical stack} underlying 
$$\bMaps_{\on{groups}}(\on{Gal}^{\on{alg}}(X),\sG)/\on{Ad}(\sG),$$
where $\bMaps_{\on{groups}}(-,-)$ is the stack of maps of pro-algebraic groups. 

\sssec{} 

The projection 
$$\on{Gal}^{\on{alg}}(X)\to \on{Gal}^{\on{red}}(X)$$
gives rise to a closed embedding\footnote{The first isomorphism in the formula below is due to the fact that the mapping
space between reductive groups is classical.}
\begin{multline} \label{e:emb semi-simple locus}
\LS^{\on{restr},0}_\sG(X):=\bMaps_{\on{groups}}(\on{Gal}^{\on{red}}(X),\sG)/\on{Ad}(\sG) \simeq \\
\simeq {}^{\on{cl}}\bMaps_{\on{groups}}(\on{Gal}^{\on{red}}(X),\sG)/\on{Ad}(\sG) 
\hookrightarrow  {}^{\on{cl}}\bMaps_{\on{groups}}(\on{Gal}^{\on{alg}}(X),\sG)/\on{Ad}(\sG)\simeq \\
\simeq {}^{\on{cl}}\!\LS^{\on{restr}}_\sG(X)\hookrightarrow \LS^{\on{restr}}_\sG(X).
\end{multline} 

At the level of $\sfe$-points, the image of \eqref{e:emb semi-simple locus} corresponds to \emph{semi-simple} local systems, see \cite[Sect. 3.6]{AGKRRV}. 
Moreover, the map \eqref{e:emb semi-simple locus} induces a bijection between connected components.

\sssec{} \label{sss:semi-simple bis}

For future use we also note:

\medskip

\begin{enumerate}

\item The automorphism group of a semi-simple local system is reductive;

\medskip

\item For a connected component $\LS^{\on{restr}}_\sG(X)_\alpha$ of $\LS^{\on{restr}}_\sG(X)$, the intersection
$$\LS^{\on{restr},0}_\sG(X)_\alpha:=\LS^{\on{restr},0}_\sG(X)\cap \LS^{\on{restr}}_\sG(X)_\alpha$$
is the unique closed substack of ${}^{\on{cl}}\!\LS^{\on{restr}}_\sG(X)_\alpha$ that has a single isomorphism class of $\sfe$-points.

\end{enumerate} 

\sssec{}  

Note that the map
$$\LS^{\on{restr},0}_\sG(X)\to {}^{\on{cl}}\!\LS^{\on{restr}}_\sG(X)$$
admits a left inverse, 
\begin{multline} \label{e:emb semi-simple locus split}
{}^{\on{cl}}\!\LS^{\on{restr}}_\sG(X)\simeq {}^{\on{cl}}\bMaps_{\on{groups}}(\on{Gal}^{\on{alg}}(X),\sG)/\on{Ad}(\sG)\to  \\
\to {}^{\on{cl}}\bMaps_{\on{groups}}(\on{Gal}^{\on{red}}(X),\sG)/\on{Ad}(\sG) \simeq
\bMaps_{\on{groups}}(\on{Gal}^{\on{red}}(X),\sG)/\on{Ad}(\sG) = \\
= \LS^{\on{restr},0}_\sG(X)
\end{multline} 
 given by restriction along a choice of a Levi splitting
$$\on{Gal}^{\on{alg}}(X)\simeq \on{Gal}^{\on{red}}(X)\ltimes \on{Gal}^{\on{unip}}(X).$$

\medskip

For a geometric point of $\LS^{\on{restr}}_\sG(X)$, we will refer to its image under \eqref{e:emb semi-simple locus split}
as its \emph{semi-simplification}; it is well-defined up to isomorphism. 

\medskip

Thus, we have:

\begin{lem} \label{l:semi-simple}
Two geometric points of $\LS^{\on{restr}}_\sG(X)$ belong to the same connected component if and only if
they have isomorphic semi-simplifications.
\end{lem}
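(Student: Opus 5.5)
The plan is to prove the two implications separately, working at the level of $\sfe$-points (geometric points) and using the semi-simplification map \eqref{e:emb semi-simple locus split}. Denote this map by $r$ and the closed embedding \eqref{e:emb semi-simple locus} by $\iota$. We will use that $r\circ\iota\simeq\on{Id}$, and that $\iota$ induces a bijection on connected components.

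\emph{Only if.} Connected components of $\LS^{\on{restr}}_\sG(X)$ coincide with those of its underlying classical stack ${}^{\on{cl}}\!\LS^{\on{restr}}_\sG(X)$. Since $r$ is a map of stacks, it sends each connected component of ${}^{\on{cl}}\!\LS^{\on{restr}}_\sG(X)$ into a single connected component of $\LS^{\on{restr},0}_\sG(X)$. That target is a disjoint union of classifying stacks $\on{pt}/\sG_\alpha$: these are the $\bMaps_{\on{groups}}(\on{Gal}^{\on{red}}(X),\sG)/\on{Ad}(\sG)$-orbits, which are open since homomorphisms out of a pro-reductive group are rigid. Each such component has a single isomorphism class of $\sfe$-points. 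Hence two points in the same component have isomorphic semi-simplifications.

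\emph{If.} Let $y$ be an $\sfe$-point with semi-simplification $r(y)$. I will show that $y$ and $\iota(r(y))$ lie in the same connected component; applying this to both points then gives the claim. Write $y$ as a homomorphism $\rho:\on{Gal}^{\on{alg}}(X)\to\sG$ and fix the Levi splitting $\on{Gal}^{\on{alg}}(X)\simeq\on{Gal}^{\on{red}}(X)\ltimes\on{Gal}^{\on{unip}}(X)$. The image of $\rho$ is an algebraic subgroup $H\subset\sG$; its image $\rho(\on{Gal}^{\on{red}}(X))$ is a Levi factor $L$, and $\rho(\on{Gal}^{\on{unip}}(X))$ lies in the unipotent radical $U$ of $H$. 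We may take $L$ to be a Levi factor, because $\on{Gal}^{\on{red}}$ surjects onto a reductive subgroup complementing the unipotent normal image.

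Choose a cocharacter $\lambda:\BG_m\to\sG$ that centralizes $L$ and acts on $\on{Lie}(U)$ with strictly positive weights. Concretely, take $H\subset P$, a parabolic with $U\subset U(P)$, and let $\lambda$ be central in a Levi of $P$ containing $L$. Then $t\mapsto\on{Ad}_{\lambda(t)}\circ\rho$ extends to a map $\BA^1\to{}^{\on{cl}}\bMaps_{\on{groups}}(\on{Gal}^{\on{alg}}(X),\sG)$ whose value at $0$ is $\rho|_{\on{Gal}^{\on{red}}}$ composed with the projection, i.e.\ $\iota(r(y))$. Since $\BA^1$ is connected, $y$ and $\iota(r(y))$ lie in the same component.

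The main obstacle is the existence of $P$ and $\lambda$ compatible with $L$. The standard route is the Borel--Tits theorem: it places the unipotent radical $U$ inside $U(P)$ for a parabolic $P\supset N_\sG(U)\supset H$. One then conjugates $L$ into a Levi $M$ of $P$ (by conjugacy of Levis in $P\cap$ a suitable group) and takes $\lambda$ to be a cocharacter of $Z(M)^\circ$ that is dominant regular for $P$.

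Finally, the pro-algebraic nature of $\on{Gal}^{\on{alg}}(X)$ is harmless. The argument only uses the algebraic image $H$, which is finite type, and continuity of the family in $t$ is clear since it is conjugation by $\lambda(t)$ restricted to $U(P)$.
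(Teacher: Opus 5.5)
Your proof is correct. Your ``only if'' direction is the paper's argument. For the ``if'' direction you take a genuinely different route.

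The paper deduces the lemma formally from two inputs:
\begin{itemize}
\item the left inverse $r$ of \eqref{e:emb semi-simple locus split};
\item the fact, quoted from \cite{AGKRRV}, that the embedding $\iota$ of \eqref{e:emb semi-simple locus} induces a bijection on connected components.
\end{itemize}
Since $r\circ\iota\simeq\on{Id}$, the map $r$ then induces the inverse bijection on $\pi_0$. Hence every point $y$ lies in the component of $\iota(r(y))$.

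You prove that key statement directly instead. You degenerate $\rho$ along a cocharacter $\lambda$ with $H\subset P(\lambda)$, $U\subset U(P(\lambda))$ and $L\subset Z_\sG(\lambda)$, using Borel--Tits plus the Levi-conjugacy (Mostow) step. The resulting family is an algebraic family of homomorphisms, so it is a legitimate $\BA^1$-point of ${}^{\on{cl}}\!\LS^{\on{restr}}_\sG(X)$ by \secref{sss:classical}.

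What each route buys:
\begin{itemize}
\item The paper's route is a few lines, given the citation.
\item Yours is self-contained modulo standard structure theory. It reproves the nontrivial (surjectivity) half of the $\pi_0$-bijection the paper quotes; injectivity is formal from $r\circ\iota\simeq\on{Id}$.
\item Your degeneration is a pointwise, non-canonical cousin of the paper's canonical $\BA^1$-contraction of $\LS^{\on{restr,relev}}_\sG(X)$.
\end{itemize}

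Two small remarks:
\begin{itemize}
\item ``Acts on $\on{Lie}(U)$ with strictly positive weights'' only makes sense if $\lambda$ normalizes $U$. What you actually use is $U\subset U(P(\lambda))$, which your concrete construction provides.
\item The step placing $L$ inside a Levi of $P$ can be dropped. For any $\lambda$ with $P(\lambda)=P$, the limit $\pi_\lambda\circ\rho$ kills $\rho(\on{Gal}^{\on{unip}}(X))$. So it is a point $\iota(z)$ in the component of $y$, and your ``only if'' direction then forces $z\simeq r(\iota(z))\simeq r(y)$.
\end{itemize}
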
 

\sssec{} \label{sss:LS arithm}

From now on we will take $k=\BF_q$. In this case, the action of $\Frob^*$ on $\qLisse(X)$ gives rise to an
automorphism of $\LS^{\on{restr}}_\sG(X)$, which we denote by $\Frob$. 

\medskip

We set
$$\LS^{\on{arithm}}_\sG(X):=(\LS^{\on{restr}}_\sG(X))^{\Frob}.$$

According to \cite[Theorem 24.1.4]{AGKRRV}, the prestack $\LS^{\on{arithm}}_\sG(X)$ is a disjoint union of 
quasi-compact algebraic stacks, each of which can be written as a quotient of an affine scheme by an action of a reductive 
group. 

\begin{rem}

By definition, for an $\sfe$-algebra $R$, 
$$\Maps(\Spec(R),\LS^{\on{arithm}}_\sG(X))$$
is the groupoid of right t-exact symmetric monoidal functors
$$\Rep(\sG)\to R\mod(\qLisse^{\on{Weil}}(X)).$$

Note that in the above formula, it is really $\qLisse^{\on{Weil}}(X)$ and not $\qLisse^{\on{Weil,loc.fin}}(X)$; the latter would produce a different object,
denoted $\LS^{\on{arithm-restr}}_\sG(X)$. For example, for $X=\on{pt}$, we have:

\medskip

\begin{itemize}

\item $\LS^{\on{restr}}_\sG(X)=\on{pt}/\sG$;

\medskip

\item $\LS^{\on{arithm}}_\sG(X)=\sG/\on{Ad}(\sG)$;

\medskip

\item $\LS^{\on{arithm-restr}}_\sG(X)$ is the union of formal neighborhoods of loci in $\sG/\on{Ad}(\sG)$ corresponding to elements in $\sG$
with a fixed semi-simple part.

\end{itemize} 

\end{rem}

\ssec{The substack of \emph{relevant} local systems}

\sssec{}

We now consider a variant of $\LS^{\on{restr}}_\sG(X)$, denoted $\LS^{\on{restr,relev}}_\sG(X)$, constructed in the
same way as $\LS^{\on{restr}}_\sG(X)$, with the difference that instead of the \emph{gentle Tannakian} 
category\footnote{See \cite[Sect. 1.7]{AGKRRV} for what this means.}
$\qLisse(X)$ we use $\qLisse^{\on{relev}}(X)$. 

\medskip

As in \secref{sss:classical}, the classical stack underlying $\LS^{\on{restr,relev}}_\sG(X)$ identifies with
$$^{\on{cl}}\bMaps_{\on{groups}}(\on{Gal}^{\on{alg,relev}}(X),\sG)/\on{Ad}(\sG).$$

\sssec{}

The (fully faithful) embedding
$$\qLisse^{\on{relev}}(X)\hookrightarrow \qLisse(X)$$ 
gives rise to a \emph{monomorphism}
\begin{equation} \label{e:LS relev to all}
\LS^{\on{restr,relev}}_\sG(X) \hookrightarrow \LS^{\on{restr}}_\sG(X).
\end{equation} 

\medskip

We claim:

\begin{prop} \label{p:LS relev to all}
The map \eqref{e:LS relev to all} is the inclusion of the union of some of the connected components.
\end{prop}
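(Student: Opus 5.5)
We need to show that the monomorphism \eqref{e:LS relev to all} is the inclusion of a union of connected components. The plan has three parts: show the map is formally étale, identify its image on $\sfe$-points, and then combine these with the description of connected components.

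\textbf{Step 1: formal étaleness.} We would use the general deformation theory of \cite[Sects. 1.7--1.8]{AGKRRV}. There, the tangent/cotangent complex of $\LS^{\on{restr}}_\sG(X)$ at an $R$-point $\sigma$ is computed as $\CHom_{\qLisse(X)}(\one,\sfe\otimes\fg_\sigma)$ (shifted), where $\fg_\sigma$ is the image of the adjoint representation. The same formula holds for $\LS^{\on{restr,relev}}_\sG(X)$ with $\qLisse^{\on{relev}}(X)$ in place of $\qLisse(X)$.

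Since $\qLisse^{\on{relev}}(X)\hookrightarrow\qLisse(X)$ is fully faithful and symmetric monoidal, and $\fg_\sigma$ lies in the relevant subcategory, these Hom complexes agree. So \eqref{e:LS relev to all} induces an isomorphism on cotangent complexes.

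Both sides are formal completions, at their $\sfe$-points, of algebraic stacks locally almost of finite type. Hence a monomorphism inducing an isomorphism of cotangent complexes is an isomorphism onto the formal neighborhood of its image. More concretely, both sides are unions over $\alpha$ of formal completions of quasi-compact stacks along the semi-simple locus, so it is enough to compare formal neighborhoods.

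\textbf{Step 2: image on $\sfe$-points.} We need to show the image is a union of connected components. By \lemref{l:semi-simple}, the connected components of $\LS^{\on{restr}}_\sG(X)$ are indexed by isomorphism classes of semi-simple $\sG$-local systems. So it suffices to show the following: a $\sfe$-point $\sigma$ of $\LS^{\on{restr}}_\sG(X)$ lies in $\LS^{\on{restr,relev}}_\sG(X)$ if and only if its semi-simplification does.

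An $\sfe$-point is a homomorphism $\on{Gal}^{\on{alg}}(X)\to\sG$. It factors through $\on{Gal}^{\on{alg,relev}}(X)$ exactly when the local systems $V_\sigma$, for $V\in\Rep(\sG)^\heartsuit$, lie in $\qLisse^{\on{relev}}(X)$. By \corref{c:char relev}, membership is checked on irreducible subquotients, namely the condition that each is invariant under a power of $\Frob$. The irreducible subquotients of $V_\sigma$ and of $V_{\sigma^{\on{ss}}}$ coincide, since the semi-simplification is obtained by restricting along a Levi splitting. Hence $\sigma$ is relevant if and only if $\sigma^{\on{ss}}$ is, and the image on $\sfe$-points is a union of full connected components.

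\textbf{Step 3: conclusion and main obstacle.} Combining Steps 1 and 2 and passing from each component to its formal completion, we get that for every relevant $\alpha$ the map restricts to an isomorphism $\LS^{\on{restr,relev}}_\sG(X)_\alpha\simeq\LS^{\on{restr}}_\sG(X)_\alpha$. The source has no other points, so the map is the inclusion of the union of the relevant components.

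I expect the main obstacle to be Step 1. We must make sure that the deformation theory of \cite{AGKRRV} applies to the gentle Tannakian category $\qLisse^{\on{relev}}(X)$ and is compatible with the fully faithful embedding. Concretely, $R\mod(\qLisse^{\on{relev}}(X))\to R\mod(\qLisse(X))$ must stay fully faithful for Artinian $R$, and the relevant subcategory must be closed under the extensions that occur, which \corref{c:char relev} guarantees. I would first try to deduce this directly from \corref{c:char relev} together with left-completeness (\thmref{t:t on relev}(a)).
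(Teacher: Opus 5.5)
Your Steps 1 and 2 are fine, but the passage from them to the conclusion does not work, and that passage carries the real content. Formal \'etaleness, which you flag as the main obstacle, is the part the paper simply calls clear.

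The step you rely on is the claim that each component is a formal completion along the semi-simple locus, so that comparing formal neighborhoods of $\sfe$-points suffices. This is false. By \lemref{l:semi-simple}, the component $\LS^{\on{restr}}_\sG(X)_\alpha$ contains every local system whose semi-simplification is $\sigma_\alpha$, and its underlying reduced stack is typically positive-dimensional. For instance, take $\sG=GL_2$, $\sigma_\alpha=\chi_1\oplus\chi_2$ and $\on{Ext}^1(\chi_2,\chi_1)\neq 0$. Scaling a nonzero extension class by $t$ gives a map $\BA^1\to\LS^{\on{restr}}_\sG(X)_\alpha$. It does not factor through the formal neighborhood of any single point, since its fibers at $t=0$ and $t=1$ are non-isomorphic.

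Formal \'etaleness lets you reduce to reduced (or classical) test schemes, but it says nothing about what happens there. Moreover, a formally \'etale monomorphism that is bijective on $\sfe$-points need not be an isomorphism: $\bigsqcup_{x\in\BA^1(\sfe)}(\BA^1)^\wedge_x\to\BA^1$ is such a map. So Steps 1 and 2 alone do not exclude that $\LS^{\on{restr,relev}}_\sG(X)$ meets the relevant components in such a fragmented way.

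What is missing is the family version of your Step 2. The paper first uses formal \'etaleness to pass to the underlying classical stacks. It then takes a connected classical $S=\Spec(R)$ and a family $\phi_S\colon\on{Gal}^{\on{alg}}(X)\to\sG$ that factors through $\on{Gal}^{\on{alg,relev}}(X)$ at one $\sfe$-point $s$, and shows that the whole family factors. This amounts to $\phi_S^*(V)\in R\mod(\qLisse^{\on{relev}}(X))^\heartsuit$ for all $V\in\Rep(\sG)^\heartsuit$.

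That membership is checked at every geometric point $s'\colon\Spec(\sfe')\to S$. Here $\sfe'$ may be a larger algebraically closed field, since $R$ need not be of finite type. At such a point, \lemref{l:semi-simple} identifies the irreducible subquotients of $\phi_{s'}^*(V)$ with base changes of those of $\phi_s^*(V)$, and \corref{c:char relev} concludes. Your pointwise argument is exactly this last input, but it must be applied at all geometric points of connected families, not only at the $\sfe$-points of the stack.
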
 

\begin{proof}

It is clear that the map \eqref{e:LS relev to all} is formally \'etale 
(i.e., it induces an isomorphism on cotangent spaces). Hence, it is enough to prove the corresponding assertion at the level of the 
underlying classical substacks, i.e., 
$$^{\on{cl}}\bMaps_{\on{groups}}(\on{Gal}^{\on{alg}}(X),\sG)/\on{Ad}(\sG)\to 
{}^{\on{cl}}\bMaps_{\on{groups}}(\on{Gal}^{\on{alg,relev}}(X),\sG)/\on{Ad}(\sG).$$

\medskip

For a connected classical scheme $S=\Spec(R)$, let us be given an $S$-family of homomorphisms
\begin{equation} \label{e:R pt LS}
\on{Gal}^{\on{alg}}(X)\overset{\phi_S}\to \sG
\end{equation} 
such that for some $s\in S(\sfe)$, the corresponding homomorphism $\phi_s$ factors through 
\begin{equation} \label{e:red quot again}
\on{Gal}^{\on{alg}}(X)\twoheadrightarrow \on{Gal}^{\on{alg,relev}}(X).
\end{equation} 

We need to show that \eqref{e:R pt LS} factors through \eqref{e:red quot again}.

\medskip

The latter is equivalent to saying that for any $V\in \Rep(\sG)^\heartsuit$, the corresponding object 
$$\phi_S^*(V)\in R\mod(\qLisse(X))^\heartsuit$$
belongs to $R\mod(\qLisse^{\on{relev}}(X))^\heartsuit$. For that, it is enough to
show that for every geometric point 
$$\Spec(\sfe')\overset{s'}\to S,$$
the corresponding object $\phi_{s'}^*(V)\in \sfe'\mod(\qLisse(X))^\heartsuit$ belongs to 
\begin{equation} \label{e:base change s'}
\sfe'\mod(\qLisse^{\on{relev}}(X))\simeq \Vect_{\sfe'}\underset{\Vect_{\sfe}}\otimes \qLisse^{\on{relev}}(X).
\end{equation} 

\medskip

By \corref{c:char relev}\footnote{Note that here we are using an easy aspect of \corref{c:char relev}, i.e., when 
an object in question lies in the heart of the t-structure, i.e., we are not using the left-completeness property.},
this is equivalent to the fact that all irreducible subquotients of all perverse cohomologies 
of $\phi_{s'}^*(V)$ belong to \eqref{e:base change s'}. However, by \lemref{l:semi-simple}, these subquotients
are the same as the corresponding subquotients for $\phi_s(V)$ (base changed $\sfe\rightsquigarrow \sfe'$),
which belong to \eqref{e:base change s'}, by assumption. 

\end{proof} 

\sssec{}

Let $\LS^{\on{restr,relev},0}_\sG(X)$ be a variant of $\LS^{\on{restr,relev}}_\sG(X)$ constructed out of the gentle
Tannakian category $\qLisse^{\on{relev},0}(X)$. The symmetric monoidal functor 
$$\qLisse^{\on{relev},0}(X)\to \qLisse^{\on{relev}}(X)$$
gives rise to a map
\begin{equation} \label{e:semi-simple relev}
\LS^{\on{restr,relev},0}_\sG(X)\to \LS^{\on{restr,relev}}_\sG(X).
\end{equation}

We claim:

\begin{prop} \label{p:semi-simple relev}
The map \eqref{e:semi-simple relev} factors through an isomorphism
$$\LS^{\on{restr,relev},0}_\sG(X)\overset{\sim}\to \LS^{\on{restr},0}_\sG(X)\underset{\LS^{\on{restr}}_\sG(X)}\times \LS^{\on{restr,relev}}_\sG(X).$$
\end{prop}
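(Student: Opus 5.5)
The plan is to reduce to a comparison of mapping stacks of pro-reductive groups and then invoke \propref{p:Gal red}.

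\emph{Step 1: identify the source.} Since $\qLisse^{\on{relev},0}(X)$ is semi-simple and $\Rep(\on{Gal}^{\on{alg,relev},0}(X))\to \qLisse^{\on{relev},0}(X)$ is an equivalence, the construction of \cite[Sect. 1.8]{AGKRRV} applied to it gives
$$\LS^{\on{restr,relev},0}_\sG(X)\simeq \bMaps_{\on{groups}}(\on{Gal}^{\on{alg,relev},0}(X),\sG)/\on{Ad}(\sG).$$
This stack is classical, since the mapping space from a pro-reductive group to $\sG$ is classical (as in the footnote to \eqref{e:emb semi-simple locus}). By \propref{p:Gal red}, $\on{Gal}^{\on{alg,relev},0}(X)\simeq \on{Gal}^{\on{red,relev}}(X)$, and under this identification the map \eqref{e:semi-simple relev} is restriction along $\on{Gal}^{\on{alg,relev}}(X)\twoheadrightarrow \on{Gal}^{\on{red,relev}}(X)$. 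The reason is that $\phi$ is the tautological forgetful functor, which on hearts is restriction of representations.

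\emph{Step 2: identify the target.} By \propref{p:LS relev to all}, $\LS^{\on{restr,relev}}_\sG(X)$ is a union of connected components of $\LS^{\on{restr}}_\sG(X)$. Hence the fiber product is the union of the corresponding connected components of $\LS^{\on{restr},0}_\sG(X)=\bMaps_{\on{groups}}(\on{Gal}^{\on{red}}(X),\sG)/\on{Ad}(\sG)$, because \eqref{e:emb semi-simple locus} induces a bijection on connected components. The fiber product is therefore classical. So it suffices to show that
$$\bMaps_{\on{groups}}(\on{Gal}^{\on{red,relev}}(X),\sG)\to \bMaps_{\on{groups}}(\on{Gal}^{\on{red}}(X),\sG)$$
(restriction along the surjection $\on{Gal}^{\on{red}}(X)\twoheadrightarrow\on{Gal}^{\on{red,relev}}(X)$) is an isomorphism onto exactly those components whose points land in the relevant components. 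It should also be compatible with the maps to ${}^{\on{cl}}\!\LS^{\on{restr}}_\sG(X)$; this compatibility is clear, since both routes are restriction along quotient maps of $\on{Gal}^{\on{alg}}(X)$.

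\emph{Step 3: the comparison (main obstacle).} Restriction along a surjection of pro-algebraic groups is a closed embedding (monomorphism) of mapping stacks, so the map in Step 2 is a closed embedding. The remaining task is to show it is also open, i.e.\ a union of components. I would argue exactly as in the proof of \propref{p:LS relev to all}. Take a connected $S$-family $\on{Gal}^{\on{red}}(X)\to \sG$ such that one fiber factors through $\on{Gal}^{\on{red,relev}}(X)$. Representations of a pro-reductive group are semi-simple, so for each $V\in\Rep(\sG)^\heartsuit$ the pullbacks at geometric points are semi-simple local systems. By \lemref{l:semi-simple}, all points of a connected family have isomorphic semi-simplifications; here the points are already semi-simple, so they are isomorphic. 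Hence all their irreducible constituents are relevant, and by \corref{c:char relev} the whole family factors through $\on{Gal}^{\on{red,relev}}(X)$.

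Conversely, any point of the fiber product is a semi-simple local system lying in a relevant component, so it factors through $\on{Gal}^{\on{red,relev}}(X)$. This gives surjectivity onto these components. Combining it with the monomorphism property yields the isomorphism. The delicate point I expect to need care with is the factorization for families (not just pointwise). I would handle it via the Tannakian criterion: for each $V$, membership of $\phi_S^*(V)$ in $R\mod(\qLisse^{\on{relev}}(X))^\heartsuit$ is checked at geometric points, as in \propref{p:LS relev to all}.
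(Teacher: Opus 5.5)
Your proposal is correct and follows the paper's route: use \propref{p:LS relev to all} to identify the fiber product with the union of those components of $\bMaps_{\on{groups}}(\on{Gal}^{\on{red}}(X),\sG)/\on{Ad}(\sG)$ whose homomorphisms factor through $\on{Gal}^{\on{red,relev}}(X)$, and then conclude by \propref{p:Gal red}. Your Step 3 spells out what the paper states in one line; the family argument could be shortened, since each component of $\LS^{\on{restr},0}_\sG(X)$ is of the form $\on{pt}/\sG_\alpha$ and so has a single isomorphism class of points.
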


\begin{proof}

By \propref{p:LS relev to all}, the right-hand side is the union of connected components of
$$\bMaps_{\on{groups}}(\on{Gal}^{\on{red}}(X),\sG)/\on{Ad}(\sG)$$
that correspond to homomorphisms that factor via
$$\on{Gal}^{\on{red}}(X)\twoheadrightarrow \on{Gal}^{\on{red,relev}}(X).$$

The required assertion follows now from \propref{p:Gal red}.

\end{proof} 

\sssec{}

Finally, we claim:

\begin{lem} \label{l:relev catches all}
The inclusion
$$(\LS^{\on{restr,relev}}_\sG(X))^{\Frob}\hookrightarrow (\LS^{\on{restr}}_\sG(X))^{\Frob}=:\LS^{\on{arithm}}_\sG(X)$$
is an isomorphism.
\end{lem}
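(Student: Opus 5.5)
By \propref{p:LS relev to all}, the map $\LS^{\on{restr,relev}}_\sG(X)\hookrightarrow \LS^{\on{restr}}_\sG(X)$ is the inclusion of a union of connected components. It is $\Frob$-equivariant, since $\Frob^*$ preserves $\qLisse^{\on{relev}}(X)$: an irreducible object invariant under some power of Frobenius goes to another such object. Taking $\Frob$-fixed points of an open-closed inclusion gives an open-closed inclusion
$$(\LS^{\on{restr,relev}}_\sG(X))^{\Frob}\hookrightarrow (\LS^{\on{restr}}_\sG(X))^{\Frob}.$$
It is a base change of the original open-closed embedding along $\LS^{\on{arithm}}_\sG(X)\to \LS^{\on{restr}}_\sG(X)$, because fixed points are computed as a fiber product with the graph of $\Frob$. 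It therefore suffices to show that this inclusion is surjective on $\sfe'$-points for algebraically closed fields $\sfe'\supset\sfe$. An open-closed substack that contains all geometric points is everything.

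So let $\sigma$ be a geometric point of $\LS^{\on{arithm}}_\sG(X)$. This is a $\sG$-local system $\sigma$ on $X$, with coefficients in $\sfe'$, together with an isomorphism $\Frob^*\sigma\simeq\sigma$. By \lemref{l:semi-simple}, the connected component of $\sigma$ is determined by its semi-simplification. Since the relevant locus is a union of components, we need to show that $\sigma$ lies in a relevant component. Equivalently, for every $V\in\Rep(\sG)^\heartsuit$, the object $V_\sigma\in \qLisse(X)^\heartsuit$ (base changed to $\sfe'$) should lie in $\qLisse^{\on{relev}}(X)$. Here I use the criterion of \corref{c:char relev}, in its easy form for objects of the heart: every irreducible subquotient of $V_\sigma$ must be invariant under some power of Frobenius.

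This is the key step. The Weil structure on $\sigma$ gives $\Frob^*V_\sigma\simeq V_\sigma$. Hence $\Frob^*$ permutes the finite set of isomorphism classes of irreducible subquotients of the finite-length object $V_\sigma$. Each such class is then fixed by some power $\Frob^n$, as required.

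The one subtlety I expect is the coefficient extension. The criterion and \lemref{l:rel gener} are stated over $\sfe$, while the point may be over $\sfe'$. I would handle this as in the proof of \propref{p:LS relev to all}, working in $\Vect_{\sfe'}\otimes_{\Vect_\sfe}\qLisse^{\on{relev}}(X)$ and noting that the finite-permutation argument is insensitive to the coefficient field. Alternatively, it suffices to check $\sfe$-points, since each connected component of the locally finite type stack $\LS^{\on{arithm}}_\sG(X)$ has an $\sfe$-point. Either way, the inclusion is surjective, and hence an isomorphism.
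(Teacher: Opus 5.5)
Your argument is correct, and its overall structure is the paper's. You use \propref{p:LS relev to all} to reduce to checking a point $\sigma$ of $\LS^{\on{arithm}}_\sG(X)$, and then show that $V_\sigma\in\qLisse^{\on{relev}}(X)$ for every $V$.

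The difference is in that last step.
\begin{itemize}
\item The paper reduces to compact $V$. The Frobenius-equivariance of $\sigma$ then makes $V_\sigma$ a constructible Weil object, so $V_\sigma$ lies in the image of $\oblv_{\on{Weil}}:\qLisse^{\on{Weil,loc.fin}}(X)\to\qLisse(X)$. That image is contained in $\qLisse^{\on{relev}}(X)$ by definition, since $\Shv^{\on{relev}}(X)$ is the essential image of \eqref{e:tensor up 2}.
\item You instead use the constituent-wise criterion: \corref{c:char relev} in its heart form, or equivalently \lemref{l:rel gener} plus closure of $\Shv^{\on{relev}}(X)$ under extensions. You then observe that $\Frob^*$ permutes the finitely many Jordan--H\"older classes of $V_\sigma$.
\end{itemize}
The paper's route is shorter and needs no description of irreducible objects. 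Yours makes explicit why invariance under a power of Frobenius is automatic.

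Two small remarks. First, you should reduce to finite-dimensional $V$ before calling $V_\sigma$ a finite-length object, as the paper does. Second, your explicit identification of $(\LS^{\on{restr,relev}}_\sG(X))^{\Frob}$ as a base change of an open-closed embedding is more careful than the paper. So is your observation that every connected component has an $\sfe$-point, which justifies the reduction to $\sfe$-points that the paper leaves implicit.
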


\begin{proof} 

By \propref{p:LS relev to all}, a priori, the left-hand side is the union of some of the connected components
of the right hand side. Hence, in order to prove the lemma, we need to show that if an $\sfe$-point $\sigma$ of 
$\LS^{\on{restr}}_\sG(X)$ factors through
$\LS^{\on{arithm}}_\sG(X)$, then it belongs to $\LS^{\on{restr,relev}}_\sG(X)$. 

\medskip

I.e., we need to show that for $V\in \Rep(\sG)$, the corresponding object $V_\sigma\in \qLisse(X)$
belongs to $\qLisse^{\on{relev}}(X)$. Note that by assumption, $V_\sigma$ lies in the image 
of $$\oblv_{\on{Weil}}:\qLisse^{\on{Weil}}(X)\to \qLisse(X).$$

\medskip

With no restriction of generality, we can assume that $V$ is compact. Then $V_\sigma$ is constructible.
Hence, we obtain that $V_\sigma$ lies in the essential image of
$$\qLisse^{\on{Weil,loc.fin}}(X)\to \qLisse(X).$$

Hence, it belongs to $\qLisse^{\on{relev}}(X)$.

\end{proof}

\ssec{The contraction} \label{ss:contr LS}

\sssec{} \label{e:W on Lisse} 

Recall that according to \thmref{t:contraction on category}, the category $\qLisse^{\on{relev}}(X)$ carries a coaction of
of the comonoidal category $\QCoh(\BZ^{\on{alg,wt}})$, and in particular of $\QCoh(\BG_m)$. 

\medskip

By the functoriality of the construction in \cite[Sect. 1.8]{AGKRRV} with respect to the gentle Tannakian category, 
we obtain that the prestack $\LS^{\on{restr,relev}}_\sG(X)$
carries an action of the pro-algebraic group $\BZ^{\on{alg,wt}}$, which restricts to an action of $\BG_m$. 

\sssec{}

In addition, the above coaction of $\QCoh(\BG_m)$ on $\qLisse^{\on{relev}}(X)$
extends to a coaction of $\QCoh(\BA^1)$. 

\medskip

Applying again the functoriality of the construction in \cite[Sect. 1.8]{AGKRRV} with respect to the gentle Tannakian category, 
we obtain that $\LS^{\on{restr,relev}}_\sG(X)$ carries an action of the monoid $\BA^1$. 

\medskip

Moreover, this action is compatible
with the action of $\BZ^{\on{alg,wt}}$, and the resulting actions of
$$\BA^1 \hookleftarrow \BG_m\hookrightarrow  \BZ^{\on{alg,wt}}$$
coincide. 

\sssec{}

Consider the resulting endomorphism $[0]$ of $\LS^{\on{restr,relev}}_\sG(X)$. By construction, it factors as
$$\LS^{\on{restr,relev}}_\sG(X) \overset{\psi}\to \LS^{\on{restr,relev},0}_\sG(X)  \overset{\phi}\to \LS^{\on{restr,relev}}_\sG(X),$$
where:

\begin{itemize}

\item $\phi$ is the map \eqref{e:semi-simple relev};

\medskip

\item The map $\psi$ is induced by the (right t-exact symmetric monoidal) functor
$$\psi:\qLisse^{\on{relev}}(X)\to \qLisse^{\on{relev},0}(X);$$

\item The maps $\phi$ and $\psi$ are $\BZ^{\on{alg,wt}}$-equivariant, where the action on $\qLisse^{\on{relev},0}(X)$ is via
$$\BZ^{\on{alg,wt}}\twoheadrightarrow \BZ^{\on{alg},0}.$$

\item $\psi\circ \phi\simeq \on{id}$.

\end{itemize}

\sssec{} \label{sss:contraction!}

In particular, we can view $\LS^{\on{restr,relev}}_\sG(X)$ as a prestack \emph{over} $\LS^{\on{restr,relev},0}_\sG(X)$ by means
of $\psi$, and as such it carries a fiber-wise action of $\BA^1$.

\medskip

We think of this structure as a \emph{contraction} of $\LS^{\on{restr,relev}}_\sG(X)$ to $\LS^{\on{restr,relev},0}_\sG(X)$.

\ssec{Contraction on the stack of arithmetic local systems}

\sssec{} 

Since the action of the monoid $\BA^1$ on $\LS^{\on{restr,relev}}_\sG(X)$ commutes with the action of
$\BZ^{\on{alg,wt}}$, and in particular of the element $\Frob$, it induces an action of $\BA^1$ on the stack
$$(\LS^{\on{restr,relev}}_\sG(X))^{\Frob}\overset{\text{\lemref{l:relev catches all}}}\simeq \LS^{\on{arithm}}_\sG(X).$$

\sssec{}

In particular, we obtain the endomorphism $[0]$ of $\LS^{\on{arithm}}_\sG(X)$, which factors as 
\begin{equation} \label{e:contraction arithm}
\LS^{\on{arithm}}_\sG(X) \overset{\psi}\to (\LS^{\on{restr},0}_\sG(X))^{\on{Frob}} \overset{\phi}\to \LS^{\on{arithm}}_\sG(X),
\end{equation}
where we regard the stacks and maps in \eqref{e:contraction arithm} as compatible with an action of $\BZ^{\on{alg,wt}}$, and where we identify 
$$(\LS^{\on{restr,relev},0}_\sG(X))^{\on{Frob}}\simeq  (\LS^{\on{restr},0}_\sG(X))^{\on{Frob}}$$
thanks to \propref{p:semi-simple relev} and \lemref{l:relev catches all}.  

\sssec{} \label{sss:contraction! arithm}

As in \secref{sss:contraction!}, we can thus view $\LS^{\on{arithm}}_\sG(X)$ as a prestack over $(\LS^{\on{restr},0}_\sG(X))^{\on{Frob}}$,
equipped with a fiber-wise action of $\BA^1$. 

\sssec{} \label{sss:arithm 0}

Denote
$$\LS^{\on{arithm},0}_\sG(X):=(\LS^{\on{restr},0}_\sG(X))^{\on{Frob}}.$$

\medskip

Note that by construction, for an $\sfe$-algebra $R$, 
$$\Maps(\Spec(R),\LS^{\on{arithm},0}_\sG(X))$$
is the groupoid of right t-exact symmetric monoidal functors
$$\Rep(\sG)\to R\mod((\qLisse^{\on{relev},0}(X))^{\Frob}).$$

Note also that the symmetric monoidal category $(\qLisse^{\on{relev},0}(X))^{\Frob}$ appearing in the above formula
can be rewritten as 
\begin{equation} \label{e:strange category}
\BZ\mod \underset{\Rep(\BZ^{\on{alg},0})}\otimes \qLisse^{\on{Weil},0}(X).
\end{equation} 

\sssec{} \label{sss:conn comp LS 0}

Let us analyze the structure of the (prestack) $\LS^{\on{arithm},0}_\sG(X)$. 

\medskip

Let $\LS^{\on{restr}}_\sG(X)_\alpha$ be a connected component of $\LS^{\on{restr}}_\sG(X)$ that intersects 
$\LS^{\on{arithm}}_\sG(X)$ nontrivially; this is equivalent to saying that $\LS^{\on{restr}}_\sG(X)_\alpha$ 
is Frobenius-invariant. 

\medskip

Let $\sigma_\alpha$ be the corresponding semi-simple local system, which is defined up to an isomorphism. 
Denote $\sG_\alpha:=\on{Autom}(\sigma_\alpha)$. By
\secref{sss:semi-simple bis}(1), $\sG_\alpha$ is reductive. 

\medskip

We can think about $\sigma_\alpha$ as a homomorphism 
$$\phi_\alpha:\on{Gal}^{\on{alg}}(X)\to \sG;$$
then $\sG_\alpha:=Z_\sG(\on{Im}(\phi_\alpha))$. 

\medskip

The action of Frobenius on $\qLisse(X)$ gives rise to an extension\footnote{Cf. Remark \ref{r:can Levi}.}
\begin{equation} \label{e:alg Weil}
1\to \on{Gal}^{\on{alg}}(X)\to \on{Weil}^{\on{alg}}(X)\to \BZ\to 1.
\end{equation} 

Pick a lift of the element $1\in \BZ$ to $\on{Weil}^{\on{alg}}(X)$; denote it by $\Frob$. 

\medskip

By \secref{sss:semi-simple bis}(2), the fact that $\LS^{\on{restr}}_\sG(X)_\alpha$  is Frobenius-invariant means
that so is the substack $\LS^{\on{restr},0}_\sG(X)_\alpha$. I.e., $\sigma_\alpha$ admits a structure of Frobenius-equivariance. 
This means that there exists an element $\sg\in \sG$ such that the homomorphism $\phi_\alpha$ intertwines 
the action of $\Frob$ on $\on{Gal}^{\on{alg}}(X)$ by conjugation and the adjoint action of $\sg$. 

\medskip

The variety of such elements, to be denoted $\sF_\alpha$, is a torsor with respect to $\sG_\alpha$ with respect to
both left and right multiplication. In particular, we have a well-defined \emph{adjoint} action of $\sG_\alpha$ on $\sF_\alpha$.

\medskip

We identify
$$\LS^{\on{restr},0}_\sG(X)_\alpha\simeq \on{pt}/\sG_\alpha,$$
and we obtain that
\begin{equation} \label{e:structure stabilizer}
(\LS^{\on{restr},0}_\sG(X)_\alpha)^{\on{Frob}}\simeq \sF_\alpha/\on{Ad}(\sG_\alpha).
\end{equation} 

\sssec{}

We now claim:

\begin{prop} \label{p:contr affine}
The map $\psi$ in \eqref{e:contraction arithm} is affine.
\end{prop}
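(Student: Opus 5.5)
We need to show that $\psi:\LS^{\on{arithm}}_\sG(X)\to \LS^{\on{arithm},0}_\sG(X)$ is affine. The strategy is to reduce to a single connected component and then use the presentation of the target as a quotient by $\sG_\alpha$. The key input is that the source is itself a quotient of an affine scheme by a reductive group, and that $\psi$ is $\sG$-compatible in a way that makes the fiber over the base point of the target affine.

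\medskip

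\noindent\emph{Step 1: reduction to one component.} Both stacks decompose compatibly into the pieces indexed by the $\Frob$-invariant components $\alpha$. The map $\psi$ preserves these pieces, since $\phi$ does, $\psi\circ\phi\simeq\on{id}$, and $[0]=\phi\circ\psi$ acts within a connected component. So it suffices to show that
$$\psi_\alpha:(\LS^{\on{restr}}_\sG(X)_\alpha)^{\Frob}\to \sF_\alpha/\on{Ad}(\sG_\alpha)$$
is affine, using \eqref{e:structure stabilizer}.

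\medskip

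\noindent\emph{Step 2: the base change.} Since $\sF_\alpha\to \sF_\alpha/\on{Ad}(\sG_\alpha)$ is a smooth cover, affineness may be checked after base change to $\sF_\alpha$. Here $\sF_\alpha$ is affine, being a torsor for the reductive group $\sG_\alpha$. So the goal is to show that the base change $\CY_\alpha:=(\LS^{\on{restr}}_\sG(X)_\alpha)^{\Frob}\times_{\sF_\alpha/\on{Ad}(\sG_\alpha)}\sF_\alpha$ is an affine (derived) scheme.

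I would describe $\CY_\alpha$ via the contraction. A point of $\CY_\alpha$ is an arithmetic local system $\sigma$ together with an identification of $\psi(\sigma)$ with $\sigma_\alpha$ that is compatible with Frobenius. Using the preferred Levi splitting coming from the $\BA^1$-action (Remark \ref{r:can Levi}), this should identify $\CY_\alpha$ with the $\Frob$-fixed points of the fiber of
$$\LS^{\on{restr,relev}}_\sG(X)_\alpha\to \on{pt}/\sG_\alpha$$
over $\on{pt}$. That fiber is the space of deformations of $\phi_\alpha$ in unipotent directions: maps $\on{Gal}^{\on{alg,relev}}(X)\to\sG$ whose restriction to the reductive part equals $\phi_\alpha$. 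Such maps are controlled by the unipotent radical $\on{Gal}^{\on{alg,relev,unip}}(X)$, so the fiber should be a formal-type object with an $\BA^1$-contraction to the point $\phi_\alpha$. The expected outcome is that the fiber is an ind-scheme, and that its $\Frob$-fixed locus is an honest affine scheme.

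\medskip

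\noindent\emph{Step 3: affineness, the main obstacle.} The hard step is showing that the $\Frob$-fixed points of this fiber form an affine scheme rather than a stack or an ind-scheme. I would first try to deduce it from \cite[Theorem 24.1.4]{AGKRRV}, which presents $(\LS^{\on{restr}}_\sG(X)_\alpha)^{\Frob}$ as $Z/\sH$ with $Z$ affine and $\sH$ reductive. Then $\CY_\alpha$ is a stack over an affine base whose automorphism groups are closed subgroups of the stabilizers in the fiber over $\sigma_\alpha$.

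Next I would show these automorphism groups are trivial. By the splitting, an automorphism of $(\sigma,\text{trivialization of }\psi(\sigma))$ lies in the kernel of $\on{Aut}(\sigma)\to\on{Aut}(\psi(\sigma))$. That kernel is unipotent, and it is $\BG_m$-contracted by the $\BA^1$-action. On the other hand, everything is $\Frob$-fixed and $\BZ^{\on{alg,wt}}$ acts trivially on the fixed locus, so the $\BG_m$-action on these groups is trivial. A unipotent group contracted to the identity by a trivial $\BG_m$-action must be trivial. Hence $\CY_\alpha$ is an algebraic space.

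Finally, $\CY_\alpha$ is quasi-compact and carries an $\BA^1$-action contracting it to the single point $\sigma_\alpha$. An algebraic space with such a contraction onto an affine fixed locus is affine; this is a standard fact, in the spirit of the Białynicki-Birula/Drinfeld attractor results. That would complete the proof.
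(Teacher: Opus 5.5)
Your reduction to a single component and the base change along the smooth cover $\sF_\alpha\to\sF_\alpha/\on{Ad}(\sG_\alpha)$ are fine. Step 3, however, has a genuine gap: the argument that points of $\CY_\alpha$ have trivial automorphism groups.

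You justify this by asserting that $\BZ^{\on{alg,wt}}$, hence $\BG_m$, acts trivially on the $\Frob$-fixed locus and therefore on the stabilizers. Nothing in the paper gives this. \lemref{l:Exc triv} concerns only the ring of global functions. Its proof (full faithfulness of $\Rep(\BZ^{\on{alg,wt}})\to\BZ\mod$ plus Zariski density of $\BZ$) works for representations on vector spaces. It does not work for actions on stacks, where a trivialization of the $\BZ$-action is extra data; indeed the remark at the end of \secref{ss:proof of Drinf} says the $\wh\BZ$-part acts non-trivially on $\LS^{\on{arithm}}_\sG(X)$.

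Worse, the claim is false in the form you use it. Your own conclusion that $\CY_\alpha$ is affine, together with a trivial $\BG_m$-action, would force the action map $\BA^1\times\CY_\alpha\to\CY_\alpha$ to agree with the projection on the schematically dense open $\BG_m\times\CY_\alpha$, hence everywhere. That gives $[0]=\on{id}$, i.e.\ every arithmetic local system in the component would be geometrically semi-simple. This fails, e.g.\ for a Weil structure on a non-split unipotent rank-two local system on an elliptic curve (twist sub and quotient by suitable characters of $\BZ$). Also, for a non-semisimple $\sigma$ there is no $\BG_m$-action on $\on{Aut}(\sigma)$ at all without a fixed-point structure, so ``contracted by the $\BA^1$-action'' has no meaning there. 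A smaller slip: the $\BA^1$-action contracts $\CY_\alpha$ onto the section $\sF_\alpha$, not to a single point.

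The missing idea is the framing at $x$. The fiber functor at $x$ is compatible with the $\BA^1$-action, so $\psi$ lies over $\on{pt}/\sG$. In framed terms, $\psi$ is restriction along the Levi section $\on{Gal}^{\on{alg,relev},0}(X)\to\on{Gal}^{\on{alg,relev}}(X)$. So on stabilizers it is just the inclusion of the centralizer in $\sG$ of (the image of) $\sigma$ into that of $\psi(\sigma)$, and injectivity is immediate. With this, your route could be completed via the attractor statement you invoke, since the fixed locus $\sF_\alpha$ is affine.

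The paper's proof needs none of this machinery. It base changes along $\on{pt}\to\on{pt}/\sG$. The source becomes an affine scheme by \cite[Theorem 24.1.4]{AGKRRV}. The target becomes $(\sF_\alpha\times\sG)/\sG_\alpha$, which is affine because $\sG/\sG_\alpha$ is affine for reductive $\sG_\alpha$. A morphism between affine schemes is affine.
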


\begin{proof}

Let us base change the morphism in question by means of $\on{pt}\to \on{pt}/\sG$, where we map both sides to $\on{pt}/\sG$
by composing the forgetful map to $\LS^{\on{restr}}_\sG(X)$ with the map
$$\LS^{\on{restr}}_\sG(X)\to \LS^{\on{restr}}_\sG(\on{pt})\simeq \on{pt}/\sG,$$
corresponding to a point $x\in X$. 

\medskip

It suffices to show that for every connected component $\LS^{\on{arithm}}_\sG(X)_\alpha$ of $\LS^{\on{arithm}}_\sG(X)$,
the resulting map
\begin{equation} \label{e:contr affine}
\LS^{\on{arithm}}_\sG(X)_\alpha\underset{\on{pt}/\sG}\times \on{pt}\to \LS^{\on{arithm},0}_\sG(X)_\alpha \underset{\on{pt}/\sG}\times \on{pt}
\end{equation}
is affine. We claim that both prestacks appearing in \eqref{e:contr affine} are in fact affine schemes. 

\medskip

Indeed,
$$\LS^{\on{arithm}}_\sG(X)_\alpha\underset{\on{pt}/\sG}\times \on{pt}$$
is an affine scheme by \cite[Theorem 24.1.4]{AGKRRV}. 

\medskip

The fiber product $\LS^{\on{arithm},0}_\sG(X)_\alpha \underset{\on{pt}/\sG}\times \on{pt}$ is an affine scheme by \eqref{e:structure stabilizer}:
in fact, it is isomorphic to
$$(\sF_\alpha\times \sG)/\sG_\alpha,$$
which is affine since $\sG/\sG_\alpha$ is affine (being the quotient of a reductive group by a reductive subgroup). 

\end{proof} 

\ssec{The excursion algebra} 

\sssec{}

We are now ready to state and prove the main result of this paper:

\begin{thm} \label{t:exc}
The map
\begin{equation} \label{e:exc}
\Gamma(\LS^{\on{arithm}}_\sG(X),\CO_{\LS^{\on{arithm}}_\sG(X)})\to
\Gamma(\LS^{\on{arithm},0}_\sG(X),\CO_{\LS^{\on{arithm},0}_\sG(X)})
\end{equation} 
is an isomorphism.
\end{thm}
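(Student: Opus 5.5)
We will follow the strategy sketched in the Introduction, now using the structures constructed in \secref{ss:contr LS}. The map \eqref{e:exc} is $\phi^*$, where $\phi:\LS^{\on{arithm},0}_\sG(X)\to \LS^{\on{arithm}}_\sG(X)$ is the map in \eqref{e:contraction arithm}. Since $\psi\circ\phi\simeq\on{id}$, the map $\psi^*$ is a right inverse of $\phi^*$ on global functions: $\phi^*\circ\psi^*\simeq\on{id}$. Hence $\phi^*$ is surjective, and it suffices to show that $\psi^*\circ\phi^*=[0]^*$ acts as the identity on $\on{Exc}(X,\sG)$. We will work one connected component $\LS^{\on{arithm}}_\sG(X)_\alpha$ at a time. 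This is legitimate because $\psi$ and $\phi$ respect the decomposition into components: $[0]$ lies in the same connected monoid $\BA^1$ as $1$, so it preserves components.

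\textbf{Step 1: grading.} The $\BA^1$-action on $\LS^{\on{arithm}}_\sG(X)$ makes $A:=\Gamma(\LS^{\on{arithm}}_\sG(X)_\alpha,\CO)$ a comodule over $\CO(\BA^1)=\sfe[t]$, compatibly with the algebra structure. We first argue in the classical case (and we will also need to handle $H^*$ of the derived algebra).

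By \propref{p:contr affine}, $\psi$ is affine, so $A=\Gamma(\LS^{\on{arithm},0}_\sG(X)_\alpha,\psi_*\CO)$. The $\BA^1$-action is fiberwise for $\psi$, so $\psi_*\CO$ is a quasi-coherent sheaf of algebras on $\sF_\alpha/\on{Ad}(\sG_\alpha)$ with a non-negative grading $\bigoplus_{n\geq 0}(\psi_*\CO)_n$. By construction, $(\psi_*\CO)_0=\CO$ via $\psi^*$: the fixed locus of a contracting $\BA^1$-action on an affine morphism with a section is the section. Since $\sG_\alpha$ is reductive, taking global sections is exact. So $A=\bigoplus_{n\geq0}A_n$ with $A_0=\on{Exc}(X,\sG)^0_\alpha$, and $[0]^*$ is the projection onto $A_0$. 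The same holds term-by-term for the cohomology of the derived global sections.

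\textbf{Step 2: the grading is trivial.} The $\BA^1$-action restricts on $\BG_m$ to the restriction of the $\BZ^{\on{alg,wt}}$-action along $\BG_m\to \BZ^{\on{alg,wt}}$. So the grading on $A$ is the $\BG_m$-weight decomposition of a $\BZ^{\on{alg,wt}}$-representation. On $\LS^{\on{arithm}}_\sG(X)=(\LS^{\on{restr,relev}}_\sG(X))^{\Frob}$ the element $\Frob\in\BZ^{\on{alg,wt}}$ acts trivially, canonically via the fixed-point structure. Hence $\Frob$ acts trivially on $A$.

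Because $\BZ\to\BZ^{\on{alg,wt}}$ has Zariski-dense image, the category of representations on which $\Frob$ acts trivially consists of those on which all of $\BZ^{\on{alg,wt}}$ acts trivially. Concretely, a locally finite $\BZ$-module with trivial generator is the trivial $\BZ^{\on{alg,wt}}$-module. Therefore $\BG_m$ acts trivially, i.e. $A=A_0$, and $[0]^*=\on{id}$. This proves the isomorphism, and it also shows that $A$ is classical, because $A_0$ is the ring of functions on $\sF_\alpha/\!/\on{Ad}(\sG_\alpha)$.

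\textbf{Main obstacle.} The main difficulty is the coherence in Step 2. We must check that the triviality of the $\Frob$-action on a fixed-point stack is compatible with the full $\BZ^{\on{alg,wt}}$-action, so that the induced action on functions is genuinely trivial as a $\BZ^{\on{alg,wt}}$-representation and not merely trivial up to a non-trivial homotopy. In particular, at the derived level $A$ is a complex, and a trivialization of $\Frob$ need not be the trivial module structure.

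We would first pass to cohomology $H^i(A)$. These are honest locally finite $\BZ$-modules with weight decomposition, and there the argument is plain linear algebra: the generator acts by the identity, so only weight $0$ (eigenvalue $1$) occurs. Combined with Step 1 applied to each $H^i$, this gives $H^i(A)=H^i(A)_0$. The statement then follows because $\phi^*$ is split by $\psi^*$.
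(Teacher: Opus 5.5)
Your proposal is correct and is essentially the paper's proof: as in \lemref{l:Exc triv} and \corref{c:Exc triv}, the trivial action of $\Frob$ together with the Zariski-density of $\BZ\to\BZ^{\on{alg,wt}}$ rules out all nonzero $\BG_m$-weights, and the contraction then finishes the argument. Your closing step uses $\phi^*\circ\psi^*=\on{id}$ and $[0]^*=\psi^*\circ\phi^*$ as the projection onto weight $0$, which is exactly the alternative argument in the Remark after the paper's proof; consequently the affineness of $\psi$ in your Step 1 (the paper's main route via \propref{p:contr affine}) is not needed, and passing to $H^i(A)$ is a harmless substitute for the paper's appeal to the full faithfulness of $\Rep(\BZ^{\on{alg,wt}})\to\BZ\mod$.
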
 

Note that by combining \thmref{t:exc} and \eqref{e:structure stabilizer} we obtain:

\begin{cor} \label{c:exc}
For every connected component $\LS^{\on{arithm}}_\sG(X)_\alpha$ of $\LS^{\on{arithm}}_\sG(X)$, the 
algebra 
$$\on{Exc}(X,\sG)_\alpha:=\Gamma(\LS^{\on{arithm}}_\sG(X)_\alpha,\CO_{\LS^{\on{arithm}}_\sG(X)_\alpha})$$ is classical, integral and normal.
\end{cor}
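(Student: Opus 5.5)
The plan is to use \thmref{t:exc} to replace $\on{Exc}(X,\sG)_\alpha$ by global functions on a connected component of $\LS^{\on{arithm},0}_\sG(X)$. I will then read off classicality, integrality and normality from the explicit description \eqref{e:structure stabilizer} as a quotient of a smooth affine variety by a reductive group.

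\emph{Step 1: matching connected components.} The isomorphism \eqref{e:exc} is a product over connected components, so I first need that $\phi,\psi$ of \eqref{e:contraction arithm} induce mutually inverse bijections on $\pi_0$.
\begin{itemize}
\item We have $\psi\circ\phi\simeq\on{id}$.
\item The endomorphism $\phi\circ\psi=[0]$ is connected to $\on{id}=[1]$ through the action of the connected monoid $\BA^1$, so every point $y$ lies in the same connected component as $\phi\circ\psi(y)$.
\end{itemize}
Hence $\LS^{\on{arithm}}_\sG(X)_\alpha=\psi^{-1}(\CY^0)$ for a single connected component $\CY^0$ of $\LS^{\on{arithm},0}_\sG(X)$. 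Then \thmref{t:exc}, restricted to this factor, gives $\on{Exc}(X,\sG)_\alpha\simeq\Gamma(\CY^0,\CO_{\CY^0})$.

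\emph{Step 2: explicit form of $\CY^0$.} By \eqref{e:structure stabilizer}, $\CY^0$ is a connected component of $\sF_\beta/\on{Ad}(\sG_\beta)$ for the appropriate semi-simple $\sigma_\beta$.
\begin{itemize}
\item Choosing $\sg\in\sF_\beta$ identifies $\sF_\beta\simeq \sG_\beta$. Under this identification the adjoint action becomes $\sg$-twisted conjugation, since $h\cdot \sg x\cdot h^{-1}=\sg\,(\sg^{-1}h\sg)\,x\,h^{-1}$.
\item So $\sF_\beta$ is a smooth affine variety; it is classical and reduced, being a torsor for the reductive group $\sG_\beta$.
\item Let $F'\subset\sF_\beta$ be an irreducible (equivalently connected) component lying over $\CY^0$, and let $H\subset\sG_\beta$ be its stabilizer. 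Connectedness of $\CY^0$ means $\sG_\beta$ permutes the components over $\CY^0$ transitively. Hence $\CY^0\simeq F'/H$, and $H$ is reductive, being a finite-index subgroup of $\sG_\beta$ containing the neutral component.
\end{itemize}

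\emph{Step 3: the three properties.} We have $\Gamma(F'/H,\CO)=\CO(F')^{H}$, computed as the derived invariants of $H$ on the classical algebra $\CO(F')$.
\begin{itemize}
\item \emph{Classical:} in characteristic $0$ the functor of $H$-invariants on $\Rep(H)$ is exact for reductive $H$, so the derived invariants are concentrated in degree $0$.
\item \emph{Integral:} $F'$ is smooth and connected, so $\CO(F')$ is a normal domain. The subring $\CO(F')^H$ is therefore a domain.
\item \emph{Normal:} if $f$ lies in the fraction field of $\CO(F')^H$ and is integral over it, then $f\in\CO(F')$ by normality of $\CO(F')$. Since $f$ is $H$-invariant, $f\in\CO(F')^H$.
\end{itemize}

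I expect the main care to be needed in Step 1 and the component bookkeeping of Step 2. One must check that the $\BA^1$-homotopy argument really identifies connected components for these prestacks, not only $\sfe$-points. One must also track that $\sF_\beta$ may be disconnected when $\sG_\beta$ is, which is exactly why I pass to $F'$ and $H$ rather than working with $\sF_\beta$ and $\sG_\beta$ directly.
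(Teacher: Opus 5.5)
Your proposal is correct and follows the paper's route. Via \thmref{t:exc} and \eqref{e:structure stabilizer}, you reduce to the invariants of a reductive group acting on the coordinate ring of a smooth affine torsor, which are classical, integral and normal. Your extra bookkeeping is a genuine refinement rather than a detour: you match components through the $\BA^1$-contraction, and you replace $\sF_\beta,\sG_\beta$ by a connected component $F'$ and its finite-index stabilizer $H$. This matters because the paper's one-line argument uses all of $\Gamma(\sF_\alpha,\CO_{\sF_\alpha})^{\sG_\alpha}$, which is not a domain when $\sF_\alpha/\on{Ad}(\sG_\alpha)$ is disconnected (possible when $\sG_\alpha$ is disconnected).
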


\begin{proof}

Indeed, the algebras
$$\Gamma(\sF_\alpha/\on{Ad}(\sG_\alpha),\CO_{\sF_\alpha/\on{Ad}(\sG_\alpha)})\simeq
\Gamma(\sF_\alpha,\CO_{\sF_\alpha})^{\sG_\alpha}$$ have this property, since
since $\sF_\alpha$ is a torsor over $\sG_\alpha$, while $\sG_\alpha$ is reductive.

\end{proof}

\sssec{}

For a connected component $\LS^{\on{arithm}}_\sG(X)_\alpha$ of $\LS^{\on{arithm}}_\sG(X)$, the algebra 
$$\on{Exc}(X,\sG)_\alpha$$
appearing in \corref{c:exc} is called the 
\emph{Excursion Algebra} (corresponding to the given connected component). 

\medskip

Denote
$$\LS^{\on{arithm,coarse}}_\sG(X)_\alpha:=\Spec(\on{Exc}(X,\sG)_\alpha).$$

\medskip

Thus, \corref{c:exc} says that each connected component of $\LS^{\on{arithm,coarse}}_\sG(X)_\alpha$ is classical, integral and normal.

\sssec{}

We now begin the proof of \thmref{t:exc}. The first step of the proof is an assertion of independent interest.

\medskip

Since $\LS^{\on{arithm}}_\sG(X)_\alpha$ is a quotient of an affine scheme by an algebraic group, the functor
$$\Gamma(\LS^{\on{arithm}}_\sG(X)_\alpha,-)$$
preserves colimits. Hence, the 
action of $\BZ^{\on{alg,wt}}$ on $\LS^{\on{arithm}}_\sG(X)_\alpha$ allows us to upgrade $\on{Exc}(X,\sG)_\alpha$ to
an object of $\Rep(\BZ^{\on{alg,wt}})$. 

\medskip

Note, however, that the action of $\Frob\in \BZ^{\on{alg,wt}}$ on $\LS^{\on{arithm}}_\sG(X)_\alpha$ is tautologically trivial. Hence, the image
of $\on{Exc}(X,\sG)_\alpha$ under the functor
$$\Rep(\BZ^{\on{alg,wt}})\to \BZ\mod$$
is multiple of the trivial representation. 

\medskip

However, since the above functor is fully faithful, we obtain:

\begin{lem}  \label{l:Exc triv}
The object $\on{Exc}(X,\sG)_\alpha\in \Rep(\BZ^{\on{alg,wt}})$ is a multiple of the trivial representation.
\end{lem}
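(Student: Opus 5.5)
The argument is the one sketched just before the statement, and I will simply make each step precise. Start with the connected component $\LS^{\on{arithm}}_\sG(X)_\alpha$. By \cite[Theorem 24.1.4]{AGKRRV} it is a quotient of an affine scheme by a reductive group, so $\Gamma(\LS^{\on{arithm}}_\sG(X)_\alpha,-)$ commutes with colimits. Hence the action of the pro-algebraic group $\BZ^{\on{alg,wt}}$ on this stack (constructed in \secref{ss:contr LS}) makes $\on{Exc}(X,\sG)_\alpha$ into a comodule over $\CO(\BZ^{\on{alg,wt}})$, i.e., an object of $\Rep(\BZ^{\on{alg,wt}})$. I will make this precise by writing the stack as a limit of its truncations and noting that the coaction on global functions is a filtered colimit of finite-dimensional comodules.

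Next, consider the restriction functor $\Rep(\BZ^{\on{alg,wt}})\to \BZ\mod$ along $\BZ\to \BZ^{\on{alg,wt}}$. I need it to be fully faithful. This holds because it factors as
$$\Rep(\BZ^{\on{alg,wt}})=\QCoh(\BG_m)_{\on{wt}}\subset \QCoh(\BG_m)_{\on{tors}}\subset \QCoh(\BG_m)\simeq \BZ\mod,$$
where the inclusions are fully faithful. The first inclusion is a full subcategory cut out by a condition on set-theoretic support, and the second is fully faithful on the derived level since $\QCoh(\BG_m)_{\on{tors}}$ is the colimit of sheaves supported at finitely many points and local cohomology is compatible.

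Now compute the image of $\on{Exc}(X,\sG)_\alpha$ in $\BZ\mod$. The generator $1\in\BZ$ acts by pullback along $\Frob$ acting on $\LS^{\on{arithm}}_\sG(X)=(\LS^{\on{restr}}_\sG(X))^{\Frob}$. On the fixed-point prestack, $\Frob$ is tautologically isomorphic to the identity: a point is a pair $(\sigma,\Frob(\sigma)\simeq\sigma)$, and this isomorphism provides the natural transformation, compatibly with the structure. So the underlying $\BZ$-module is the trivial one, namely $\on{Exc}(X,\sG)_\alpha$ with trivial $\BZ$-action. This is the image of $\on{Exc}(X,\sG)_\alpha\otimes\sfe_{\on{triv}}\in\Rep(\BZ^{\on{alg,wt}})$, where the plain vector space is given the trivial action. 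Full faithfulness then says the two objects of $\Rep(\BZ^{\on{alg,wt}})$ are isomorphic, and in fact the isomorphism is canonical, being the unique lift of the identity.

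The main subtlety is the identification of the $\BZ$-action with the trivial one coherently (as an $E_1$/homotopy-coherent action rather than just up to homotopy for each element). I would handle it by observing that for any prestack $\CY$ with automorphism $F$, the prestack $\CY^F$ carries the $\BZ$-action induced by $F$, and this action is canonically trivialized as a $\BZ$-action. This is the general fact that on a homotopy fixed-point object the residual action of the generator is trivialized, since $\CY^F=\on{Maps}(\on{pt}/\BZ,\CY)$ and $\BZ$ acts through translation on $\on{pt}/\BZ$, which is trivializable. One also has to check that the action of $\Frob\in\BZ^{\on{alg,wt}}$ on $\LS^{\on{restr,relev}}_\sG(X)$ from \thmref{t:contraction on category} and \corref{c:W acts on rel} agrees with the geometric Frobenius automorphism. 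This follows from the construction, since $\Frob\in\BZ^{\on{alg,wt}}$ acts on $\Vect\otimes_{\Rep(\BZ^{\on{alg,wt}})}\Shv^{\on{Weil,wt}}(X)$ through the Weil structure, which recovers $\Frob^*$.
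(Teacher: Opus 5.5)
Your proposal is correct and is essentially the paper's argument: continuity of $\Gamma(\LS^{\on{arithm}}_\sG(X)_\alpha,-)$ makes $\on{Exc}(X,\sG)_\alpha$ an object of $\Rep(\BZ^{\on{alg,wt}})$, $\Frob$ acts trivially on the fixed-point stack, and full faithfulness of $\Rep(\BZ^{\on{alg,wt}})\to \BZ\mod$ lifts the resulting trivialization. Your extra justifications (full faithfulness via $\QCoh(\BG_m)_{\on{wt}}\subset\QCoh(\BG_m)$, and that $\Frob\in \BZ^{\on{alg,wt}}$ acts by the geometric Frobenius) spell out what the paper takes as evident. The coherence issue you flag is actually vacuous: a $\BZ$-action is a single automorphism, and trivializing it is a single isomorphism of that automorphism with the identity. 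Also, $\CY^F$ is the space of sections of $\CY/\BZ\to \on{pt}/\BZ$, not $\on{Maps}(\on{pt}/\BZ,\CY)$, which gives the fixed points of the trivial action.
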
 

\begin{cor}  \label{c:Exc triv}
The action of $\BG_m$ on $\on{Exc}(X,\sG)_\alpha$ obtained by restriction along $\BG_m\to \BZ^{\on{alg,wt}}$
is trivial.
\end{cor}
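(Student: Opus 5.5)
The plan is to deduce \corref{c:Exc triv} directly from \lemref{l:Exc triv} by restricting along the canonical homomorphism $\BG_m\to \BZ^{\on{alg,wt}}$ of \secref{sss:structure W}. By \lemref{l:Exc triv}, $\on{Exc}(X,\sG)_\alpha$, viewed as an object of $\Rep(\BZ^{\on{alg,wt}})$, is isomorphic to $\sfe\otimes V$ for a plain vector space (or complex) $V$, with the trivial $\BZ^{\on{alg,wt}}$-action on the first factor. The restriction functor $\Rep(\BZ^{\on{alg,wt}})\to \Rep(\BG_m)$ is symmetric monoidal and colimit-preserving, so it sends the trivial representation to the trivial representation. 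Hence $\on{Exc}(X,\sG)_\alpha$ lands in degree $0$ of the $\BZ$-grading on $\Rep(\BG_m)\simeq \Vect^{\BZ}$.

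There is one point to check. The $\BG_m$-action on $\on{Exc}(X,\sG)_\alpha$ in the statement should be the one coming from the $\BG_m$-action on $\LS^{\on{arithm}}_\sG(X)_\alpha$ obtained by restricting the $\BZ^{\on{alg,wt}}$-action. This holds by construction, via the compatibility of the actions of $\BA^1 \hookleftarrow \BG_m\hookrightarrow \BZ^{\on{alg,wt}}$ noted in \secref{ss:contr LS}. Also, $\Gamma(\LS^{\on{arithm}}_\sG(X)_\alpha,-)$ commutes with colimits, so equivariant global sections are computed compatibly with restriction of the group. Beyond this functoriality bookkeeping, I do not expect any real obstacle. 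In terms of the algebra structure, the conclusion says that the grading on the commutative algebra $\on{Exc}(X,\sG)_\alpha$ induced by $\BG_m$ is concentrated in degree $0$.
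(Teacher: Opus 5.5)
Your proposal is correct and matches the paper's (implicit) argument: the corollary follows immediately from \lemref{l:Exc triv}, because restriction along $\BG_m\to \BZ^{\on{alg,wt}}$ is a colimit-preserving symmetric monoidal functor, so it sends a multiple of the trivial representation to a multiple of the trivial representation. Your extra bookkeeping is harmless but largely superfluous, since the $\BG_m$-action in the statement is by definition the restricted one; its agreement with the $\BA^1$-action becomes relevant only later, in the proof of \thmref{t:exc}.
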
 

\sssec{}

From \corref{c:Exc triv} we obtain that in order to prove \thmref{t:exc}, it suffices to show that the map 
\begin{equation} \label{e:exc Gm}
\left(\Gamma\left(\LS^{\on{arithm}}_\sG(X),\CO_{\LS^{\on{arithm}}_\sG(X)}\right)\right)^{\BG_m}\to
\left(\Gamma\left(\LS^{\on{arithm},0}_\sG(X),\CO_{\LS^{\on{arithm},0}_\sG(X)}\right)\right)^{\BG_m},
\end{equation} 
induced by \eqref{e:exc}, is an isomorphism. 

\sssec{}

Recall the map
$$\psi:\LS^{\on{arithm}}_\sG(X)\to \LS^{\on{arithm},0}_\sG(X).$$

It suffices to show that the map
\begin{equation} \label{e:exc Gm inverse}
\left(\Gamma\left(\LS^{\on{arithm},0}_\sG(X),\CO_{\LS^{\on{arithm},0}_\sG(X)}\right)\right)^{\BG_m} 
\to \left(\Gamma\left(\LS^{\on{arithm}}_\sG(X),\CO_{\LS^{\on{arithm}}_\sG(X)}\right)\right)^{\BG_m},
\end{equation} 
given by pullback along $\psi$, is an isomorphism.

\sssec{}

We rewrite 
$$\Gamma(\LS^{\on{arithm}}_\sG(X),\CO_{\LS^{\on{arithm}}_\sG(X)})\simeq 
\Gamma(\LS^{\on{arithm},0}_\sG(X),\psi_*(\CO_{\LS^{\on{arithm}}_\sG(X)})).$$

\medskip

The map \eqref{e:exc Gm inverse} is obtained by applying $\BG_m$-invariants to the map
$$\Gamma\left(\LS^{\on{arithm},0}_\sG(X),\CO_{\LS^{\on{arithm},0}_\sG(X)}\right)\to
\Gamma\left(\LS^{\on{arithm},0}_\sG(X),\psi_*(\CO_{\LS^{\on{arithm}}_\sG(X)})\right).$$

\sssec{}

Recall now (see \secref{sss:contraction! arithm}) that we can view the action of $\BG_m$ on $\LS^{\on{arithm}}_\sG(X)$
as \emph{fiberwise} for the projection $\psi$. Moreover, this action is \emph{contracting}, i.e., extends to a fiber-wise action of
$\BA^1$, and the action of $0\in \BA^1$ is the composition $\phi\circ \psi$. 

\medskip

The map \eqref{e:exc Gm inverse} is obtained by applying $\Gamma(\LS^{\on{arithm},0}_\sG(X),-)$
to the map
\begin{equation} \label{e:exc Gm inverse local}
\CO_{\LS^{\on{arithm}}_\sG(X)}\simeq (\CO_{\LS^{\on{arithm}}_\sG(X)})^{\BG_m}\to \psi_*(\CO_{\LS^{\on{arithm}}_\sG(X)})^{\BG_m}.
\end{equation} 

\medskip

It suffices to show that the map \eqref{e:exc Gm inverse local} is an isomorphism. However, this follows from 
\propref{p:contr affine}: indeed we are dealing with an affine map with a fiberwise contracting action of $\BG_m$.

\qed[\thmref{t:exc}]

\begin{rem}
Here is a different argument proving that \eqref{e:exc Gm} is an isomorphism (one that avoids \propref{p:contr affine}):

\medskip

Note that the composition
\eqref{e:exc Gm}$\circ$\eqref{e:exc Gm inverse} is the identity map. So it is enough to show that
the composition \eqref{e:exc Gm inverse}$\circ$\eqref{e:exc Gm} is the identity map. 

\medskip

Now, the composition
\begin{multline} \label{e:psiphi}
\Gamma\left(\LS^{\on{arithm}}_\sG(X),\CO_{\LS^{\on{arithm}}_\sG(X)}\right) \overset{\phi^*}\to \\
\to \Gamma\left(\LS^{\on{arithm},0}_\sG(X),\CO_{\LS^{\on{arithm},0}_\sG(X)}\right) \overset{\psi^*}\to 
\Gamma\left(\LS^{\on{arithm}}_\sG(X),\CO_{\LS^{\on{arithm}}_\sG(X)}\right)
\end{multline}
 equals $[0]^*$.

\medskip

We can regard $\Gamma\left(\LS^{\on{arithm}}_\sG(X),\CO_{\LS^{\on{arithm}}_\sG(X)}\right)$
as a representation of the monoid $\BA^1$, so that the map $[0]^*$ above is the action of point $0\in \BA^1$.

\medskip

Now, for any $W\in \Rep(\BA^1)$, the action of $0$ on $W^{\BG_m}$ is the identity, while 
the composition \eqref{e:exc Gm inverse}$\circ$\eqref{e:exc Gm} is obtained from \eqref{e:psiphi}
by taking $\BG_m$-invariants. 

\end{rem} 

\section{Properties of the excursion algebra} \label{s:fin}

In this section we will connect our definition of the excursion algebra to (a variant of) V.~Lafforgue's definition.

\medskip

In addition, we will study its finiteness properties over the \emph{Hecke} algebra. 

\ssec{A change of conventions}

\sssec{} 

The contents of the previous section admit the following variant: 

\medskip

We replace the category $\qLisse(X)$ by its variant, where we bound the degree of ramification 
(at the generic point of each boundary divisor for a given compactification of $X$). 

\sssec{} \label{sss:ram}

That said, we will \emph{not} change the notation, and denote the resulting category 
by the same symbol $\qLisse(X)$.

\medskip

Correspondingly, all the variants (e.g., $\qLisse^{\on{relev}}(X)$, $\qLisse^{\on{Weil}}(X)$), as well as the
moduli spaces that arise from them (e.g., $\LS^{\on{retsr}}_\sG(X)$, $\LS^{\on{arithm}}_\sG(X)$),
should be understood in this new sense.

\sssec{}

The impact of the restriction on ramification is the following: the stack $\LS^{\on{arithm}}_\sG(X)$
has now \emph{finitely many} connected components, see \cite{EK}. 

\sssec{}

Denote 
$$\on{Exc}(X,\sG):=\underset{\alpha}\Pi\, \on{Exc}(X,\sG)_\alpha;$$
this is the ``total" excursion algebra, whose properties we will study
in this section. 

\medskip

Denote also 
$$\LS^{\on{arithm,coarse}}_\sG(X):=\Spec(\on{Exc}(X,\sG)) \simeq \underset{\alpha}\sqcup\, \LS^{\on{arithm,coarse}}_\sG(X)_\alpha.$$

\ssec{Relation to V.~Lafforgue's definition}

\sssec{}

Let $\on{Weil}(X)$ denote the Weil group of $X$:
$$\on{Weil}(X):=\on{Gal}^{\on{arithm}}(X)\underset{\wh\BZ}\times \BZ,$$
equipped with its natural topology.

\medskip

Let $\on{Weil}^{\on{discr}}(X)$ (resp., $\on{Gal}^{\on{discr}}(X)$) be the group $\on{Weil}(X)$ (resp., $\on{Gal}(X)$), viewed as a discrete group. 
Recall the group $\on{Weil}^{\on{alg}}(X)$, see \eqref{e:alg Weil}. By construction, we have
$$\Rep(\on{Weil}^{\on{alg}}(X))^\heartsuit\simeq \qLisse^{\on{Weil}}(X)^\heartsuit.$$

\medskip

We have tautological homomorphisms
\begin{equation} \label{e:discr to usual}
\on{Weil}^{\on{discr}}(X)\to \on{Weil}^{\on{alg}}(X) \text{ and } \on{Gal}^{\on{discr}}(X)\to \on{Gal}^{\on{alg}}(X).
\end{equation} 

The first of the above homomorphisms gives rise to a functor 
\begin{equation} \label{e:discr to usual rep}
\qLisse^{\on{Weil}}(X)^\heartsuit \to \Rep(\on{Weil}^{\on{discr}}(X))^\heartsuit.
\end{equation}

The functor \eqref{e:discr to usual rep} is fully faithful, and its essential image
consists of representations, on which the action of the subgroup
$$\on{Gal}^{\on{discr}}(X)\subset \on{Weil}^{\on{discr}}(X)$$
is locally finite and as such corresponds to a continuous action of $\on{Gal}(X)$;
in other words, the resulting object of $\Rep(\on{Gal}^{\on{discr}}(X))^\heartsuit$ corresponds to 
an object of $\Rep(\on{Gal}^{\on{alg}}(X))^\heartsuit$. 

\sssec{}

Let $\LS^{\on{arithm,discr}}_\sG(X)$ be the stack
$$\bMaps_{\on{groups}}(\on{Weil}^{\on{discr}}(X),\sG)/\on{Ad}(\sG).$$

\medskip

The functor \eqref{e:discr to usual rep} gives rise to a map 
\begin{equation} \label{e:discr to usual LS}
^{\on{cl}}\LS_\sG^{\on{arithm}}(X)\to {}^{\on{cl}}\!\LS^{\on{arithm,discr}}_\sG(X).
\end{equation} 

The following is straightforward:

\begin{lem} \label{l:discr closed emb} \hfill

\smallskip

\noindent{\em(a)}
The map \eqref{e:discr to usual LS} is a closed embedding. 

\smallskip

\noindent{\em(b)} An $\sfe$-point of $\LS^{\on{arithm,discr}}_\sG(X)$ is the image of an $\sfe$-point of $\LS_\sG^{\on{arithm}}(X)$
if and only if the corresponding homomorphism $\on{Weil}^{\on{discr}}(X)\to \sG$ extends to a homomorphism
$\on{Weil}^{\on{alg}}(X)\to \sG$.

\end{lem}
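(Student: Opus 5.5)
We work entirely on the level of classical stacks and use the Tannakian descriptions. An $R$-point of ${}^{\on{cl}}\LS^{\on{arithm}}_\sG(X)$, for $R$ a classical $\sfe$-algebra, is a right t-exact symmetric monoidal functor $\Rep(\sG)\to R\mod(\qLisse^{\on{Weil}}(X))$. On the heart this is the same as an exact tensor functor $\Rep(\sG)^\heartsuit\to R\mod(\qLisse^{\on{Weil}}(X)^\heartsuit)$, i.e., an $R$-family of homomorphisms $\on{Weil}^{\on{alg}}(X)\to \sG$ up to conjugation. An $R$-point of ${}^{\on{cl}}\LS^{\on{arithm,discr}}_\sG(X)$ is an abstract homomorphism $\on{Weil}^{\on{discr}}(X)\to \sG(R)$, i.e., a tensor functor into $R$-modules with a $\on{Weil}^{\on{discr}}(X)$-action. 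The map \eqref{e:discr to usual LS} is composition with \eqref{e:discr to usual rep}. Since both sides are quotients by $\sG$, we pass to the $\sG$-torsors (rigidify at the fiber functor at $x$). Point (a) then becomes the claim that
$$\bMaps_{\on{groups}}(\on{Weil}^{\on{alg}}(X),\sG)\to \bMaps_{\on{groups}}(\on{Weil}^{\on{discr}}(X),\sG)$$
is a closed embedding of classical schemes.

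For (a), I first show that the map is a monomorphism. Because $\on{Weil}^{\on{discr}}(X)\to \on{Weil}^{\on{alg}}(X)$ is Zariski-dense, two homomorphisms from the pro-algebraic group that agree on the discrete group agree everywhere; this also holds in $R$-families by density of the image on coordinate rings. Next I identify the image as a closed condition. A homomorphism $\rho:\on{Weil}^{\on{discr}}(X)\to\sG(R)$ extends exactly when, for every $V\in\Rep(\sG)^\heartsuit$ (enough: one faithful $V$), the $R$-module $V\otimes R$ with $\on{Gal}^{\on{discr}}(X)$-action comes from $\on{Gal}^{\on{alg}}(X)$. I would use the extension $1\to\on{Gal}^{\on{alg}}(X)\to\on{Weil}^{\on{alg}}(X)\to\BZ\to1$ together with the fully faithfulness in \eqref{e:discr to usual rep}. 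Reduced to a finite-dimensional statement, the condition says that the restriction to $\on{Gal}^{\on{discr}}(X)$ factors through $\on{Gal}^{\on{alg}}(X)\to\sG$. Write $\on{Gal}^{\on{alg}}(X)=\lim H_\beta$ and $\on{Maps}(\on{Gal}^{\on{alg}},\sG)=\on{colim}\,\on{Maps}(H_\beta,\sG)$. The ind-structure could spoil closedness, so I will instead characterize factorization via the coordinate ring: $\rho^*:\CO(\sG)\to \on{Fun}(\on{Gal}^{\on{discr}}(X),R)$ must land in $R\otimes\CO(\on{Gal}^{\on{alg}}(X))$. Since $\CO(\on{Gal}^{\on{alg}})\subset\on{Fun}(\on{Gal}^{\on{discr}},\sfe)$ is a subspace (injectivity by density), landing in $R\otimes W$ for a subspace $W$ is cut out by the vanishing of the images in $\on{Fun}/W$. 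This is a closed condition in $R$, applied to the finitely many generators of $\CO(\sG)$.

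The expected obstacle in (a) is exactly this passage from pointwise continuity to $R$-families: matrix coefficients of $\rho$ landing in $R\otimes\CO(\on{Gal}^{\on{alg}})$ must be equivalent to an honest $R$-family of algebraic homomorphisms. I would handle it by noting that a coalgebra map $\CO(\sG)\to R\otimes\CO(\on{Gal}^{\on{alg}})$ compatible with multiplication is the same as a family of homomorphisms. Multiplicativity is checked after restricting to the dense discrete group, and the injectivity $R\otimes\CO(\on{Gal}^{\on{alg}})\hookrightarrow \on{Fun}(\on{Gal}^{\on{discr}},R)$ holds by flatness over the field $\sfe$. Extending from $\on{Gal}$ to $\on{Weil}$ is then automatic, since the Frobenius lift maps to $\rho(\Frob)$.

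Point (b) is the specialization of this description to $R=\sfe$. An $\sfe$-point is in the image iff the homomorphism lies in the closed subscheme, iff it extends to $\on{Weil}^{\on{alg}}(X)$, with uniqueness by density.
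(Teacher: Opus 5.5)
Your monomorphism argument, your reconstruction of a Hopf map from coefficient functions by density, and part (b) are fine. The gap is the closedness step in (a).

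\textbf{The closedness claim is false for infinite-dimensional $W$.} The universal coefficient function $\gamma\mapsto f(\rho^{\on{univ}}(\gamma))$ is not a finite tensor in $A\otimes \on{Fun}(\Gamma,\sfe)$, where $A$ is the coordinate ring of the target. Also $\on{Fun}(\Gamma,R)/(R\otimes W)$ is not $R\otimes(\on{Fun}(\Gamma,\sfe)/W)$. So ``vanishing of the image in $\on{Fun}/W$'' is not a system of polynomial equations in finitely many coordinates at a time.

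A minimal counterexample: take $\BZ\subset \BG_a$ (Zariski dense), $W=\CO(\BG_a)$, $\sG=\BG_m$. The locus of $u\in R^\times$ such that $n\mapsto u^n$ lies in $R\otimes W$ is $\{u-1 \text{ nilpotent}\}$. This is the formal completion of $\BG_m$ at $1$, which is not closed.

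\textbf{Your argument never uses Frobenius.} You prove closedness at the level of $\on{Gal}$ and declare the extension to $\on{Weil}$ automatic. It would therefore show that $\bMaps_{\on{groups}}(\on{Gal}^{\on{alg}}(X),\sG)\to \bMaps_{\on{groups}}(\on{Gal}^{\on{discr}}(X),\sG)$ is a closed embedding, and that is false. Suppose $a:\on{Gal}(X)\twoheadrightarrow \BZ_\ell$ is a continuous surjection. Then the characters $\gamma\mapsto \exp(\epsilon\, a(\gamma))$, with $\epsilon$ nilpotent of arbitrary order, are points of the source. Their limit over $\sfe[[\epsilon]]$ is a point of the target, and a closed subscheme containing all the truncations would have to contain it. But this limit is not in the source: its $\epsilon$-expansion involves all the functions $a^k/k!$, which are linearly independent on $\on{Gal}^{\on{discr}}(X)$.

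\textbf{What is missing is the finiteness input.} This is where Frobenius, via weights, enters.
\begin{itemize}
\item By \cite[Theorem 24.1.4]{AGKRRV}, each framed component $\LS^{\on{arithm}}_\sG(X)_\alpha\underset{\on{pt}/\sG}\times\on{pt}$ is an affine scheme $\Spec(B_\alpha)$.
\item With bounded ramification there are finitely many $\alpha$ \cite{EK}.
\item Hence, for generators $f_1,\dots,f_r$ of $\CO(\sG)$, the universal coefficients $f_j\circ\rho^{\on{univ}}|_{\on{Gal}}$ all lie in $B_\alpha\otimes W_0$ for a single finite-dimensional $W_0\subset \CO(\on{Gal}^{\on{alg}}(X))$.
\end{itemize}

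For finite-dimensional $W_0$ your closedness argument does work. Pick $\gamma_1,\dots,\gamma_m\in\on{Gal}^{\on{discr}}(X)$ such that evaluation at them gives an isomorphism $W_0\to\sfe^m$. Then $h\in R\otimes W_0$ if and only if $h$ equals the element of $R\otimes W_0$ determined by $(h(\gamma_1),\dots,h(\gamma_m))$. This is a system of $\sfe$-linear equations in the coordinates $h(\gamma)$, so it is genuinely closed. Products of the $f_j$ are handled because $\CO(\on{Gal}^{\on{alg}}(X))$ is a subalgebra of functions.

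The closed subscheme of $\bMaps_{\on{groups}}(\on{Weil}^{\on{discr}}(X),\sG)$ cut out this way contains the framed source, by the choice of $W_0$. It is contained in the framed source by your density arguments (Hopf property, and compatibility with $\rho(\Frob)$). This gives (a).
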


\sssec{}

Denote
$$\on{Exc}(X,\sG)^{\on{discr}}:=\Gamma(\LS^{\on{arithm,discr}}_\sG(X),\CO_{\LS^{\on{arithm,discr}}_\sG(X)}).$$

\medskip

Note that $\LS^{\on{arithm,discr}}_\sG(X)$ is the quotient of an affine scheme (of infinite type)
by the action of $\sG$. In particular, $\on{Exc}(X,\sG)^{\on{discr}}$ is connective. 

\medskip

Denote
$$\LS^{\on{arithm,discr,coarse}}_\sG(X):=\Spec(\on{Exc}(X,\sG)^{\on{discr}}).$$

\sssec{}

The map \eqref{e:discr to usual LS} gives rise to a map
\begin{equation}  \label{e:map of exc algebras}
H^0(\on{Exc}(X,\sG)^{\on{discr}})\to H^0(\on{Exc}(X,\sG)).
\end{equation} 

Note, however, that since $\on{Exc}(X,\sG)^{\on{discr}}$ is connective and 
$\on{Exc}(X,\sG)$ is classical, the datum of a map as in \eqref{e:map of exc algebras} is 
equivalent to that of a map
$$\on{Exc}(X,\sG)^{\on{discr}}\to \on{Exc}(X,\sG).$$

Equivalently, we obtain a map
\begin{equation} \label{e:coarse map}
\LS^{\on{arithm,coarse}}_\sG(X)\to \LS^{\on{arithm,discr,coarse}}_\sG(X).
\end{equation} 

From \lemref{l:discr closed emb}(a), we obtain: 

\begin{cor} \label{l:exc surj}
The map \eqref{e:coarse map} is a closed embedding. 
\end{cor}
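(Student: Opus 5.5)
The plan is to reduce the statement to surjectivity on global functions, using the closed embedding of \lemref{l:discr closed emb}(a). Both sides of \eqref{e:coarse map} are affine schemes, so being a closed embedding means exactly that the ring map
$$H^0(\on{Exc}(X,\sG)^{\on{discr}})\to H^0(\on{Exc}(X,\sG))=\on{Exc}(X,\sG)$$
is surjective. (Here $\on{Exc}(X,\sG)$ is classical by \corref{c:exc}.) So I will prove that global functions on ${}^{\on{cl}}\!\LS^{\on{arithm,discr}}_\sG(X)$ restrict surjectively onto global functions on ${}^{\on{cl}}\!\LS_\sG^{\on{arithm}}(X)$.

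First, I will replace both stacks by their classical truncations. For the source, this is legitimate because $H^0$ of global functions on a quotient of a (derived) affine scheme by $\sG$ depends only on the classical truncation. For the target, I use $\Gamma(\LS^{\on{arithm}}_\sG(X),\CO)\simeq\Gamma({}^{\on{cl}}\!\LS^{\on{arithm}}_\sG(X),\CO)$ on $H^0$. This holds because $\LS^{\on{arithm}}_\sG(X)$ is (by \cite[Theorem 24.1.4]{AGKRRV}) a finite disjoint union of quotients $Y_\alpha/\sG$ with $Y_\alpha$ affine. Taking $\sG$-invariants is exact, so $H^0$ of invariants of $\CO(Y_\alpha)$ equals the invariants of $H^0\CO(Y_\alpha)=\CO({}^{\on{cl}}Y_\alpha)$.

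Next, I will present both sides compatibly. Write ${}^{\on{cl}}\!\LS^{\on{arithm,discr}}_\sG(X)=Z/\sG$ with $Z=\bMaps_{\on{groups}}(\on{Weil}^{\on{discr}}(X),\sG)$, which is affine but of infinite type. By \lemref{l:discr closed emb}(a), ${}^{\on{cl}}\!\LS_\sG^{\on{arithm}}(X)=Z'/\sG$ for a $\sG$-stable closed subscheme $Z'\subset Z$, namely the preimage. The map on functions is then
$$\CO(Z)^{\sG}\to \CO(Z')^{\sG}.$$
The restriction $\CO(Z)\to\CO(Z')$ is a $\sG$-equivariant surjection of $\sG$-representations, and these representations are unions of finite-dimensional ones. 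Since $\sG$ is reductive in characteristic $0$, taking invariants is exact on such representations, so the map on invariants is surjective.

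The only point needing care is the claim that $Z'$ is genuinely a closed subscheme of $Z$, i.e.\ that the embedding statement of \lemref{l:discr closed emb}(a) holds before quotienting by $\sG$ and that the finitely many components $\alpha$ assemble into it. I will handle this by applying \lemref{l:discr closed emb}(a) after base change along $\on{pt}\to\on{pt}/\sG$ (rigidifying at the base point $x$). Closed embeddings are stable under base change, so the rigidified map is a closed embedding of affine schemes, which gives $Z'$. Beyond this, the argument is formal.
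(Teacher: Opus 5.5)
Your proposal is correct and is essentially the paper's own argument. The paper's one-line proof observes that the closed embedding \eqref{e:discr to usual LS} arises, by passing to quotients by the reductive group $\sG$, from a closed embedding of affine schemes, so that exactness of $\sG$-invariants gives surjectivity on global functions; you have simply made explicit the base change along $Z\to Z/\sG$ that produces $Z'\subset Z$, and the harmless passage to classical truncations.
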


\begin{proof}

Follows from the fact that the map in \eqref{e:discr to usual LS} arises from a map
between affine schemes, by passing to stack-theoretic quotients by the reductive group $\sG$.

\end{proof}

\sssec{} \label{sss:pts coarse}

Recall now (see \cite[Lemma 11.9]{VLaf}) that the tautological projection
$$\LS^{\on{arithm,discr}}_\sG(X)\to \LS^{\on{arithm,discr,coarse}}_\sG(X)$$
gives rise to a bijection between:

\begin{itemize}

\item Isomorphism classes of \emph{semi-simple} homomorphisms $\on{Weil}^{\on{discr}}(X)\to \sG$, and 

\item $\sfe$-points of $\LS^{\on{arithm,discr,coarse}}_\sG(X)$.

\end{itemize}

\medskip

Similarly, the projection
$$\LS^{\on{arithm}}_\sG(X)\to \LS^{\on{arithm,coarse}}_\sG(X)$$
gives rise to a bijection between:

\begin{itemize}

\item Isomorphism classes of \emph{semi-simple} homomorphisms $\on{Weil}^{\on{alg}}(X)\to \sG$, and 

\item $\sfe$-points of $\LS^{\on{arithm,coarse}}_\sG(X)$.

\end{itemize}

\sssec{}

Let
$$'\!\LS^{\on{arithm,coarse}}_\sG(X)\subset \LS^{\on{arithm,discr,coarse}}_\sG(X)$$
be the Zariski closure of the subset of $\sfe$-points that correspond to \emph{continuous}
semi-simple homomorphisms $\on{Weil}^{\on{alg}}(X)\to \sG$.

\medskip

We claim:

\begin{prop} \label{p:coarse via discr}
The map \eqref{e:coarse map} factors via an isomorphism
$$\LS^{\on{arithm,coarse}}_\sG(X)\overset{\sim}\to {}'\!\LS^{\on{arithm,coarse}}_\sG(X).$$
\end{prop}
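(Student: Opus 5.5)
We know from \corref{l:exc surj} that \eqref{e:coarse map} is a closed embedding. So $\LS^{\on{arithm,coarse}}_\sG(X)$ is identified with a closed subscheme of $\LS^{\on{arithm,discr,coarse}}_\sG(X)$, and the proposition reduces to two claims: (i) this closed subscheme is reduced; (ii) its set of $\sfe$-points is exactly the set of points corresponding to continuous semi-simple homomorphisms. Granting these, a reduced closed subscheme of a scheme of finite type over $\sfe$ whose $\sfe$-points are a given set is the Zariski closure of that set. We know $\LS^{\on{arithm,coarse}}_\sG(X)$ is of finite type, since it is a finite disjoint union of the $\Spec(\on{Exc}(X,\sG)_\alpha)$. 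So it equals ${}'\!\LS^{\on{arithm,coarse}}_\sG(X)$, and we do not even need the ambient scheme to be of finite type.

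Claim (i) is immediate from \corref{c:exc}. Each $\on{Exc}(X,\sG)_\alpha$ is classical, integral and normal, hence reduced. Moreover, via \thmref{t:exc} and \eqref{e:structure stabilizer}, it equals $\CO(\sF_\alpha)^{\sG_\alpha}$, which is finitely generated since $\sG_\alpha$ is reductive. Thus $\LS^{\on{arithm,coarse}}_\sG(X)$ is a reduced affine scheme of finite type over $\sfe$. By the Nullstellensatz, it is the Zariski closure of its $\sfe$-points inside any ambient affine scheme.

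Claim (ii) is the compatibility of the two bijections recalled in \secref{sss:pts coarse}. A point of $\LS^{\on{arithm,coarse}}_\sG(X)$ is the class of a semi-simple homomorphism $\rho:\on{Weil}^{\on{alg}}(X)\to\sG$. I will check that its image under \eqref{e:coarse map} is the class of the semi-simple homomorphism obtained by composing $\rho$ with $\on{Weil}^{\on{discr}}(X)\to\on{Weil}^{\on{alg}}(X)$ from \eqref{e:discr to usual}. For this I need that restriction to the dense discrete subgroup preserves semi-simplicity. This follows from the full faithfulness of \eqref{e:discr to usual rep}, together with the fact that its essential image is closed under subquotients (local finiteness and continuity pass to subquotients). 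Hence $\rho$-stable parabolics coincide for both groups. The commutation of the coarse map with these point-level bijections then follows because the map on coarse spaces is induced from \eqref{e:discr to usual LS}, and that map sends a homomorphism to its restriction. Conversely, by \lemref{l:discr closed emb}(b), the semi-simple discrete homomorphisms in the image are exactly those extending to $\on{Weil}^{\on{alg}}(X)$, i.e.\ the continuous ones.

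The main subtlety I expect is this point-level compatibility. The issue is that the image of a closed point of $\LS^{\on{arithm}}_\sG(X)$ in the discrete stack need not be the closed-orbit representative there. We must use that the closed orbit in each fiber is the semi-simplification, and that semi-simplification is computed identically on the algebraic and the discrete sides. I would handle this via the Hilbert–Mumford characterization: the semi-simplification is the limit along a cocharacter into a Levi of a $\rho$-stable parabolic, and this construction commutes with restriction.
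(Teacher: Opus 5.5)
Your proposal is correct and takes essentially the same route as the paper. Both arguments combine the closed embedding, the reducedness of $\LS^{\on{arithm,coarse}}_\sG(X)$ from \corref{c:exc}, and the matching of $\sfe$-points via \secref{sss:pts coarse} and \lemref{l:discr closed emb}(b). You also make explicit two steps the paper leaves implicit: the finite-type/Nullstellensatz step, and the check that restriction to the Zariski-dense image of $\on{Weil}^{\on{discr}}(X)$ preserves semi-simplicity, so that the two point-level bijections are compatible.
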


\begin{proof}

First we show that the map \eqref{e:coarse map} factors via $'\!\LS^{\on{arithm,discr}}_\sG(X)$.
Since $\LS^{\on{arithm,coarse}}_\sG(X)$ is reduced, it is enough to show that the image
of the set of $\sfe$-points of $\LS^{\on{arithm,coarse}}_\sG(X)$ under \eqref{e:coarse map}  
is contained in the set of $\sfe$-points of $'\!\LS^{\on{arithm,coarse}}_\sG(X)$. However, 
this follows from the description of the former set in \secref{sss:pts coarse} and
\lemref{l:discr closed emb}(b).

\medskip

In order to show that the resulting map
$$\LS^{\on{arithm,coarse}}_\sG(X)\to {}'\!\LS^{\on{arithm,coarse}}_\sG(X)$$
is an isomorphism, it is enough to show that it is surjective at the level of $\sfe$-points.
However, this again follows from \secref{sss:pts coarse} and
\lemref{l:discr closed emb}(b).

\end{proof} 

\ssec{Excursion vs Hecke operators}

\sssec{}

Fix a representation $\sG\to GL_n$, i.e., an object $V\in \Rep(\sG)^{\heartsuit,c}$. 

\medskip

To a conjugacy class 
$\gamma\in \on{Weil}(X)$ we attach an element
$$\on{exc}(V,\gamma)^{\on{discr}}\in H^0(\on{Exc}(X,\sG)^{\on{discr}})$$
whose value on $\sigma\in \bMaps_{\on{groups}}(\on{Weil}^{\on{discr}}(X),\sG)$ equals $\Tr(\sigma(\gamma),V)$.

\medskip

Let 
$$\on{exc}(V,\gamma)\in \on{Exc}(X,\sG)\simeq \Gamma( \LS^{\on{arithm}}_\sG(X),\CO_{\LS^{\on{arithm}}_\sG(X)})$$ 
denote the image of $\on{exc}(V,\gamma)^{\on{discr}}$ under \eqref{e:map of exc algebras}. 

\sssec{}

Let $x'$ be a closed point of $X$, and let $\Frob_{x'}\in \on{Weil}(X)$ be the conjugacy class
of the corresponding Frobenius element. We will use the notation
$$\on{H}(V,x')^{\on{discr}}:=\on{exc}(V,\Frob_{x'})^{\on{discr}} \text{ and }
\on{H}(V,x'):=\on{exc}(V,\Frob_{x'}).$$

The elements 
$$\on{H}(V,x')\in \on{Exc}(X,\sG)$$
are called the \emph{Hecke operators}. 

\sssec{}

The assignment
$$V\rightsquigarrow \on{H}(V,x')$$
defines a homomorphism
\begin{equation} \label{e:Hecke homomorphism}
\CH_{x'}(\sG)\to \on{Exc}(X,\sG),
\end{equation} 
where
$$\CH_{x'}(\sG):=\sfe\underset{\BZ}\otimes K^0(\Rep(\sG)^{\heartsuit,c})\simeq \CO_{\sG/\!/\on{Ad}(\sG)}.$$

\medskip

The homomorphisms \eqref{e:Hecke homomorphism} combine to a homomorphism
\begin{equation} \label{e:Hecke Homomorphism}
\CH_X(\sG):=\underset{x'\in |X|}\bigotimes\, \CH_{x'}(\sG)\to \on{Exc}(X,\sG).
\end{equation} 

\sssec{}

Assume for a moment that $\sG=GL_n$ and that $V$ is the tautological representation. In this case
we denote
$$\on{exc}(V,\gamma)^{\on{discr}}=:\on{exc}(\gamma)^{\on{discr}}_{\on{taut}} \text{ and } \on{exc}(V,\gamma)=:\on{exc}(\gamma)_{\on{taut}}.$$

The following is well-known (see, e.g., \cite{Pro}):

\begin{lem} \label{l:Tr for GL_n}
The elements $\on{exc}(\gamma)^{\on{discr}}_{\on{taut}}$, $\gamma\in \on{Weil}(X)$ generate $H^0(\on{Exc}(X,\sG)^{\on{discr}})$.
\end{lem}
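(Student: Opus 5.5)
The plan is to reduce the statement to the classical first fundamental theorem of invariant theory for $GL_n$ acting on tuples of matrices by simultaneous conjugation (Procesi). The key observation is that the stack $\LS^{\on{arithm,discr}}_\sG(X)=\bMaps_{\on{groups}}(\on{Weil}^{\on{discr}}(X),GL_n)/\on{Ad}(GL_n)$ is a quotient of an affine scheme by a reductive group. Therefore $H^0$ of its global functions is the ring of $GL_n$-invariants in the coordinate ring of the classical affine scheme $\bMaps_{\on{groups}}(\on{Weil}^{\on{discr}}(X),GL_n)$. Here we use that taking invariants for a reductive group in characteristic $0$ is exact, so it commutes with $H^0$.

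First, write the classical mapping scheme as a closed subscheme of a product of copies of $GL_n$. Concretely, present $\on{Weil}^{\on{discr}}(X)$ by its set of elements $S$ and the multiplication relations. Then $\bMaps_{\on{groups}}(\on{Weil}^{\on{discr}}(X),GL_n)$ embeds as a closed $GL_n$-stable subscheme of $\prod_{s\in S}GL_n$, which is the limit of the finite products. Its coordinate ring is thus a quotient of $\CO(\prod_{s} GL_n)=\underset{F\subset S}{\on{colim}}\,\CO(GL_n^F)$, the colimit over finite subsets.

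Second, use exactness of invariants. Since $GL_n$ is reductive over a characteristic $0$ field, the surjection $\CO(\prod_s GL_n)\twoheadrightarrow \CO(\bMaps)$ stays surjective after taking $GL_n$-invariants. The same exactness lets invariants commute with the filtered colimit over $F$. So it suffices to show that $\CO(GL_n^F)^{GL_n}$ is generated by the traces of words in the matrices $g_s$, $s\in F$, and their inverses. This is Procesi's theorem: the invariants of $\CO(\mathfrak{gl}_n^F)$ are generated by traces of monomials. Localizing at the determinants $\det(g_s)$ gives the result for $GL_n^F$; the determinants are themselves invariant, and $\det(g)^{-1}$ equals $\det(g^{-1})$, which is a polynomial in the traces $\Tr((g^{-1})^k)$ by Newton's identities.

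Third, translate back. On the mapping scheme, a word in the $g_s$ and $g_s^{-1}$ evaluates to $\sigma(\gamma)$, where $\gamma$ is the corresponding product in $\on{Weil}(X)$. So the trace of that word restricts to $\on{exc}(\gamma)^{\on{discr}}_{\on{taut}}$, and these elements generate. The main obstacle is the infinite-type bookkeeping: one must make sure that $\bMaps_{\on{groups}}$ is really the classical affine scheme in question (its $H^0$), and that invariants commute with the colimit and with the quotient. Both follow from reductivity, since invariants form a direct summand functor.
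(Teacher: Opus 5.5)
Your argument is correct and is essentially the paper's approach. The paper simply declares the lemma well known and cites Procesi's first fundamental theorem for $GL_n$ acting on tuples of matrices by simultaneous conjugation; you supply the routine reduction from the infinite-type scheme of homomorphisms to finitely many matrices (exactness of $GL_n$-invariants in characteristic $0$, passage to the filtered colimit over finite subsets, and localization at the invariant determinants, with $\det(g)^{-1}=\det(g^{-1})$ written as a polynomial in traces of powers of $g^{-1}$ via Newton's identities).
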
 

\begin{cor} \label{c:Tr for GL_n}
The elements $\on{exc}(\gamma)_{\on{taut}}$, $\gamma\in \on{Weil}(X)$ generate $\on{Exc}(X,\sG)$.
\end{cor}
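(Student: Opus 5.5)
The plan is to deduce \corref{c:Tr for GL_n} directly from \lemref{l:Tr for GL_n} together with the surjectivity of the map \eqref{e:map of exc algebras}. By definition, $\on{exc}(\gamma)_{\on{taut}}$ is the image of $\on{exc}(\gamma)^{\on{discr}}_{\on{taut}}$ under the algebra homomorphism
$$H^0(\on{Exc}(X,\sG)^{\on{discr}})\to H^0(\on{Exc}(X,\sG))=\on{Exc}(X,\sG),$$
where the last equality holds because $\on{Exc}(X,\sG)$ is classical (\thmref{t:exc} combined with \corref{c:exc}). The image of a generating set under a surjective algebra homomorphism generates the target. Hence it suffices to show that \eqref{e:map of exc algebras} is surjective.

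Surjectivity is exactly the content of \corref{l:exc surj}. The map \eqref{e:coarse map} is a closed embedding of affine schemes, so the corresponding map on rings of functions is surjective.

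For completeness, I would recall why \corref{l:exc surj} holds. Write both ${}^{\on{cl}}\LS^{\on{arithm}}_\sG(X)_\alpha$ and ${}^{\on{cl}}\LS^{\on{arithm,discr}}_\sG(X)$ as quotients $Y/\sG$ and $Y^{\on{discr}}/\sG$ of affine schemes. By \lemref{l:discr closed emb}(a) we have a $\sG$-equivariant closed embedding $Y\hookrightarrow Y^{\on{discr}}$. Since $\sG$ is reductive, taking invariants is exact, so $\CO(Y^{\on{discr}})^\sG\to \CO(Y)^\sG$ is surjective. This remains true for $Y^{\on{discr}}$ of infinite type, because it is a filtered colimit of finite-type equivariant pieces.

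One also has to account for the finitely many connected components in the product defining $\on{Exc}(X,\sG)$. The images of distinct components are disjoint closed substacks, and their union $\sqcup_\alpha Y_\alpha$ still embeds as a closed subscheme. So the surjectivity argument goes through for the total algebra.

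The only step requiring some care is this identification of $H^0$ of global functions on the classical and derived stacks. Global functions on $\LS^{\on{arithm}}_\sG(X)$ versus its classical truncation agree on $H^0$, since $\Gamma(-/\sG,-)$ is exact and truncation to $H^0$ commutes with it. Once this is in place, the statement is immediate.
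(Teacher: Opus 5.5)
Your proposal is correct and follows the paper's own argument. The paper obtains the corollary by pushing the generators of \lemref{l:Tr for GL_n} through the map \eqref{e:map of exc algebras}, which is surjective by \corref{l:exc surj}; your extra bookkeeping (the closed embedding of \lemref{l:discr closed emb}(a), exactness of $\sG$-invariants, $H^0$ of the derived versus classical stack, and the finitely many components) is correct and only makes explicit what the paper leaves implicit.
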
 

\sssec{} \label{sss:exc op}

Let $\sG$ again be arbitrary. Let us be given a data of $(I,V_I,\gamma_I,v_I,\xi_I)$, where:

\begin{itemize}

\item $I$ is a finite set;

\item $V_I$ is a finite-dimensional representation of $\sG^I$;

\item $\gamma_I$ is an $I$-tuple of elements of $\on{Weil}(X)$ (defined up to simultaneous conjugation); 

\item $v_I$ is a vector in $V^I$, invariant with respect to the diagonal copy of $\sG$;

\item $\xi_I$ is a covector on $V^I$, invariant with respect to the diagonal copy of $\sG$.

\end{itemize}

To this data we assign an element
$$\on{exc}(V_I,\gamma_I,v_I,\xi_I)^{\on{discr}}\in H^0(\on{Exc}(X,\sG)^{\on{discr}})$$ 
as follows:

\medskip

The value of $\on{exc}(V_I,\gamma_I,v_I,\xi_I)^{\on{discr}}$ on $\sigma\in \bMaps_{\on{groups}}(\on{Weil}^{\on{discr}}(X),\sG)$
the scalar $\xi(\gamma_I(v_I))$.

\medskip

Let $\on{exc}(V_I,\gamma_I,v_I,\xi_I)$ denote the image of $\on{exc}(V_I,\gamma_I,v_I,\xi_I)^{\on{discr}}$ under the homomorphism
\eqref{e:map of exc algebras}. 

\medskip

The elements $\on{exc}(V_I,\gamma_I,v_I,\xi_I)$ are called the \emph{excursion operators}. 

\begin{rem}
Note that the elements $\on{exc}(V,\gamma)^{\on{discr}}$ (resp., $\on{exc}(V,\gamma)$) are a particular case of 
$\on{exc}(V_I,\gamma_I,v_I,\xi_I)^{\on{discr}}$ (resp., $\on{exc}(V_I,\gamma_I,v_I,\xi_I)$), where:

\begin{itemize}

\item $I=\{1,2\}$;

\item $V_I=V\otimes V^\vee$;

\item $\gamma_I=(\gamma,1)$;

\item $v_I$ is the unit vector in $V\otimes V^\vee$;

\item $\xi_I$ is the counit covector.

\end{itemize}

\end{rem} 

\sssec{}

The following is established in \cite[Sect. 11]{VLaf} (see also \cite[Sect. 2]{GKRV}):

\begin{prop} \label{p:exc generate}
The elements $\on{exc}(V_I,\gamma_I,v_I,\xi_I)^{\on{discr}}$ span $H^0(\on{Exc}(X,\sG)^{\on{discr}})$.
\end{prop}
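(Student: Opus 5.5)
We will show that the excursion elements $\on{exc}(V_I,\gamma_I,v_I,\xi_I)^{\on{discr}}$ span $H^0(\on{Exc}(X,\sG)^{\on{discr}})$ by reducing to a statement about invariant functions on products of copies of $\sG$.

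Write $\Gamma:=\on{Weil}^{\on{discr}}(X)$. The scheme $\bMaps_{\on{groups}}(\Gamma,\sG)$ is affine, namely a closed subscheme of $\prod_{\gamma\in\Gamma}\sG$. Since $\sG$ is reductive, taking $\sG$-invariants is exact on quasi-coherent sheaves, so
$$H^0(\on{Exc}(X,\sG)^{\on{discr}})\simeq \CO(\bMaps_{\on{groups}}(\Gamma,\sG))^{\sG}.$$
The restriction map $\CO(\prod_{\gamma}\sG)^{\sG}\to \CO(\bMaps_{\on{groups}}(\Gamma,\sG))^{\sG}$ is surjective, again by exactness of invariants for a reductive group acting on a surjection of algebras. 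So it suffices to show that every $\sG$-invariant function on $\prod_\gamma\sG$ is a sum of functions that restrict to excursion elements.

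Any regular function on $\prod_\gamma\sG$ depends on only finitely many coordinates, so it is pulled back from some finite product $\sG^I$ along the map $\sigma\mapsto(\sigma(\gamma_i))_{i\in I}$. Because this projection is $\sG$-equivariant, an invariant function on $\prod_\gamma\sG$ comes from an element of $\CO(\sG^I)^{\sG}$, where $\sG$ acts on $\sG^I$ by diagonal conjugation.

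By Peter--Weyl applied to $\sG^I$,
$$\CO(\sG^I)\simeq \bigoplus_{W} W^\vee\otimes W,$$
with $W$ running over the irreducible representations of $\sG^I$. Under this decomposition the matrix coefficient $\xi\otimes v$ is the function $g_I\mapsto \xi(g_I\cdot v)$. Diagonal conjugation by $\sG$ acts on $W^\vee\otimes W$ through the diagonal $\sG\subset\sG^I$ acting on both factors. Hence the invariants are
$$\bigoplus_W \bigl(W^\vee\otimes W\bigr)^{\sG_{\on{diag}}}.$$

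The main subtlety is that an invariant tensor in $W^\vee\otimes W$ need not be a single pure tensor $\xi\otimes v$ with both $\xi$ and $v$ diagonally invariant. We handle this by the standard trick of enlarging the index set. An invariant element of $W^\vee\otimes W$ is a $\sG_{\on{diag}}$-equivariant endomorphism $T$ of $W$, and the corresponding function is $g_I\mapsto \Tr(g_I\circ T)$ (trace taken on $W$). To realize this as an excursion element, we take the index set $I\sqcup I$, the $\sG^{I\sqcup I}$-representation $V:=W\otimes W^\vee$, and the tuple $(\gamma_I,1_I)$. The vector $v:=T\in W\otimes W^\vee$ is invariant under the diagonal $\sG$ because $T$ commutes with it. The covector $\xi$ is the evaluation pairing, which is also diagonally invariant. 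Evaluating, $\xi((\sigma(\gamma_I),1)\cdot T)=\Tr(\sigma(\gamma_I)\circ T)$, so this function is exactly the excursion element $\on{exc}(V,(\gamma_I,1_I),T,\on{ev})^{\on{discr}}$. Summing over $W$ gives the proposition.

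One technical point needs care: $\prod_\gamma\sG$ is of infinite type, so exactness of invariants has to be justified there. This follows by passing to the filtered colimit over finite subsets of $\Gamma$, since invariants commute with filtered colimits.
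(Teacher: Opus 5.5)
Your proof is correct, and it is essentially the standard argument behind the reference the paper defers to (V.~Lafforgue, Sect.~11; the paper gives no proof of its own): by exactness of invariants for the reductive group $\sG$, every invariant function on $\bMaps_{\on{groups}}(\on{Weil}^{\on{discr}}(X),\sG)$ lifts to an invariant function of finitely many coordinates $\sG^I$, which is then written via matrix coefficients with diagonally invariant vector and covector. The only difference is cosmetic: you handle conjugation-invariance by doubling the index set to $I\sqcup I$ and using $T\in\End_{\sG}(W)\subset W\otimes W^\vee$ as the invariant vector, whereas Lafforgue adds one index with $\gamma_0=1$ to identify $\CO(\sG^I)^{\sG}$ with functions on $\sG^{I\sqcup\{0\}}$ invariant under left and right diagonal translations, each of which is a single matrix coefficient (take the subrepresentation generated by $f$ under right translations, $v=f$, and $\xi$ the evaluation at $1$).
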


\begin{cor}
The elements $\on{exc}(V_I,\gamma_I,v_I,\xi_I)$ span $\on{Exc}(X,\sG)$.
\end{cor}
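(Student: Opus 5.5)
The plan is to deduce the corollary from \propref{p:exc generate} together with the surjectivity of the map \eqref{e:map of exc algebras}. By definition, $\on{exc}(V_I,\gamma_I,v_I,\xi_I)$ is the image of $\on{exc}(V_I,\gamma_I,v_I,\xi_I)^{\on{discr}}$ under \eqref{e:map of exc algebras}. Images of a spanning set under a surjective linear map span the target. So everything reduces to showing that
$$H^0(\on{Exc}(X,\sG)^{\on{discr}})\to H^0(\on{Exc}(X,\sG))=\on{Exc}(X,\sG)$$
is surjective, where the last equality uses that $\on{Exc}(X,\sG)$ is classical by \corref{c:exc}.

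For surjectivity, I would use \corref{l:exc surj}, which says that \eqref{e:coarse map} is a closed embedding. Since both sides are (spectra of) classical algebras after passing to $H^0$, a closed embedding of affine schemes corresponds exactly to a surjection of the rings of functions. That gives the required surjectivity.

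If one wants to see this more concretely, recall how \corref{l:exc surj} arises. The map \eqref{e:discr to usual LS} comes from a closed embedding of affine schemes $Z\hookrightarrow Z^{\on{discr}}$, equivariant for the reductive group $\sG$. Restriction $\CO(Z^{\on{discr}})\to \CO(Z)$ is surjective. Taking $\sG$-invariants is exact on $H^0$ because $\sG$ is reductive and we are in characteristic $0$. Hence $\CO(Z^{\on{discr}})^{\sG}\to\CO(Z)^{\sG}$ is surjective. The rings $\CO(Z^{\on{discr}})$ and $\CO(Z)$ may fail to be of finite type, but this causes no trouble: the invariants functor is still exact on rational representations, which are unions of finite-dimensional ones.

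There is one point to take care of. $\on{Exc}(X,\sG)$ is a finite product over the components $\alpha$, and \propref{p:exc generate} concerns the whole discrete algebra. This is harmless, since the surjectivity statement above is made for the total algebras. No step here is a real obstacle. The only mild subtlety is identifying $H^0$ of global functions on the quotient stack with invariants on the classical truncation, which holds because $\sG$ is reductive.
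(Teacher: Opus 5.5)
Your proposal is correct and matches the paper's (implicit) argument. The paper obtains the corollary by pushing the spanning set of \propref{p:exc generate} through \eqref{e:map of exc algebras}, which is surjective because \eqref{e:coarse map} is a closed embedding (\corref{l:exc surj}); that in turn comes from a $\sG$-equivariant closed embedding of affine schemes together with exactness of $\sG$-invariants. Your extra remarks — classicality of $\on{Exc}(X,\sG)$, and identifying $H^0$ of functions on the derived quotient with invariants on the classical truncation — are accurate and supply details the paper leaves unstated.
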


\ssec{Finiteness and generation properties}

\sssec{}

The goal of the rest of this section is to prove the following:

\begin{thm} \label{t:Hecke} \hfill

\medskip

\noindent{\em(a)} For any $x'\in X$, the homomorphism 
$$\CH_{x'}(\sG)\overset{\text{\eqref{e:Hecke homomorphism}}}\longrightarrow \on{Exc}(X,\sG)$$
makes $\on{Exc}(X,\sG)$ into a \emph{finitely generated} $\CH_{x'}(\sG)$-module.

\medskip

\noindent{\em(b)} Let $\sG=\GL_n$. Then the homomorphism 
$$\CH_X(\sG)\overset{\text{\eqref{e:Hecke Homomorphism}}}\longrightarrow  \on{Exc}(X,\sG)$$
is surjective. 

\end{thm}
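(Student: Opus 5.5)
For part (a), I will reduce to one connected component at a time. This is harmless because $\on{Exc}(X,\sG)=\prod_\alpha \on{Exc}(X,\sG)_\alpha$ is a finite product (finitely many components, by \cite{EK}).

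By \thmref{t:exc} and \eqref{e:structure stabilizer}, $\on{Exc}(X,\sG)_\alpha\simeq \CO(\sF_\alpha)^{\sG_\alpha}=\CO_{\sF_\alpha/\!/\on{Ad}(\sG_\alpha)}$. Here $\sF_\alpha\subset \sG$ is the closed subvariety of elements $\sg$ intertwining the Frobenius action with $\phi_\alpha$; it is a $\sG_\alpha$-bitorsor, i.e.\ a coset $\sg_0\sG_\alpha$ of the reductive group $\sG_\alpha$.

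First I identify the Hecke map in these terms. Through the contraction, the Hecke operator $\on{H}(V,x')$ restricted to the point $\sg\in \sF_\alpha$ equals $\Tr(\Frob_{x'},V_{\sigma})$, where $\sigma$ is the semi-simple arithmetic local system determined by $(\sigma_\alpha,\sg)$. Choose a geometric point $\bar x'$ over $x'$, and write $\deg(x')=d$. Then $\Frob_{x'}$ acts through an element $\phi_\alpha(\gamma)\cdot \sg^{d}$, with $\gamma\in \on{Gal}^{\on{alg}}(X)$ fixed.

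So the map $\CH_{x'}(\sG)=\CO_{\sG/\!/\on{Ad}\sG}\to \CO_{\sF_\alpha/\!/\on{Ad}\sG_\alpha}$ is pullback along
$$m:\sF_\alpha/\!/\on{Ad}(\sG_\alpha)\to \sG/\!/\on{Ad}(\sG),\qquad \sg\mapsto \phi_\alpha(\gamma)\sg^{d}.$$
I expect that after modifying $\sg_0$ we may write this as a twisted-conjugation-equivariant map. Since the Hecke algebra only sees semisimple conjugacy classes, the claim becomes: $m$ is finite.

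Finiteness of $m$ is the main obstacle; it is the variant of Vinberg's theorem \cite{Vin}. I would factor $m$ into two steps:
\begin{itemize}
\item the $d$-th power map on $\sF_\alpha$, which is equivariant and finite on the level of invariant-theoretic quotients;
\item the map induced by the inclusion of the non-identity component $\sg_0\sG_\alpha$ of the (possibly disconnected) reductive group $\langle \sG_\alpha,\sg_0\rangle$ into $\sG$.
\end{itemize}

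To prove finiteness, I would use the criterion that a morphism of affine GIT quotients is finite once it is quasi-finite and proper, or equivalently integral. Concretely, I would show that $\CO(\sF_\alpha)^{\sG_\alpha}$ is integral over the image of the class functions. The plan:
\begin{itemize}
\item Restrict to a maximal torus of the component: the twisted analogue of Chevalley restriction, due to Mohrdieck/Springer for disconnected groups, identifies $\sg_0\sG_\alpha/\!/\sG_\alpha$ with $(\sg_0 T)/\!/W'$ for a torus $T$ fixed by $\on{Ad}_{\sg_0}$.
\item Reduce to finiteness of $T'\to T_\sG/W_\sG$ for a sub-torus $T'\subset T_\sG$. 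This holds because restriction of characters from a torus to a closed subtorus (more precisely, to a translate of one) is integral, using that the coordinate ring is spanned by characters.
\end{itemize}
Alternatively, I would simply try to extract the statement from \cite{Vin}.

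For part (b), let $\sG=GL_n$. By \corref{c:Tr for GL_n}, $\on{Exc}(X,\sG)$ is generated by $\on{exc}(\gamma)_{\on{taut}}$, so it suffices to show each such element lies in the image $A$ of $\CH_X(\sG)$. By (a), $\on{Exc}$ is finite over $A$, hence $\Spec\on{Exc}\to\Spec A$ is finite.

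It then suffices to show the map is a closed embedding. Since $\on{Exc}$ is reduced (Corollary \ref{c:exc}), I would check that it is injective on $\sfe$-points, and then argue by normality. Injectivity on points is the statement that a semisimple continuous representation $\on{Weil}(X)\to GL_n$ is determined by the characteristic polynomials of all $\Frob_{x'}$; this is Chebotarev density plus Brauer–Nesbitt, following \cite{Laum}.

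Upgrading injectivity on points to surjectivity of rings is the delicate part. Injectivity alone gives a finite bijective morphism, which can fail to be an isomorphism onto its image. My first attempt would be to use Laumon's argument scheme-theoretically: the function $\sigma\mapsto \Tr\sigma(\gamma)$ is a limit of linear combinations of Frobenius traces by continuity, and Cayley–Hamilton-type pseudo-character identities express $\Tr\sigma(\gamma)$ via the coefficients of characteristic polynomials at finitely many $\Frob_{x'}$, uniformly on each finite-type component.
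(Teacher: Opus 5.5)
Your part (a) is sound in outline but argues differently from the paper. The paper proves a general twisted Vinberg statement, that $H/\!/\on{Ad}_\phi(H)\to G/\!/\on{Ad}_\phi(G)$ is finite, by a valuative-criterion argument on tori. You instead use that the target is \emph{untwisted} conjugation in $\sG$. After restricting to a torus coset $\sg_0T'$, with $T'$ centralized by $\sg_0$ and by the image of $\phi_\alpha$, you have $m(\sg_0t)=\phi_\alpha(\gamma)\sg_0^d\,t^d$. Its semisimple part lies in a maximal torus of $\sG$, so finiteness is immediate. This also makes your unproved finiteness claim for the $d$-th power map unnecessary, since $\phi_\alpha(\gamma)\sg_0^d$ commutes with $T'$. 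Your identification of the Hecke map as $\sg\mapsto\phi_\alpha(\gamma)\sg^d$ is correct.

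The genuine gap is in (b), at exactly the step you call delicate. Injectivity on $\sfe$-points plus finiteness does not suffice, and "arguing by normality" does not help. $\on{Exc}(X,\sG)$ is normal, but you would need normality of the image $A$ of $\CH_X(\sG)$, which is unknown. (The normalization of a cusp is finite and bijective with normal source, yet not a closed embedding.) What must be proved is an infinitesimal statement on the \emph{stack}: for every local Artinian $R$ and every $R$-point $\phi_R$ of $\on{pt}\underset{\Spec(\CH_X(\sG))}\times\LS^{\on{arithm}}_\sG(X)$, one has $\Tr_R(\phi_R(\gamma))\in\sfe$. Nakayama plus (a) then gives surjectivity.

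Your continuity argument fails here. Chebotarev approximates $\gamma$ by $\Frob_{x'_\alpha}$ only in the profinite topology of $\on{Gal}^{\on{arithm}}(X)$, where $\deg(x'_\alpha)\to\deg(\gamma)$ only in $\wh\BZ$. A Weil representation whose Frobenius eigenvalues have non-unit norm does not extend continuously there. Already for $GL_1$, $\mu^{\deg(x'_\alpha)}$ does not converge to $\mu^{\deg(\gamma)}$ when $|\mu|\neq 1$. Moreover, the $R$-families are in general non-semisimple deformations whose constituents need different twists, and your sketch does not address this. The pseudo-character suggestion is circular: a uniform polynomial expression of $\Tr\sigma(\gamma)$ in finitely many Hecke coefficients is equivalent to the surjectivity being proved.

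The missing ideas are those of the paper's proof of (ii').
\begin{enumerate}
\item Use the filtered deformation of \cite[Sect. 25.3]{AGKRRV} to replace $V_R$ by its associated graded, which stays in the same Hecke fiber.
\item Group the graded pieces by the norm $|\mu_i|$ of the character $\chi_{\mu_i}$ that makes them extend to $\on{Gal}^{\on{arithm}}(X)$. Reduce to the Levi $\sM=\prod_r GL(n_r)$, using that $\Spec(\CH_X(\sM))\to\Spec(\CH_X(\sG))$ is formally \'etale at the relevant point.
\item Twist by a single $\chi_{\mu_r}$ to land in the continuous case, where Chebotarev and continuity do apply.
\end{enumerate}
The same norm-separation device is also needed for your injectivity on $\sfe$-points. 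For Weil, as opposed to continuous $\on{Gal}^{\on{arithm}}(X)$-, representations, Chebotarev plus Brauer--Nesbitt alone are not enough. There are only finitely many Frobenius elements of each degree, and different constituents require different twists.
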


\sssec{}

Let us first show how point (a) of \thmref{t:Hecke} implies point (b). Indeed, given point (a), it suffices to prove the following:

\medskip

Given a homomorphism $\CH_X(\sG)\overset{\ul\lambda}\longrightarrow\sfe$, the map
$$\sfe\to \sfe\underset{\CH_X(\sG)}\otimes \on{Exc}(X,\sG)$$
is surjective. 

\medskip

By \corref{c:Tr for GL_n}, it suffices to show that for every $\gamma\in \on{Weil}(X)$, the image of 
$\on{exc}(\gamma)_{\on{taut}}$ in 
\begin{equation} \label{e:coarse fiber}
\sfe\underset{\CH_X(\sG)}\otimes \on{Exc}(X,\sG)
\end{equation}
is constant. 

\sssec{}

Since $\LS^{\on{arithm}}_\sG(X)$ is the quotient of an affine scheme by an action of a reductive group, the algebra 
\eqref{e:coarse fiber} is the algebra of global functions on the fiber product
\begin{equation} \label{e:Hecke fiber}
\on{pt}\underset{\Spec(\CH_X(\sG))}\times \LS^{\on{arithm}}_\sG(X).
\end{equation}

Hence, it suffices to show that for every $\gamma\in \on{Weil}(X)$, the function on 
\eqref{e:Hecke fiber} obtained by restriction from 
$$\on{exc}(\gamma)_{\on{taut}}\in  \on{Exc}(X,\sG)\simeq \Gamma( \LS^{\on{arithm}}_\sG(X),\CO_{\LS^{\on{arithm}}_\sG(X)})$$
is constant.

\sssec{}

Since $\LS^{\on{arithm}}_\sG(X)$ is an algebraic stack locally of finite type, it suffices to establish the following two properties
for any $\gamma\in \on{Weil}(X)$: 

\medskip

\noindent{(i)} For a pair of $\sfe$-points $\sigma_1$ and $\sigma_2$ of \eqref{e:Hecke fiber}, the values of 
$\on{exc}(\gamma)_{\on{taut}}$ at these points are equal;

\medskip

\noindent{(ii)} For a local Artinian $\sfe$-algebra $R$ and an $R$-point $\sigma_R$ of \eqref{e:Hecke fiber}, 
the pullback of $\on{exc}(\gamma)_{\on{taut}}$ along $\sigma_R$ belongs to $\sfe\subset R$.

\sssec{}

We can think of $\sigma_i$ as a homomorphism
$$\phi_i:\on{Weil}(X)\to GL_n(\sfe),$$
whose restriction to $\on{Gal}(X)$ is \emph{continuous}.

\medskip

Similarly, we can think of $\sigma_R$ as a homomorphism
$$\phi_R:\on{Weil}(X)\to GL_n(R),$$
whose restriction to $\on{Gal}(X)$ is \emph{continuous} (for the natural topology on $GL_n(R)$). 

\medskip

The value of $\on{exc}(\gamma)_{\on{taut}}$ on $\sigma_i$ is $\Tr(\phi_i(\gamma))$, and the pullback of 
$\on{exc}(\gamma)_{\on{taut}}$ along $\sigma_R$ is $\Tr_{R}(\phi_R(\gamma))$. Thus, we need to show:

\medskip

\noindent{(i')} $\Tr(\phi_1(\gamma))=\Tr(\phi_2(\gamma))$, and 

\medskip

\noindent{(ii')} $\Tr_{R}(\phi_R(\gamma))\in \sfe\subset R$. 

\sssec{} \label{sss:Cheb}

As as a warm-up we will now prove assertions (i') and (ii') in the special case when the above homomorphisms
extend to \emph{continuous} homomorphisms from $\on{Gal}^{\on{arithm}}(X)$. 

\medskip

By Chebotarev's density, we can find a sequence of points $\gamma'_\alpha$ that converge to $\gamma$ in the topology
of $\on{Gal}^{\on{arithm}}(X)$, such that the conjugacy class of each $\gamma'_\alpha$ is of the form $\Frob_{x'_\alpha}$
for some $x'_\alpha\in |X|$.

\medskip

By assumption, the values of $\Tr(\phi_i(\gamma))\in \sfe$ ($i=1,2$) is the limit of $\Tr(\phi_i(\gamma'_\alpha))$ in the topology of $\sfe$.
Similarly, the value of $\Tr_{R}(\phi_R(\gamma))\in R$ is the limit of $\Tr_{R}(\phi_R(\gamma'_\alpha))$
in the topology of $R$.

\medskip

The fact that $\sigma_1$ and $\sigma_2$ both map to \eqref{e:Hecke fiber} implies that
$$\Tr(\phi_1(\gamma'_\alpha))=\Tr(\phi_2(\gamma'_\alpha)), \quad \forall \alpha.$$

Hence,
$$\Tr(\phi_1(\gamma))=\Tr(\phi_2(\gamma)),$$
as required. 

\medskip

The above establishes property (i'). For property (ii') we note that
$$\Tr_{R}(\phi_R(\gamma'_\alpha))\in \sfe\subset R$$
and hence the same is true for $\Tr_{R}(\phi_R(\gamma))$. 

\ssec{Proof of Property (i')} 

The argument below was explained to us by V.~Lafforgue.  It is an upgrade to Weil sheaves of the well-known argument
(see \cite[Theorem 1.1.2]{Laum}) that the map from the Grothendieck group of \'etale sheaves to $\underset{n}\Pi\, \sFunct(X(\BF_{q^n}),\sfe)$
is injective. 

\sssec{} \label{sss:mu i}

We will think of $\phi_i$ as an $n$-dimensional representation $V_i$ of $\on{Weil}(X)$; let
$$\underset{a}\oplus\, V_{i,a}$$ 
be the direct sum of its irreducible subquotients. 

\medskip

For each $a$ we can find an element $\mu_{i,a}\in \sfe^\times$, such that the representation
$$V_{i,a}\otimes \chi_{\mu_{i,a}},$$
extends to a continuous representation of $\on{Gal}^{\on{arithm}}(X)$, where $\chi_{\mu_{i,a}}$ is the
character of $\on{Weil}(X)$ that factors as
$$\on{Weil}(X)\to \BZ \overset{1\mapsto \mu_{i,a}}\longrightarrow \sfe^\times.$$

\medskip

For each value $r$ of the norm, set
$$V_{i,r}:=\underset{a,\, |\mu_{i,a}|=r}\oplus\, V_{i,a}.$$

\medskip 

It suffices to show that for every $r$,
$$\Tr(\gamma,V_{1,r})=\Tr(\gamma,V_{2,r}).$$

\sssec{}

Choose some $\mu_r$ with $|\mu_r|=r$, and set
$$V^c_{i,r}:=V_{i,r}\otimes \chi_{\mu_r}.$$

It suffices to show that for every $r$, 
\begin{equation} \label{e:Tr gamma r}
\Tr(\gamma,V^c_{1,r})=\Tr(\gamma,V^c_{2,r}).
\end{equation} 

Note, however, that each $V^c_{i,r}$ extends to a continuous representation of $\on{Gal}^{\on{arithm}}(X)$. 
Hence, by the argument in \secref{sss:Cheb}, in order to prove \eqref{e:Tr gamma r}, it suffices to show that
$$\Tr(\Frob_{x'},V^c_{1,r})=\Tr(\Frob_{x'},V^c_{2,r}), \quad \forall \,x'\in |X|.$$

The latter is equivalent to 
\begin{equation} \label{e:Tr Frob r}
\Tr(\Frob_{x'},V_{1,r})=\Tr(\Frob_{x'},V_{2,r}), \quad \forall \,x'\in |X|.
\end{equation} 

\sssec{}

For $x'\in |X|$, let $\lambda_{i,a,x',k}$, $k=1,...,\dim(V_{i,a})$ denote the eigenvalues of $\Frob_{x'}$ on $V_{i,a}$.
Denote $$\lambda^c_{i,a,x',k}:=\lambda_{i,a,x',k}\cdot \mu^{\deg(x')}_{i,a}.$$

\medskip

The fact that $\sigma_1$ and $\sigma_2$ both map to \eqref{e:Hecke fiber} implies that 
\begin{equation} \label{e:sum eigenvalues}
\underset{a}\Sigma\, \underset{k}\Sigma\, (\lambda^c_{1,a,x',k})^m\cdot \mu_{1,a}^{-m\cdot \deg(x')}
=\underset{a}\Sigma\, \underset{k}\Sigma\, (\lambda^c_{2,a,x',k})^m\cdot \mu_{2,a}^{-m\cdot \deg(x')}, \quad \forall m\in \BZ^{\geq 1}.
\end{equation} 

This implies that the sets 
$$\{\lambda^c_{1,a,x',k}\cdot \mu^{-\deg(x')}_{1,a}\} \text{ and } \{\lambda^c_{2,a,x',k}\cdot \mu^{-\deg(x')}_{2,a}\},$$
counted with multiplicities, agree.

\medskip

Note that the elements $\lambda^c_{i,a,x',k}$ have norm $1$ in $\sfe$. Hence, we obtain 
that for each value $r$ of the norm, the sets
$$\{\lambda^c_{1,a,x',k}\cdot \mu^{-\deg(x')}_{1,a}\,|\, |\mu_{1,a}|=r\} \text{ and } \{\lambda^c_{2,a,x',k}\cdot \mu^{-\deg(x')}_{2,a}\,|\, |\mu_{2,a}|=r\}$$
also agree. And hence
$$\underset{a,\, |\mu_{i,a}|=r}\Sigma\, \underset{k}\Sigma\, (\lambda^c_{1,a,x',k})^m\cdot \mu_{1,a}^{-m\cdot \deg(x')}
=\underset{a,\, |\mu_{i,a}|=r}\Sigma\, \underset{k}\Sigma\, (\lambda^c_{2,a,x',k})^m\cdot \mu_{2,a}^{-m\cdot \deg(x')}, \quad \forall m\in \BZ^{\geq 1}.$$

The latter equality means that for every $r$, 
$$\Tr(\Frob_{x'}^m,V_{1,r})=\Tr(\Frob_{x'}^m,V_{2,r}), \quad  \forall m\in \BZ^{\geq 1}.$$

Taking $m=1$, the latter equality gives the desired equality \eqref{e:Tr Frob r}. 

\qed[Property (i')]

\ssec{Proof of Property (ii)}

\sssec{}

Let us regard $\phi_R$ as a representation $V_R$ of $\on{Weil}(X)$ over the ring $R$; let $V$ be its reduction modulo $m_R$,
the maximal ideal of $R$.

\medskip

By the argument in \cite[Sect. 25.3]{AGKRRV}, we can equip $V$ with a filtration 
$$0=V_0\subset V_1\subset ....\subset V_{k-1}\subset V_k=V,$$
such that the deformation
$$V\rightsquigarrow V_R$$
extends to a filtered deformation
$$V_i \rightsquigarrow V_{i,R}.$$

This allows us to replace $V$ by 
$$\on{gr}(V):=\underset{i}\oplus\, V_i/V_{i-1}=:\underset{i}\oplus\, V'_i,$$
and $V_R$ by 
$$\underset{i}\oplus\, V_{i,R}/V_{i-1,R}=:\underset{i}\oplus\, V'_{i,R}.$$

\medskip

Denote the corresponding $R$-point of $\LS^{\on{arithm}}_\sG(X)$ by $\sigma'_R$. This point factors through 
\eqref{e:Hecke fiber}, and we can replace the original $\sigma_R$ by $\sigma'_R$. 


\sssec{}

Let $\mu_i$ be as in \secref{sss:mu i}, i.e., 
$$V'_i\otimes \chi_{\mu_i}$$
extends to a continuous representation of $\on{Gal}^{\on{arithm}}(X)$.

\medskip

For every value $r$ of the norm, set 
$$V'_{r,R}:=\underset{i,\,  |\mu_i|=r}\oplus\, V'_{i,R}.$$

Denote
$$n_r:=\underset{i,\,  |\mu_i|=r}\Sigma\, \dim(V'_i).$$

We obtain that $\sigma'_R$ factors through an $R$-point of $\LS^{\on{arithm}}_{\sM}(X)$, where $\sM$ is the Levi subgroup of $\sG=GL_n$ 
$$\sM:=\underset{r}\Pi\, GL(n_r).$$

\medskip

Note now that the map
$$\Spec(\CH_X(\sM))\to \Spec(\CH_X(\sG))$$ 
is formally \'etale at the image of the point
$$\sigma':=\sigma'_R|_{\on{pt}}$$
under $\LS^{\on{arithm}}_{\sM}(X)\to \Spec(\CH_X(\sM))$. 

\medskip

This implies that $\sigma'_R$ lifts to a point of 
$$\on{pt}\underset{\Spec(\CH_X(\sM))}\times \LS^{\on{arithm}}_\sM(X).$$

This allows us to reduce the proof to the case when only one value of $r$ appears. 

\sssec{}

Let $\mu_r$ be such that $|\mu_r|=r$. For $i$ with $|\mu_i|=r$, 
denote 
$$V'_i{}^c:=V'_i\otimes \chi_{\mu_r}.$$

Note that $V'_i{}^c$ also extends to a continuous representation of $\on{Gal}^{\on{arithm}}(X)$. 
Hence, so does
$$V'_{r,R}\otimes \chi_{\mu_r}.$$

\medskip

Thus, translating by $\mu_r$ on $\LS^{\on{arithm}}_\sG(X)$ and $\Spec(\CH_X(\sG))$, we can assume that 
$V'_{r,R}$ is continuous. This reduces us to the case considered in \secref{sss:Cheb}. 

\qed[Property (i'')]

\ssec{Proof of \thmref{t:Hecke}(a)}

\sssec{}

Since $\LS^{\on{arithm}}_\sG(X)$ has finitely many connected components, it suffices to show that for every connected
component $\LS^{\on{arithm}}_\sG(X)_\alpha$, the algebra $\on{Exc}(X,\sG)_\alpha$ is finite as a module over $\CH_{x'}(\sG)$. 

\medskip

By \thmref{t:exc} and \eqref{e:structure stabilizer}, it suffices to show that the map
$$\sF_\alpha/\!/\on{Ad}(\sG_\alpha)\to \sG/\!/\on{Ad}(\sG)$$ 
is finite.

\sssec{}

Choose a point $\sg\in \sF_\alpha$. This point normalizes $\sG_\alpha$, and we can identify
$$\sF_\alpha\simeq \sG_\alpha,$$
so that the adjoint action of $\sG_\alpha$ on $\sF_\alpha$ corresponds to the $\on{Ad}(\sg)$-twisted
conjugation action of $\sG_\alpha$ on itself.

\medskip

This, we have to show that the map
\begin{equation} \label{e:Vinberg map}
\sG_\alpha/\on{Ad}_\sg(\sG_\alpha)\to \sG/\!/\on{Ad}(\sG)
\end{equation} 
is finite.

\medskip

In fact, we will prove a more general group-theoretic statement, which we will now formulate.

\newcommand{\vphi}{\phi}
\newcommand{\Ad}{\mathrm{Ad}}
\newcommand{\sslash}{/\!/}

\sssec{} Let $\vphi$ be an automorphism of a (possibly disconnected) reductive group $G$ which preserves a maximal torus $T \subseteq G$. Then we obtain an action of $\vphi$ on the weight lattice $X^*(T)$ of $T$. Suppose that $\vphi$ acts on $X^*(T)$ with finite order. Then we claim that for any $n \in N(T)$, the automorphism $\on{Ad}_n \circ \vphi$ acts on $X^*(T)$ with finite order as well. To see this, let $W$ denote $N(T)/T$, let $W_0$ denote the image of $W$ in $\Aut(T)$, and let $\langle\phi\rangle$ denote the subgroup of $\Aut(T)$ generated by $\phi$. If $\vphi$ preserves $T$, then it automatically preserves $N(T)$, so the action of $W_0$ on $X^*(T)$ extends to an action of 
\[W_0 \rtimes \left\langle\vphi\right\rangle\]
on $X^*(T)$, and since both $W_0$ and $\left\langle\vphi\right\rangle$ are finite, any element of this group acts on $X^*(T)$ with finite order. 

\medskip

Thus, if $\vphi$ is an automorphism of a reductive group $G$, we will say that $\vphi$ acts with finite order on $X^*(T)$ if for some/any $g \in G$ such that $\on{Ad}_g \circ \vphi$ preserves $T$, the resulting automorphism of $X^*(T)$ has finite order. 

\begin{prop}\label{prop:finiteness}
Let $\vphi$ be an automorphism of a reductive group $G$ such that $\vphi$ acts with finite order on $X^*(T)$. 
Then if $H$ is any reductive subgroup of $G$ preserved by $\vphi$, the map 
\[H \sslash \Ad_\vphi(H) \to G \sslash \Ad_\vphi(G)\]
is finite. 
\end{prop}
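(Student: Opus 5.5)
We prove finiteness by reducing to the semisimple points and using a Chevalley-type restriction. The standard tool is to pass to the disconnected group $G\rtimes\langle\vphi\rangle$. If $\vphi$ has infinite order as an automorphism, we first replace $\langle\vphi\rangle$ by its image in $\on{Out}$ modulo a suitable subgroup. What matters is that $\vphi$-twisted conjugation on $G$ is the ordinary conjugation action of $G$ on the coset $G\vphi$ inside $G\rtimes \langle\vphi\rangle$. Thus $G\sslash \Ad_\vphi(G)=\Gamma(G\vphi,\CO)^G$, and likewise $H\sslash\Ad_\vphi(H)=\Gamma(H\vphi,\CO)^H$. The map in question is induced by the closed $H$-equivariant embedding $H\vphi\hookrightarrow G\vphi$. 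Both quotients are affine of finite type, since $G,H$ are reductive. So it suffices to show that the morphism is quasi-finite and proper, or more directly that $\Gamma(H\vphi,\CO)^H$ is integral over the image of $\Gamma(G\vphi,\CO)^G$.

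I would argue integrality via the Hilbert--Mumford / nullcone criterion. We use the standard criterion: a morphism of affine GIT quotients $Y\sslash H\to Z\sslash G$, induced by an $H$-equivariant closed embedding $Y\hookrightarrow Z$, is finite as soon as the preimage of the ``nullcone'' fibers is controlled. More precisely, we show that the fibers of $H\vphi\sslash H\to G\vphi\sslash G$ are finite. Then we show the map is proper by checking that the graded/asymptotic cone condition holds. Concretely I would first prove \emph{quasi-finiteness}: points of $H\vphi\sslash H$ are closed $H$-orbits, i.e.\ $\vphi$-semisimple elements $h\vphi$. Using the finite-order hypothesis, $h\vphi$ normalizes a maximal torus $T_H$ of $H$ and a pair $(B_H,T_H)$ up to conjugation. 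The semisimple part can then be moved into $T_H\cdot n\vphi$ with $n$ normalizing $T_H$. The finite-order hypothesis on $X^*(T)$, which is propagated by the lemma preceding the statement, then implies the following. Such classes are parametrized by $(T_H)_{n\vphi}$-coinvariants modulo a finite group, and similarly for $G$. The induced map on these torus-coinvariant quotients is finite, since it is a map of diagonalizable-group quotients induced by $T_H\to T$ with finite-index issues only. Hence fibers are finite.

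For properness, I would use that both sides carry a natural contracting structure modulo the central part. Alternatively, and more robustly, I would reduce to the torus case via a Chevalley restriction theorem for twisted conjugation. The restriction $\Gamma(G\vphi,\CO)^G\to\Gamma(T^{\sigma}_{\circ}\cdot n\vphi,\CO)^{W^{\sigma}}$ is finite, where $T_\circ^\sigma$ is the identity component of the fixed torus (Mohrdieck/Springer-style twisted Chevalley). The same holds for $H$. Then the square of restrictions reduces finiteness to that of a map between quotients of diagonalizable groups by finite groups. That map is finite because a homomorphism of tori that is injective up to finite kernel induces a finite map on coordinate rings. The finite-order hypothesis is exactly what makes the relevant Weyl-type groups finite and the twisted torus $T\cdot n\vphi$ meet every semisimple class.

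The main obstacle I expect is the compatibility step: choosing the torus $T_H\subset H$ stable under $n\vphi$ so that it sits inside an $\Ad_{n'}\circ\vphi$-stable maximal torus of $G$. This is needed to make the restriction square commute. I would handle it by taking a $\vphi$-quasi-semisimple element of $H\vphi$. Its centralizer torus in $H$ lies in one in $G$ by Steinberg's theorem on quasi-semisimple automorphisms (every semisimple automorphism fixes a Borel--torus pair). From there I would argue that finiteness of each side's restriction map, plus finiteness on tori, gives integrality of $\Gamma(H\vphi,\CO)^H$ over the image.
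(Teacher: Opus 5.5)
Your overall skeleton is the same as the paper's Step 4: finiteness of a torus piece over $G\phi/\!/G$, plus dominance of that piece onto $H\phi/\!/H$, then a Noetherian-submodule argument. However, both engines you propose for the torus piece have gaps.

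The first engine does not apply. It is quasi-finiteness followed by a ``nullcone'', ``asymptotic cone'' or ``contracting structure'' criterion for properness. But $G\phi/\!/G$ is not conical: it has no grading and no contracting $\BG_m$-action to which a graded-Nakayama nullcone criterion could be applied. Quasi-finiteness alone does not give finiteness. Properness is the whole content of the statement. The paper proves it directly in Step 1 by the valuative criterion: for a $\phi$-equivariant $V$, the exterior power $\Lambda^rV$ has a unique $\phi$-fixed weight of minimal valuation, of multiplicity one, so the non-archimedean property forces that minimal valuation to be $\geq 0$.

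The second engine (twisted Chevalley restriction for $G$ and for $H$) breaks at exactly the compatibility step you flagged. Steinberg's theorem gives a stable Borel pair only for a \emph{semisimple} automorphism, and an element of $H\phi$ that is quasi-semisimple relative to $H$ need not be semisimple relative to $G$. Take $G=GL_2$, $\phi=\on{Ad}_u$ with $u$ regular unipotent, and $H=Z(G)\simeq\BG_m$. The hypotheses hold and $\phi|_H=\on{id}$, yet $\on{Ad}_{zu}=\on{Ad}_u$ stabilizes no maximal torus of $G$: if $u\in N(T)$, then $u^2\in T$ is unipotent, so $u=1$. This is precisely the situation of the application, where $\phi=\on{Ad}_\sg$ with $\sg$ arbitrary.

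To repair it, proceed as follows.
\begin{itemize}
\item Write $\phi=\on{Ad}_g\circ\tau$ with $\tau$ of finite order, so that $G^\circ\rtimes\langle\tau\rangle$ is algebraic and Jordan decomposition makes sense. This reduction is where the finite-order hypothesis enters, not finiteness of Weyl groups, which is automatic.
\item For $x=h\phi$ and $s$ in the torus $S$ centralized by $x$, one has $(sx)_s=sx_s$, and this lies in the closure of the orbit of $sx$. So invariant functions only see $sx_s$.
\item Then $S\subset Z(x_s)^\circ$ can be conjugated by $Z(x_s)^\circ$ into $T^{x_s}_\circ$ for an $x_s$-stable Borel pair. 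This gives a closed embedding of the $H$-piece into a $G$-Chevalley section.
\item Disconnected $G$ and $H$ must be treated componentwise, as in the paper's Steps 3 and 5, since the Mohrdieck--Springer theorem is for connected groups.
\end{itemize}

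So repaired, your route is valid, but it rests on the twisted Chevalley theorem and Steinberg's theory of quasi-semisimple elements. The paper's Step 1 instead works for any automorphism stabilizing a maximal torus $T'$, so no Borel pair on the $G$ side is ever needed. On the $H$ side the paper uses only dominance of $T_H/\!/\on{Ad}_\phi(T_H)\to H/\!/\on{Ad}_\phi(H)$, from highest weight theory, not a full Chevalley theorem for $H$.
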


\sssec{}

Clearly, \propref{prop:finiteness} contains the assertion that \eqref{e:Vinberg map} is finite as a particular case.

\qed[\thmref{t:Hecke}(a)]

\ssec{Proof of \propref{prop:finiteness}}

\sssec{} Note that when $\vphi$ is the identity automorphism of $G$, \propref{prop:finiteness} recovers \cite[Theorem 1]{Vin} in the case when $p = 1$. 

\medskip
 
After some preliminary remarks, we will prove \propref{prop:finiteness} in five steps, starting with the assumption that $G$ is connected and $H = T$ is a maximal torus of $G$, and then gradually relaxing these assumptions.   

\sssec{}

Suppose that $G$ is a connected reductive group with an automorphism $\vphi$. Recall the Peter-Weyl theorem  
\[\CO_G \simeq \bigoplus_\sigma V_\sigma \otimes V_\sigma^\vee \in \Rep(G \times G).\]
If $G$ acts on itself by $\vphi$-twisted conjugacy, then we obtain instead 
\[\CO_G \simeq \bigoplus_\sigma V_\sigma \otimes V_{\vphi(\sigma)}^\vee.\]
It follows from Schur's lemma that 
\[\CO_{G\sslash \Ad_\vphi(G)} \simeq \bigoplus_{\sigma = \vphi(\sigma)} k\]
where for each $\sigma$ that is isomorphic to its image under $\vphi$, the corresponding copy of $k$ is spanned by the function 
\[g \mapsto \on{tr}_V(\alpha \cdot g)\]
for any $\phi$-equivariance data $\alpha$ for $V$. 

\sssec{}

Suppose that $T$ is a subtorus of $G$ which is preserved by $\vphi$. Applying the discussion of the previous paragraph, we have
\begin{equation}\label{eq:torus-quot-funcs}
\CO_{T\sslash \Ad_\vphi(T)} \simeq \bigoplus_{\substack{\lambda \in X^*(T) \\
 \lambda = \vphi(\lambda)}} k    
\end{equation}
If $V$ is any irreducible representation of $G$ with $V \simeq V^\vphi$, we observe that the function 
\[t \mapsto \on{tr}_V(\alpha \cdot t)\]
only receives contributions from the weights in $V$ which are stable under the action of $\vphi$. 

\medskip 

We now begin the proof of \propref{prop:finiteness}. 

\sssec{Step 1}

$G$ is connected and $H = T$ is a maximal torus of $G$. We want to show that the map $T \sslash \Ad_\vphi(T) \to G \sslash \Ad_\vphi(G)$ is finite. Because it is affine, it suffices to show that it is proper. We will prove this using the valuative criterion. Suppose that $R$ is a valuation ring with fraction field $K$, and suppose that we have a commutative diagram:
\begin{equation}\label{eq:val-criterion-diag}
\CD
\CO_{G \sslash \Ad_\vphi(G)} @>>> R \\
@VVV @VVV \\
\CO_{T \sslash \Ad_\vphi(T)} @>>> K
\endCD   
\end{equation}
We need to show that the image of every element of $\CO_{T\sslash \Ad_\vphi(T)}$ has positive valuation in $K$. By (\ref{eq:torus-quot-funcs}) it suffices to show this for the $\vphi$-invariant characters of $T$. Let $\lambda$ be any $\vphi$-invariant character of $T$ and let $V$ be a $\phi$-equivariant representation of $G$ with a nonzero $\lambda$ weight-space.\footnote{For example, if $w \cdot \lambda$ is dominant then we can take $V$ to be the irreducible representation with highest weight $w\cdot \lambda$} To each weight $\mu$ appearing in $V$, we assign the following numbers:
\begin{itemize}
    \item Let $n_\mu$ denote the size of the orbit of $\mu$ under the action of $\vphi$ on the weights of $V$.
    \item Let $v_\mu$ denote $\frac{1}{n_\mu}$ times the valuation of $\prod_{i = 1}^{n_\mu} \vphi^i(\mu)$ in $K$.
\end{itemize}   
Let $v_0$ denote the minimum of $v_\mu$ as $\mu$ ranges over all of the weights in $V$. To show that $v_\lambda \ge 0$, it suffices to show that $v_0 \ge 0$.  

\medskip

To see that $v_0 \ge 0$, let $r$ denote the dimension of the subspace of $V$ spanned by the weight spaces $V_\mu$ with $v_\mu = v_0$. Because $V$ is $\phi$-equivariant, the representation $\Lambda^r V$ also admits a natural $\phi$-equivariance structure, so we may consider its character $\chi_{\Lambda^r V} \in \CO_{G \sslash \Ad_\phi(G)}$. As a $G$-representation, $\Lambda^r V$ contains a unique $\vphi$-invariant weight $\lambda_0$ with valuation $v_0r$, the $\lambda_0$-weight space has multiplicity 1, and $v_0r$ is \textit{strictly} smaller than the valuation of any other $\vphi$-invariant weight. By the non-archimedean property of the valuation on $K$, it follows that 
\[\on{val}_K(\chi_{\Lambda^r V}) = \on{val}_K(\lambda_0).\]

\medskip

Because $\chi_{\Lambda^r V}$ is an element of $\CO_{G \sslash \Ad_\phi(G)}$, the commutativity of \eqref{eq:val-criterion-diag} implies that $\on{val}_K(\chi_{\Lambda^r V}) \ge 0$. As a result, 
\[v_0r = \on{val}_K(\lambda_0) = \on{val}_K(\chi_{\Lambda^r V}) \ge 0,\]
so $v_0 \ge 0$, which is what we needed to show. 

\sssec{Step 2}

$G$ is connected and $H = T$ is an arbitrary subtorus of $G$. Let $M = Z_G(T)$ denote the Levi subgroup of $G$ associated to $T$. It is also preserved by $\vphi$. Let $T'$ denote a maximal torus inside of $M$ (which necessarily contains $T$). By modifying $\vphi$ by conjugation by an element of $M$, we may assume that $\vphi$ preserves $T'$ as well. We have a short exact sequence 
\[0 \to \Lambda \to X^*(T') \to X^*(T) \to 0\]
where $\Lambda$ is a finitely generated free abelian group. The snake lemma identifies the cokernel of $X^*(T')^\vphi \to X^*(T)^\vphi$ with a subgroup of $\Lambda_\vphi$, so the cokernel must be finitely generated. Because the action of $\vphi$ on $X^*(T')$ has finite order, it follows that the rationalization of the cokernel of $X^*(T')^\vphi \to X^*(T)^\vphi$ vanishes, so the cokernel itself must be a finite abelian group. This shows that in the diagram 
\[T\sslash \Ad_\vphi(T) \to T' \sslash \Ad_\vphi(T') \to G \sslash \Ad_\vphi(G)\]
the first arrow is finite. Since the second arrow is finite by Step 1, this case is finished. 

\sssec{Step 3}

Now suppose that $G$ is arbitrary and $H = T$ is an arbitrary subtorus of $G$. Then $T \sslash \Ad_\vphi(T) \to G \sslash \Ad_\vphi(G)$ factors as 
\[T \sslash \Ad_\vphi(T) \to G^\circ \sslash \Ad_\vphi(G^\circ) \to G^\circ \sslash \Ad_\vphi(G) \to G \sslash \Ad_\vphi(G).\]

\sssec{Step 4}

Now suppose that $G$ is arbitrary and $H$ is connected. By modifying $\vphi$ by conjugation by an element of $H$, we may assume that $\vphi$ preserves a maximal torus $T$ of $H$. By (\ref{eq:torus-quot-funcs}) and highest weight theory for $H$, we see that the map $T \sslash \Ad_\vphi(T) \to H \sslash \Ad_\vphi(H)$ is dominant. Since 
\[T \sslash \Ad_\vphi(T) \to G \sslash \Ad_\vphi(G)\]
is finite, it follows formally that 
\[H \sslash \Ad_\vphi(H) \to G \sslash \Ad_\vphi(G)\]
is, too.

\sssec{Step 5}

Now suppose that both $G$ and $H$ are arbitrary. For any $x \in H$, the component $H^\circ \cdot  x$ of $H$ is preserved by the 
$\vphi$-twisted adjoint action of $H^\circ$, since 
\[h \cdot H^\circ \cdot  x \cdot  \vphi(h^{-1}) = h \cdot  H^\circ \cdot  \Ad_x(\vphi(h^{-1}))\cdot  x.\]
This also shows the map 
\begin{equation}\label{eq:component}
H^\circ \cdot x \sslash \Ad_\vphi(H^\circ) \to G \sslash \Ad_\vphi(G)    
\end{equation}
identifies with 
\[H^\circ\sslash \Ad_{\Ad_x \circ \vphi}(H^\circ) \to G \sslash \Ad_{\Ad_x \circ \vphi}(G)\] 
so by Step 4 we know that (\ref{eq:component}) is finite (note that if $\phi$ acts on $X^*(T)$ with finite order then the same is true of $\Ad_x \circ \phi$). Since $H \sslash \Ad_\vphi(H^\circ)$ is a finite disjoint union of components 
$H^\circ \cdot x \sslash \Ad_\vphi(H^\circ)$, it follows that 
\[H \sslash \Ad_\vphi(H^\circ) \to G \sslash \Ad_\vphi(G)\] 
is finite. Since 
\[H \sslash \Ad_\vphi(H^\circ) \to H\sslash \Ad_\vphi(H)\] 
is dominant, the result follows. 

\qed[\propref{prop:finiteness}]

\section{Rationality and independence of \texorpdfstring{$\ell$}{l} of the excursion algebra} \label{s:rat}

In this section we show that the excursion algebra $\on{Exc}(X,\sG)$ is defined over $\BQ$ and, in particular, 
is independent of $\ell$. This will be based on the ``independence of $\ell$" results from the paper \cite{Dr}. 

\ssec{Statement of the result} 


\sssec{}

We will prove:

\begin{thm} \label{t:rat} 
There exists a canonically defined $\BQ$-algebra $\on{Exc}(X,\sG)_\BQ$ that 
provides a rational model for $\on{Exc}(X,\sG)$ for any $\ell$. Moreover, 
the elements $\on{H}(V,x')$ for $V\in \Rep(\sG)^{\heartsuit,c}$, $x'\in |X|$ are rational. 
\end{thm}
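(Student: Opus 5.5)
The plan is to use \thmref{t:exc} to replace $\on{Exc}(X,\sG)$ by functions on the semi-simple arithmetic locus, and then transport a rational structure from Drinfeld's theorem on $\on{Gal-Drinf}^{\on{arithm}}(X)$.

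\medskip

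\textbf{Step 1: rewrite $\on{Exc}(X,\sG)$ via a pro-reductive group.} By \thmref{t:exc} and \eqref{e:strange category}, I will write
$$\on{Exc}(X,\sG)\simeq \Gamma\bigl(\bMaps_{\on{groups}}(\on{Weil}^{\on{red}}(X),\sG)/\on{Ad}(\sG),\CO\bigr).$$
Here $\on{Weil}^{\on{red}}(X)$ is the pro-reductive group attached to the category $\BZ\mod\underset{\Rep(\BZ^{\on{alg},0})}\otimes \qLisse^{\on{Weil},0}(X)$. It is an extension of $\BZ$ by $\on{Gal}^{\on{alg,relev},0}(X)$, which is pro-reductive by \propref{p:Gal red}. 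The only part of $\on{Weil}^{\on{red}}(X)$ that is visible to $\Gamma(-,\CO)$ is its class up to inner automorphisms. Indeed, on each component $\alpha$ we get $\CO_{\sF_\alpha}^{\sG_\alpha}$, and this depends only on the conjugacy class of the homomorphism together with the $\Frob$-twist. So I will show that $\on{Exc}(X,\sG)$ is a functor of the object of Drinfeld's coarse category $\on{Pro-ss}^{\on{coarse}}$ determined by $\on{Weil}^{\on{red}}(X)$. This object is, essentially, a pro-semisimple group with outer action of $\BZ$, together with the pro-torus of weight-$0$ Weil numbers. The torus part is defined over $\BQ$ by Remark \ref{r:Z wt over Q}.

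\medskip

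\textbf{Step 2: transport the rational structure.} Drinfeld's theorem \cite{Dr} says that $\on{Gal-Drinf}^{\on{arithm}}(X)$ has a canonical model over $\BQ$ as an object of $\on{Pro-ss}^{\on{coarse}}$, independent of $\ell$. Its source is the independence-of-$\ell$ and companion results of L.~Lafforgue and Deligne. I will relate $\on{Weil}^{\on{red}}(X)$ to this group by adjoining the abelian weight-$0$ Weil-number part, and so obtain a $\ol\BQ$-model $\on{Exc}(X,\sG)_{\ol\BQ}$ together with a semilinear action of $\on{Gal}(\ol\BQ/\BQ)$. I then define
$$\on{Exc}(X,\sG)_\BQ:=\bigl(\on{Exc}(X,\sG)_{\ol\BQ}\bigr)^{\on{Gal}(\ol\BQ/\BQ)}.$$
Independence of $\ell$ follows because Drinfeld's model is independent of $\ell$.

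\medskip

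\textbf{Step 3: rationality of Hecke operators.} The element $\on{H}(V,x')$ is the function $\sigma\mapsto \Tr(\sigma(\Frob_{x'}),V)$. On the semi-simple locus its value is a sum of Frobenius eigenvalues, which are Weil numbers. A Galois conjugate of a pure weight-$0$ irreducible is its companion (Deligne, L.~Lafforgue), and Drinfeld's construction is built exactly so that the $\on{Gal}(\ol\BQ/\BQ)$-action is compatible with companions. Hence $\on{H}(V,x')$ is Galois-invariant and so lies in $\on{Exc}(X,\sG)_\BQ$. In particular the local Hecke algebra maps $\CO_{\sG/\!/\on{Ad}(\sG)}\to\on{Exc}(X,\sG)$ are defined over $\BQ$.

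\medskip

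\textbf{Main obstacle.} I expect the hardest step to be showing that the $\on{Gal}(\ol\BQ/\BQ)$-action on $\on{Exc}(X,\sG)_{\ol\BQ}$ is continuous, meaning every element has an open stabilizer. Without this, Galois descent does not give $\ol\BQ\underset{\BQ}\otimes\on{Exc}(X,\sG)_\BQ\simeq\on{Exc}(X,\sG)_{\ol\BQ}$. My first approach will use \thmref{t:Hecke}(a). Fix $x'$; then $\on{Exc}(X,\sG)_{\ol\BQ}$ is a finite module over $\CH_{x'}(\sG)_{\ol\BQ}$. This Hecke algebra is defined over $\BQ$ and is acted on trivially up to the coefficient action, by Step 3. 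So the action preserves a finite-dimensional generating subspace over a $\BQ$-algebra. Since there are finitely many components, each with finitely many generators, it then suffices to show that the finitely many generators are fixed by an open subgroup. For that I will use that the generating data involve only finitely many Weil numbers, which lie in a finite extension of $\BQ$.
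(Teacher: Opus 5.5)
Your architecture matches the paper's. You reduce by \thmref{t:exc} to functions on $\bMaps_{\on{groups}}(\on{Weil}^{\on{red}}(X),\sG)/\on{Ad}(\sG)$, note these only see the class of the extension in the coarse category (\corref{c:coarse}), transport Drinfeld's $\on{Gal}(\ol\BQ/\BQ)$-equivariant model, get rationality of $\on{H}(V,x')$ from Galois-invariance of Frobenius classes (\thmref{t:Dr bis}), and descend once continuity is known. One side correction: no pro-torus of weight-$0$ Weil numbers needs to be adjoined. The quotient $\BZ$ of $\on{Weil}^{\on{red}}(X)$ is discrete, so it already absorbs all Frobenius twists, and \propref{p:Drinf} gives $\on{Weil}^{\on{red}}(X)\simeq \on{Gal-Drinf}^{\on{arithm}}(X)\underset{\wh\BZ}\times\BZ$ directly.

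The genuine gap is the step you flag as the crux: your continuity argument does not work. Finiteness over $\CH_{x'}(\sG)$ plus Galois-invariance of the Hecke operators does not constrain a semilinear action enough. Toy model: $A=\ol\BQ[t^{\pm 1}]$ is finite over $\ol\BQ\otimes\BQ[t+t^{-1}]$. For a non-continuous homomorphism $\epsilon:\on{Gal}(\ol\BQ/\BQ)\to\{\pm 1\}$, the rule $\tau\cdot f(t)=f^{\tau}(t^{\epsilon(\tau)})$ is a semilinear action by ring automorphisms. It fixes $t+t^{-1}$ and even preserves the finite-dimensional generating subspace spanned by $1,t,t^{-1}$. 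Yet the stabilizer of $t$ is $\ker(\epsilon)$, which is not open. This is exactly the danger for general $\sG$: $\on{Exc}(X,\sG)$ is not generated by Hecke operators (Remark \ref{r:RINO}), and Drinfeld's equivariance datum lives only in $\on{Pro-ss}^{\on{coarse}}$. So a priori the Galois group could act on the part invisible to Hecke operators as wildly as in the toy model. Your closing remark about ``finitely many Weil numbers'' does not address this. The extra generators are not functions of Frobenius eigenvalues. Even the traces of one local system involve infinitely many points, and that they generate a number field is Deligne's theorem \cite[Theorem 3.1]{De3}, an input rather than a formality. Your argument is fine when Hecke operators generate, i.e.\ for $\sG=GL_n$ via \thmref{t:Hecke}(b), which is the remark the paper makes.

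The missing idea is \propref{prop:gal-continuity}: for every finite-type quotient $H$ of $\on{Gal-Drinf}^{\on{arithm}}(X)_{\ol\BQ}$ there is an open $U\subset\on{Gal}(\ol\BQ/\BQ)$ such that $H$ has a split model over $\ol\BQ^U$ and the projection is $U$-equivariant. The paper first proves this for $H^\circ$ simply connected. It compares the projection $\pi$ with its $\tau$-twist $\pi'$ via Drinfeld's rigidity (\lemref{lem:surj-maps-equal-coarse}): surjections in $\on{Pro-ss}^{\on{coarse}}$ are determined by $\pi_0$ and by the maps on $K^+$ of finite-index pieces.
\begin{itemize}
\item Agreement on $\pi_0$ comes from Chebotarev.
\item Agreement on $K^+$ needs only finitely many generators of $K(H_0)$. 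It follows from injectivity of traces on Frobenius classes of a finite cover $\wt X$ and Deligne's number-field theorem.
\item \lemref{lem:fin-type-quot-has-sc-cover} then removes the simply-connectedness assumption.
\end{itemize}
Continuity then follows: each connected component of $\LS^{\on{arithm},0}_\sG(X)$ is identified with a component of $\bMaps_{\on{groups}}(H\underset{\BZ/n}\times\BZ,\sG)/\on{Ad}(\sG)$ for such an $H$. Here $H$ is the image of a $GL_n$-valued extension of the given $\sigma$ through a faithful representation, so the relevant kernel is $\ker(\sigma)$. On this, $U$ acts through a $\ol\BQ^U$-structure, and there are only finitely many components \cite{EK}.
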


Note that the second assertion in \thmref{t:rat} means that the homomorphism
$$\CH_X(\sG)\overset{\text{\eqref{e:Hecke Homomorphism}}}\to \on{Exc}(X,\sG)$$
is defined over $\BQ$.

\medskip

From here we obtain:

\begin{cor} \label{c:rat}
The algebra $\CH_{X,\BQ}(\sG)$ admits a quotient $\CH^{\on{glob}}_{X,\BQ}$ such that for any $\sfe$, 
the homomorphism \eqref{e:Hecke Homomorphism} factors as 
$$\CH_X(\sG)=\sfe\underset{\BQ}\otimes \CH_{X,\BQ}(\sG)\to \sfe\underset{\BQ}\otimes \CH^{\on{glob}}_{X,\BQ}\hookrightarrow \on{Exc}(X,\sG).$$
\end{cor}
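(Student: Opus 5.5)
The corollary is stated immediately after \thmref{t:rat}, so I would deduce it from that theorem by a formal argument. I would define $\CH^{\on{glob}}_{X,\BQ}$ as the image of the rational Hecke homomorphism. By \thmref{t:rat}, the map \eqref{e:Hecke Homomorphism} is obtained by base change from a homomorphism of $\BQ$-algebras
$$h_\BQ:\CH_{X,\BQ}(\sG)\to \on{Exc}(X,\sG)_\BQ .$$
Here $\CH_{X,\BQ}(\sG)=\underset{x'}\bigotimes\, \BQ\underset{\BZ}\otimes K^0(\Rep(\sG)^{\heartsuit,c})$ is the evident rational form, and $h_\BQ$ sends $[V]$ at $x'$ to the rational element $\on{H}(V,x')$. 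I then set
$$\CH^{\on{glob}}_{X,\BQ}:=\on{Im}(h_\BQ)\subset \on{Exc}(X,\sG)_\BQ,$$
which is a quotient of $\CH_{X,\BQ}(\sG)$.

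To get the factorization, I apply $\sfe\underset{\BQ}\otimes(-)$ to the sequence
$$\CH_{X,\BQ}(\sG)\twoheadrightarrow \CH^{\on{glob}}_{X,\BQ}\hookrightarrow \on{Exc}(X,\sG)_\BQ .$$
Since $\sfe$ is flat over the field $\BQ$, the surjection stays surjective and the injection stays injective. Using the canonical isomorphism $\sfe\underset{\BQ}\otimes \on{Exc}(X,\sG)_\BQ\simeq \on{Exc}(X,\sG)$ from \thmref{t:rat}, the composite becomes the base change of $h_\BQ$, which is \eqref{e:Hecke Homomorphism}. This gives the claimed factorization with the second arrow injective.

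The one point that needs care is that the quotient must not depend on $\ell$. This holds because $\on{Exc}(X,\sG)_\BQ$ and $h_\BQ$ are canonically defined and the same for every $\ell$, as asserted in \thmref{t:rat}. Hence $\CH^{\on{glob}}_{X,\BQ}$ is a single quotient of $\CH_{X,\BQ}(\sG)$ that works for all $\sfe$.

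I expect no real obstacle here: all the substance lies in \thmref{t:rat}, and in particular in the rationality of the elements $\on{H}(V,x')$. The remaining check is that the identification of $\CH_X(\sG)$ with $\sfe\underset{\BQ}\otimes\CH_{X,\BQ}(\sG)$ matches the element $\on{H}(V,x')$ on both sides, and this holds by construction.
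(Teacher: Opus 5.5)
Your proposal is correct and is the formal deduction the paper intends: \thmref{t:rat} makes \eqref{e:Hecke Homomorphism} the base change of one $\ell$-independent map $\CH_{X,\BQ}(\sG)\to \on{Exc}(X,\sG)_\BQ$. Taking $\CH^{\on{glob}}_{X,\BQ}$ to be its image, exactness of $\sfe\underset{\BQ}\otimes(-)$ gives the factorization with injective second arrow; the $\ell$-independence you invoke ultimately rests on the rational Frobenius classes of \thmref{t:Dr bis} not depending on $\ell$.
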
 

\sssec{}

Assume for a moment that $\sG=\GL_n$. Combining \corref{c:rat} and \thmref{t:Hecke}(b)
we obtain:

\begin{cor} \label{c:rat GLn}
The algebra $\CH_{X,\BQ}(\sG)$ admits a quotient $\CH^{\on{glob}}_{X,\BQ}$, such that the homomorphism \eqref{e:Hecke Homomorphism} factors as 
$$\CH_X(\sG)=\sfe\underset{\BQ}\otimes \CH_{X,\BQ}(\sG)\to \sfe\underset{\BQ}\otimes \CH^{\on{glob}}_{X,\BQ}(\sG)\simeq \on{Exc}(X,\sG).$$
\end{cor}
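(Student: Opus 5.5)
The plan is to deduce the statement formally from \corref{c:rat} and \thmref{t:Hecke}(b), using only the flatness of $\sfe$ over $\BQ$.

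First, I will fix the quotient $\CH^{\on{glob}}_{X,\BQ}$ as in \corref{c:rat}. Concretely, I take it to be the image of the rational Hecke homomorphism
$$\CH_{X,\BQ}(\sG)\to \on{Exc}(X,\sG)_\BQ$$
provided by \thmref{t:rat}. Here one uses that the elements $\on{H}(V,x')$ are rational, so that \eqref{e:Hecke Homomorphism} is the base change of this map. Equivalently, $\CH^{\on{glob}}_{X,\BQ}$ is the quotient of $\CH_{X,\BQ}(\sG)$ by the kernel $I_\BQ$ of this map. Since $\on{Exc}(X,\sG)_\BQ$ is canonically defined and independent of $\ell$, so is $\CH^{\on{glob}}_{X,\BQ}$.

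Next, I will apply $\sfe\underset{\BQ}\otimes(-)$ to the exact sequence
$$0\to I_\BQ\to \CH_{X,\BQ}(\sG)\to \on{Exc}(X,\sG)_\BQ.$$
Since $\sfe$ is flat over $\BQ$, the result stays exact. Using $\sfe\underset{\BQ}\otimes \on{Exc}(X,\sG)_\BQ\simeq \on{Exc}(X,\sG)$ and the compatibility of the Hecke maps, I obtain a factorization
$$\CH_X(\sG)\twoheadrightarrow \sfe\underset{\BQ}\otimes \CH^{\on{glob}}_{X,\BQ}\hookrightarrow \on{Exc}(X,\sG)$$
of \eqref{e:Hecke Homomorphism}. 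This is exactly the content of \corref{c:rat}.

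Finally, for $\sG=GL_n$, \thmref{t:Hecke}(b) says that the composite $\CH_X(\sG)\to \on{Exc}(X,\sG)$ is surjective. Hence the injective second arrow is also surjective, and therefore an isomorphism
$$\sfe\underset{\BQ}\otimes \CH^{\on{glob}}_{X,\BQ}(\sG)\simeq \on{Exc}(X,\sG).$$

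The only point needing care is that the kernel $I_\BQ$, and hence the quotient $\CH^{\on{glob}}_{X,\BQ}$, must be defined over $\BQ$ rather than only after extension to $\sfe$. This is precisely where the rationality of the Hecke elements from \thmref{t:rat} is used. The rest is formal.
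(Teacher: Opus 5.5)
Your argument is correct and is the paper's own: the paper gets the corollary by combining \corref{c:rat}, which gives a factorization through an injective map $\sfe\underset{\BQ}\otimes \CH^{\on{glob}}_{X,\BQ}\hookrightarrow \on{Exc}(X,\sG)$, with the surjectivity of the Hecke map from \thmref{t:Hecke}(b). Your derivation of \corref{c:rat} from \thmref{t:rat} by flat base change along $\BQ\to\sfe$ just spells out the step the paper leaves implicit.
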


\begin{rem} \label{r:RINO}

Note that \corref{c:rat GLn} gives a rather robust characterization of the rational structure on $\on{Exc}(X,\sG)$ for $\sG=GL_n$; indeed, $\on{Exc}(X,\sG)_\BQ$
is the quotient $\CH^{\on{glob}}_{X,\BQ}$ of $\CH_{X,\BQ}(\sG)$. 

\medskip

However, for a general $\sG$, since an assertion parallel to \thmref{t:Hecke}(b) fails, \thmref{t:rat}
is a rather weak statement\footnote{It can be called ``rationality in name only".}. 

\end{rem} 

\sssec{} 

Let $X$ be a curve, and let $\ol{X}$ be its compactification.  One can show that V.~Lafforgue's construction 
(with an extension by \cite{Xue} from cuspidal to all automorphic functions plus an input from \cite{AGKRRV})
can be upgraded to
an action of $\on{Exc}(X,\sG)$ on the space
$$\on{Autom}:=\on{Funct}_c(\Bun_G^{\on{level}}(\BF_q),\sfe),$$
where:

\begin{itemize}

\item $G$ is the Langlands dual of $\sG$;

\medskip

\item $\Bun_G^{\on{level}}$ is the moduli space of $G$-bundles on $\ol{X}$ with level structure on $\ol{X}-X$, whose
depth depends on the amount of ramification we allow for our choice of $\qLisse(X)$ (see \secref{sss:ram}).

\end{itemize} 

\medskip

Following \cite[Conjecture 12.12]{VLaf} we propose:

\begin{conj} \label{conj:rat}
The action of $\on{Exc}(X,\sG)$ on $\on{Autom}$ is compatible with the rational structures. 
\end{conj}

We do not know how to approach this conjecture for an arbitrary $\sG$ (see Remark \ref{r:RINO}),  
since, on the one hand, \thmref{t:rat} does not give a unique characterization of the rational 
structure, and on the other hand, its proof is rather inexplicit (yet, at the end of the day, it follows from 
Drinfeld's theorem in \cite{Dr}, which is deduced by comparison with the automorphic side).

\medskip

However, we claim:

\begin{thm}
The statement of \conjref{conj:rat} holds for $\sG=GL_n$.
\end{thm}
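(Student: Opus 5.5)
The plan is to use the fact that for $\sG=GL_n$ the excursion algebra is generated by Hecke operators. By \thmref{t:Hecke}(b) and \corref{c:rat GLn}, the homomorphism $\CH_X(\sG)\to\on{Exc}(X,\sG)$ is surjective, and $\on{Exc}(X,\sG)_\BQ$ is by construction the image $\CH^{\on{glob}}_{X,\BQ}$ of $\CH_{X,\BQ}(\sG)$. Compatibility of the action on $\on{Autom}$ with rational structures therefore means the following. Equip $\on{Autom}$ with its evident $\BQ$-structure $\on{Autom}_\BQ:=\on{Funct}_c(\Bun_G^{\on{level}}(\BF_q),\BQ)$. We must show that every element of $\CH^{\on{glob}}_{X,\BQ}$ preserves $\on{Autom}_\BQ$, and that the action of $\CH_{X,\BQ}(\sG)$ on $\on{Autom}_\BQ$ factors through the quotient $\CH^{\on{glob}}_{X,\BQ}$.

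\textbf{Step 1.} I will invoke the key compatibility in V.~Lafforgue's construction (extended by \cite{Xue}): the excursion operator $\on{H}(V,x')=\on{exc}(V,\Frob_{x'})$ acts on $\on{Autom}$ by the classical unramified Hecke operator $T_{V,x'}$ at $x'$. This is \cite{VLaf}, the ``$S=\{0\}$'' relation, and it persists in the version where the action factors through $\on{Exc}(X,\sG)$. The operator $T_{V,x'}$ is given by an integral kernel. Its kernel is the trace of Frobenius of the Satake sheaf $\IC$ on the Hecke stack, which is a $\BQ$-valued (even $\BZ[q^{\pm1/2}]$-valued, hence $\BQ$-valued after a normalization defined over $\BQ$) function. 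Consequently $T_{V,x'}$ preserves $\on{Autom}_\BQ$. I would also verify that this rational structure on the local Hecke algebra agrees with $\CH_{x',\BQ}(\sG)=\BQ\otimes K^0(\Rep(\sG)^{\heartsuit,c})$. This holds because, through the Satake isomorphism, the class $[V]$ corresponds to the function with rational values, up to the $q^{1/2}$-twist. That twist has to be handled consistently, and I would fix it by the same normalization used in \thmref{t:rat}.

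\textbf{Step 2.} The action of $\CH_X(\sG)=\sfe\otimes_\BQ\CH_{X,\BQ}(\sG)$ on $\on{Autom}=\sfe\otimes_\BQ\on{Autom}_\BQ$ is therefore the $\sfe$-linear extension of an action of $\CH_{X,\BQ}(\sG)$ on $\on{Autom}_\BQ$. Let $I_\BQ$ be the kernel of $\CH_{X,\BQ}(\sG)\to\CH^{\on{glob}}_{X,\BQ}$. By \corref{c:rat}, its base change $\sfe\otimes I_\BQ$ is the kernel of $\CH_X(\sG)\to\on{Exc}(X,\sG)$, using flatness of $\sfe$ over $\BQ$. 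Since the action of $\CH_X(\sG)$ factors through $\on{Exc}(X,\sG)$, the ideal $I_\BQ$ acts by zero on $\on{Autom}$, and hence also on $\on{Autom}_\BQ$. The action therefore descends to $\CH^{\on{glob}}_{X,\BQ}=\on{Exc}(X,\sG)_\BQ$ acting on $\on{Autom}_\BQ$. Its $\sfe$-linear extension agrees with the given action of $\on{Exc}(X,\sG)$, because by \thmref{t:Hecke}(b) both are determined by their values on Hecke operators.

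The main obstacle is Step 1. The issue is not the rationality of the Hecke kernels, which is classical. It is certifying that the $\on{Exc}$-action of the factorized construction (Lafforgue--Xue plus \cite{AGKRRV}) sends $\on{H}(V,x')$ exactly to $T_{V,x'}$ with the matching normalization, including the $q^{1/2}$ twist and the level structure. I would argue this first on cusp forms via \cite{VLaf}, and then on all of $\on{Autom}$ via \cite{Xue}.
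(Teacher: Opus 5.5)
Your argument is correct and follows the paper's route. For $\sG=GL_n$ the rational structure on $\on{Exc}(X,\sG)$ is the quotient $\CH^{\on{glob}}_{X,\BQ}$ of $\CH_{X,\BQ}(\sG)$ (\corref{c:rat GLn}), so the conjecture reduces to the rationality of the Hecke action on $\on{Autom}$, which the paper simply cites and your Step 2 unpacks into the explicit descent to the quotient. One caution about your Step 1: a $\BZ[q^{\pm 1/2}]$-valued Satake kernel is not automatically $\BQ$-valued, so you need either $q^{1/2}\in\BQ$ or a rational structure on $\on{Autom}$ (or on $\CH_{x'}(\sG)$) twisted to absorb the half-Tate twist of the normalization, a point the paper's proof also takes for granted.
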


\begin{proof}

Follows from the fact that the action of $\CH_X(\sG)$ on $\on{Autom}$ is compatible with the rational structures.

\end{proof} 

\begin{rem}
An assertion parallel to \conjref{conj:rat} holds in the context of geometrization of the local Langlands correspondence
(\`a la Fargues-Scholze). This has been established in \cite{Sch} using motivic methods. 

\medskip

One of the key points of this
paper is that when studying motivic local systems in the local situation, it is sufficient to consider \emph{mixed Tate} 
motives, for which one knows the existence of the t-structure, etc. We do not know how to adopt these methods to
the context of a global curve.
\end{rem}

\ssec{The pro-reductive quotient of the Weil group} \label{ss:Drinf}

The rest of this section is devoted to the proof of \thmref{t:rat}. 

\sssec{} \label{sss:Weil red}

Recall the group $\on{Weil}^{\on{alg}}(X)$, see \eqref{e:alg Weil}. Note that the subgroup
$$\on{Gal}^{\on{unip}}(X)\subset  \on{Gal}^{\on{alg}}(X)\subset \on{Weil}^{\on{alg}}(X)$$
is normal. Consider the short exact sequence
\begin{equation} \label{e:Weil red almost}
1\to \on{Gal}^{\on{red}}(X)\to \on{Weil}^{\on{alg}}(X)/\on{Gal}^{\on{unip}}(X)\to \BZ\to 1.
\end{equation} 

Let us denote by $\on{Weil}^{\on{red}}(X)$ the push-out of \eqref{e:Weil red almost} with respect to
$$\on{Gal}^{\on{red}}(X)\to \on{Gal}^{\on{red,relev}}(X),$$
i.e.,
\begin{equation} \label{e:Weil red}
1\to \on{Gal}^{\on{red,relev}}(X)\to \on{Weil}^{\on{red}}(X)\to \BZ\to 1.
\end{equation} 

\medskip

The group $\on{Weil}^{\on{red}}(X)$ comes equipped with a homomorphism
\begin{equation} \label{e:discr to Drinf'}
\on{Weil}^{\on{discr}}(X)\to \on{Weil}^{\on{alg}}(X)\to \on{Weil}^{\on{red}}(X).
\end{equation} 

\sssec{}

By \thmref{t:exc}, for the proof of \thmref{t:rat}, we can replace
$$\on{Exc}(X,\sG) \rightsquigarrow \on{Exc}(X,\sG)^0:=
\Gamma(\LS^{\on{arithm},0}_\sG(X),\CO_{\LS^{\on{arithm},0}_\sG(X)}).$$

\medskip

Note that the stack $\LS^{\on{arithm},0}_\sG(X)$ can be identified with 
$$\bMaps_{\on{groups}}(\on{Weil}^{\on{red}}(X),\sG)/\on{Ad}(\sG),$$
see \secref{sss:arithm 0}. 

\medskip

Hence, we can interpret $\on{Exc}(X,\sG)^0$ as
$$\Gamma(\bMaps_{\on{groups}}(\on{Weil}^{\on{red}}(X),\sG)/\on{Ad}(\sG),
\CO_{\bMaps_{\on{groups}}(\on{Weil}^{\on{red}}(X),\sG)/\on{Ad}(\sG)}).$$

\sssec{}

Let $\on{Gal-Drinf}^{\on{arithm}}(X)$ be the pro-semi-simple group over $\sfe$ defined in \cite[Sect. 1.1.2]{Dr} (in {\it loc. cit.} it is denoted 
$\hat\Pi_\lambda$). 
It comes equipped with a map
$$\on{Gal-Drinf}^{\on{arithm}}(X)\to \wh\BZ,$$
where the pro-finite group $\wh\BZ$ is viewed as a pro-semi-simple group.

\medskip

Denote
$$\on{Weil-Drinf}(X):=\on{Gal-Drinf}^{\on{arithm}}(X)\underset{\wh\BZ}\times \BZ.$$

By construction, the group $\on{Weil-Drinf}(X)$ comes equipped with a homomorphism
\begin{equation} \label{e:discr to Drinf}
\on{Weil}^{\on{discr}}(X)\to \on{Weil-Drinf}(X).
\end{equation} 

\sssec{}

We claim:

\begin{propconstr} \label{p:Drinf}
There is a canonical isomorphism
$$\on{Weil}^{\on{red}}(X)\simeq \on{Weil-Drinf}(X),$$
which makes the diagram
$$
\CD
\on{Weil}^{\on{discr}}(X) @>{\on{id}}>> \on{Weil}^{\on{discr}}(X)  \\
@V{\text{\eqref{e:discr to Drinf'}}}VV @VV{\text{\eqref{e:discr to Drinf}}}V \\
\on{Weil}^{\on{red}}(X) @>{\sim}>>  \on{Weil-Drinf}(X)
\endCD
$$
commute.
\end{propconstr}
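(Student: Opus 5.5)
We construct the isomorphism by comparing the Tannakian categories of semi-simple representations of the two groups, each with its fiber functor. Both $\on{Weil}^{\on{red}}(X)$ and $\on{Weil-Drinf}(X)$ are extensions of $\BZ$ by a pro-reductive group. So both are determined by the category of their semi-simple representations on which the kernel acts algebraically, equivalently by a set of representations of $\on{Weil}^{\on{discr}}(X)$ closed under $\otimes$, duals and subquotients. The key point is that both groups receive Zariski-dense maps from $\on{Weil}^{\on{discr}}(X)$, via \eqref{e:discr to Drinf'} and \eqref{e:discr to Drinf}. It therefore suffices to show that the two images of the pullback functors
$$\Rep(\on{Weil}^{\on{red}}(X))^{\heartsuit,c}\to \Rep(\on{Weil}^{\on{discr}}(X))^\heartsuit \leftarrow \Rep(\on{Weil-Drinf}(X))^{\heartsuit,c}$$
coincide, that both functors are fully faithful, and that both images are closed under subquotients. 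Density then identifies both groups with the same Tannakian envelope, compatibly with the maps from $\on{Weil}^{\on{discr}}(X)$. Compatibility with the projections to $\BZ$ is automatic, and the isomorphism is canonical since it is characterized by the commutative square.

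First, I describe the left-hand image. By the definition \eqref{e:Weil red}, together with \eqref{e:discr to usual rep} and \propref{p:Gal red}, the finite-dimensional representations of $\on{Weil}^{\on{red}}(X)$ are the finite-dimensional representations of $\on{Weil}^{\on{discr}}(X)$ that satisfy three conditions. They are continuous on $\on{Gal}(X)$, so that the restriction to geometric $\pi_1$ extends to $\on{Gal}^{\on{alg}}(X)$. They are semi-simple on $\on{Gal}(X)$, which is forced by factoring through the reductive quotient. And the Frobenius-twisted structure factors through the pushout: the $\on{Gal}^{\on{alg}}(X)$-action factors through $\on{Gal}^{\on{red,relev}}(X)$, which is automatic by \lemref{l:relev catches all}. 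Since $\BZ$ is discrete, no further condition is imposed on the Frobenius lift; in particular these are exactly the Weil representations whose geometric restriction is semi-simple.

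Second, I recall Drinfeld's definition of $\hat\Pi_\lambda$ in \cite{Dr}. It is the pro-semi-simple completion of $\on{Gal}^{\on{arithm}}(X)$ with respect to the continuous $\ell$-adic representations that are semi-simple, with the caveat that Drinfeld works with representations up to twist by characters of $\wh\BZ$. Base change to $\BZ$ via $\on{Weil-Drinf}(X)$ then yields semi-simple representations of $\on{Weil}^{\on{discr}}(X)$ whose geometric restriction is continuous and semi-simple. Conversely, I need every such representation to arise. Take an irreducible representation of $\on{Weil}(X)$ with continuous, semi-simple geometric restriction. By the argument of \secref{sss:mu i} (cf. \cite{De1} on twisting), after twisting by a character $\chi_\mu$ it extends to a continuous representation of $\on{Gal}^{\on{arithm}}(X)$. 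The twist $\chi_\mu$ is itself a representation of $\on{Weil-Drinf}(X)$ through the factor $\BZ$. Hence the original representation lies in the image.

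The main obstacle is making the Drinfeld side precise. The difficulty is that $\on{Weil-Drinf}(X)$ is defined as a fiber product in a category of pro-semi-simple groups, where morphisms are considered up to conjugation-type coarseness. One has to check that its semi-simple representations on which the kernel acts algebraically are exactly the ones described above, with no missing characters of $\BZ$ and no extra components. I would handle this by showing that the fiber product over $\wh\BZ$ preserves the Tannakian description degree by degree: a representation of $\BZ$ acts through $\wh\BZ$ exactly when its eigenvalues are roots of unity. Combined with the twisting argument above, this gives essential surjectivity.

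Full faithfulness and closure under subquotients are then straightforward. On the left, full faithfulness follows from \eqref{e:discr to usual rep} and Zariski density of $\BZ\to$ the relevant group. On the right, it follows from the density of $\on{Gal}^{\on{arithm}}(X)$ in its algebraic completion.
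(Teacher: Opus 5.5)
Your route differs from the paper's. The paper rewrites both sides as $\BZ\mod\underset{\BZ\mod_{\on{ss},0}}\otimes \qLisse^{\on{Weil,ss},0}(X)$ and $\BZ\mod\underset{\BZ\mod_0}\otimes \qLisse^{\on{Weil},0}(X)$, then reduces to a bijection on isomorphism classes between two semi-simple categories. You instead compare essential images in $\Rep(\on{Weil}^{\on{discr}}(X))^\heartsuit$. That strategy is viable, and your description of the left-hand image (Weil local systems with semi-simple geometric restriction) is correct.

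The gap is on the Drinfeld side. The group $\on{Gal-Drinf}^{\on{arithm}}(X)=\hat\Pi_\lambda$ is the pro-\emph{semi-simple} completion. Its representations are the semi-simple continuous representations of $\on{Gal}^{\on{arithm}}(X)$ whose irreducible constituents have \emph{finite-order determinant*}. This is the paper's $\qLisse^{\on{Weil,ss,fin.det}}(X)$, Tannakian by \cite[Proposition 3.6.1]{Dr}. It is not ``all semi-simple continuous representations up to twist by characters of $\wh\BZ$''. The distinction matters: using all semi-simple continuous representations gives a pro-reductive completion whose kernel over $\wh\BZ$ contains an extra pro-torus. Already for $X=\on{pt}$, its fiber product with $\BZ$ is an extension of $\BZ$ by the torus whose characters are the $\ell$-adic units of $\sfe$ modulo roots of unity, whereas $\on{Weil}^{\on{red}}(\on{pt})=\BZ$.

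Consequently, your essential surjectivity step does not go through as written. Knowing that $V\otimes\chi_\mu$ extends to a continuous representation of $\on{Gal}^{\on{arithm}}(X)$ does not make it a representation of $\on{Gal-Drinf}^{\on{arithm}}(X)$. For instance, a further twist by $\chi_{\mu'}$, with $\mu'$ an $\ell$-adic unit of infinite order, keeps continuity but destroys finiteness of the determinant. You must choose $\mu$ so that the twist has finite-order determinant. This is possible by Deligne's results in \cite[Sect. 1.3]{De2}: an irreducible lisse Weil sheaf is a twist of an \'etale one, and the determinant of an irreducible \'etale lisse sheaf has finite order up to a twist by $b^{\deg}$. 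This is exactly the input hidden in the paper's equivalence $\BZ\mod_{\on{ss},0}\underset{\wh\BZ\mod}\otimes \qLisse^{\on{Weil,ss,fin.det}}(X)\simeq \qLisse^{\on{Weil,ss},0}(X)$.

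There is also a secondary point. For $E$ either of the two groups, the Tannakian envelope of its \emph{semi-simple} representations is an extension of $\BZ^{\on{red}}$, not of $\BZ$. So ``compatibility with the projections to $\BZ$ is automatic'' needs the extra step $E\simeq E^{\on{red}}\underset{\BZ^{\on{red}}}\times \BZ$. That step uses the fact that irreducible representations of the kernel have finite Frobenius orbits. If you instead work with all representations, you must check closure under extensions, not just under subquotients. Finally, the coarseness you worry about plays no role here: over $\sfe$, $\on{Gal-Drinf}^{\on{arithm}}(X)$ is an honest pro-semi-simple group, and the coarse category only enters in the rationality statement.
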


The proposition will be proved in \secref{ss:proof of Drinf}. 

\sssec{}

By \propref{p:Drinf}, we have:
\begin{equation} \label{e:LS arithm 0 again}
\LS^{\on{arithm},0}_\sG(X)\simeq \bMaps_{\on{groups}}(\on{Weil-Drinf}(X),\sG)/\on{Ad}(\sG),
\end{equation}
compatibly with maps to 
$$\bMaps_{\on{groups}}(\on{Weil}^{\on{discr}}(X),\sG)/\on{Ad}(\sG).$$

\medskip

Therefore, for the proof of \thmref{t:rat}, we can further replace
\begin{multline*} 
\on{Exc}(X,\sG)^0 \rightsquigarrow \on{Exc-Drinf}(X,\sG)^0:= \\
=\Gamma(\bMaps_{\on{groups}}(\on{Weil-Drinf}(X),\sG)/\on{Ad}(\sG),\CO_{\bMaps_{\on{groups}}(\on{Weil-Drinf}(X),\sG)/\on{Ad}(\sG)}).
\end{multline*} 

\ssec{Proof of \propref{p:Drinf}} \label{ss:proof of Drinf}

\sssec{}

We will construct an equivalence of symmetric monoidal categories
$$\Rep(\on{Weil-Drinf}(X))\simeq \Rep(\on{Weil}^{\on{red}}(X)) ,$$
compatible with the forgetful functors to $\Rep(\on{Weil}^{\on{discr}}(X))$.

\sssec{}

Let 
$$\qLisse^{\on{Weil,ss},0}(X)^\heartsuit\subset \qLisse^{\on{Weil,0}}(X)^\heartsuit$$
be the full \emph{abelian} subcategory consisting of semi-simple objects, and let
$$\qLisse^{\on{Weil,ss},0}(X):=D(\qLisse^{\on{Weil,ss},0}(X)^\heartsuit).$$

\sssec{}

Let
$$\qLisse^{\on{Weil,ss,fin.det}}(X)\subset \qLisse^{\on{Weil,ss},0}(X)$$
be the full subcategory, we impose the further condition that the irreducible constituents are 
local systems with a finite order determinant. This is a Tannakian category thanks to \cite[Proposition 3.6.1]{Dr}.

\medskip

Unwinding, we obtain:
\begin{equation} \label{e:Weil drinf fin det} 
\Rep(\on{Weil-Drinf}(X))\simeq \BZ\mod\underset{\wh\BZ\mod}\otimes \qLisse^{\on{Weil,ss,fin.det}}(X),
\end{equation} 
where we note that
$$\wh\BZ\mod\simeq \qLisse^{\on{Weil,ss,fin.det}}(\on{pt}).$$

\sssec{}

Note, however, that the naturally defined functor
$$\BZ\mod_{\on{ss},0} \underset{\hat\BZ\mod}\otimes \qLisse^{\on{Weil,ss,fin.det}}(X)\to \qLisse^{\on{Weil,ss},0}(X)$$
is an equivalence,
where
$$\BZ\mod_{\on{ss},0}:=\qLisse^{\on{Weil,ss},0}(\on{pt}).$$

Hence, we can rewrite
$$\Rep(\on{Weil-Drinf}(X))\simeq  \BZ\mod\underset{\BZ\mod_{\on{ss},0}}\otimes \qLisse^{\on{Weil,ss},0}(X).$$

\sssec{}

Note that we have:
\begin{multline} \label{e:Weil red mod} 
\Rep(\on{Weil}^{\on{red}}(X)) \simeq (\Rep(\on{Gal}^{\on{red,relev}}(X)))^{\Frob}\overset{\text{\propref{p:Gal red}}}\simeq \\
\simeq (\Rep(\on{Gal}^{\on{red,relev},0}(X)))^{\Frob}\simeq 
(\qLisse^{\on{relev},0}(X))^{\Frob} \overset{\text{\eqref{e:strange category}}}\simeq 
\BZ\mod \underset{\BZ\mod_0}\otimes \qLisse^{\on{Weil},0}(X). 
\end{multline} 

\medskip

Hence, in order to prove \propref{p:Drinf}, it remains to establish an equivalence
\begin{equation} \label{e:qLisse 0 ss}
\BZ\mod_0\underset{\BZ\mod_{\on{ss},0}}\otimes \qLisse^{\on{Weil,ss},0}(X)\simeq \qLisse^{\on{Weil},0}(X),
\end{equation}
as symmetric monoidal categories, compatibly with the forgetful functors to $\Rep(\on{Weil}^{\on{discr}}(X))$.

\sssec{}

Note that we have a canonically defined functor\footnote{Note that the functor \eqref{e:Dr to all} is no longer fully faithful.}
\begin{equation} \label{e:Dr to all}
\qLisse^{\on{Weil,ss},0}(X)\to \qLisse^{\on{Weil},0}(X).
\end{equation}

The functor \eqref{e:Dr to all} gives rise to a functor $\to$ in \eqref{e:qLisse 0 ss}. Let us show that the latter 
is an equivalence.

\medskip

By \corref{c:eq de-eq W}, it suffices to show that the induced functor
\begin{multline} \label{e:qLisse 0 ss bis}
\Vect_\sfe\underset{\BZ\mod_{\on{ss},0}}\otimes \qLisse^{\on{Weil,ss},0}(X)\simeq 
\Vect_\sfe\underset{\BZ\mod_0}\otimes 
\BZ\mod_0\underset{\BZ\mod_{\on{ss},0}}\otimes \qLisse^{\on{Weil,ss},0}(X)\to \\
\to \Vect_\sfe\underset{\BZ\mod_0}\otimes \qLisse^{\on{Weil},0}(X)\simeq \qLisse^{\on{relev},0}(X)
\end{multline} 
is an equivalence. 

\medskip

However, the latter is obvious: both sides of \eqref{e:qLisse 0 ss bis} are semi-simple, and the functor
\eqref{e:qLisse 0 ss bis} induces a bijection on the set of isomorphism classes of objects. 

\qed[\propref{p:Drinf}]

\begin{rem}

Note that combining the equivalence in \eqref{e:Weil drinf fin det} and that of \propref{p:Drinf}, we 
obtain an equivalence 
\begin{equation} 
\Rep(\on{Weil}^{\on{red}}(X)) \simeq \BZ\mod\underset{\wh\BZ\mod}\otimes \qLisse^{\on{Weil,ss,fin.det}}(X).
\end{equation}

In particular, we obtain that the action of the group $\BZ^{\on{alg,wt}}$ on $\Rep(\on{Weil}^{\on{red}}(X))$ canonically 
factors via
\begin{equation} \label{e:factor Z hat}
\BZ^{\on{alg,wt}}\twoheadrightarrow \pi_0(\BZ^{\on{alg,wt}})\simeq \wh\BZ.
\end{equation} 

Hence, we obtain that the action of $\BZ^{\on{alg,wt}}$ on
$$\LS_\sG^{\on{restr}} \text{ and } \LS_\sG^{\on{arithm}}$$
(which the proof of \thmref{t:exc} reiled on), factors through \eqref{e:factor Z hat}.

\medskip

We note the following:

\medskip

\begin{enumerate}

\item The action of $\wh\BZ$ on $\Gamma(\LS_\sG^{\on{arithm}},\CO_{\LS_\sG^{\on{arithm}}})$ is trivial;

\medskip

\item The action of $\BZ$ on $\LS_\sG^{\on{arithm}}$ is trivial;

\medskip

\item One can show that the action of $\wh\BZ$ on $\LS_\sG^{\on{arithm}}$ is \emph{non}-trivial.

\end{enumerate}

\end{rem}

\ssec{Interlude: the ``motivic" version}

In this subsection we will comment on the relation between the Galois groups appearing above and another object from \cite{Dr}.

\sssec{}

Let $\on{Gal}^{\on{red,arithm}}(X)$ be the semi-simple envelope of the arithmetic Galois group of $X$, i.e., 
$$\on{Gal}^{\on{red,arithm}}(X):= \on{Gal-Drinf}^{\on{arithm}}(X).$$

We have a short exact sequence
$$1\to \on{Gal}^{\on{red,relev}}(X) \to \on{Gal}^{\on{red,arithm}}(X)\to \wh\BZ\to 1.$$

\sssec{}

Consider the pullback
\begin{equation} \label{e:Gal mot}
\on{Gal}^{\on{mot}}(X):=
\on{Gal}^{\on{red,arithm}}(X)\underset{\wh\BZ}\times \BZ^{\on{red,wt}},
\end{equation} 
where $\BZ^{\on{red,wt}}$ is the maximal reductive quotient of $\BZ^{\on{alg,wt}}$. We have
$$\on{Weil}^{\on{red}}(X)\simeq \on{Gal}^{\on{mot}}(X)\underset{\BZ^{\on{red,wt}}}\times \BZ.$$

\medskip

Then $\on{Gal}^{\on{mot}}(X)$ is isomorphic to the group denoted by $\wh\Pi^{\on{mot}}_\lambda$ in \cite[Sect. 6.1]{Dr}.
 
\medskip

Note that we have:
$$\Rep(\on{Gal}^{\on{mot}}(X)) \simeq \Rep(\BZ^{\on{red,wt}})\underset{\Rep(\BZ^{\on{red,0}})}\otimes \qLisse^{\on{Weil,ss},0}(X),$$
where 
$$\Rep(\BZ^{\on{red,0}})\simeq \BZ\mod_{\on{ss},0}.$$

\sssec{}

Consider the stack
$$\LS^{\on{mot}}_\sG(X):=\bMaps_{\on{groups}}(\on{Gal}^{\on{mot}}(X),\sG)/\on{Ad}(\sG).$$

Note, however, that since $\on{Gal}^{\on{mot}}(X)$ is pro-reductive, the stack $\LS^{\on{mot}}_\sG(X)$ is the union
of (infinitely many) connected components of the form $\on{pt}/H$, where $H$ is reductive group. 

\medskip

For every finite sub-union
$$'\!\LS^{\on{mot}}_\sG(X)\subset \LS^{\on{mot}}_\sG(X),$$
set
$$'\!\on{Exc}(X,\sG)^{\on{mot}}:=\Gamma({}'\!\LS^{\on{mot}}_\sG(X),\CO_{'\!\LS^{\on{mot}}_\sG(X)})$$
and 
$$'\!\LS_\sG^{\on{mot,coarse}}(X):=\Spec({}'\!\on{Exc}(X,\sG)^{\on{mot}}).$$

Set
$$\LS_\sG^{\on{mot,coarse}}(X):=\cup\, '\!\LS_\sG^{\on{mot,coarse}}(X)$$

Thus, $\LS_\sG^{\on{mot,coarse}}(X)$ is (an infinite) union of point-schemes.

\begin{rem}

The above definition of $\LS_\sG^{\on{mot,coarse}}(X)$ coincides with one in \cite[Sect. 6.6]{Dr}.

\end{rem}

\sssec{}

The homomorphism
$$\on{Weil}^{\on{red}}(X)\to \on{Gal}^{\on{mot}}(X)$$
gives rise to a map 
\begin{equation} \label{e:mot to arithm}
\LS^{\on{mot}}_\sG(X)\to \LS^{\on{arithm},0}_\sG(X).
\end{equation} 

\medskip

This map is a closed embedding when restricted to every finite union $'\!\LS^{\on{mot}}_\sG(X)$. 

\medskip

This map in turns gives rise to a surjection
\begin{multline*} 
\on{Exc}(X,\sG)\simeq \on{Exc}(X,\sG)^0=\Gamma(\LS^{\on{arithm},0}_\sG(X),\CO_{\LS^{\on{arithm},0}_\sG(X)})\to \\
\to \Gamma({}'\!\LS^{\on{mot}}_\sG(X),\CO_{'\!\LS^{\on{mot}}_\sG(X)})={}'\!\on{Exc}(X,\sG)^{\on{mot}},
\end{multline*}
and hence to a closed embedding
$$'\!\LS_\sG^{\on{mot,coarse}}(X)\hookrightarrow \LS_\sG^{\on{arithm,coarse}}(X).$$

\medskip

These embeddings combine to a map
\begin{equation} \label{e:emb coarse mot}
\LS_\sG^{\on{mot,coarse}}(X)\hookrightarrow \LS_\sG^{\on{arithm,coarse}}(X).
\end{equation}

The image of \eqref{e:emb coarse mot} is the (disjoint) union of $\sfe$-points of $\LS_\sG^{\on{arithm,coarse}}(X)$
that correspond to those semi-simple Weil $\sG$-local systems on $X$ that are \emph{weakly motivic} in sense of \cite[Proposition 6.2.1]{Dr}, i.e. 
for every representation $\sG\to GL_n$, the corresponding object of $\qLisse^{\on{Weil,loc.fin}}(X)^\heartsuit$
belongs to $\qLisse^{\on{Weil,wt}}(X)^\heartsuit$. 

\begin{rem}

The image for the terminology ``weakly motivic" is the following: 

\medskip

By \cite{LLaf}, when $X$ is a curve, every irreducible object in $\qLisse^{\on{Weil,ss,fin.det}}(X)^\heartsuit$, 
up to a half-integral Tate twist, is a direct summand of a local system that appears in the cohomology of a smooth and proper 
direct image $Y\to X$ of the constant sheaf on $Y$. 

\medskip

A further modification of the action of the Frobenius by a Weil number stays motivic, by Honda's theorem. 

\end{rem} 

\sssec{}

Once \thmref{t:rat} is proved, the same argument shows that the (ind)-affine scheme $\LS_\sG^{\on{mot,coarse}}(X)$ is also defined
over $\BQ$, as well as the map \eqref{e:emb coarse mot}. 

\medskip

The image of the map \eqref{e:emb coarse mot} is called the locus of \emph{motivic} local systems.

\begin{rem}

If in \eqref{e:Gal mot} instead of $\BZ^{\on{red,wt}}$ we took $\BZ^{\on{red}}$ 
(the reductive quotient of $\BZ^{\on{alg}}$), we would obtain a moduli stack, whose 
coarse moduli space is still a disjoint union of points, and its image in $\LS_\sG^{\on{arithm,coarse}}(X)$
is now \emph{all} $\sfe$-points of $\LS_\sG^{\on{arithm,coarse}}(X)$.

\medskip

Note, however, that an analog of \thmref{t:rat} does \emph{not} apply here, since $\BZ^{\on{red}}$, unlike $\BZ^{\on{red,wt}}$,
is \emph{not} defined over $\BQ$.

\end{rem}

\begin{rem}

An intermediate option is, instead of $\BZ^{\on{red,wt}}$ or $\BZ^{\on{red}}$, to take the diagonalizable group, 
whose characters are \emph{integral units} in $\sfe$. In this way, we obtain the pro-reductive group that is denoted $\wh\Pi^{\on{red}}_\lambda$ in 
\cite[Sect. 3.5]{Dr}.

\medskip

Note the resulting coarse moduli space is still \emph{not} defined over $\BQ$, for the same reason as for $\BZ^{\on{red}}$. 

\end{rem}

\ssec{Drinfeld's category}

\sssec{} \label{sss:ext pro Red}

For a field $\bk$ of characteristic $0$, let $\on{Pro-AlgGrp}(\bk)$ be the category of affine group-schemes
(a.k.a., pro-algebraic groups) over $\bk$. 

\medskip

Let $\on{Pro-AlgGrp}^\coarse(\bk)$ denote the (ordinary) category of group schemes over $\bk$ ``up to conjugation by their neutral components''. 
So the objects of $\on{Pro-AlgGrp}^\coarse(k)$ are the same as those of $\on{Pro-AlgGrp}(\bk)$, and 
\[\Hom_{\on{Pro-AlgGrp}^\coarse(\bk)}(G, H) = \Hom_{\on{Pro-AlgGrp}^\coarse(\bk)}(G, H)/H^\circ(\bk)\]
where an element $h \in H^\circ(\bk)$ acts on $\Hom_{\on{Pro-AlgGrp}(\bk)}(G, H)$ by  
\[f \mapsto \Ad_h \circ f.\]

\sssec{}

Let 
$$\on{Pro-ss}(\bk)\subset \on{Pro-red}(\bk)\subset \on{Pro-AlgGrp}(\bk)$$ and 
$$\on{Pro-ss}^\coarse(\bk)\subset \on{Pro-red}^\coarse(\bk)\subset \on{Pro-AlgGrp}^\coarse(\bk)$$
be the full subcategories, where we restrict objects to be pro-semi simple (resp., pro-reductive) 
algebraic groups. 

\medskip

Note that $\on{Pro-ss}^\coarse(\bk)$ and $\on{Pro-red}^\coarse(\bk)$ are the same categories that 
have been introduced in  \cite[Sect. 1.2.3]{Dr} (NB:  in {\it loc. cit.} they are denoted $\on{Pro-ss}(\bk)$ 
and $\on{Pro-red}(\bk)$, respectively). 

\medskip

In this subsection we will only need the underlying groupoids of $\on{Pro-ss}^\coarse(\bk)$ and $\on{Pro-red}^\coarse(\bk)$,
which we denote by the same symbol, by a slight abuse of notation. 

\sssec{}

Let $\on{Pro-red-ext}^{\on{coarse}}(\bk)$ be a similarly defined groupoid, where we consider extensions 
\begin{equation} \label{e:ext pro Red}
1\to H\to H'\to \BZ\to 1,
\end{equation} 
where $H$ is a pro-reductive group.

\sssec{}

As is explained in \cite{Dr}, to an object $H\in \on{Pro-red}^\coarse(\bk)$, we can attach a well-defined scheme
$$H/\!/\on{Ad}(H^\circ),$$
where $H^\circ$ is the connected component of $H$. 

\medskip

Similarly, to an object $H'\in \on{Pro-red-ext}^{\on{coarse}}(\bk)$, we can attach a well-defined scheme
$$H'/\!/\on{Ad}(H^\circ).$$

\sssec{}

Consider the functor from $\on{Pro-red-ext}(\bk)$ to the category of (possibly infinite unions of) affine schemes
\begin{equation}  \label{e:coarse}
H'\mapsto  \bMaps_{\on{groups}}(H',\sG)/\!/\on{Ad}(\sG),
\end{equation} 

\begin{lem} \label{l:coarse}
The functor \eqref{e:coarse} factors via a functor
$$\on{Pro-red-ext}^{\on{coarse}}(\bk)\to \affSch_{/\bk}.$$
\end{lem}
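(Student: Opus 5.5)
The plan is to show that the functor \eqref{e:coarse} sends morphisms which become equal in $\on{Pro-red-ext}^{\on{coarse}}(\bk)$ to equal maps of affine schemes. Here it is covariant on objects and contravariant on mapping spaces, so ``morphism'' means precomposition. Concretely, let $f_1,f_2:H'_1\to H'_2$ be two morphisms of extensions \eqref{e:ext pro Red} with $f_2=\on{Ad}_h\circ f_1$ for some $h\in H_2^\circ(\bk)$. I need to show that the two induced maps
$$\bMaps_{\on{groups}}(H'_2,\sG)/\!/\on{Ad}(\sG)\rightrightarrows \bMaps_{\on{groups}}(H'_1,\sG)/\!/\on{Ad}(\sG)$$
coincide. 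The only point is that precomposition with $\on{Ad}_h$ is, on the level of maps to $\sG$, conjugation by the image of $h$, which varies with the map. So it is not a single element of $\sG$, but it should still be ``homotopic to the identity'' on invariant functions because $H_2^\circ$ is connected.

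First, I write $\bMaps_{\on{groups}}(H'_2,\sG)$ as a (possibly infinite disjoint union of) affine scheme(s) $M$. Each connected component is $\sG$-stable, and the GIT quotient is taken componentwise. The group scheme $H_2^\circ$ acts on $M$ by $\rho\mapsto \rho\circ \on{Ad}_h$, and this action commutes with the $\sG$-action. The key identity is
$$\rho\circ \on{Ad}_h=\on{Ad}_{\rho(h)}\circ \rho,$$
so the orbit map $a:H_2^\circ\times M\to M$, $(h,\rho)\mapsto \rho\circ\on{Ad}_h$, factors as $(h,\rho)\mapsto \on{Ad}_{\rho(h)}(\rho)$. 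Consequently, for any $\on{Ad}(\sG)$-invariant function $F$ on $M$, we have $a^*F=\on{pr}_2^*F$ as functions on $H_2^\circ\times M$. This holds scheme-theoretically, not just on $\bk$-points: using the map $H_2^\circ\times M\to \sG\times M$, $(h,\rho)\mapsto(\rho(h),\rho)$, it reduces to $\sG$-invariance of $F$, which is the identity $act^*F=\on{pr}_2^*F$ on $\sG\times M$. Specializing at $h\in H_2^\circ(\bk)$ then gives $F(\rho\circ\on{Ad}_h)=F(\rho)$. Notably, connectedness of $H_2^\circ$ is not even needed for this step; any $h\in H_2(\bk)$ would do, which suggests the lemma is quite robust.

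It remains to check that precomposition by $f_2$ equals precomposition by $\on{Ad}_h\circ f_1$, i.e.\ $\rho\mapsto (\rho\circ \on{Ad}_h)\circ f_1$. The pullback of invariant functions along $f_2^*$ therefore equals the pullback along $f_1^*$ composed with the automorphism $\rho\mapsto\rho\circ\on{Ad}_h$ of $M$, and the latter acts trivially on $\CO(M)^{\sG}$ by the preceding paragraph. I also need $f_i^*$ to respect the decomposition into components and to be $\sG$-equivariant, which is clear.

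The main obstacle I expect is bookkeeping rather than substance. The mapping stack from a pro-algebraic group is an infinite union of affine schemes, and I need $/\!/\on{Ad}(\sG)$ to be defined as the union of the GIT quotients of the finite sub-unions, so that the identity of invariant functions above passes to the union. It is enough to verify this on each quasi-compact piece, which is affine, where the argument above applies verbatim. The extension structure (the map to $\BZ$) plays no role beyond being preserved by $f_1,f_2$ and by $\on{Ad}_h$, since $h$ lies in $H_2$.
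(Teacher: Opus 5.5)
Your proof is correct. The paper states Lemma~\ref{l:coarse} without proof, and your argument supplies exactly the justification it implicitly relies on.

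The key identity is $\rho\circ\on{Ad}_h=\on{Ad}_{\rho(h)}\circ\rho$. This factors precomposition with $\on{Ad}_h$ through the action map $\sG\times M\to M$ via $\rho\mapsto(\rho(h),\rho)$. It therefore fixes $\sG$-invariant functions scheme-theoretically, so $f_1$ and $\on{Ad}_h\circ f_1$ induce the same map on coarse quotients, componentwise.

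Your side remark that any $h\in H_2(\bk)$ would do is also right. Since the argument only uses invariance of functions, it is consistent with the paper's remark after Corollary~\ref{c:coarse} that the stack-valued functor does not factor.
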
 

\begin{cor} \label{c:coarse}
The functor 
\begin{equation}  \label{e:coarse alf}
H'\mapsto \on{Exc}(H',\sG):=
\Gamma(\bMaps_{\on{groups}}(H',\sG)/\on{Ad}(\sG),\CO_{\bMaps_{\on{groups}}(H',\sG)/\on{Ad}(\sG)})
\end{equation} 
from the category of extensions \eqref{e:ext pro Red} to $\on{ComAlg}(\Vect_{\bk})$ factors via a functor
\begin{equation}  \label{e:coarse fact}
\on{Pro-red-ext}^{\on{coarse}}(\bk)\to \on{ComAlg}(\Vect_{\bk}).
\end{equation} 
\end{cor}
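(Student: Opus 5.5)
The plan is to show that the functor \eqref{e:coarse} depends only on the isomorphism class of $H'$ in $\on{Pro-red-ext}^{\on{coarse}}(\bk)$. Concretely: two morphisms $f_1,f_2:H'_1\to H'_2$ of extensions \eqref{e:ext pro Red} that differ by $\Ad_h$, with $h\in H_2^\circ(\bk)$, must induce the same map
$$\bMaps_{\on{groups}}(H'_2,\sG)/\!/\on{Ad}(\sG)\to \bMaps_{\on{groups}}(H'_1,\sG)/\!/\on{Ad}(\sG).$$
Once this is established, the functor descends automatically. Indeed, the coarse morphism sets are quotients of the ordinary ones by the action of $H_2^\circ(\bk)$, so functoriality of the descended assignment is inherited.

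The first step is to rewrite the two induced maps. Precomposition with $f_2=\Ad_h\circ f_1$ on $\bMaps_{\on{groups}}(H'_2,\sG)$ equals precomposition with $f_1$ following the automorphism $c_h:\rho\mapsto \rho\circ \Ad_h$ of $\bMaps_{\on{groups}}(H'_2,\sG)$. So it suffices to show that $c_h$ acts trivially on the algebra of $\sG$-invariant functions $\CO_{\bMaps_{\on{groups}}(H'_2,\sG)}^{\sG}$.

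The second step is to use connectedness, which is the key point. The elements $h\in H_2^\circ$ give an action of the connected pro-algebraic group $H_2^\circ$ on the affine scheme (or union of affine schemes) $\bMaps_{\on{groups}}(H'_2,\sG)$, given by $(h,\rho)\mapsto \rho\circ\Ad_h$. Since $\rho\circ\Ad_h=\Ad_{\rho(h)}\circ\rho$, we have $c_h(\rho)=\Ad_{\rho(h)}(\rho)$. Hence, for any $\sG$-invariant function $F$,
$$F(c_h(\rho))=F(\rho).$$
This identity holds pointwise on $R$-points, for any $\bk$-algebra $R$ and any $h\in H_2^\circ(R)$. In particular, it holds scheme-theoretically for the universal point, so $c_h^*F=F$. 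Note that this argument in fact does not need connectedness. Connectedness only matters for the groupoid structure, namely for the requirement that $h$ lie in $H_2$ rather than in $H'_2$, so that the map to $\BZ$ is preserved. This also gives the scheme-level statement: the map on coarse quotients is well-defined, since invariant functions are computed as the limit over finite-type quotients of $H'_2$, where $\bMaps$ is a genuine affine scheme and $/\!/$ is $\Spec$ of invariants.

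The last step, the only mildly delicate one, is to handle the pro-structure of $H'$. I write $H'_2$ as a limit of extensions of $\BZ$ by reductive groups $H_{2,\beta}$, identify $\bMaps_{\on{groups}}(H'_2,\sG)$ as the corresponding colimit (union of components), and check that the identity above is compatible with the transition maps. It is, since $h$ maps compatibly to each $H_{2,\beta}^\circ$. Corollary \ref{c:coarse} then follows by taking global functions, using that for a reductive $\sG$ the global functions on the stack quotient equal the $\sG$-invariants.
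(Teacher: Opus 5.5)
Your argument is correct and follows the paper's route. The paper states Lemma~\ref{l:coarse} (the factorization of $H'\mapsto \bMaps_{\on{groups}}(H',\sG)/\!/\on{Ad}(\sG)$ through the coarse groupoid) without proof and obtains the corollary by passing to functions; your identity $\rho\circ\on{Ad}_h=\on{Ad}_{\rho(h)}\circ\rho$, which gives $F(\rho\circ\on{Ad}_h)=F(\rho)$ for $\sG$-invariant $F$, is exactly the omitted proof of that lemma.

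One slip in your side remark: conjugation by any element of $H'_2$ already preserves the projection to the abelian group $\BZ$. So restricting to $h\in H_2^\circ$ is just part of the definition of the coarse category, not something forced by compatibility with $\BZ$.
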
 

\begin{rem}

Note that the functor 
$$H'\mapsto \bMaps_{\on{groups}}(H',\sG)/\on{Ad}(\sG)$$
with values in stacks does \emph{not} factor via $\on{Pro-red-ext}^{\on{coarse}}(\bk)$.

\end{rem} 

\sssec{}

Recall now that according to \cite[Sect. 1.2.3]{Dr}, for a map of algebraically closed fields $\bk_1\to \bk_2$,
the functor
$$\on{Pro-red-ext}^{\on{coarse}}(\bk_1)\to \on{Pro-red-ext}^{\on{coarse}}(\bk_2)$$
is an equivalence.

\medskip

In particular, the groupoid $\on{Pro-red-ext}^{\on{coarse}}(\ol\BQ)$ carries a canonical action of $\on{Gal}(\ol\BQ/\BQ)$. Hence,
for an object of $\on{Pro-red-ext}^{\on{coarse}}(\ol\BQ)$, it makes sense to talk about structure of $\on{Gal}(\ol\BQ/\BQ)$-equivariance
on it. 

\sssec{}

We now recall the main result of the paper \cite{Dr}, namely, Theorem 1.4.1 in {\it loc. cit.}:

\begin{thm} \label{t:Dr}  
There exists a canonically defined object $\on{Gal-Drinf}^{\on{arithm}}(X)_{\ol\BQ}\in \on{Pro-red}^{\on{coarse}}(\ol\BQ)$, equipped with a structure of
$\on{Gal}(\ol\BQ/\BQ)$-equivariance, such that for any $\ell\neq \on{char}(k)$ and an embedding
$\ol\BQ\to \sfe$, 
the image of $\on{Gal-Drinf}^{\on{arithm}}(X)_{\ol\BQ}$ in $\on{Pro-red}^{\on{coarse}}(\sfe)$ identifies with the class of the extension 
$\on{Gal-Drinf}^{\on{arithm}}(X)$.\footnote{Strictly speaking, the result in \cite{Dr} concerns lisse sheaves on $X$ with no bound on their ramification at infinity. However, using \cite[Theorem 9.8]{De1half} as in \cite[Section 4]{EK}, one can show that the result also holds for lisse sheaves with bounded ramification.}
\end{thm}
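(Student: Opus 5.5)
This statement is \cite[Theorem 1.4.1]{Dr}, and we do not intend to reprove it. The plan is to recall the structure of Drinfeld's argument, and then check that it survives the change of conventions of \secref{sss:ram}, i.e.\ bounding the ramification at infinity.

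\textbf{Step 1 (input from Langlands for $GL_n$).} Let $\CF$ be an irreducible lisse $\ol\BQ_\ell$-sheaf on $X_0$ with finite order determinant.
\begin{itemize}
\item By \cite{LLaf}, the Frobenius traces $\Tr(\Frob_{x'},\CF)$ lie in a number field $E\subset\ol\BQ$.
\item Again by \cite{LLaf} (with \cite{De3}), for every $\ell'\neq p$ and every isomorphism $\ol\BQ_\ell\supset\ol\BQ\subset\ol\BQ_{\ell'}$, there is an $\ell'$-adic companion.
\item Likewise, for every $\tau\in\on{Gal}(\ol\BQ/\BQ)$ there is a $\tau$-conjugate companion.
\end{itemize}
Hence the set of trace data, namely functions $|X|\to\ol\BQ$ arising from objects of $\qLisse^{\on{Weil,ss,fin.det}}(X)^\heartsuit$, is independent of $\ell$. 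It is stable under $\on{Gal}(\ol\BQ/\BQ)$, under $\oplus$, under $\otimes$, and under passage to irreducible summands. The last point uses Chebotarev together with the linear independence of traces of pairwise non-isomorphic irreducibles, i.e.\ the argument of \cite{Laum} already used in the proof of Property (i').

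\textbf{Step 2 (Drinfeld's reconstruction).} A pro-semi-simple group equipped with a map to $\wh\BZ$, taken up to conjugation by the neutral component, is determined by a combinatorial datum: the ``tensor structure'' of its irreducible representations. Concretely, one records which triples of irreducibles satisfy $\Hom(V_1\otimes V_2,V_3)\neq0$, together with the multiplicities. For $\on{Gal-Drinf}^{\on{arithm}}(X)$ these multiplicities are computed from the trace data via Chebotarev. Drinfeld shows (\cite[Sect.~1.2.3]{Dr}) that this datum recovers the object of $\on{Pro-red}^{\on{coarse}}$ over any algebraically closed field of characteristic zero, and that $\on{Pro-red}^{\on{coarse}}(\ol\BQ)\to\on{Pro-red}^{\on{coarse}}(\sfe)$ is an equivalence. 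By Step 1 the datum is defined over $\ol\BQ$, independent of $\ell$, and $\on{Gal}(\ol\BQ/\BQ)$-equivariant. This produces both $\on{Gal-Drinf}^{\on{arithm}}(X)_{\ol\BQ}$ and its equivariance structure. The genuinely deep step is the existence of companions; this is exactly why the theorem is not elementary, and it is the step we import wholesale.

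\textbf{Step 3 (bounded ramification).} This is the only point not literally contained in \cite{Dr}, and it is the main thing to check. We need the companion correspondences of Step 1 to preserve the chosen bound on ramification along the boundary divisors. The plan follows \cite[Sect.~4]{EK}: by \cite[Theorem 9.8]{De1half}, the ramification bound of a lisse sheaf is controlled by its restrictions to curves. On curves, the Swan conductors at infinity are read off from the $L$-functions, hence from the trace data. Companions share trace data, so they share the same ramification bound. Consequently the subcategory with bounded ramification is cut out by a condition that is independent of $\ell$ and $\on{Gal}(\ol\BQ/\BQ)$-stable. Running Step 2 for this subcategory yields the asserted rational object.
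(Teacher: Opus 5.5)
Your proposal takes the same route as the paper, which also does not reprove the statement: it imports \cite[Theorem 1.4.1]{Dr} and treats bounded ramification only in a footnote, via \cite[Theorem 9.8]{De1half} as in \cite[Section 4]{EK}, which is the route of your Step 3. Two minor imprecisions, neither load-bearing: first, in Step 2 the tensor multiplicities of the irreducibles of $G$ do not by themselves determine $G$ in $\on{Pro-ss}^{\on{coarse}}$ (the finite groups $D_4$ and $Q_8$ have the same ones), and the data Drinfeld actually uses also involve the profinite group $\pi_0$ and $K^+$ of the preimages of all its open subgroups, i.e.\ trace data on all finite \'etale covers (cf.\ \lemref{lem:surj-maps-equal-coarse}); second, in Step 3 a single $L$-function on a curve only gives the total Swan conductor, so isolating the local Swan conductors at infinity requires twisting by ramified characters or using local constants.
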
 

As a formal corollary, we obtain: 

\begin{cor} \label{c:Dr}
There exists an object $\on{Weil-Drinf}(X)_{\ol\BQ}\in \on{Pro-red-ext}^{\on{coarse}}(\ol\BQ)$, equipped with a structure of
$\on{Gal}(\ol\BQ/\BQ)$-equivariance, such that for any $\ell\neq \on{char}(k)$ and an embedding $\ol\BQ\to \sfe$, 
the image of $\on{Weil-Drinf}(X)_{\ol\BQ}$ in $\on{Pro-red-ext}^{\on{coarse}}(\sfe)$ identifies with the class of the extension 
$\on{Weil-Drinf}(X)$.
\end{cor}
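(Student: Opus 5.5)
The plan is to deduce \corref{c:Dr} from \thmref{t:Dr} by an explicit pullback construction. By definition,
$$\on{Weil-Drinf}(X)=\on{Gal-Drinf}^{\on{arithm}}(X)\underset{\wh\BZ}\times \BZ,$$
so I would set
$$\on{Weil-Drinf}(X)_{\ol\BQ}:=\on{Gal-Drinf}^{\on{arithm}}(X)_{\ol\BQ}\underset{\wh\BZ}\times \BZ,$$
where $\wh\BZ$ is regarded as a pro-finite (hence pro-reductive) group over $\ol\BQ$. The proof then consists of three checks: that this pullback is well defined at the level of the coarse groupoids, that it carries a Galois-equivariance structure, and that it specializes correctly to each $\sfe$.

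The first step is to make sense of the map to $\wh\BZ$ on the rational model. \thmref{t:Dr} only provides $\on{Gal-Drinf}^{\on{arithm}}(X)_{\ol\BQ}$ as an object of $\on{Pro-red}^{\on{coarse}}(\ol\BQ)$. However, $\wh\BZ$ is pro-finite, so $(\wh\BZ)^\circ$ is trivial, and a morphism to it in the coarse category is an honest homomorphism. Moreover, conjugating by an element of $H^\circ$ does not change the composite map $H\to\wh\BZ$.

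So I would use the strong form of Drinfeld's theorem \cite[Theorem 1.4.1]{Dr}: the rational object comes equipped with an equivariant map to $\wh\BZ$ (the arithmetic-to-$\wh\BZ$ projection, which is defined over $\BQ$ since $\wh\BZ$ is a constant pro-finite group). Equivalently, one can say it is an object of the coarse category over $\wh\BZ$. With this in hand, the fiber product with $\BZ\to\wh\BZ$ is compatible with coarse morphisms. Indeed, a conjugation by $h\in H^\circ$ lifts to the pullback $H'$, whose kernel $H$ has the same neutral component. Hence pullback defines a functor
$$\on{Pro-red}^{\on{coarse}}_{/\wh\BZ}(\bk)\to\on{Pro-red-ext}^{\on{coarse}}(\bk).$$

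The second step is Galois equivariance and base change. The pullback functor above commutes with extension of scalars $\bk_1\to\bk_2$ and with the action of $\on{Gal}(\ol\BQ/\BQ)$, because $\BZ\to\wh\BZ$ is defined over $\BQ$ and is Galois-fixed. Transporting the equivariance structure of \thmref{t:Dr} along this functor gives the equivariance structure on $\on{Weil-Drinf}(X)_{\ol\BQ}$. Its image in $\on{Pro-red-ext}^{\on{coarse}}(\sfe)$ is then the pullback of the image of $\on{Gal-Drinf}^{\on{arithm}}(X)_{\ol\BQ}$, which is $\on{Gal-Drinf}^{\on{arithm}}(X)\underset{\wh\BZ}\times\BZ=\on{Weil-Drinf}(X)$.

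The main obstacle is the first step: checking that the identification in \thmref{t:Dr} is compatible with the maps to $\wh\BZ$, i.e. that the isomorphism in the coarse category respects the projections. I would verify this by noting that the projection to $\wh\BZ=\pi_0$-type quotient is intrinsic: it corresponds to the Tannakian subcategory of local systems pulled back from $\on{Spec}(\BF_q)$, which Drinfeld's construction matches across $\ell$. Since $\wh\BZ$ has no neutral component, this compatibility is a property rather than extra data, so checking it on $\pi_0$-quotients should suffice.
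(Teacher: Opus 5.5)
Your proposal is correct and is the same argument the paper intends: the paper states \corref{c:Dr} without proof, as a formal consequence of \thmref{t:Dr} obtained by pulling Drinfeld's rational model back along $\BZ\to\wh\BZ$. Your checks are exactly what make this formal: morphisms to $\wh\BZ$ in the coarse category are honest homomorphisms since $(\wh\BZ)^\circ=1$; the kernel of the pullback has the same neutral component, so coarse isomorphisms lift; and the construction commutes with base change and with the Galois action. The compatibility with the projection to $\wh\BZ$, which you flag as the main obstacle, is part of the input, since \thmref{t:Dr} is phrased for the extension $\on{Gal-Drinf}^{\on{arithm}}(X)\to\wh\BZ$; it is also forced by the Galois-invariance of Frobenius elements together with Chebotarev density, as the paper itself uses in the proof of \propref{prop:gal-continuity}.
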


\sssec{}

Let $x'$ be a closed point of $X$, and let $\Frob_{x'}\in \on{Weil}^{\on{discr}}(X)$ be the conjugacy class
of the corresponding Frobenius element. Let $\ol\Frob_{x',\sfe}$ denote the image of $\Frob_{x'}$ under the composition
\begin{multline*} 
\on{Weil}^{\on{discr}}(X)/\on{Ad}(\on{Weil}^{\on{discr}}(X))\to 
\on{Weil-Drinf}(X)/\on{Ad}(\on{Weil-Drinf}(X))\to \\
\to \on{Weil-Drinf}(X)/\!/\on{Ad}(\on{Weil-Drinf}(X)).
\end{multline*} 

\sssec{}

Another assertion of \cite[Theorem 1.4.1]{Dr} reads:

\begin{thm} \label{t:Dr bis}
For any $x'$ as above, there exists a $\on{Gal}(\ol\BQ/\BQ)$-invariant element 
$$\ol\Frob_{x',\ol\BQ}\in \on{Weil-Drinf}(X)_{\ol\BQ}/\!/\on{Ad}(\on{Weil-Drinf}(X)_{\ol\BQ})$$
such that for any embedding $\ol\BQ\to \sfe$ its image under
$$\on{Weil-Drinf}(X)_{\ol\BQ}/\!/\on{Ad}(\on{Weil-Drinf}(X)_{\ol\BQ})\to \on{Weil-Drinf}(X)/\!/\on{Ad}(\on{Weil-Drinf}(X))$$
equals $\ol\Frob_{x',\sfe}$.
\end{thm}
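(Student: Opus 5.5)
The statement is a reformulation of the second half of \cite[Theorem 1.4.1]{Dr}, so the plan is to translate Drinfeld's formulation into the language of $\on{Weil-Drinf}(X)$ rather than reprove anything. Drinfeld's assertion concerns the pro-semi-simple group $\on{Gal-Drinf}^{\on{arithm}}(X)$ and the quotient by its neutral component. It provides, for each closed point $x'$, a $\on{Gal}(\ol\BQ/\BQ)$-invariant point
$$\ol\Frob^{\on{Gal}}_{x',\ol\BQ}\in \on{Gal-Drinf}^{\on{arithm}}(X)_{\ol\BQ}/\!/\on{Ad}(\on{Gal-Drinf}^{\on{arithm}}(X)^\circ_{\ol\BQ}).$$
For every $\ell$ and every embedding $\ol\BQ\to\sfe$, this point specializes to the image of the Frobenius conjugacy class. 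As in the footnote to \thmref{t:Dr}, I would first note that the bounded-ramification version follows by the same argument, using \cite[Theorem 9.8]{De1half} as in \cite[Section 4]{EK}.

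Next, I would relate the two coarse quotients. Set $H:=\on{Gal-Drinf}^{\on{arithm}}(X)$ and $H':=\on{Weil-Drinf}(X)=H\underset{\wh\BZ}\times\BZ$. Then $H'$ is the disjoint union over $n\in\BZ$ of the fibers $H'_n$, and each $H'_n$ maps isomorphically onto the fiber $H_{n}$ of $H$ over $n\in\wh\BZ$. This identification is equivariant for $\on{Ad}(H^\circ)=\on{Ad}(H'^\circ)$. Hence $H'/\!/\on{Ad}(H'^\circ)$ is the union of the schemes $H_n/\!/\on{Ad}(H^\circ)$, and $H'/\!/\on{Ad}(H')$ receives a canonical map from it. These constructions are functorial in the groupoid $\on{Pro-red-ext}^{\on{coarse}}$: conjugation by the neutral component acts trivially on quotients by $\on{Ad}(H^\circ)$. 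Therefore they are compatible with the base-change equivalences $\on{Pro-red-ext}^{\on{coarse}}(\ol\BQ)\simeq\on{Pro-red-ext}^{\on{coarse}}(\sfe)$, and with the $\on{Gal}(\ol\BQ/\BQ)$-equivariance structure of \corref{c:Dr}.

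Then I would define $\ol\Frob_{x',\ol\BQ}$ as follows. Since $\Frob_{x'}$ maps to $\deg(x')\in\BZ$, Drinfeld's element lies in the component $H_{\deg(x')}/\!/\on{Ad}(H^\circ)$. Transport it to $H'_{\deg(x')}/\!/\on{Ad}(H'^\circ)$ and push forward to $H'/\!/\on{Ad}(H')$. Galois invariance and the specialization property are inherited from Drinfeld's statement by the functoriality just described. The diagram comparing $\on{Weil}^{\on{discr}}(X)$ with $H$ and $H'$ commutes, by the construction of \eqref{e:discr to Drinf}.

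The main obstacle is bookkeeping in the coarse categories. One must check that quotients by $\on{Ad}$ of the neutral component are genuinely well-defined on isomorphism classes in $\on{Pro-red-ext}^{\on{coarse}}$, including their Galois descent data. One must also check that passing from $\wh\BZ$ to $\BZ$ commutes with Drinfeld's rational model. For the latter, I would argue componentwise over $n\in\BZ$, using that the finite quotient $\pi_0$ is preserved under change of algebraically closed field.
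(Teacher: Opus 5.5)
Your proposal is correct and follows the paper, which gives no separate argument: it presents this as part of \cite[Theorem 1.4.1]{Dr} and leaves implicit the formal passage from $\on{Gal-Drinf}^{\on{arithm}}(X)/\!/\on{Ad}(\on{Gal-Drinf}^{\on{arithm}}(X)^\circ)$ to the full adjoint quotient of $\on{Weil-Drinf}(X)$, which is exactly the translation you spell out. One small correction: in Drinfeld's formulation the Galois-fixed points of $\hat\Pi/\!/\on{Ad}(\hat\Pi^\circ)$ are attached to Frobenius \emph{elements}, equivariantly for the action of $\pi_0=\Pi$, not to closed points; so you should fix a representative of the conjugacy class $\Frob_{x'}$ and note that the choice disappears after pushing forward to $\on{Weil-Drinf}(X)/\!/\on{Ad}(\on{Weil-Drinf}(X))$, which your construction already does.
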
 

\ssec{Proof of \thmref{t:rat}}

\sssec{}

Note that for a map of algebraically closed fields $\bk_1\to \bk_2$, the diagram
$$
\CD
\on{Pro-red-ext}^{\on{coarse}}(\bk_1) @>{\text{\eqref{e:coarse fact}}}>>  \on{ComAlg}(\Vect_{\bk_1}) \\
@VVV @VVV \\
\on{Pro-red-ext}^{\on{coarse}}(\bk_2) @>{\text{\eqref{e:coarse fact}}}>>  \on{ComAlg}(\Vect_{\bk_2})
\endCD
$$
commutes.

\medskip

Combined with \corref{c:Dr}, we obtain that 
$$\on{Exc-Drinf}(X,\sG)^0_{\ol\BQ}\in \on{ComAlg}(\Vect_{\ol\BQ}^\heartsuit)$$
carries a structure of $\on{Gal}(\ol\BQ/\BQ)$-equivariance.

\sssec{}

Furthermore, for a representation $V$ of $\sG$ defined over $\BQ$, the natural transformation of functors 
$$(H\mapsto H'/\!/\on{Ad}(H^\circ)) \Rightarrow (H\mapsto \on{Exc}(H',\sG)),$$
given by
$$H'/\!/\on{Ad}(H^\circ) \times \bMaps_{\on{groups}}(H',\sG)/\on{Ad}(\sG)\to \sG/\!/\on{Ad}(\sG) \overset{\on{Tr(-,V)}}\to \BA^1$$
is also compatible with maps between algebraically closed fields. 

\medskip

Combined with \thmref{t:Dr bis}, this implies that for any $x'\in X$, and $V\in \Rep(\sG)^{\heartsuit,c}$ defined over $\BQ$,
the image of the element $\on{H}(V,x')$ under
$$\on{Exc}(X,\sG)\to  \on{Exc}(X,\sG)^0 \simeq \on{Exc-Drinf}(X,\sG)^0$$
comes from a $\on{Gal}(\ol\BQ/\BQ)$-invariant element in $\on{Exc-Drinf}(X,\sG)^0_{\ol\BQ}$.

\sssec{}

Thus, we obtain that in order to prove \thmref{t:rat}, it remains to establish the following:

\begin{thm} \label{t:cont}
The action of $\on{Gal}(\ol\BQ/\BQ)$ on $\on{Exc-Drinf}(X,\sG)^0_{\ol\BQ}$ is continuous.
\end{thm}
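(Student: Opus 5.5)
We need continuity of the $\on{Gal}(\ol\BQ/\BQ)$-action on $\on{Exc-Drinf}(X,\sG)^0_{\ol\BQ}$: every element should have an open stabilizer, i.e. be fixed by $\on{Gal}(\ol\BQ/E)$ for some number field $E$. By \thmref{t:Hecke}(a), $\on{Exc}(X,\sG)$ is finitely generated as a module over $\CH_{x'}(\sG)$ for a fixed closed point $x'$. Transported to the $\ol\BQ$-model, this makes $\on{Exc-Drinf}(X,\sG)^0_{\ol\BQ}$ a finitely generated algebra. So it suffices to find finitely many algebra generators, each with an open stabilizer. The plan is to reduce to finitely many generators and then bound their fields of definition using the rational Hecke elements.

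\textbf{Step 1: finiteness over the Hecke algebra.} By the preceding subsection, the images of $\on{H}(V,x')$ for $V$ defined over $\BQ$ are $\on{Gal}(\ol\BQ/\BQ)$-invariant. Hence the homomorphism
$$\CH_{x',\ol\BQ}(\sG)=\ol\BQ\underset{\BQ}\otimes \CO_{\sG/\!/\on{Ad}(\sG),\BQ}\to \on{Exc-Drinf}(X,\sG)^0_{\ol\BQ}$$
is Galois-equivariant for the natural semilinear action on the source. Finiteness over $\ol\BQ$ follows from finiteness over $\sfe$, since $\sfe\otimes_{\ol\BQ}(-)$ is faithfully flat and the base change identifies the two maps.

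\textbf{Step 2: the finite set of points of a fiber.} Decompose by connected components. There are finitely many components, because $\LS^{\on{arithm}}_\sG(X)$ has finitely many by \cite{EK}. The Galois action permutes these components, so each component has an open stabilizer, and we may pass to a single component $\alpha$ and its stabilizer.

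For that component, take the fiber over a $\BQ$-rational point $c$ of $\sG/\!/\on{Ad}(\sG)$. This fiber is a finite scheme over $\ol\BQ$, and the Galois group acts on its finitely many $\ol\BQ$-points, so an open subgroup fixes all of them.

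\textbf{Step 3: lattice and rigidity.} The goal is to conclude that an open subgroup fixes the whole algebra $A_\alpha:=\on{Exc-Drinf}(X,\sG)^0_{\ol\BQ,\alpha}$. The algebra $A_\alpha$ is a normal domain, finite over the $\BQ$-defined algebra $B:=\CH_{x',\ol\BQ}$. Choose finitely many generators $a_1,\dots,a_m$ of $A_\alpha$ over $B$, and let $\sigma$ be a Galois element. Then each $\sigma(a_i)$ is integral over $B$ and satisfies the $\sigma$-conjugate of the characteristic polynomial of $a_i$ over $\on{Frac}(B)$.

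To get a handle on the action, first take a finitely generated subfield $E$ over which a $B_\BQ$-lattice $A_E\subset A_\alpha$ is stable under $\on{Gal}(\ol\BQ/E)$. The existence of such a lattice is precisely what needs proof. The intended route is to reduce to showing that the semilinear action on $A_\alpha$ is \emph{locally finite} over $B$ and factors through a finite quotient on each finite-dimensional specialization.

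For this I would use Galois descent for finite $B$-algebras: $\on{Spec}(A_\alpha)\to\on{Spec}(B)$ is a finite morphism whose generic fiber is étale, because the base is normal and the map is generically separable in characteristic $0$. The Galois action permutes the geometric points of the generic fiber. Since that fiber is finite, an open subgroup fixes them all. By normality of $A_\alpha$, this subgroup then fixes $A_\alpha$ pointwise relative to the $E$-structure.

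\textbf{Main obstacle.} The hard point is exactly this last descent step. The Galois action is defined only through Drinfeld's coarse groupoid, not via an a priori $\BQ$-form, so one must check that it is compatible with the $B$-algebra structure as a semilinear action. One must also check that the action on geometric generic points is the natural one. Only then does the "open subgroup fixes the finite generic fiber" argument apply.
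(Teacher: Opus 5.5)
There is a genuine gap, and it is exactly where continuity has to come from. You infer that because $\on{Gal}(\ol\BQ/\BQ)$ permutes a finite set, an open subgroup fixes it pointwise. You use this in Step 2 for the points of a fiber and for the components, and again in Step 3 for the geometric generic points. An abstract action of a profinite group on a finite set only gives a stabilizer of finite index, and $\on{Gal}(\ol\BQ/\BQ)$ has non-open subgroups of finite index. Indeed, its abelianization $\widehat{\BZ}^\times$ surjects onto $\prod_p \BZ/2$, which has discontinuous characters $\chi$ with values in $\{\pm 1\}$. Openness of these stabilizers is precisely the continuity being proved, so the argument is circular.

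Nor can the structure you actually use rescue it: finiteness over the Galois-stable Hecke algebra via \thmref{t:Hecke}(a), normality, and \'etale generic fiber. Take $B=\ol\BQ[t]$ with Galois acting on coefficients, and $A=\ol\BQ[s]$ with $s^2=t$ and $\sigma(\sum_i a_i s^i):=\sum_i \sigma(a_i)\chi(\sigma)^i s^i$. This is a semilinear action by ring automorphisms, compatible with the continuous action on $B$. Moreover $A$ is a normal domain, finite over $B$, with \'etale generic fiber. Yet the stabilizer of $s$ is $\ker\chi$, which is not open, and so is the stabilizer of the $\ol\BQ$-point $s\mapsto 1$ over the rational point $t=1$. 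So the Galois-stable model $A_E$ that you say ``is precisely what needs proof'' is not a technical descent detail. Its existence is equivalent to the theorem, and it cannot follow formally from finiteness and normality. Your ``main obstacle'' paragraph misplaces the difficulty.

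What is missing is arithmetic input pinning down Drinfeld's equivariance structure on finite-type quotients. The paper supplies it in \propref{prop:gal-continuity}: for every finite-type quotient $H$ of $\on{Gal-Drinf}^{\on{arithm}}(X)_{\ol\BQ}$ there is an open $U\subset\on{Gal}(\ol\BQ/\BQ)$ such that $H$ has a split model over $\ol\BQ^U$ and the projection onto $H$ is $U$-equivariant. Its proof combines Deligne's theorem \cite[Theorem 3.1]{De3}, which says that for a fixed representation the Frobenius traces are fixed by an open subgroup, with Chebotarev density. It also uses Drinfeld's rigidity (\lemref{lem:surj-maps-equal-coarse}, based on \cite[Lemma 4.1.1, Theorem 4.3.7]{Dr}), which upgrades agreement on $K^+$ to equality of morphisms in the coarse category, and a reduction to quotients with simply connected neutral component.

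The theorem then follows component by component. For a semisimple Weil $\sG$-local system $\sigma$, compose with a faithful representation $\sG\to GL_n$ and extend to the arithmetic group. Let $H$ be the image and $K=\ker(\sigma)$. The component of $\LS^{\on{arithm},0}_\sG(X)$ through $\sigma$ is then $U$-equivariantly identified with maps out of $\on{Weil-Drinf}(X)/K\simeq H\underset{\BZ/n}\times\BZ$, where the $U$-action is split and hence continuous. Finiteness of the set of components \cite{EK} then gives a single open subgroup. The Hecke-operator route you chose does work for $\sG=GL_n$, where \thmref{t:Hecke}(b) makes the whole algebra a quotient of the $\BQ$-rational global Hecke algebra. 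For general $\sG$, however, it leaves exactly the ambiguity exhibited above.
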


The theorem will be proved in \secref{sss:proof-of-cont}. 

\sssec{}

Assume for a moment that $\sG=GL_n$. Then we claim that the statement of \thmref{t:cont} is automatic:

\medskip

Indeed, this follows from \thmref{t:Hecke}(b) and the fact that the elements $\on{H}(V,x')$ are 
$\on{Gal}(\ol\BQ/\BQ)$-invariant.

\subsection{The continuity theorem--preparations} \label{ss:cont}

\subsubsection{} A map of fields $\bk_1 \to \bk_2$ induces a base-change functor $\on{Pro-AlgGrp}(\bk_1) \to \on{Pro-AlgGrp}(\bk_2)$, which in turn 
descends to a functor $\on{Pro-AlgGrp}^\coarse(\bk_1) \to \on{Pro-AlgGrp}^\coarse(\bk_2)$. 
In particular, taking $\bk_1 = \bk_2 = \bk$, we see that $\Aut(\bk)$ (viewed as an abstract group) acts on $\on{Pro-AlgGrp}^\coarse(\bk)$. 

\medskip

If $\sigma$ is an element 
of $\Aut(\bk)$ we will denote the corresponding automorphism of $\on{Pro-AlgGrp}^\coarse(\bk)$ by $F_\sigma$. Note that any $\Aut(\bk)$-equivariant object of 
$\on{Pro-AlgGrp}(\bk)$ defines an $\Aut(\bk)$-equivariant object of $\on{Pro-AlgGrp}^\coarse(\bk)$. 

\sssec{}

Note that when $\bk$ is algebraically closed, \cite[Sect. 2.2.2]{Dr} gives an explicit description of (the underlying groupoid of) $\pss(\bk)$ in terms of (pro-)combinatorial data. 

\medskip

In terms of the notation of \textit{loc. cit.}, suppose we are given some object $G$ of $\pss(\bk)$ and some field $\bk_0 \subseteq \bk$ such that the map 
\[Z_\Delta(\bk) \to Z\]
is $\Aut(\bk/\bk_0)$-equivariant (where $Z$ is given the trivial $\Aut(\bk/\bk_0)$-action). Then the construction of \cite[Sect. 2.2.4]{Dr} 
produces a pro-semisimple group $G_0$ defined over $\bk_0$ whose base-change to $\bk$ is isomorphic to $G$. 
This endows $G$ with $\Aut(\bk/\bk_0)$-equivariance data which we call ``split'' equivariance data for $G$. 

\subsubsection{}\label{sss:actions-on-equiv} Since $GL_n$ is defined over $\BQ$, it acquires equivariance data for $\Aut(\bk)$ as an object of 
$\on{Pro-AlgGrp}^\coarse(\bk)$. As a result, if $G$ is any $\Aut(\bk)$-equivariant object of $\on{Pro-AlgGrp}^\coarse(\bk)$, we obtain an action of $\Aut(\bk)$ on 
$K^+(G)$, where the latter is the Grothendieck semi-ring of the small abelian category $\Rep(G)^{\heartsuit,c}$.

\medskip

Namely, if $\sigma$ is an element of $\Aut(\bk)$ and $\rho \in K^+(G)$ then $\sigma \cdot \rho$ is the isomorphism class of the representation 
\[G \totext{\sim} F_\sigma(G) \totext{F_\sigma(\rho)} F_\sigma(GL_n) \totext{\sim} GL_n.\]

\sssec{}

There is another piece of data that we can extract from $\Aut(\bk)$-equivariance for $G$: 

\medskip

If $\sigma$ is an element of $\Aut(\bk)$, then (for any $G$) there is a natural map 
\[(G \sslash \on{Ad}(G^\circ))(\bk) \to (F_\sigma(G) \sslash \on{Ad}(F_\sigma(G)^\circ))(\bk).\]

\medskip

Hence, if $G$ is $\Aut(\bk)$-equivariant then we obtain an action of $\Aut(\bk)$ on $(G \sslash \on{Ad}(G^\circ))(\bk)$. 

\sssec{}

These two pieces of data are compatible in the following way: if $G$ is any object of $\on{Pro-AlgGrp}^\coarse(\bk)$, there is a well-defined trace map 
\[\on{tr}: (G \sslash \on{Ad}(G^\circ))(\bk) \times K^+(G) \to \bk\]
and if $G$ is equipped with the data of $\Aut(\bk)$-equivariance, then the trace map intertwines the induced action of $\Aut(\bk)$ on $(G \sslash \on{Ad}(G^\circ))(\bk) \times K^+(G)$ with the canonical action of $\Aut(\bk)$ on $\bk$. 

\newcommand{\BQbar}{\overline{\mathbb{Q}}}
\newcommand{\Gal}{\on{Gal}}
\newcommand{\Fr}{\on{Frob}}
\newcommand{\GD}{\on{Gal-Drinf}^{\on{arithm}}(X)_{\ol\BQ}}
\newcommand{\tr}{\on{Tr}}

\subsubsection{}\label{sss:gal-action-on-gd}
According to \cite[Sect. 1.4.2]{Dr}, the object $G = \on{Gal-Drinf}^{\on{arithm}}(X)_{\ol\BQ}$ of $\pss(\BQbar)$ admits an equivariance datum for $\Gal(\BQ)$, which is uniquely characterized by the requirement that the Frobenius elements in $(G\sslash \on{Ad}(G^\circ))(\BQbar)$ are fixed by the action of $\Gal(\BQ)$. 

\medskip

This has the following representation-theoretic consequence: if $\Fr_x$ is any Frobenius element in $G$ and $\rho: G \to GL(V)$ is any element of $K^+(G)$, then 
\[\tr(\Fr_x, V_\sigma) = \sigma \cdot \tr(\Fr_x, V)\]
where $V_\sigma$ is the element of $K^+(G)$ obtained by applying the action from \secref{sss:actions-on-equiv}.

\begin{prop}\label{prop:gal-continuity}
Suppose that $H$ is a finite-type quotient of $\GD$. Then there is an open subgroup $U \subseteq \Gal(\BQ)$ such that 
\begin{itemize}
    \item $H$ has a split model over $\BQbar^U$.
    \item The map $\GD \to H$ is $U$-equivariant, where $H$ is equipped with the split equivariance structure.
\end{itemize} 
\end{prop}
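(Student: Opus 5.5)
We use Drinfeld's combinatorial description of $\pss(\BQbar)$ from \cite[Sect. 2.2.2]{Dr}. A finite-type quotient $H$ of $G:=\GD$ is determined, up to the coarse equivalence, by finitely many irreducible representations $V_1,\dots,V_m$ of $G$ whose tensor generation gives $\Rep(H)$. The $\Gal(\BQ)$-action on $K^+(G)$ from \secref{sss:actions-on-equiv} permutes isomorphism classes of irreducibles.

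The first step is to show that the stabilizer of each class $[V_i]$ is open in $\Gal(\BQ)$. By \secref{sss:gal-action-on-gd}, $\tr(\Fr_x,\sigma\cdot V_i)=\sigma(\tr(\Fr_x,V_i))$. By the Chebotarev argument of \secref{sss:Cheb} and the injectivity argument of \cite{Laum}, a semisimple representation is determined by its Frobenius traces, so $\sigma\cdot V_i\simeq V_i$ as soon as $\sigma$ fixes all values $\tr(\Fr_x,V_i)$.

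The main obstacle is that these are infinitely many algebraic numbers, and we need them to lie in a single finite extension of $\BQ$. Our plan is to get this from finiteness. For fixed dimension and bounded ramification, the set of irreducible weight-$0$ lisse sheaves with finite-order determinant, up to twist by finite-order characters of $\wh\BZ$, is finite (cf. \cite{EK}, \cite{De1half}). This finite set is stable under $\Gal(\BQ)$ by \secref{sss:gal-action-on-gd}, so the $\Gal(\BQ)$-orbit of each $[V_i]$ is finite, up to the twisting ambiguity. The twisting ambiguity has to be controlled separately. We will fix the twist by normalizing the determinant, together with a finite-order character of bounded order, and then again use finiteness. Once the orbit is finite, the stabilizer $U_i$ has finite index, and it is closed because it is cut out by Galois conditions on algebraic numbers. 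Hence $U_i$ is open. Set $U:=\bigcap_i U_i$.

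The second step is to produce the split model. With $\bk_0:=\BQbar^U$, the classes of $V_1,\dots,V_m$, and hence the whole combinatorial datum of $H$ (the map $Z_\Delta(\BQbar)\to Z$ of \cite[Sect. 2.2.2]{Dr}), are $\Aut(\BQbar/\bk_0)$-invariant. The construction of \cite[Sect. 2.2.4]{Dr} then gives a split model $H_0$ over $\bk_0$. It remains to compare the two $U$-equivariance structures on the map $G\to H$: the one induced from $G$, and the split one on $H$. In the coarse category, a morphism is determined by the induced map on $K^+$, equivalently on characters. Both structures induce the same permutation action on $K^+(H)\subset K^+(G)$, namely the trivial one on the generators. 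Therefore $G\to H$ is $U$-equivariant, possibly after shrinking $U$ to fix the finitely many remaining combinatorial data, such as the relevant central characters.
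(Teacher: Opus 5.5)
Your first step is essentially sound. The Deligne--Esnault--Kerz finiteness theorem, with the twist pinned down by the order of the determinant, gives finite $\on{Gal}(\ol\BQ/\BQ)$-orbits and hence open stabilizers of the classes $[V_i]$. In effect this re-derives Deligne's finiteness of the field of Frobenius traces, which the paper simply cites as \cite[Theorem 3.1]{De3}.

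The first bullet of the statement is a different matter. It is a property of $H$ alone: a finite-type group over $\ol\BQ$ has a split model over some number field, so it is obtained just by shrinking $U$, as the paper does. It does not follow from the invariance of the $[V_i]$, as your ``hence'' suggests, but this is harmless.

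The genuine gap is in your last step, where you assert that in the coarse category a morphism is determined by the induced map on $K^+$. This is false.

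\begin{itemize}
\item Take $H$ finite, so that $H^\circ=1$, and $h\in H$ non-central. Then $\pi$ and $\on{Ad}_h\circ\pi$ induce the same map $K^+(H)\to K^+(G)$, yet they are different morphisms in $\pss(\ol\BQ)$.
\item In general, equality of the $K^+$-maps of $\pi$ and $\pi'$ only gives $\ker(\pi)=\ker(\pi')$, by \cite[Lemma 4.1.1(i)]{Dr}. So $\pi'=\alpha\circ\pi$ for some automorphism $\alpha$ of $H$ preserving all representation classes, and such an $\alpha$ need not lie in $\on{Ad}(H^\circ)$.
\end{itemize}

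Put differently, your $V_1,\dots,V_m$ determine the kernel of $G\to H$, not the morphism $G\to H$ in the coarse category.

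What is missing is a rigidity statement such as \lemref{lem:surj-maps-equal-coarse}, which rests on \cite[Theorem 4.3.7(i)]{Dr}. Verifying its hypotheses is the real content of the paper's proof.

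\begin{enumerate}
\item[(a)] $\pi$ and $\pi'$ must agree on $\pi_0(G)\to\pi_0(H)$. The paper gets this from Chebotarev density and the fact that the Galois structure on $G$ fixes Frobenius elements, not from any $K^+$-data. For finite $H$ this condition is the whole content.
\item[(b)] The $K^+$-maps must agree for every $G\underset{\pi_0(H)}\times\Gamma_0\to H\underset{\pi_0(H)}\times\Gamma_0$ with $\Gamma_0$ open.
\begin{itemize}
\item These involve lisse sheaves on finite \'etale covers $\wt X$ of $X$, which the classes $[V_i]$ do not control: an element fixing $[V_i]$ can permute the constituents of the restriction of $V_i$ to the preimage.
\item So you need \cite[Lemma 3.1.2]{Dr} to identify the preimage, Galois-equivariantly, with the pro-semisimple completion for $\wt X$, and then to rerun your openness argument on $\wt X$.
\end{itemize}
\item[(c)] $H^\circ$ must be a product of almost simple groups. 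This forces a reduction to the case where $H^\circ$ is simply connected, via \cite[Proposition 3.1.4]{Dr} and \lemref{lem:fin-type-quot-has-sc-cover}, followed by descending the equivariance along $H'\to H$.
\end{enumerate}

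Shrinking $U$ ``to fix the relevant central characters'' does not substitute for any of (a)--(c).
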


\newcommand{\sfG}{\mathsf{G}}
\newcommand{\geom}{\mathrm{geom}}
\newcommand{\Z}{\mathbb{Z}}
\newcommand{\et}{\mathrm{\acute{e}t}}
\newcommand{\red}{\on{red}}
\newcommand{\relev}{\on{relev}}
\newcommand{\Ggeom}{\Gal^{\red, \relev}(X)}
\newcommand{\Garith}{\on{Gal-Drinf}^{\on{arithm}}(X)}

\subsubsection{}\label{sss:proof-of-cont} The proof of \propref{prop:gal-continuity} will occupy the next section of this paper. Taking the proposition for granted, we will now prove \thmref{t:cont}. 

\medskip

Let $\sigma: \Ggeom \to \sfG$ be a semisimple $\sfG$ local system on $X$, and assume that $\sigma$ admits a Weil structure. 
Pick a faithful representation $\sfG \to GL_n$. It is well known that there is an etale local system $\sigma': \Garith \to GL_n$ whose restriction to $\Ggeom$ agrees with our original composite 
\[\Ggeom \to \sfG \to GL_n.\]
Let $H$ denote the image of $\sigma'$. By \propref{prop:gal-continuity}, we know that there is an open subgroup $U$ of $\Gal(\BQ)$ such that $\Garith \to H$ is $U$-equivariant (where $H$ has the split equivariance structure). 

\medskip

Let $K$ denote the intersection of $\Ggeom$ with the kernel of $\Garith \to H$. This group is evidently preserved by the action of $U$. Additionally, because $\sfG \to GL_n$ was chosen to be injective, we know that $K = \ker(\sigma)$. 

\medskip

Let $H^\geom$ denote the quotient $\Ggeom/K$. We obtain a commutative diagram 
\[
\begin{tikzcd}
1 \arrow[r] &\Gal^{\red, \relev}(X) \arrow[d] \arrow[r] & \Garith \arrow[d] \arrow[r] & \hat{\Z} \arrow[d] \arrow[r] & 1\\
1 \arrow[r] & H^\geom \arrow[r] & H \arrow[r] & \Z/n \arrow[r] & 1
\end{tikzcd}
\]
where every map is $U$-equivariant. In particular, the group $H \underset{\Z/n}\times \Z$ acquires a natural (split) $U$-equivariance structure, for which the map 
$$\WD(X) \to H \underset{\Z/n}\times \Z$$ is $U$-equivariant. 

\medskip 

Since $K$ is the kernel of $\sigma$, any map $\WD(X) \to \sfG$ extending $\sigma$ factors through $$\WD(X)/K \simeq H \underset{\Z/n}\times \Z.$$
This implies that the restriction map 
\begin{equation}\label{eq:WD-fin-type-map}
\Maps(\WD(X)/K, \sfG)/\sfG \to \Maps(\WD(X), \sfG)/\sfG    
\end{equation}
induces an isomorphism on the component of $\on{LS}^{\on{arithm}, 0}$ corresponding to $\sigma$. 

\medskip

Since the map \eqref{eq:WD-fin-type-map} is $U$-equivariant and the $U$-equivariance data for $\WD(X)/K$ is split, it follows that the action of $U$ on the ring of functions of this component is continuous. 

\medskip

By \cite{EK}, $\on{LS}^{\on{arithm}, 0}$ only has finitely many connected components, so there is some open subgroup of $\Gal(\BQ)$ which acts continuously on the whole ring of functions of $\on{LS}^{\on{arithm}, 0}$. But this implies that the $\Gal(\BQ)$ action is itself continuous. 

\qed{[\thmref{t:cont}]}

\subsection{Proof of \propref{prop:gal-continuity}} \label{ss:cont-bis}

\sssec{}

We start with the following observation:

\begin{lem}\label{lem:surj-maps-equal-coarse}
Let $\bk$ be algebraically closed, and let $f_1, f_2: G \to H$ be surjective maps of pro-semisimple groups over $\bk$. Assume that:

\medskip

\begin{itemize}

\item The neutral component of $H$ is a product of almost simple groups;

\medskip

\item $f_1$ and $f_2$ induce the same map 
\[\pi_0(G) \to \pi_0(H);\]

\medskip

\item For every open subgroup $\Gamma_0 \subseteq \pi_0(H)$, they induce the same map 
\[K^+(H \underset{\pi_0(H)}\times \Gamma_0) \to K^+(G \underset{\pi_0(H)}\times \Gamma_0).\]

\end{itemize}

\medskip

Then $f_1 = f_2$ as morphisms in $\pss(\bk)$.     
\end{lem}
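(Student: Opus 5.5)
We plan to reduce to the case where $H$ is of finite type and then compare $f_1$ and $f_2$ through their restrictions to the neutral component. Since $\pss(\bk)$ is built from pro-systems, a morphism $G\to H$ is determined by its composites with the finite-type quotients of $H$, so we may assume $H$ is of finite type. The hypotheses pass to such a quotient: its neutral component is again a product of almost simple factors (a quotient of a product of almost simple groups), and $K^+$ of the quotient embeds into $K^+$ of $H$. Set $\Gamma=\pi_0(H)$, now finite, and let $G^1:=G\underset{\Gamma}\times \{1\}$, the common preimage of $H^\circ$ under $f_1$ and $f_2$. This subgroup is the same for both maps because they induce the same map on $\pi_0$. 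Both $f_i|_{G^1}:G^1\to H^\circ$ are surjective.

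The first step is to show that $f_1|_{G^1}$ and $f_2|_{G^1}$ agree up to $H^\circ$-conjugacy. Take $\Gamma_0=\{1\}$; the hypothesis then says the two pullback maps $K^+(H^\circ)\to K^+(G^1)$ coincide. Write $H^\circ=\prod_j H_j$ with $H_j$ almost simple. For each $j$, irreducible representations of $H^\circ$ that factor through $H_j$ pull back under $f_1$ and $f_2$ to the same irreducible representations of $G^1$, since pullback along a surjection preserves irreducibility. The kernels of $G^1\to H^\circ/\prod_{k\neq j}H_k$ are therefore the same for both maps: such a kernel is the intersection of the kernels of these pulled-back representations, and those representations agree up to isomorphism. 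So for each $j$ the two maps $G^1\to \bar H_j$ (the adjoint quotient, or $H_j$ up to centre) differ by an automorphism $\alpha_j$ of $\bar H_j$ that preserves the class of every irreducible representation. Such an automorphism fixes every dominant weight, hence is inner; this is where almost simplicity is used, to exclude outer (diagram) automorphisms. The central ambiguity is then handled by the same character argument applied to $H^\circ$ itself. We expect this handling of the central isogeny, i.e. passing from agreement on adjoint factors to agreement on $H^\circ$, to be the main obstacle. Our first attempt will be to use that $G$ is pro-semisimple, so that the $f_i$ kill no torus part, together with the fact that the characters of $K^+(H^\circ)$ detect the centre.

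The second step is to promote this to all of $G$. After conjugating $f_2$ we may assume $f_1=f_2$ on $G^1$. For $g\in G$, the element $c(g)=f_2(g)f_1(g)^{-1}$ lies in $H^\circ$ because the $\pi_0$-maps agree. Moreover, conjugation by $c(g)$ commutes with the common map on $G^1$, so $c(g)\in Z(H^\circ)$, which is finite. This makes $c$ a cocycle with values in a finite abelian group. We then apply the $K^+$ hypothesis to the open subgroups $\Gamma_0\subseteq\Gamma$, in particular the cyclic ones, together with Clifford theory: characters of $H\underset{\Gamma}\times\Gamma_0$ evaluated on $f_i(g)$ must agree. This should force $c$ to be a coboundary of the form $z\,\gamma(z)^{-1}$, with $z$ lifting to $H^\circ$, and hence $f_1$ and $f_2$ agree up to $H^\circ$-conjugacy, which is equality in $\pss(\bk)$.
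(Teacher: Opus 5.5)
Your Step~1 can be completed, and it is easier than you expect. Run the kernel argument on $H^\circ$ itself, using a faithful semisimple representation, to get $\ker(f_1|_{G^1})=\ker(f_2|_{G^1})$. Then $f_2|_{G^1}=\alpha\circ f_1|_{G^1}$ with $\alpha\in\Aut(H^\circ)$ fixing every irreducible class. For \emph{any} connected semisimple group such an $\alpha$ is conjugation by an element of $H^\circ(\bk)$, because a nontrivial automorphism of the based root datum moves some dominant weight. So there is no central-isogeny obstacle. Almost simplicity is not used here either: diagram automorphisms are excluded by the $K^+$ condition itself.

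The genuine gap is Step~2, which is the whole nontrivial content of the lemma. Having reduced to a cocycle $c\colon\Gamma\to Z(H^\circ)$, you only assert that the $K^+$ conditions ``should force'' $c(\gamma)=z\,\gamma(z)^{-1}$. The paper does not prove this by hand either. It gets $\ker f_1=\ker f_2$ from Drinfeld's Lemma 4.1.1(i), checks that the $K^+$ conditions descend to $G/K$, and then invokes Drinfeld's Theorem 4.3.7(i). That theorem is exactly the statement your Step~2 leaves open.

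A proof of Step~2 has to be a local-to-global argument.

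\begin{itemize}
\item \emph{Reduction to cyclic subgroups.} Characters of $H\underset{\Gamma}\times\Gamma_0$ restrict to characters of $H\underset{\Gamma}\times\langle\gamma\rangle$, so your hypotheses are equivalent to the cyclic ones. These only constrain each value $c(\gamma)$ separately.
\item \emph{The local condition.} Take a lift $\tilde\gamma$ preserving a pinning of $H^\circ$ with torus $T$. The hypothesis says that translation by $c(\gamma)$ acts trivially on $H^\circ\tilde\gamma/\!/\on{Ad}(H^\circ)$. One checks that this amounts to $c(\gamma)\in(1-\gamma)T\cap Z(H^\circ)$.
\item \emph{The global conclusion.} From this you must deduce that the class of $c$ in $H^1(\Gamma,Z(H^\circ))$ is zero. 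This does not come for free. A priori $(1-\gamma)T\cap Z$ can be larger than $(1-\gamma)Z$. Moreover, a class vanishing on every cyclic subgroup need not vanish for a general finite module; for instance, this fails for $\Gamma=(\BZ/2)^2$ acting on $\BZ/8$ through $(\BZ/8)^\times$.
\end{itemize}

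So this is where the hypothesis on $H^\circ$ has to enter. With $H^\circ=\prod_j H_j$, one has $Z(H^\circ)\simeq\bigoplus\on{Ind}_{\Gamma_j}^{\Gamma}Z(H_j)$, summed over orbit representatives. Shapiro's lemma then reduces to the stabilizer $\Gamma_j$ acting on a single almost simple factor. The local condition kills $c$ on the part of $\Gamma_j$ acting by inner automorphisms, since a pinned inner automorphism is trivial. What remains is a check for the diagram automorphism group, which is either cyclic or $S_3$ acting on $\mu_2\times\mu_2$ for $\on{Spin}_8$, where $H^1=0$. Without such an argument, or the citation, the proof is incomplete.

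A smaller point concerns your reduction to finite type. A central quotient of a product of almost simple groups need not be such a product; $SO_4$ is an example. You should instead use a cofinal system of quotients whose neutral components are products of finitely many of the factors. To pass to the limit in $\pss(\bk)$, use that at each level the set of conjugators is a nonempty finite set.
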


\begin{proof}
Because $f_1$ and $f_2$ induce the same map $K^+(H) \to K^+(G)$, we know by \cite[Lemma 4.1.1(i)]{Dr} that $\ker(f_1) = \ker(f_2)$; denote this group by $K$. 
Then $f_1$ and $f_2$ descend to two isomorphisms $G/K \to H$ which induce the same map on $\pi_0$. 

\medskip

Since the image of $K$ in $\pi_0(H)$ is trivial, we know that for every open subgroup $\Gamma_0 \subseteq \pi_0(H)$, 
\[K \subseteq G \underset{\pi_0(H)}\times \Gamma_0\] 
and so 
\[G/K \underset{\pi_0(H)}\times \Gamma_0 \simeq (G \underset{\pi_0(H)}\times \Gamma_0)/K.\]
This implies that $K^+(G/K \underset{\pi_0(H)}\times \Gamma_0)$ is a subring of $K^+(G \underset{\pi_0(H)}\times \Gamma_0)$, 
so by assumption $f_1$ and $f_2$ induce the same map 
\[K^+(H \underset{\pi_0(H)}\times \Gamma_0) \to K^+(G/K \underset{\pi_0(H)}\times \Gamma_0).\]

\medskip
 
Now the result follows by \cite[Theorem 4.3.7(i)]{Dr}.
\end{proof}

\subsubsection{Step 1} In this step we will prove \propref{prop:gal-continuity} under the additional assumption that $H^\circ$ is simply connected. 

\medskip

Since $H$ is finite-type over $\BQbar$, there is an open subgroup $U$ of $\Gal(\BQ)$ such that $H$ admits a split model over the fixed field of $U$. We claim that after passing to a further open subgroup $V$ of $U$ the map $\pi: G \to H$ becomes $V$-equivariant (where $H$ is equipped with the split equivariance datum).

\medskip

To see this, we will invoke \lemref{lem:surj-maps-equal-coarse}. For an element $\sigma \in U$, let $\pi': G \to H$ denote the composite 
\[G \totext{\sim} F_\sigma(G) \totext{F_\sigma(\pi)} F_\sigma(H) \totext{\sim} H.\]
It follows from Chebotarev density and the defining property of the $\Gal(\BQ)$-equivariance data on $G$ that $\pi$ and $\pi'$ induce the same map $\pi_0(G) \to \pi_0(H)$. 

\medskip

For a subgroup $\Gamma_0 \subseteq \pi_0(H)$, we will denote by the same symbols $\pi$ and $\pi'$ the corresponding maps 
\[G \underset{\pi_0(H)}\times  \Gamma_0 \to H \underset{\pi_0(H)}\times  \Gamma_0.\]

\medskip

By \lemref{lem:surj-maps-equal-coarse} it remains to show that there is an open subgroup $V$ of $U$ such that for all $\Gamma_0 \subseteq \pi_0(H)$, the maps 
\[K^+(H \underset{\pi_0(H)}\times  \Gamma_0) \to K^+(G \underset{\pi_0(H)}\times  \Gamma_0)\]
induced by $\pi$ and $\pi'$ agree for all $\sigma \in V$. Because $\pi_0(H)$ is finite, it suffices to work with a single $\Gamma_0$. For ease of notation, let $H_0$ denote $H \underset{\pi_0(H)}\times  \Gamma_0$ and let $G_0$ denote $G \underset{\pi_0(H)}\times  \Gamma_0$. 

By our choice of equivariance data for $H$, for any $\rho: H_0 \to GL_n$, the diagram 
\[
\begin{tikzcd}
F_\sigma(H) \arrow[r, "\sim"] \arrow[d, "F_\sigma(\rho)"'] & H \arrow[d, "\rho"]\\
F_\sigma(GL_n) \arrow[r, "\sim"] & GL_n
\end{tikzcd}
\]
commutes for any $\sigma$ in an open subgroup of $U$\footnote{Because $H_0$ and $GL_n$ are both finite type groups over the fixed field of $U$, any map $\rho$ between them is defined over a finite extension of this fixed field.}. It follows that $\rho \circ \pi' = \sigma \cdot (\rho \circ \pi)$ in $K^+(G_0)$. In other words, if we write $V$ for the class of $\rho \circ \pi$ in $K^+(G_0)$, we know that the class of $\rho \circ \pi'$ is equal to $V_\sigma$ (using the notation of \secref{sss:gal-action-on-gd}). 

\medskip

Now we note that since $\pi_0(G) \underset{\pi_0(H)}\times  \Gamma_0$ is an open subgroup of $\pi_0(G) = \pi_1^\et(X)$, there is a finite cover $\wt{X}$ of $X$ such that 
\[\pi_0(G) \underset{\pi_0(H)}\times  \Gamma_0 \simeq \pi_1^\et(\wt{X}).\]
By \cite[Lemma 3.1.2]{Dr}, $G_0$ identifies $\Gal(\BQ)$-equivariantly with the pro-semisimple completion of $\pi_1^\et(\widetilde{X})$. Since the trace map $K^+(G_0) \to \on{Fun}(\pi_1^\et(\widetilde{X})_{\Fr}; \BQbar)$ is injective, to prove that $V = V_\sigma$ it suffices to check that 
\[\tr(\Fr_x, V) = \tr(\Fr_x, V_\sigma) = \sigma \cdot \tr(\Fr_x, V)\]
for all $x \in |\widetilde{X}|$. 

\medskip

For any fixed $\rho$, \cite[Theorem 3.1]{De3} shows that this equality holds for any $\sigma$ in an open subgroup of $U$. 

\medskip

Since $H_0$ is finite-type, $K(H_0)$ is finitely generated, so to check that $\pi$ and $\pi'$ induce the same map $K^+(H_0) \to K^+(G_0)$ it suffices to check this condition for finitely many $\rho$.

\subsubsection{} We now wish to relax the assumption that $H^\circ$ is simply connected. To this end, we will prove the following lemma: 

\begin{lem}\label{lem:fin-type-quot-has-sc-cover}
Let $G$ be a pro-semisimple group whose neutral component is simply connected. Let $H$ be a finite-type quotient of $G$. Then there is a 
finite type quotient $H'$ of $G$ whose neutral component is simply connected such that the map $G \to H$ factors through $H'$.     
\end{lem}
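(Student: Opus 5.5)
We write $\pi:G\to H$ for the quotient map and $K_1:=\ker(\pi)$. The idea is to pass to the simply connected cover of $H^\circ$, observe that $G^\circ$ maps to it, and then take for $H'$ an extension of this cover by a finite quotient of $\pi_0(G)$.

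First, we reduce to the neutral components. The image of $G^\circ$ in $H$ is $H^\circ$, since $G\to H$ is surjective and the image of a connected group is connected of finite index. Let $\wt{H^\circ}\to H^\circ$ be the simply connected cover. This is an isogeny with finite central kernel, since $H^\circ$ is semisimple as a quotient of a pro-semisimple group. Because $G^\circ$ is pro-simply connected, i.e.\ a product of simply connected almost simple factors, the map $G^\circ\to H^\circ$ lifts to a surjection $G^\circ\to \wt{H^\circ}$. To see this, write $G^\circ\to H^\circ$ as factoring through a finite-type quotient $G^\circ\to \prod_{i\in S} G_i$ with each $G_i$ simply connected, and note that $\prod_{i\in S} G_i$ is simply connected, so it maps to $\wt{H^\circ}$ compatibly. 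Set $K:=\ker(G^\circ\to \wt{H^\circ})$. Then $K$ is the product of the almost simple factors of $G^\circ$ killed by the map, so $G^\circ/K\simeq \wt{H^\circ}$ is simply connected.

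Next, we make $K$ normal in $G$. The group $K$ is normal in $G^\circ$, but $G$ acts on $G^\circ$ by conjugation and permutes its almost simple factors. The set $S$ of factors surviving in $\wt{H^\circ}$ is exactly the set of factors not contained in $K_1$, because the factors mapping to $H^\circ$ nontrivially are those mapping onto a factor of $H^\circ$. Since $K_1$ is normal in $G$, the set $S$ is $G$-stable, and hence $K$, the product of the factors outside $S$, is normal in $G$. Moreover $K\subseteq K_1$.

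Finally, we kill enough of $\pi_0$. The quotient $G/K$ is an extension of the profinite group $\pi_0(G)$ by $\wt{H^\circ}$. The conjugation action of $G/K$ on $\wt{H^\circ}$ goes through a finite group, namely the outer automorphisms together with inner ones, so we can choose an open normal subgroup $N\subseteq \pi_0(G)$ contained in the image of $K_1$ in $\pi_0(G)$, which is open since $\pi_0(H)$ is finite. The main obstacle is to find a closed normal subgroup $M\subseteq K_1/K$ of $G/K$ with $M\cap \wt{H^\circ}=1$ that maps onto an open subgroup of $\pi_0$, so that $H':=(G/K)/M$ is of finite type with neutral component $\wt{H^\circ}$. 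We propose to take $M:=(K_1/K)\cap Z_{G/K}(\wt{H^\circ})$. Its intersection with $\wt{H^\circ}$ is contained in the finite center $Z(\wt{H^\circ})$, and shrinking $M$ to an open subgroup of its image in $\pi_0$ (using that the extension splits over an open subgroup, by finiteness of the center) makes this intersection trivial. For the image in $\pi_0$ to be open, we use that for $\gamma\in K_1$ the element $\Ad_\gamma$ on $\wt{H^\circ}$ lifts the identity on $H^\circ$ modulo $K_1$, and therefore differs from an inner automorphism by a finite amount. Then $G\to H$ factors through $H'$ because $K\cdot M\subseteq K_1$.
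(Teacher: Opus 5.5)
Your first two steps are fine, and they produce the same subgroup as the paper. Your $K$ is exactly $(K_1)^\circ$, the product of the almost simple factors of $G^\circ$ killed by $G\to H$. The paper gets normality in one line ($K$ is the neutral component of the normal subgroup $K_1$, hence characteristic in it); your permutation-of-factors argument also works.

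\textbf{The gap is in the last step.} You need a closed subgroup $M\subseteq K_1/K$ with three properties:
\begin{itemize}
\item it is \emph{normal in $G/K$};
\item it meets $Z:=(K_1/K)\cap \wt{H^\circ}=\ker(\wt{H^\circ}\to H^\circ)$ trivially;
\item it has open image in $\pi_0(G)$.
\end{itemize}
Your recipe is to shrink $M$ to an open subgroup over which $1\to M\cap \wt{H^\circ}\to M\to \bar M\to 1$ splits. The resulting subgroup has no reason to be normal in $G/K$; the image of a section need not even be normal in $M$. So $(G/K)/M$ is not defined as a group, and the proposal never addresses this.

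The centralizer also plays no role. For $\gamma\in K_1/K$, the map $x\mapsto \gamma x\gamma^{-1}x^{-1}$ sends the connected group $\wt{H^\circ}$ into the finite group $Z$, so it is trivial. Hence $K_1/K$ centralizes $\wt{H^\circ}$ outright, and your $M$ is all of $K_1/K$. Your stated reason for openness ($\Ad_\gamma$ ``differs from an inner automorphism by a finite amount'') would not give it by itself, since correcting by an element of $\wt{H^\circ}$ takes you out of $K_1/K$.

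\textbf{The missing idea} is to use the pro-algebraic structure of $G/K$ instead of cutting down $K_1/K$ by hand. Kernels of finite-type quotients of $G/K$ are normal in $G/K$, form a filtered family, and have trivial intersection. Since $Z$ is finite, there is a finite-type quotient $G/K\to Q$ whose kernel $N$ satisfies $N\cap Z=1$. Set $M:=(K_1/K)\cap N$, so that $H'$ is the image of $G/K$ in $H\times Q$. Then:
\begin{itemize}
\item $M$ is normal in $G/K$;
\item $H'$ is of finite type, so openness in $\pi_0$ is automatic;
\item the neutral component of $H'$ is the image of $\wt{H^\circ}$, which is injective because $\wt{H^\circ}\cap M\subseteq Z\cap N=1$;
\item $G\to H$ factors through $H'$.
\end{itemize}
This is exactly the paper's argument: it separates the finitely many nontrivial elements of the finite kernel $\ker(G^\circ/K\to H^\circ)$ by finite-type quotients and takes a common refinement dominating $H$.
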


\begin{proof}
Let $K$ denote the neutral component of the kernel of $G \to H$. Note that $K$ is a normal subgroup of $G$. Because $G^\circ$ was assumed to be simply connected, $G^\circ/K$ is the simply-connected cover of $H^\circ$. For every nonzero element $x \in G/K$, there is some intermediate finite-type quotient $G/K \to H_x \to H$ such that the image of $x$ is nonzero in $H_x$. Because the kernel of $G^\circ/K \to H^\circ$ is finite, there is some finite-type quotient $H'$ of $G/K$ such that $(H')^\circ = G^\circ/K$ is the simply-connected cover of $H$, and such that $G \to H$ factors through $H'$. 
\end{proof}

\subsubsection{Step 2} We now prove \propref{prop:gal-continuity} for any $H$.

\medskip

By \cite[Proposition 3.1.4]{Dr} and \lemref{lem:fin-type-quot-has-sc-cover}, we can find a quotient $H'$ of $G$ such that $(H')^\circ$ is simply connected and the map $G \to H$ factors through $H'$. 

\medskip

By Step 1, we know that there is an open subgroup $U$ of $\Gal(\BQ)$ such that the map $G \to H'$ is $U$-equivariant, where $H'$ has the split equivariance structure. 

\medskip

Since both $H'$ and $H$ have the split equivariance structure, after replacing $U$ by a finite index open subgroup the map $H' \to H$ is $U$-equivariant as well.

\qed{[\propref{prop:gal-continuity}]}

\end{document}